\documentclass[a4paper, 10pt]{amsart}

\usepackage{amsmath}
\usepackage{amssymb}

\usepackage{amscd,amsfonts,amsthm,bm,bbm,comment,eucal,mathrsfs,mathtools,stmaryrd,MnSymbol}
\usepackage{hyperref}
\usepackage{graphicx}
\usepackage[all]{xy}
\usepackage{tikz}
\usetikzlibrary{cd}
\usetikzlibrary{decorations.pathmorphing}
\usetikzlibrary{patterns}
\DeclareMathOperator{\Ad}{Ad}

\DeclareMathOperator{\Ber}{Ber}
\DeclareMathOperator{\Br}{Br}
\DeclareMathOperator{\BT}{BT}
\DeclareMathOperator{\BW}{BW}
\DeclareMathOperator{\can}{can}
\DeclareMathOperator{\CSAlg}{CSAlg}

\DeclareMathOperator{\diag}{diag}
\DeclareMathOperator{\bEnd}{\mathbbmss{E}nd}
\DeclareMathOperator{\End}{End}
\DeclareMathOperator{\ev}{ev}

\DeclareMathOperator{\GRet}{GRet}
\DeclareMathOperator{\GT}{GT}
\DeclareMathOperator{\bHom}{\mathbbmss{H}om}
\DeclareMathOperator{\Hom}{Hom}
\DeclareMathOperator{\hy}{hy}
\DeclareMathOperator{\id}{id}

\DeclareMathOperator{\Ind}{Ind}
\DeclareMathOperator{\Irr}{Irr}

\DeclareMathOperator{\Res}{Res}
\DeclareMathOperator{\sep}{sep}

\DeclareMathOperator{\spl}{spl}

\DeclareMathOperator{\Spec}{Spec}

\DeclareMathOperator{\qrat}{qrat}
\DeclareMathOperator{\super}{super}
\DeclareMathOperator{\sqrat}{sqrat}

\DeclareMathOperator{\GL}{GL}
\DeclareMathOperator{\PGL}{PGL}

\DeclareMathOperator{\SL}{SL}
\DeclareMathOperator{\SO}{SO}
\DeclareMathOperator{\Sp}{Sp}

\DeclareMathOperator{\Pp}{P}
\DeclareMathOperator{\Qq}{Q}
\DeclareMathOperator{\SpO}{SpO}
\DeclareMathOperator{\Uu}{U}

\newcommand{\bA}{\mathbb{A}}
\newcommand{\bC}{\mathbb{C}}

\newcommand{\bH}{\mathbb{H}}

\newcommand{\bR}{\mathbb{R}}

\newcommand{\bZ}{\mathbb{Z}}

\newcommand{\fg}{\mathfrak{g}}
\newcommand{\fgl}{\mathfrak{gl}}
\newcommand{\fh}{\mathfrak{h}}

\newcommand{\fspo}{\mathfrak{spo}}

\newcommand{\fu}{\mathfrak{u}}
\newcommand{\fv}{\mathfrak{v}}

\newcommand{\fS}{\mathfrak{S}}

\newcommand{\bmB}{\mathbbmss{B}}
\newcommand{\bmG}{\mathbbmss{G}}
\newcommand{\bmH}{\mathbbmss{H}}
\newcommand{\bmU}{\mathbbmss{U}}
\newcommand{\bmX}{\mathbbmss{X}}
\newcommand{\xcong}[1]{%
	\mathrel{\tikz[baseline=0pt] {
			\node[above] at (0,1.2ex) (a) {\(\scriptstyle #1\)};
			\draw[preaction={
				transform canvas={yshift=-.5ex},
				draw,
				decorate,
				decoration={lineto}},
			preaction={
				transform canvas={yshift=-1ex},
				draw,
				decorate,
				decoration={lineto}}]
			(a.south west) .. controls +(.25,.15) and +(-.25,-.15) .. (a.south east);
}}}
\theoremstyle{plain}
\newtheorem{thm}{Theorem}[section]
\newtheorem{cor}[thm]{Corollary}
\newtheorem{defn-prop}[thm]{Definition-Proposition}
\newtheorem{lem}[thm]{Lemma}
\newtheorem{prop}[thm]{Proposition}
\newtheorem{var}[thm]{Variant}
\theoremstyle{definition}
\newtheorem{ass}[thm]{Assumption}

\newtheorem{cons}[thm]{Construction}
\newtheorem{defn}[thm]{Definition}
\newtheorem{ex}[thm]{Example}
\newtheorem{conv}[thm]{Convention}
\newtheorem{note}[thm]{Notation}
\newtheorem{notedefn}[thm]{Notation-Definition}

\newtheorem{rem}[thm]{Remark}
\begin{document}
	\title[Classification of irreducible representations and their endomorphisms]{Classification of irreducible representations of affine group superschemes and the division superalgebras of their endomorphisms}
	\author{Takuma Hayashi}
	\address{Osaka Central Advanced Mathematical Instituteatics,
		Osaka Metropolitan University,
		3-3-138 Sugimoto, Sumiyoshi-ku Osaka 558-8585, Japan}
	\email{takuma.hayashi.forwork@gmail.com}
	\date{}
	\subjclass[2020]{17B10, 17A70, 12F10, 17B20.}
	\keywords{affine group superscheme, irreducible representations, the Brauer--Wall group, the Borel--Weil theorem, quasi-reductive algebraic supergroups}
	\begin{abstract}
		In this paper, we classify irreducible representations of affine group superschemes over fields $F$ of characteristic not two in terms of those over a separable closure $F^{\sep}$ and their Galois twists. We also compute the division superalgebras of their endomorphisms. Finally, we give numerical conclusions for quasi-reductive algebraic supergroups under certain conditions, based on Shibata's Borel--Weil theory in \cite{MR4039427}.
	\end{abstract}
	
	\maketitle
	
	\section{Introduction}\label{sec:intro}
	
	The classification problem of irreducible representations over non algebraically closed fields goes back to Loewy's scheme for finite dimensional irreducible representations over the field $\bR$ of real numbers of groups in \cite{MR1500635}. In particular, it dealt with the descent problem from representations over the field $\bC$ of complex numbers to those over $\bR$. As we will briefly explain a little later, Frobenius and Schur determined Loewy's scheme for finite groups in \cite{zbMATH02646565} by a numerical formula which is nowadays called the Frobenius--Schur indicator (see \cite[Chapter 23]{MR1864147} for an exposition). Then Cartan gave a similar classification scheme for real Lie algebras in \cite{E1914} (see \cite{MR102534} for an exposition). He also determined it for real simple Lie algebras in \cite{E1914}. One can find that Loewy's classification scheme readily works for real irreducible representations of real affine group superschemes $\bmG$. To give the statement, let us collect basic notations: Let $\Irr \bmG$ (resp.\ $\Irr \bmG\otimes_\bR\bC$) denote the set of isomorphism classes of irreducible representations of $\bmG$ (resp.\ $\bmG\otimes_\bR\bC$). Set
	\[\begin{array}{cc}
		\Irr_{\mathrm{a}}\bmG=\{V\in\Irr \bmG:~V\otimes_\bR\bC\ \mathrm{is\ irreducible}\},
		&\Irr_{\mathrm{na}}\bmG=\Irr \bmG\setminus \Irr_{\mathrm{a}} \bmG.
	\end{array}\]
	We define the functor from the category of representations of $\bmG\otimes_\bR\bC$ to that of $\bmG$ by restriction of the scalar from $\bC$ to $\bR$. Define subsets $\Irr_\bullet\bmG\otimes_\bR\bC\subset \Irr \bmG\otimes_\bR\bC$ for $\bullet\in\{1,\neq 1\}$ by
	\[\Irr_1 \bmG\otimes_\bR\bC
	=\{V'\in \Irr \bmG\otimes_\bR\bC:~\Res_{\bC/\bR} V'\textrm{~is~reducible}\},\]
	\[\Irr_{\neq 1} \bmG\otimes_\bR\bC
	=\Irr\bmG\otimes_\bR\bC\setminus \Irr_{ 1} \bmG\otimes_\bR\bC.\]
	Define an equivalence relation on $\Irr_{\neq 1} \bmG\otimes_\bR\bC$ by $\bar{V}\sim V$ for $V\in \Irr_{\neq 1} \bmG\otimes_\bR\bC$, where $\bar{V}$ is the complex conjugate representation to $V$.
	
	\begin{thm}\label{thm:Cartan}
		\begin{enumerate}
			\renewcommand{\labelenumi}{(\arabic{enumi})}
			\item An irreducible representation $V$ of $\bmG\otimes_\bR\bC$ admits a real form if and only if $\Res_{\bC/\bR} V$ is reducible.
			\item The complexification determines a bijection
			$\Irr_{\mathrm{a}} \bmG\cong \Irr_1 \bmG\otimes_\bR\bC$.
			\item The restriction determines a bijection
			$\Irr_{\neq 1} \bmG\otimes_\bR\bC/\sim\cong\Irr_{\mathrm{na}} \bmG$.
		\end{enumerate}
	\end{thm}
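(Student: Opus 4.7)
The approach is to adapt the classical arguments of Loewy, Frobenius--Schur, and Cartan, using the elementary interplay between restriction and extension of scalars along $\bR\subset\bC$. Two identities drive the proof: for any real representation $W$ of $\bmG$, one has $\Res_{\bC/\bR}(W\otimes_\bR\bC)\cong W\oplus W$ (decomposing as $W\otimes 1$ and $W\otimes i$), while for any complex representation $V'$ of $\bmG\otimes_\bR\bC$, one has $(\Res_{\bC/\bR}V')\otimes_\bR\bC\cong V'\oplus\bar{V'}$. The plan is to establish (1) first and then deduce (2) and (3) as formal consequences.

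For the forward direction of (1), if $V$ admits a real form $W$ the first identity exhibits $W$ as a proper real subrepresentation of $\Res V$. For the backward direction, if $W\subset\Res V$ is proper and nonzero, then $W+iW$ and $W\cap iW$ are complex subrepresentations of $V$ (each being stable under multiplication by $i$); irreducibility of $V$ together with the properness and nonvanishing of $W$ force $W\cap iW=0$ and $W+iW=V$, so that $V=W\oplus iW$ as real vector spaces and $W$ is a real form.

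Assertion (2) then follows readily: complexification sends $\Irr_{\mathrm{a}}\bmG$ into $\Irr_1\bmG\otimes_\bR\bC$ by (1), and a real form of an irreducible complex representation is automatically irreducible (a proper real subrepresentation would complexify to a proper complex one), giving surjectivity. For injectivity, given two real forms $W_1,W_2$ of the same $V$, at least one of the compositions $W_2\hookrightarrow V=W_1\oplus iW_1\twoheadrightarrow W_1$ or $W_2\hookrightarrow V\twoheadrightarrow iW_1\cong W_1$ is nonzero, hence an isomorphism by Schur's lemma. For (3), restriction sends $\Irr_{\neq 1}\bmG\otimes_\bR\bC$ into $\Irr_{\mathrm{na}}\bmG$ by the second identity and is constant on $\sim$-orbits. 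Given $V\in\Irr_{\mathrm{na}}\bmG$, pick any irreducible complex subrepresentation $V'\subset V\otimes_\bR\bC$; projecting the inclusion $\Res V'\subset\Res(V\otimes_\bR\bC)\cong V\oplus V$ onto a nonzero factor yields an isomorphism $\Res V'\cong V$ by Schur. Conversely, if $\Res V'\cong\Res V''$, extending scalars gives $V'\oplus\bar{V'}\cong V''\oplus\bar{V''}$, and Krull--Schmidt forces $V''\in\{V',\bar{V'}\}$.

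The main technical point in the super setting is that every Schur's lemma invocation above must be carried out in the graded sense (a nonzero even morphism between irreducible super representations is an isomorphism). This is harmless because the projections, inclusions, and multiplications by $i$ appearing in all the arguments are of even degree, so the graded Schur's lemma applies directly and no further adjustment is needed. In particular the proof of Theorem~\ref{thm:Cartan} itself does not require the finer classification of division superalgebras of endomorphisms that forms the main body of the paper.
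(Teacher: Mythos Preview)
Your argument is essentially correct and follows the classical line (Loewy, Cartan) that the paper itself invokes but does not reproduce: Theorem~\ref{thm:Cartan} is stated in the introduction as motivation, with the remark that ``Loewy's classification scheme readily works'' in the super setting, and no proof is given. The paper's own contribution is the general Galois-descent bijection $\Irr\bmG\cong\Gamma\backslash\Irr\bmG\otimes_F F^{\sep}$ of Theorem~\ref{thm:pure}, which specializes to the surjection $(-)_\bR$ discussed immediately after Theorem~\ref{thm:Cartan}; recovering the finer trichotomy of Theorem~\ref{thm:Cartan} from that still requires exactly the kind of elementary $W\oplus iW$ analysis you carry out. So your approach is the natural direct one, and there is no competing argument in the paper to compare it against.

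There is, however, one genuine gap in your surjectivity step for (3). You pick an irreducible $V'\subset V\otimes_\bR\bC$ and claim that projecting $\Res_{\bC/\bR} V'\hookrightarrow V\oplus V$ onto a nonzero factor ``yields an isomorphism $\Res_{\bC/\bR} V'\cong V$ by Schur''. But Schur's lemma in the form you are using requires $\Res_{\bC/\bR} V'$ itself to be irreducible, and this has not yet been shown; a priori the projection only gives a surjection onto $V$. The repair is short and uses your own part (1): if $\Res_{\bC/\bR} V'$ were reducible then $V'$ would admit a real form $W$, and the inclusion $W\otimes_\bR\bC\cong V'\hookrightarrow V\otimes_\bR\bC$ would yield (via base change of $\Hom$) a nonzero even map $W\to V$, forcing $W\cong V$ and hence $V\otimes_\bR\bC\cong V'$ irreducible, contradicting $V\in\Irr_{\mathrm{na}}\bmG$. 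With this sentence inserted before the Schur step, the proof is complete.
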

	
	Theorem \ref{thm:Cartan} encourages us to work on the descent problem of complex irreducible representations for the classification and construction of real irreducible representations. In fact, sum up the two cases in Theorem \ref{thm:Cartan} to obtain a surjective map
	$\Irr \bmG\otimes_\bR\bC\to \Irr \bmG$, which we denote by $(-)_\bR:V\mapsto V_\bR$. This map is one-to-one on the subset of self-conjugate representations which contains
	$\Irr_1 \bmG\otimes_\bR\bC$; $(-)_\bR$ is two-to-one outside it. Therefore the real irreducible representations are obtained in the following way:
	\begin{enumerate}
		\item[1.] For each $V\in \Irr \bmG\otimes_\bR\bC$, determine whether $V$ is self-conjugate or not.
		\item[2.] If $V$ is not self-conjugate, $V$ and $\bar{V}$ determine the same real irreducible representation $\Res_{\bC/\bR} V$.
		\item[3.] Suppose $V\in \Irr \bmG\otimes_\bR\bC$ is self-conjugate. If $V$ belongs to $\Irr_1 \bmG\otimes_\bR\bC$, take its real form; Otherwise assign $\Res_{\bC/\bR} V$.
		\item[4.] These give a complete list of the irreducible representations of $\bmG$. 
	\end{enumerate}
	
	In general, a self-conjugate complex irreducible representation $V$ is called of index one if it admits a real form; Otherwise, $-1$ is assigned for its index. In the non-super (= classical) setting, all the conditions in this procedure are checked numerically at once for finite groups by the Frobenius--Schur indicator (\cite{zbMATH02646565}). In particular, it gives a formula to the index (if the given irreducible representation is self-conjugate). The problems of self-conjugacy and computation of the indices for complex irreducible representations of real simple Lie algebras were solved by Cartan in \cite{E1914}.
	
	There is another perspective to the computation of the index. Let $V$ be a complex irreducible finite dimensional representation, and $V_\bR$ be the corresponding real irreducible representation. Then $\bar{V}\cong V$ if and only if the division algebra of endomorphisms of $V_\bR$ is central. If these equivalent conditions fail, the division algebra is $\bC$. If $\bar{V}\cong V$, the central division algebra is determined by the index. Namely, the map assigning the indices for self-conjugate complex irreducible representations can be identified with the map to the Brauer group of $\bR$ assigning the central division algebras of endomorphisms. Let us note that to study the division algebras of endomorphisms of real irreducible representations is enhancement of determination of the fibers of $(-)_\bR$ in the sense that a complex irreducible representation $V$ is self-conjugate if and only if the division algebra of endomorphisms of $V_\bR$ is not $\bC$.
	
	In \cite{MR4581479}, Ichikawa and Tachikawa worked on its super enhancement for finite groups. They defined a map from the set of complex irreducible representations which are self-conjugate up to parity change to the Brauer--Wall group of $\bR$ by assigning (the opposite superalgebras to) the central division superalgebras of endomorphisms of the corresponding real irreducible representations. Precisely speaking, they took cocycles to define a twisted group algebra with a super structure. For each self-conjugate irreducible representation up to parity change, they assigned the real form of the corresponding simple summand in the twisted group algebra over $\bC$ as a reformulation of the study of the division superalgebras of endomorphisms of real irreducible representations.
	They found that this map to the Brauer-Wall group is computed by the super Frobenius--Schur indicator. They also showed that a complex irreducible representation is self-conjugate up to parity change if and only if its super Frobenius--Schur indicator is nonzero.
	
	To talk about working over general ground fields, let $G$ be a linear algebraic group over a field $F$. Let $F^{\sep}$ be a separable closure of $F$, and $\Gamma$ be the absolute Galois group of $F$. Write $\Irr G$ and $\Irr G\otimes_F F^{\sep}$ for the sets of isomorphism classes of irreducible representations of $G$ and $G\otimes_F F^{\sep}$ respectively. We notice that $\Gamma$ acts on $\Irr G\otimes_F F^{\sep}$ by the Galois twist. It should be well-known that there is a bijection $\Irr G\cong\Gamma\backslash\Irr G\otimes_F F^{\sep}$. For $V'\in\Irr G\otimes_F F^{\sep}$ $\Gamma$-invariant, Borel and Tits introduced the obstruction class to the existence of a datum of Galois descent on $V'$ in \cite[Section 12]{MR207712}, which is an element of the second Galois cohology $H^2(\Gamma,(F^{\sep})^\times)$. They also discussed how to check the $\Gamma$-invariance and to compute the 2-cocyle when $G$ is connected reductive.
	
	In this paper, we work with a general super setting. The first main results are as follows:
	
	\begin{thm}[Theorem \ref{thm:pure}, Variant \ref{var:pure}, Corollary \ref{cor:compute_F'}]\label{thmA}
		Let $\bmG$ be an affine group superscheme over a field $F$ of characteristic not two. 
		\begin{enumerate}
			\item There is a bijection between the sets of irreducible representations of $\bmG$ and Galois orbits of irreducible representations of $\bmG\otimes_F F^{\sep}$ such that it respects the parity change $\Pi$.
			\item Let $V$ be an irreducible representation of $\bmG\otimes_F F^{\sep}$, and $V_F$ be the corresponding irreducible representation of $\bmG$.
			\begin{enumerate}
				\item[(i)] The division algebra (resp.~superalgebra) of endomorphisms of $V_F$ is central if and only if the endomorphisms of $V$ are only scalar and all the Galois twists of $V$ are isomorphic to $V$ (resp.~to $V$ up to parity change).
				\item[(ii)] Let $F'$ be the even part of the center of the division superalgebra of endomorphisms of $V_F$. Suppose that $F'$ is a separable extension of $F$. Then $V_F$ is the restriction of $V_{F'}$ for a fixed embedding of $F'$ into $F^{\sep}$. Moreover, the division superalgebra of endomorphisms of $V_{F'}$ is central, and it is equal to that of $V_F$.
			\end{enumerate}
		\end{enumerate}
	\end{thm}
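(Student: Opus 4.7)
The plan is to carry out Galois descent in the super setting, mirroring the classical (non-super) template while tracking the parity grading. The fundamental input is the descent equivalence between $\bmG$-representations on $F$-super vector spaces and $\bmG\otimes_F F^{\sep}$-representations on $F^{\sep}$-super vector spaces equipped with a continuous $F^{\sep}/F$-semilinear $\Gamma$-action that respects parity.

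For (1), I would begin with $V_F\in\Irr\bmG$ and analyze the base change $V_F\otimes_F F^{\sep}$ as a representation of $\bmG\otimes_F F^{\sep}$. Its socle is $\Gamma$-stable, hence descends; irreducibility of $V_F$ then forces the socle to be all of $V_F\otimes_F F^{\sep}$, yielding semisimplicity. Applying the same argument to isotypic components, the set of irreducible constituents must form a single Galois orbit in $\Irr\bmG\otimes_F F^{\sep}$. Conversely, for an orbit with stabilizer $\Gamma'\subset\Gamma$, I would descend an induction-type object built from a chosen representative $V$ along the corresponding tower to recover $V_F$; irreducibility follows since any proper subrepresentation would base-change to a proper $\Gamma$-stable subset of the orbit, contradicting transitivity. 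Compatibility with $\Pi$ is formal because $\Pi$ commutes with base change.

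For (2)(i), super Schur's lemma identifies $D=\End_\bmG V_F$ with a division superalgebra, and base change gives $D\otimes_F F^{\sep}\cong\End_{\bmG\otimes_F F^{\sep}}(V_F\otimes_F F^{\sep})$. Inserting the orbit decomposition from (1), the right-hand side becomes a matrix superalgebra over $\End V$ whose block structure records the Galois orbit. Requiring the division algebra (resp.\ the division superalgebra) to be central forces this base change to be a single matrix (super)algebra over $F^{\sep}$, equivalently $\End V=F^{\sep}$ together with the Galois orbit of $V$ being a singleton (resp.\ a singleton modulo $\Pi$). For (2)(ii), $F'$ is separable over $F$ by hypothesis and hence embeds into $F^{\sep}$; the resulting inclusion $F'\hookrightarrow D\hookrightarrow\End_F V_F$ commutes with $\bmG$, making $V_F$ into a representation $V_{F'}$ of $\bmG\otimes_F F'$ satisfying $V_F=\Res_{F'/F}V_{F'}$. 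Irreducibility of $V_{F'}$ together with $\End_{\bmG\otimes_F F'}V_{F'}=D$, now regarded as an $F'$-superalgebra and $F'$-central by the very definition of $F'$, then follows by applying (2)(i) over $F'$.

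The main obstacle I expect is the construction in part (1) of $V_F$ from a given Galois orbit: in the super world the descent obstruction naturally lives in a Brauer--Wall-type $H^2$ rather than the classical Brauer group, and one must handle the interaction between the $\Gamma$-action and $\Pi$ carefully to ensure that the resulting $V_F$ is genuinely irreducible rather than merely semisimple isotypic, particularly when the orbit is stabilized by $\Pi$ as well as by $\Gamma$.
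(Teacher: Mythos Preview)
Your overall strategy matches the paper's: base change, semisimplicity via Galois descent, orbit decomposition, and the identification $\bEnd_{\bmG}(V_F)\otimes_F F^{\sep}\cong\bEnd_{\bmG\otimes_F F^{\sep}}(V_F\otimes_F F^{\sep})$ for part~(2)(i). Part~(2)(ii) is also handled the same way in the paper (Proposition~\ref{prop:nonsuperpure}): use the $F'$-action on $V_F$ through $F'\subset\End_{\bmG}(V_F)$ to upgrade $V_F$ to a representation of $\bmG\otimes_F F'$.

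The one place where you diverge, and where your stated ``main obstacle'' lies, is the surjectivity direction in~(1). You propose to build $V_F$ from a Galois orbit by descending an ``induction-type object'' and worry about a Brauer--Wall obstruction in $H^2$. The paper avoids this entirely: given $V'\in\Irr\bmG\otimes_F F^{\sep}$, it simply picks any irreducible subrepresentation $W\subset\Res_{F^{\sep}/F}V'$ (which exists by finite-dimensionality) and checks that $W\otimes_F F^{\sep}$ contains $V'$ as a summand, so $W=V'_F$. No cohomological obstruction arises because one is not asking for an $F$-form of $V'$ itself, only for the irreducible $\bmG$-representation whose base change contains $V'$. The Brauer--Wall class enters later, precisely as the obstruction to $V'$ admitting an $F$-form (Corollary~\ref{cor:abs_irr}), which is a strictly finer question. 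So your anticipated difficulty dissolves once you replace the induction construction by the elementary ``pick an irreducible subobject of the restriction'' step.

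One further point: in~(2)(i) you assert that centrality of $D\otimes_F F^{\sep}$ forces the orbit to be a singleton (modulo $\Pi$) and $\End V=F^{\sep}$. This is correct but not quite immediate in the super setting; the paper isolates it as Lemma~\ref{lem:central}, whose proof splits into the cases $V'\cong\Pi V'$ and $V'\not\cong\Pi V'$ and uses that $F^{\sep}$ has no quadratic extensions. You should expect to do that case analysis as well.
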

	
	These follow from a standard argument on the Galois descent with a careful analysis of the parity changed irreducible representations. We remark that if $F$ is perfect, the study of the division superalgebra $\bEnd_{\bmG}(V_F)$ of endomorphisms of $V_F$ is reduced to the case that $\bEnd_{\bmG}(V_F)$ is central by (ii) of (2). For numerical aspects of this reduction, $F'$ is expressed in terms of $V$ in Corollary \ref{cor:compute_F'} when $F$ is perfect. In this paper, we also discuss generalities on computation of $\bEnd_{\bmG}(V_F)$ in terms of $V$ and its Galois twists when $\bEnd_{\bmG}(V_F)$ is central (Propositions \ref{prop:even} and \ref{prop:epsilon}, Theorems \ref{thm:BT} and \ref{thm:BT_nonqrat}). In particular, Section \ref{sec:BT} is devoted to a numerical description of $\bEnd_{\bmG}(V_F)$. This leads to a numerical criterion for the pseudo-absolute irreducibility (Corollary \ref{cor:abs_irr}). This was motivated by \cite[Section 12]{MR207712}. Preliminary arguments on rationality of irreducible representations towards the continuity of certain Galois actions are given in Section \ref{sec:rationality}.

	\begin{rem}
		One can easily recognize that the results so far hold more generally for right supercomodules of supercoalgebras. We concentrate on affine group superschemes just for consistency; As we mention right below, we are particularly interested in representation theory of quasi-reductive algebraic supergroups. Here we note this because we would like to clarify the relation of \cite{MR4581479} and the present work.
		
		In the non-super setting, we can relate the affine group scheme theoretic formalism in \cite[Section 12]{MR207712} to \cite{zbMATH02646565} as follows: Let $G$ be a finite group. Then the dual $F[G]^\vee$ of the group algebra $F[G]$ is naturally equipped with the structure of a commutative Hopf algebra. The category of representations of $G$ is canonically isomorphic to that of $\Spec F[G]^\vee$, which is compatible with base changes. Put $F=\bR$. Then the self-conjugacy of a complex irreducible representation of $(\Spec \bR[G]^\vee)\otimes_\bR\bC$ and the obstruction to the existence of a descent datum for $V$ self-conjugate are determined by the Frobenius--Schur indicator.

		Recall that in \cite{MR4581479}, they constructed the structure of a superalgebra on $F[G]$ from a one-cocycle $\varphi$ of $G$. However, one can easily see that there is no superalgebra homomorphism $F[G]\to F$ unless $\varphi$ is trivial. In particular, $F[G]$ does not admit the structure of a (commutative) Hopf superalgebra if $\varphi$ is nontrivial. The superdual $F[G]^\vee$ of $F[G]$ is still canonically equipped with the structure of a supercoalgebra. Moreover, the category of left $F[G]$-supermodules is canonically isomorphic to that of right supercomodules of $F[G]^\vee$. Apply our theory to $F[G]^\vee$ to deduce the abstract classification theorem of simple left $F[G]$-supermodules and a general formula on the division superalgebras of their endomorphisms. \cite[Theorem 1.2]{MR4581479} tells us that this formula is computed explicitly by the super Frobenius--Schur indicator if $F=\bR$. Precisely speaking, put $F=\bR$, and let $V$ be a simple right $(\bR[G]^\vee\otimes_\bR\bC)$-supercomodule. Identify $V$ with a left $\bC[G]$-supermodule to define the super Frobenius--Schur indicator $\mathcal{S}(V)$. Then $V$ is self-conjugate up to parity change if and only if $\mathcal{S}(V)\neq 0$. Moreover, if these equivalent conditions hold, the division superalgebra of endomorphisms of the corresponding simple right $\bR[G]^\vee$-supercomodule is central and is determined by $\mathcal{S}(V)$.
	\end{rem}
	
	Finally, we would like to apply our theory to a super analog of reductive algebraic groups:
	
	\begin{defn}\label{defn:qred}
		\begin{enumerate}
			\item A linear algebraic supergroup over $F$ is an affine group superscheme of finite type over $F$ (cf.~\cite[Section 2.2]{MR4039427}).
			\item We call a linear algebraic supergroup $\bmG$ over $F$ (connected) quasi-reductive if it is of finite type and its even part is connected reductive in the classical sense.
		\end{enumerate}
	\end{defn}
	
	\begin{rem}
		You may wonder about the smoothness for the definition of linear algebraic supergroups. Indeed, there is a notion of (formal) smoothness for affine superschemes over $F$ (\cite[Appendix A.1]{MR3545265}). According to \cite[Proposition A.3]{MR3545265}, a linear algebraic supergroup over $F$ in the above sense is smooth if and only if so is its even part\footnote{This equivalence holds without the of finite type hypothesis in the terminology of \cite{MR3545265}.}. Though we do not concern the smoothness apparently in this paper, our linear algebraic supergroups in Sections \ref{sec:qred} and \ref{sec:qredII} are smooth; Before that, we do not even assume our affine group superschemes to be of finite type.
	\end{rem}
	
	\begin{rem}
		The original definition of the quasi-reductivity goes back to \cite{MR2849718}. Serganova introduced this notion for linear algebraic supergroups and Lie superalgebras over algebraically closed fields of characteristic zero. Subsequently, Shibata defined quasi-reductive algebraic supergroups over general base fields of characteristic not two to be linear algebraic supergroups with split reductive even part in \cite{MR4039427} to use root systems. Actually, he defined a quasi-reductive supergroup over a PID $k$ with 2 regular as an affine group superscheme of finite type over $k$ with split reductive even part such that the odd part of the cotangent module at the unit is free of finite rank. We note that the last free of finite rank condition holds in general if the ground ring is a field. In fact, the odd part of the cotangent module is always free in this case. The finite type hypothesis implies that the cotangent module is finitely generated. When we think of working on supergroups with reductive even part, they seem to suit the terminology quasi-reductive more. After the author's private communication with Shibata, we decided to change the definition as above and to call quasi-reductive algebraic supergroups in the sense of \cite{MR4039427} split quasi-reductive in this paper. If one wishes to have a notion of quasi-reductive group superschemes over general ground rings $k$ with 2 regular, a possible candidate seems to be an affine group superscheme of finite presentation over $k$ with reductive even part in the sense of \cite[D\'efinition 2.7]{MR0228502} such that the odd part of the cotangent module at the unit is projective.
	\end{rem}

	Examples of non-split quasi-reductive algebraic supergroups over $F=\bR$ are given in \cite{MR4538712, MR2276521} through the Galois descent. We will construct some examples directly as real algebraic supergroups in Section \ref{sec:qred}, based on the classification of real simple Lie superalgebras (of classical type) in \cite[TABLE 3]{MR0714220} and \cite{MR565713}. Another kind of typical examples are the so-called pure inner forms: Suppose that we are given a (quasi-reductive) linear algebraic supergroup $\bmG$ over $F$. Then each element of the first Galois cohomology $H^1(\Gamma,\bmG(F^{\sep}))=H^1(\Gamma,G(F^{\sep}))$\footnote{Sometimes it is written as $H^1(F,G)$ in literature.} gives rise to an $F$-form of $\bmG\otimes_F F^{\sep}$ through the Galois descent, where $G$ is the even part of $\bmG$.
	
	Shibata established a uniform classification theorem of irreducible representations of split quasi-reductive algebraic supergroups through the Borel--Weil construction. For our statement of the second main theorem below, let us explain his theorem briefly. Take a maximal torus $H$ of the even part $G$ of a quasi-reductive algebraic supergroup $\bmG$ and a BPS-subgroup $\bmB'\subset \bmG\otimes_F F^{\sep}$ containing $H\otimes_F F^{\sep}$. Then there is a bijection between the quotient $\Irr_\Pi \bmG\otimes_F F^{\sep}$ of the set $\Irr \bmG\otimes_F F^{\sep}$ of irreducible representations of $\bmG\otimes_F F^{\sep}$ by the parity change and a set $X^\flat(H\otimes_F F^{\sep})$\footnote{This was originally expressed $\Lambda^\flat$ in Shibata's notation.} of dominant characters of $H\otimes_F F^{\sep}$.
	
	We wish to compute the Galois twists of irreducible representations over $F^{\sep}$ in terms of $X^\flat(H\otimes_F F^{\sep})$.
	
	\begin{thm}[Corollaries \ref{cor:superpure}, \ref{cor:h_1=0case}, \ref{cor:h_1=0case_beta}, Remark \ref{rem:BT}, Theorem \ref{thm:BT}]
		Let $\bmG$ be a quasi-reductive algebraic supergroup over $F$. We fix a maximal torus $H$ of the even part $G$ of $\bmG$ and a BPS-subgroup $\bmB'\subset \bmG\otimes_F F^{\sep}$ containing $H\otimes_F F^{\sep}$. Moreover, assume the following conditions:
		\begin{enumerate}
			\renewcommand{\labelenumi}{(\roman{enumi})}
			\item For each $\sigma\in\Gamma$, there exists a lift $w_\sigma$ of the Weyl group of
			\[(G\otimes_F F^{\sep},H\otimes_F F^{\sep})\]
			such that ${}^\sigma\bmB'=w_{\sigma} \bmB'w^{-1}_{\sigma}$.
			\item The odd part of the Lie superalgebra of $\bmG$ has no nontrivial $H$-fixed vectors.
		\end{enumerate}
		Here ${}^\sigma\bmB'$ is the Galois twist of $\bmB'$ by $\sigma$. Then:
		\begin{enumerate}
			\item The quotient map $\Gamma\backslash\Irr\bmG\otimes_F F^{\sep}\to \Gamma\backslash\Irr_{\Pi}\bmG\otimes_F F^{\sep}$ is two-to-one.
			\item The action of $\Gamma$ on $X^\flat(H\otimes_F F^{\sep})$ induced from Shibata's bijection
			\[\Irr_{\Pi}\bmG\otimes_F F^{\sep}\cong X^\flat(H\otimes_F F^{\sep})\]
			agrees with Tits' $\ast$-action.
			\item For $\lambda\in X^\flat(H\otimes_F F^{\sep})$, a corresponding irreducible representation of the quasi-reductive algebraic supergroup
			$\bmG\otimes_F F^{\sep}$
			is invariant under the Galois action up to isomorphism if and only if the $\ast$-action fixes $\lambda$.
			\item Every irreducible representation $V$ of $\bmG$ admits no nontrivial map to its parity change.
			\item Let $V$ be an irreducible representation of $\bmG$ corresponding to the character $\lambda\in X^\flat(H\otimes_F F^{\sep})$ which is invariant under the $\ast$-action. Then the division algebra of the endomorphisms of $V$ agrees with that of the endomorphisms of the irreducible representation of $G$ corresponding to that of $G\otimes_F F^{\sep}$ of highest weight $\lambda$. These are computed numerically by Theorem \ref{thm:BT} (or essentially \cite[Proof of 12.6. Proposition]{MR207712}).
		\end{enumerate}
	\end{thm}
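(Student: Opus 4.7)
The plan is to combine Shibata's Borel--Weil classification with Theorem \ref{thmA}, using hypothesis (ii) to prevent any coincidence between an irreducible representation and its parity change, and hypothesis (i) to identify the Galois action on Shibata's parameter set with Tits' $\ast$-action. This reduces everything to the classical Borel--Tits picture for $G$, whose numerical description we take from Theorem \ref{thm:BT}.

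For (1) and (4), hypothesis (ii) is designed precisely so that no $V\in \Irr\bmG\otimes_F F^{\sep}$ admits a nontrivial odd self-map: an odd element of $\End_\bmG(V)$ would have to act nontrivially on the one-dimensional highest-weight line of Shibata's Borel--Weil realization and thereby force the existence of an odd $H$-fixed element of $\fg$. Consequently $\Pi V\not\cong V$, so $\Irr\bmG\otimes_F F^{\sep}\to\Irr_\Pi\bmG\otimes_F F^{\sep}$ is strictly two-to-one; since $\Pi$ commutes with Galois twists, the induced map on $\Gamma$-quotients is also strictly two-to-one, which is (1). Part (4) follows at once: any nonzero map $V_F\to\Pi V_F$ between irreducibles of $\bmG$ would be an isomorphism, corresponding under Theorem \ref{thmA}(1) to a Galois orbit in $\Irr\bmG\otimes_F F^{\sep}$ closed under $\Pi$, contradicting (1).

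For (2) and (3), if $V$ has highest weight $\lambda$ relative to $\bmB'$, then its Galois twist ${}^\sigma V$ has highest weight $\sigma\lambda$ relative to ${}^\sigma\bmB'=w_\sigma\bmB' w_\sigma^{-1}$; conjugating back by $w_\sigma^{-1}$ restores $\bmB'$ and carries the highest weight to $w_\sigma^{-1}\cdot\sigma\lambda$, which is exactly the Tits $\ast$-action. Hypothesis (i) is exactly what supplies the Weyl element $w_\sigma$, so Shibata's bijection $\Irr_\Pi\bmG\otimes_F F^{\sep}\cong X^\flat(H\otimes_F F^{\sep})$ is $\Gamma$-equivariant for the $\ast$-action; this is (2), and (3) follows by matching fixed loci on both sides.

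For (5), Theorem \ref{thmA}(2)(ii) lets us replace $F$ by the even center $F'$ of $\bEnd_{\bmG}(V_F)$ and assume this division superalgebra is central, while (4) forces it to be purely even, so its class lies in $\Br(F)\subset \BW(F)$. The representing $2$-cocycle is built from super-isomorphisms $\psi_\sigma:{}^\sigma V\cong V$ normalized on the highest-weight line; since (ii) pins down the highest-weight vector up to an even scalar, this normalization and the resulting cocycle coincide with those attached to the irreducible representation of $G$ of highest weight $\lambda$ in the Borel--Tits construction, and Theorem \ref{thm:BT} evaluates the class numerically. The main obstacle is this last parity-rigidity step: one must check carefully that (ii) not only rules out self-parity-change but actually reduces the ambiguity in $\psi_\sigma$ to an \emph{even} scalar, so that the super cocycle is literally the classical one; the interplay between the odd part of $\bmB'$ in the BPS-decomposition and the condition $\fg_1^H=0$ is the delicate point to treat with care.
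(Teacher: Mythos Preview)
Your outline for (2) and (5) matches the paper's approach (Theorem \ref{thm:Galois_action}, the product formula Theorem \ref{thm:prod_formula}, and Corollary \ref{cor:h_1=0case_beta}), and the idea that hypothesis (ii) forces the highest-weight line to be one-dimensional is the right one. However, there is a genuine gap in your treatment of (1), (3), and (4).

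\textbf{The gap.} From $\Pi V\not\cong V$ for every $V\in\Irr\bmG\otimes_F F^{\sep}$ you conclude that
\[
\Gamma\backslash\Irr\bmG\otimes_F F^{\sep}\longrightarrow \Gamma\backslash\Irr_{\Pi}\bmG\otimes_F F^{\sep}
\]
is two-to-one, arguing that ``$\Pi$ commutes with Galois twists.'' This inference is invalid: if some $\sigma\in\Gamma$ happened to satisfy ${}^\sigma V\cong\Pi V$, then $V$ and $\Pi V$ would lie in the \emph{same} $\Gamma$-orbit and the fibre over their common image would have size one. (Concretely: on a two-point set with $\Pi$ and $\Gamma=\bZ/2\bZ$ both acting by the swap, the map on $\Gamma$-quotients is one-to-one.) The same missing information obstructs your argument for (3), since (3) asserts Galois invariance \emph{up to isomorphism}, not merely up to parity change; and your derivation of (4) from (1) then inherits the gap.

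\textbf{What the paper does instead.} Hypothesis (ii) gives $\fh_{\bar 1}=0$, so $\bmH=H$ and each $\fu(\lambda)$ is one-dimensional; one then \emph{fixes} the even choice (Convention \ref{conv}). The key extra step is Theorem \ref{thm:u(lambda)_easy_case}(1): with this convention, ${}^{w_\sigma}({}^\sigma\fu(\lambda))\cong\fu(\sigma\ast\lambda)$ \emph{without} parity change, hence ${}^\sigma V(\fu(\lambda))\cong V(\fu(\sigma\ast\lambda))$ in $\Irr$ (not just $\Irr_\Pi$). In other words, the Galois action preserves the parity section $\Irr_{\bar 0}\bmG\otimes_F F^{\sep}$ of Corollary \ref{cor:h_1=0case}(2), and this is exactly what makes the quotient map two-to-one and upgrades super quasi-rationality to quasi-rationality in (3). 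Your sketch never establishes ${}^\sigma V\not\cong\Pi V$; inserting this parity-tracking argument (which really does use (i) and (ii) together, not just (ii)) repairs the proof. Also, your sentence ``thereby force the existence of an odd $H$-fixed element of $\fg$'' misstates the mechanism: the odd endomorphism does not produce an element of $\fg$; rather, (ii) forces $\bmH=H$, hence $\fu(\lambda)$ is one-dimensional, hence admits no odd automorphism.
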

	
	We assumed (i) to relate Shibata's classifications for $\bmB'$ and its Galois twists ${}^\sigma \bmB'$. In fact, not all the BPS-subgroups are $G(F^{\sep})$-conjugate in contrast to the Borel subgroups in the non-super setting. When Shibata gave a talk in Suita Representation Theory Seminar, Yoshiki Oshima asked how Shibata's classifications for different BPS-subgroups are related. In the basic Lie superalgebra setting, \cite[Proposition 1.32]{MR3012224} gives us the answer to this question for the Borel subalgebras in the sense of Cheng and Wang. See also Proposition \ref{prop:hw_odd_reflection}. In Section \ref{sec:odd_reflection}, we use it to remove (i) for basic quasi-reductive algebraic supergroups of the main type (Definition \ref{defn:basic}). Condition (ii) is technical, but we assumed it to give a complete classification of irreducible representations of $\bmG$ and an explicit formula to the division (super)algebras of their endomorphisms. It is still interesting to remove the condition (ii) since we have $F$-forms of the split queer groups over $F^{\sep}$ for typical examples which do not satisfy (ii). If $\bmG$ is split, the classification was achieved by Shibata in \cite[Theorem 4.12]{MR4039427}. For this, he essentially related the endomorphisms of the irreducible representations of $\bmG$ to those of simple left modules over certain Clifford superalgebras. In this paper, we record it and reprove the complete classification in terms of Wall's work \cite{MR0167498} (Proposition \ref{prop:split}). In this approach, we determine the division superalgebras of endomorphims of the irreducible representations. Then \cite[Theorem 4.12]{MR4039427} is derived as its consequence. In the last two small sections \ref{sec:ast-trivial} and \ref{sec:q(p,q)}, we give explicit numerical solutions to the problems of classification of irreducible representations and determination of the division superalgebras of their endomorphisms in some special cases, particularly for some non-split real forms of the complex queer groups.
	
	\section*{Acknowledgments}
	
	Thanks Taiki Shibata for helpful discussions and careful reading of a draft of this paper. The author is also grateful to him for his generosity in allowing us to change the notion of quasi-reductive algebraic supergroups.
	
	The author is also indebted to Professor Hisayosi Matumoto for an insightful course on Lie superalgebras at University of Tokyo. He learned a lot on them from it.
	
	This work was supported by JSPS KAKENHI Grant Numbers 21J00023 and 22KJ2045.

	\section{Notation}\label{sec:notation}
	
	Let $\bZ$ denote the ring of integers, and $\bZ_{\geq 0}$ be its subset of nonnegative integers.
	
	For a nonnegative integer $m$, let $\binom{-}{m}$ denote the binomial coefficient. That is, for an integer $n$, we set
	\[\binom{n}{m}=\frac{n(n-1)\cdots(n-m+1)}{m(m-1)\cdots 1}.\]
	
	For an object $X$ of a category, we denote its identity map by $\id_X$.
	
	For $n\in\bZ_{\geq 0}$, let $I_n$ be the unit matrix of size $n$.
	
	Unless specified otherwise, $F$ will be a field of characteristic not two for a base. We write $F^\times$ for the abelian group of units of $F$. Set $(F^\times)^2$ as the subgroup of square elements of $F^\times$, i.e.,
	$(F^\times)^2=\{a\in F^\times:~\exists b\in F^\times,~a=b^2\}$.
	Let $F^{\sep}$ be a separable closure of $F$, and $\Gamma$ be the absolute Galois group of $F$. Write $\bar{F}$ for an algebraic closure of $F$ containing $F^{\sep}$. We remark that $\bar{F}=F^{\sep}$ if $F$ is perfect.
	
	We denote the Brauer group of $F$ by $\Br(F)$. For each central simple algebra $A$, we refer to its similarity (Morita equivalence) class as $[A]$.
	
	For $a,b\in F^\times$, let $(a,b)$ denote the associated quaternion algebra over $F$. Namely, $(a,b)$ is an $F$-algebra with a basis $1,i,j,ij$ which satisfies the following relations:
	\[\begin{array}{ccc}
		i^2=a,&j^2=b,&ij=-ji.
	\end{array}\]
	This is known to be a central simple algebra over $F$ (see \cite[Corollary 7.1.2]{MR4279905} for example). We write $(-1,-1)=\bH$ if $F=\bR$. For conventional reasons, we write $(a,b)=[(a,b)]\in \Br(F)$ (the Hilbert symbol).
	
	For a vector space $V$ over $F$, we write $\dim_F V$ for its dimension.
	
	For a basic formalism on affine group superschemes, we refer to \cite[Section 2]{MR4039427}. Let us collect some quickly for convenience. A vector superspace is a $\bZ/2\bZ$-graded vector space $V=V_{\bar{0}}\oplus V_{\bar{1}}$. We call $V_{\bar{0}}$ (resp.~$V_{\bar{1}}$) the even (resp.~odd) part of $V$. The degrees are traditionally called parities in this setting. For a homogeneous element $v\in V$, we denote its parity by $|v|$. Homomorphisms between vector superspaces in this paper are promised to respect the parity unless specified otherwise. The vector superspaces form a symmetric monoidal category for the tensor product $-\otimes_F-$, whose symmetry constraint
	$C_{V,W}:V\otimes_F W\cong W\otimes_F V$
	is defined by $v\otimes w\mapsto (-1)^{|v| |w|}w\otimes v$, where $v$ and $w$ are homogeneous elements of $V$ and $W$ respectively. Its (commutative) algebra objects are called (commutative) superalgebras. Let $\CSAlg_F$ denote the category of commutative superalgebras over $F$. Likewise, one can define left supermodules over them, supercoalgebras, and right supercomodules. It follows by generalities of symmetric monoidal categories that $\CSAlg_F$ is coCartesian monoidal for $-\otimes_F-$. Its cogroup objects are called commutative Hopf superalgebras. In particular, they are canonically equipped with the structure of supercoalgebras. An affine group superscheme is a representable group copresheaf over $\CSAlg_F$. Throughout this paper, we will denote copresheaves over $\CSAlg_F$ by bold capital letters. Yoneda's lemma implies that the category of affine group superschemes is equivalent to the opposite category to that of commutative Hopf superalgebras. To define a canonical quasi-inverse to the Yoneda embedding, define copresheaves $\bA^{1|0}$ and $\bA^{0|1}$ over $\CSAlg_F$ by
	\[\begin{array}{cc}
		\bA^{1|0}(R)=R_{\bar{0}},
		&\bA^{0|1}(R)=R_{\bar{1}}.
	\end{array}\]
	For a copresheaf $\bmX$ over $\CSAlg_F$, let $F[\bmX]_{\bar{0}}$ (resp.~$F[\bmX]_{\bar{1}}$) be the set of morphisms from $\bmX$ to $\bA^{1|0}$ (resp.~$\bA^{0|1}$). Then $F[\bmX]_{\bar{0}}\times F[\bmX]_{\bar{1}}$ is naturally endowed with the structure of a commutative superalgebra (cf.~\cite[The paragraph below (3) in Chapter I, Section 1.3]{MR2015057}). The assignment $\bmX\mapsto \Hom_{\CSAlg_F}(F[\bmX],(-))$ gives a left adjoint functor to the Yoneda embedding, where $\Hom_{\CSAlg_F}(-,-)$ is the Hom bi-functor of $\CSAlg_F$ (the affinization). If $\bmG$ is a group copresheaf, $F[\bmG]$ is naturally endowed with the structure of a commutative Hopf superalgebra. This gives rise to a left adjoint functor to the Yoneda embedding of the category of commutative Hopf superalgebra into that of group copresheaves (the affinization). In particular, a quasi-inverse is given by its restriction. If $\bmG$ is representable, $\bmG$ is represented by $F[\bmG]$.

	For an affine group superscheme (of finite type), we write the corresponding German small letter for its Lie superalgebra (see \cite[Section 2.2]{MR4039427}). Its Lie bracket will be denoted by $[-,-]$. We define the even part of an affine group superscheme as the restriction of the group copresheaf to the category of commutative algebras, and denote it by the corresponding capital alphabet. We remark that $G=\Spec F[\bmG]/(F[\bmG]_{\bar{1}})$, where $(F[\bmG]_{\bar{1}})$ is the ideal of $F[\bmG]$ generated by $F[\bmG]_{\bar{1}}$. If $\bmG$ is of finite type, $\fg_{\bar{0}}$ is naturally identified with the Lie algebra of $G$ (\cite[Section 2.3]{MR4039427}). For a linear algebraic supergroup $\bmG$, we denote its super-hyperalgebra (also known as the distribution superalgebra) by $\hy(\bmG)$ (see \cite[Section 2.3]{MR4039427} for details). We remark that if the characteristic of the base field $F$ is zero, $\hy(\bmG)$ coincides with the universal enveloping superalgebra of $\fg$ (see \cite[Remark 2]{MR3000482}). A representation of an affine group superscheme $\bmG$ is a homomorphism of group copresheaves from $\bmG$ to the group copresheaf of automorphisms of a vector superspace. According to Yoneda's lemma, the category of representations of $\bmG$ is equivalent to that of supercomodules over the corresponding Hopf superalgebra $F[\bmG]$. If $\bmG$ is a linear algebraic supergroup, we denote its adjoint representation by $\Ad$. For a field extension $F'/F$, the base change functor of vector superspaces is symmetric monoidal. Hence we naturally and compatibly define the base change functors $-\otimes_FF'$ for the objects above.

	For an affine group superscheme $\bmG$ over $F$ and a representation $V'$ of $\bmG\otimes_F F^{\sep}$, we denote its restriction of the scalar to $F$ by $\Res_{F^{\sep}/F} V'$.
	
	For a vector superspace $V$, we will denote its parity change by $\Pi V$, i.e.,
	\[\begin{array}{cc}
		(\Pi V)_{\bar{0}}=V_{\bar{1}},&(\Pi V)_{\bar{1}}=V_{\bar{0}}.
	\end{array}\]
	We define $\Pi$ for representations in the standard manner. For vector superspaces $V,W$ over $F$, we denote the vector space of parity-preserving linear maps from $V$ to $W$ by $\Hom_F(V,W)$. The symmetric monoidal category of vector superspaces is closed for
	\begin{flalign*}
		&\bHom_F(V,W)=\bHom_F(V,W)_{\bar{0}}\oplus \bHom_F(V,W)_{\bar{1}};\\
		&\bHom_F(V,W)_{\bar{0}}=\Hom_F(V,W),\\
		&\bHom_F(V,W)_{\bar{1}}=\Hom_F(V,\Pi W).
	\end{flalign*}
	We call $\bHom_F(V,W)$ the vector superspace of linear maps from $V$ to $W$ by convention. We will use similar terminologies later. If $V=W$, we write
	\[\begin{array}{cc}
		\End_F(V)=\Hom_F(V,V),&\bEnd_{F}(V)=\bHom_F(V,V).
	\end{array}\]
	We note that the close structure gives rise to a ``composition map''\footnote{Indeed, the category of vector superspaces is enriched by itself. Its composition law is given by this map. See \cite[Section 1.6]{MR2177301} for details.}
	\begin{equation}
		\bHom_F(V,W)\otimes_F\bHom_F(U,V)\to \bHom_F(U,W)\label{eq:enriched_composition}
	\end{equation}
	for vector superspaces $U,V,W$. For a superalgebra $A$ and left $A$-supermodules $U,V$, we define a vector superspace $\bHom_A(U,V)$ by the equalizer of the two maps $\bHom_F(U,V)\rightrightarrows\bHom_F(A\otimes_F U,W)$ determined by
	\[\begin{split}
		\bHom_F(U,V)\otimes_F A\otimes_F U
		&\overset{C_{\bHom_F(U,V),A}\otimes_FU}{\cong} A\otimes_F \bHom_F(U,V)\otimes_F U\\
		&\overset{A\otimes_F U}{\to} A\otimes_F V\\
		&\overset{a}{\to} V,
	\end{split}\]
	\[\bHom_F(U,V)\otimes_F A\otimes_F U
	\overset{a}{\to} \bHom_F(U,V)\otimes_F U
	\overset{\ev}{\to} V,
	\]
	where $\ev$ and $a$ are the evaluation and action maps respectively. Explicitly, we have 
	\[\bHom_A(U,V)_{\bar{i}}=\{f\in\bHom_F(U,V_{\bar{i}}):~f(au)=(-1)^{|f||a|}af(u)\}\]
	for $i\in\{0,1\}$, where $a\in A$ and $u\in U$ run through all homogeneous elements. This enriches the usual Hom spaces in the sense that 
	$\Hom_A(U,V)\coloneqq\bHom_A(U,V)_{\bar{0}}$ is the space of $A$-linear maps from $U$ to $V$.
	We write $\End_A(U)=\Hom_A(U,U)$, and $\bEnd_A(U)=\bHom_A(U,U)$.
	
	For representations $V,W$ of an affine group superscheme, we write $V\overset{\boldsymbol{\cdot}}{\cong} W$ if $V$ is isomorphic to $W$ or $\Pi W$.
	
	Let $\bmG$ be an affine group superscheme over $F$. For representations $V,W$ of $\bmG$, let $\Hom_{\bmG}(V,W)$ denote the vector space of homomorphisms from $V$ to $W$. We define a vector superspace $\bHom_\bmG(V,W)$ by $\bHom_\bmG(V,W)_{\bar{0}}=\Hom_\bmG(V,W)$ and $\bHom_\bmG(V,W)_{\bar{1}}=\Hom_\bmG(V,\Pi W)$. We set
	\[\begin{array}{cc}
		\End_{\bmG}(V)=\Hom_{\bmG}(V,V),&\bEnd_{\bmG}(V)=\bHom_{\bmG}(V,V).
	\end{array}\]
	If we are given a representation $V$ of $\bmG$, we will regard $\bEnd_{\bmG}(V)$ as a superalgebra for the map \eqref{eq:enriched_composition}. We denote the set of isomorphism classes of irreducible representations of $\bmG$ by $\Irr \bmG$. This is naturally equipped with an involution for $\Pi$. Let $\Irr_{\Pi}\bmG$ be the quotient of $\Irr\bmG$ by this action. For a representation $V'$ of $\bmG\otimes_F F^{\sep}$, we define its Galois twist ${}^\sigma V'$ for $\sigma\in\Gamma$ as the base change of $V'$ by $\sigma$. If $F=\bR$, we write $\bar{V}'$. We will apply similar notations to affine group superschemes and their morphisms. In this way, we regard $\Irr\bmG\otimes_F F^{\sep}$ and $\Irr_{\Pi}\bmG\otimes_F F^{\sep}$ as $\Gamma$-sets.
	
	For a homomorphism $\bmH\to \bmG$ of affine group superschemes, we denote $\Ind^{\bmG}_{\bmH}$ for the right adjoint functor to the restriction functor from the category of representations of $\bmG$ to that of $\bmH$ (see \cite[Appendix A.3]{MR4039427} for existence). When $\bmG$ is trivial, we denote $H^0(\bmH,-)=\Ind^{\bmG}_{\bmH}(-)$. For each representation $V$ of $\bmH$, $H^0(\bmH,V)$ is called the $\bmH$-invariant part of $V$. Explicitly, this is given by
	$H^0(\bmH,V)=\bHom_{\bmH}(F,V)$.
	
	For a finite extension $F'/F$ of fields of characteristic not two and an affine group superscheme $\bmG'$ over $F'$, we define a group copresheaf on the category of commutative superalgebras over $F$ by
	$(\Res_{F'/F}\bmG')(A)=\bmG'(A\otimes_F F')$.
	We call it the Weil restriction. This is right adjoint to the base change functor
	$-\otimes_F F'$.
	As in the classical setting, it is easy to show that $\Res_{F'/F} \bmG'$ is representable, and that $\Res_{F'/F} \bmG'$ is of finite type if so is $\bmG'$ (cf.~\cite[Section 7.6]{MR1045822}).
	
	We express a block diagonal matrix
	\[\left(\begin{array}{cccc}
		A_1 &  &  &  \\
		& A_2 &  &  \\
		&  & \ddots &\\
		&&& A_n
	\end{array}\right)\]
	by $\diag(A_1,A_2,\ldots,A_n)$. For a nonnegative integer $n$, let $I_n$ be the unit matrix of size $n$. Set
	\[\begin{array}{cc}
		J_n=\left(\begin{array}{cc}
			0 & I_n \\
			-I_n & 0
		\end{array}\right),&\Pi^{\spl}_n=\left(\begin{array}{cc}
			0 & I_n \\
			I_n & 0
		\end{array}\right).
	\end{array}\]

	We also follow \cite[Section 3.1]{MR2392322}, \cite[Section 3]{MR1973576}, and \cite[Section 5.3]{MR4039427} to collect some basic examples of algebraic supergroups and related notions here. Let us slightly refine notations because we will give some non-split real forms later in the standard fashion of the theory of Lie groups (see Example \ref{ex:orthosymp}). For nonegative integers $m,n$, let $\GL_{m|n}$ be the linear algebraic supergroup of automorphisms of $F^m\oplus \Pi F^n$. For each commutative superalgebra $A$, $\GL_{m|n}(A)$ is identified with the group of square matrices of size $(m+n)$ of the form
	\[\left(\begin{array}{cc}
		a & b \\
		c & d
	\end{array}\right)\]
	with the usual multiplication law, where $a,d$ are invertible square matrices with $A_{\bar{0}}$-entries of size $m$, $n$ respectively, and $b,c$ are matrices with $A_{\bar{1}}$-entries of type $m\times n$ and $n\times m$ respectively (see \cite[Lemma 3.6.1]{MR2069561} for the invertibility hypothesis). We define an anti-automorphism $(-)^{ST}$ on $\GL_{m|n}$ by
	\[\left(\begin{array}{cc}
		a & b \\
		c & d
	\end{array}\right)\mapsto\left(\begin{array}{cc}
		a^T & c^T \\
		-b^T & d^T
	\end{array}\right),\]
	where $(-)^T$ is the usual transpose. This is called the supertranspose. We define a map $\Pi:\GL_{m|n}\to\GL_{n|m}$ by
	\[\left(\begin{array}{cc}
		a & b \\
		c & d
	\end{array}\right)\mapsto\left(\begin{array}{cc}
		d & c \\
		b & a
	\end{array}\right).\]
	We call it the parity change. We also define the superdeterminant (also known as Berezinian) as
	\[\Ber:\GL_{m|n}\to \GL_{1|0};~\left(\begin{array}{cc}
		a & b \\
		c & d
	\end{array}\right)\mapsto\det(a-bd^{-1}c)\det d^{-1}.\]
	Write $\SL_{m|n}$ for its kernel. We define linear algebraic supergroups $\SpO_{2n|2m+1}$, $\SpO_{2n|2m}$ by
	\[\SpO_{2n|2m+1}(A)=\left\{g\in\SL_{2n|2m+1}(A):~
	g^{ST}J^{\spl}_{n|2m+1}g=J^{\spl}_{n|2m+1}\right\},\]
	\[\SpO_{2n|2m}(A)=\left\{g\in\SL_{2n|2m}(A):~
	g^{ST}J^{\spl}_{n|2m}g=J^{\spl}_{n|2m}\right\},\]
	where
	$J^{\spl}_{n|2m+1}=\diag(J_n,\Pi^{\spl}_m,1)$ and
	$J^{\spl}_{n|2m}=\diag(J_n,\Pi^{\spl}_m)$.
	We call them the split orthosymplectic supergroups.
	We define the $n$th split queer supergroup $\Qq_n$ as 
	\[\Qq_n(A)=\left\{\left(\begin{array}{cc}
		a & b \\
		c & d
	\end{array}\right)\in\GL_{n|n}(A):~a=d,~b=-c\right\}.\]
	We define the $n$th split periplectic supergroup $\Pp_n$ as 
	\[\Pp_n(A)=\left\{g\in\GL_{n|n}(A):~g^{ST}\Pi^{\spl}_n g=\Pi^{\spl}_n\right\}.\]

	Define an involution $\bar{}$ on $\Res_{\bC/\bR} \GL_{m|n}$ by the complex conjugation. Let $(-)^\ast$ denote the composition of the supertranspose and the complex conjugation. We call it the superadjoint. We also define another automorphism $\delta$ of $\Res_{\bC/\bR} \GL_{m|n}$ by
	\[\delta:\left(\begin{array}{cc}
		a & b \\
		c & d
	\end{array}\right)\mapsto \left(\begin{array}{cc}
		a & \sqrt{-1} b \\
		-\sqrt{-1} c & d
	\end{array}\right).\]

	For a split torus $H$, we denote its character group by $X^\ast(H)$.
	
	For $n\geq0$, let $\fS_n$ denote the $n$th symmetric group.

	\section{Division superalgebra of endomorphisms}
	
	\subsection{Preliminaries}
	
	Following \cite{MR0167498, MR1701598}, let us recall some basic notions on superalgebras:
	
	\begin{defn}
		\begin{enumerate}
			\item A superalgebra $A$ over $F$ is called central if the even elements centralizing $A$ are only scalars, i.e.,
			$\{a\in A_{\bar{0}}:~\forall b\in A,~ab=ba\}=F$.
			\item A superalgebra $A$ is called simple if it is nonzero and admits no proper two-sided superideals.
			\item A superalgebra $D$ is called a division superalgebra if every nonzero homogeneous element of $D$ is invertible.
			\item For central simple superalgebras $A,A'$, we say $A$ and $A'$ are similar if $\bEnd_A(S)\cong\bEnd_{A'}(S')$, where $S$ and $S'$ are finite dimensional simple left modules over $A$ and $A'$ respectively. This is independent of the choices of $S$ and $S'$ since every central simple superalgebra $B$ admits a unique finite dimensional simple left $B$-supermodule up to isomorphism and parity change. For each central simple algebra $A$, we denote its similarity class by $[A]$.
			\item We denote the set of similarity classes of central simple superalgebras over $F$ by $\BW(F)$. This is an abelian group for the tensor product. We call $\BW(F)$ the Brauer--Wall group.
		\end{enumerate}
	\end{defn}
	
	\begin{ex}[Clifford superalgebra]
		Let $(V,q)$ be a quadratic space, namely, a pair of a finite dimensional vector space $V$ and a quadratic form $q$ on $V$. Then define an $F$-algebra $C(V,q)$ as the algebra generated by $V$ with the relations
		$x^2=q(x)$ for all $x\in V$. We generate $V$ as a subspace of odd elements to obtain the structure of a superalgebra on $C(V,q)$. We call $C(V,q)$ the Clifford superalgebra of $(V,q)$. This superalgebra is known to be central simple if $q$ is nondegenerate (\cite[Theorem 4]{MR0167498}). In this paper, let $\bar{C}(V,q)$ denote the maximal semisimple quotient of $C(V,q)$. According to \cite[Lemma B.1]{MR4039427}, $\bar{C}(V,q)$ is a Clifford superalgebra of a nondegenerate quadratic space. 
	\end{ex}

	For later applications, let us note:
	
	\begin{prop}[{\cite[Theorem 3]{MR0167498}}, {\cite[Lemma 3.7]{MR1701598}}]
		There is a natural bijection
		\begin{equation}
			\BW(F)\cong \{+,-\}\times (F^\times)/(F^\times)^2\times \Br(F).\label{eq:bw}
		\end{equation}
		The transferred group structure on the right hand side through this bijection is given by
		\[\begin{array}{c}
			(+,a,D)(+,a',D')=(+,aa',DD'(a,a'))\\
			(+,a,D)(-,a',D')=(-,aa',DD'(a,-a'))\\
			(-,a,D)(-,a',D')=(+,-aa',DD'(a,a')).
		\end{array}\]
	\end{prop}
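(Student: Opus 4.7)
The plan is to put each similarity class in $\BW(F)$ into a normal form, read off three invariants, then verify both that the assignment is a bijection and that the displayed group law is produced by the super tensor product.

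First I would invoke the following normal form: every central simple superalgebra over $F$ is similar to either $D$ or $D \otimes C(V,q)$, where $D$ is an ordinary central simple $F$-algebra placed in even degree and $C(V,q)$ with $V = F\cdot z$ and $q(z) = a \in F^\times$ is the rank-one Clifford superalgebra $F\langle z \mid z^2 = a\rangle$ with $z$ odd. This follows from the fact (used already in the paper) that every central simple superalgebra has, up to parity change, a unique finite-dimensional simple supermodule; taking its super-endomorphism algebra produces a central simple \emph{division} superalgebra, and the classification of those gives exactly the two shapes above. The two shapes are distinguished intrinsically by the super-center: it is $F$ in the first case (``$+$'') and $F \oplus Fz$ with $z$ odd and $z^2 \in F^\times$ in the second (``$-$''). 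This $\pm$ sign will be the first coordinate of the bijection, the class of $z^2$ (or its even analogue for the ``$+$'' case obtained from a grading element) in $F^\times/(F^\times)^2$ the second, and $[D] \in \Br(F)$ the third. Well-definedness on similarity classes is automatic because tensoring with $\bEnd_F$ of a finite-dimensional simple supermodule modifies $D$ only up to ordinary Morita equivalence. The inverse map sends a triple $(\pm, a, [D])$ to the tensor product of a representative of $[D]$ with the appropriate Clifford superalgebra; surjectivity and injectivity are then immediate from the normal form.

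Next I would derive the group law by computing super tensor products of normal-form representatives. The key input is the identity $C(V,q) \otimes C(V',q') \cong C(V \oplus V', q \oplus q')$ in the super sense, together with the standard description of the even part of a rank-two Clifford superalgebra as a quaternion algebra; the super-commutation $(z \otimes 1)(1 \otimes z') = -(1 \otimes z')(z \otimes 1)$ of odd generators is precisely what produces the Hilbert symbols $(a,a')$ or $(a,-a')$. In the $+\cdot+$ case one obtains $[D][D'](a,a')$ directly. In the $-\cdot -$ case the element $zz'$ is \emph{even} central with $(zz')^2 = -zz'z'z = -aa'$ because $z$ and $z'$ anti-commute, which accounts simultaneously for the sign flip on the first coordinate and for the $-aa'$ in the second; the mixed $+\cdot -$ case is a hybrid of these two rewritings and is what produces $(a,-a')$.

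The main technical obstacle I expect is the book-keeping of super-signs in the tensor products and, closely related, the verification that the $F^\times/(F^\times)^2$ invariant is well-defined on similarity classes when the sign is $+$, since the ``odd square-root'' is then visible only after passing to a Morita representative of the form $D \otimes C(V,q)$ rather than to $A$ itself. To handle this I would again use the uniqueness up to parity change of a simple supermodule over a central simple superalgebra, which forces any two candidates for the square class to differ by the square of an even central unit and hence to coincide in $F^\times/(F^\times)^2$.
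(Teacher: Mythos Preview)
The paper does not prove this proposition; it is quoted from Wall and Deligne, and the paper only records the construction of the map $A\mapsto(\epsilon_A,u^2,[A]\text{ or }[A_{\bar 0}])$ in the paragraph following the statement. So there is no ``paper's own proof'' to match, but your sketch should still be internally sound, and it has a genuine gap.

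Your claimed normal form is incomplete. You assert that every class in $\BW(F)$ is represented by either a purely even $D$ or by $D\otimes C(Fz,\,z^2=a)$ with a rank-one Clifford factor. Under the bijection these give exactly $(+,1,[D])$ and $(-,a,[D])$ respectively; they never produce $(+,a,[D])$ with $a\notin(F^\times)^2$. Concretely, over $\bR$ the division superalgebra $\bC\oplus\bC\epsilon$ (with $\epsilon$ odd, $\epsilon^2=1$, conjugation by $\epsilon$ the complex conjugation) has invariant $(+,-1,[\bR])$ and is not similar to anything of either shape. The missing representatives are the rank-two Clifford superalgebras (or equivalently: you need to allow the central division superalgebra to have nonzero odd part even when $\epsilon=+$, which happens precisely when the even part has center a quadratic extension $F\oplus Fu$, $u$ even, $u^2\in F^\times$). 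Because of this, your derivation of the $+\cdot+$ product law is vacuous as written: with purely even $D,D'$ there is no $a,a'$ to feed into the Hilbert symbol, so you cannot recover $(a,a')$ from your representatives.

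A smaller issue: your criterion via the ``super-center'' is not quite right. For $C(Fz,\,z^2=a)$ the super-center is $F$, not $F\oplus Fz$ (check $z\cdot z=-z\cdot z$ in the graded sense). What actually distinguishes the $\pm$ types, as in Wall and as recalled in the paper, is whether $A$ is central simple as an \emph{ungraded} algebra; in the $-$ case one finds an odd $u$ in the ordinary center (equivalently, $u$ centralizes $A_{\bar 0}$), not in the super-center.
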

	
	\begin{ex}\label{ex:BW_inverse}
		For a central simple superalgebra $A$, $[A]^{-1}$ is represented by the opposite superalgebra to $A$ (\cite[(3.3.2)]{MR1701598}). 
		For $a\in F^\times/(F^\times)^2$ and $D\in\Br(F)$, we correspondingly have
		\[\begin{array}{cc}
			(+,a,D)^{-1}=(+,a,D^{-1}(a,-1)),
			&(-,a,D)^{-1}=(-,-a,D^{-1})
		\end{array}\]
		(use \cite[I, Exercises 4 of Chapter 2 and Chapter 8, Example 8.2.2]{MR4279905} for the part of the Brauer group).
	\end{ex}
	
	\begin{ex}\label{ex:cliff_op}
		Let $(V,q)$ be a quadratic space. Then $v\mapsto v$ determines an isomorphism from $C(V,-q)$ to the opposite superalgebra to $C(V,q)$. In particular, we have $[\bar{C}(V,q)]^{-1}=[\bar{C}(V,-q)]$. 
	\end{ex}
	
	\begin{ex}\label{ex:real}
		Put $F=\bR$. Then both $\Br(\bR)$ and $\bR^\times/(\bR^\times)^2$ are identified with $\{\pm 1\}$ for
		\[\begin{array}{ccc}
			\Br(F)&\cong&\{\pm 1\};\\
			\left[\bR\right]&\mapsto & 1\\
			\left[\bH\right]&\mapsto & -1,\\
			\bR^\times/(\bR^\times)^2&\cong&\{\pm 1\};\\
			1&\reflectbox{$\mapsto$} & 1\\
			-1&\reflectbox{$\mapsto$} & -1.
		\end{array}\]
		Under the first identification, we also have
		\[(a,b)=\begin{cases}
			-1&(a<0\mathrm{~and~}b<0)\\
			1&(\mathrm{otherwise}).
		\end{cases}\]
	\end{ex}

	For the convenience, we explain the construction of the bijection \eqref{eq:bw}. Let $A$ be a central simple superalgebra. We put $\epsilon_A\coloneqq +$ if $A$ is central and simple as a non-super (= ordinary) algebra; Otherwise, we put $\epsilon_A\coloneqq -$ (\cite[Lemma 4 and the sentence above Lemma 5]{MR0167498}. For the case $A=A_{\bar{0}}$, replace $A$ with $A\otimes_F \bEnd_F(V)$ for any finite dimensional vector superspace $V$ over $F$ with nonzero even and odd parts, or use \cite[Proof of Lemma 3.7]{MR1701598}). We remark that the equality $\epsilon_A=-$ holds if and only if $A_{\bar{0}}$ is a central simple algebra and $A_{\bar{1}}\neq 0$. If $\epsilon_A=+$ and $A_{\bar{1}}\neq 0$, one can and does choose $u\in A_{\bar{0}}$ such that $u^2\in F^\times$ and that the center of $A_{\bar{0}}$ is $F\oplus Fu$ as an $F$-vector space (\cite[Lemma 6]{MR0167498}). We put $u=1$ if $A=A_{\bar{0}}$. We note that if $A=A_{\bar{0}}$ and if we are given any finite dimensional vector superspace $V$ over $F$ with nonzero even and odd parts, we may put $u=1\otimes(\id_{V_{\bar{0}}},-\id_{V_{\bar{1}}})$ for $A\otimes_F \bEnd_F(V)$. In particular, we get $u^2=1$ in this case. If $\epsilon_A=-$, one can and does choose a nonzero element $u\in A_{\bar{1}}$ centralizing $A_{\bar{0}}$ (\cite[Lemma 5]{MR0167498}). Then we have $u^2\in F^\times$ (\cite[Lemma 5]{MR0167498}). The map is now given by
	\[A\mapsto\begin{cases}
		(+,u^2,[A])&(\epsilon_A=+)\\
		(-,u^2,[A_{\bar{0}}])&(\epsilon_A=-).
	\end{cases}\]
	(\cite[Lemmas 4, 5, and 6]{MR0167498}).
	
	\begin{conv}
		By abuse of notation, we write $[A]=(+,1,[A])$ for a central simple algebra $A$. This is to regard $A$ as a central simple superalgebra with trivial odd part. In terms of $\Br(F)$ and $\BW(F)$, this is to identify $(+,1,[A])$ with the image of $[A]\in\Br(F)$ along the injective map $\Br(F)\hookrightarrow \BW(F);~[B]\mapsto (+,1,[B])$.
	\end{conv}
	
	\begin{ex}[\cite{MR0167498}, see also {\cite[Section 3.9]{MR1701598}}]\label{ex:realbrauer-wall}
		Put $F=\bR$. There are eight elements in $\BW(\bR)$ by Example \ref{ex:real}. Representing central division superalgebras are determined by the following assignment:
		\[\begin{array}{c}
			[\bR]\mapsto(+,1,[\bR])=(+,1,1)\\
			\left[\bR\oplus \bR\epsilon\right]\mapsto (-,1,[\bR])=(-,1,1)\\
			\left[\bC\oplus \bC\epsilon\right]
			\mapsto (+,-1,[\bC\oplus \bC\epsilon])=(+,-1,[\bR])=(+,-1,1)\\
			\left[\bH\oplus \bH\delta\right]\mapsto (-,-1,[\bH])=(-,-1,-1)\\
			\left[\bH\right]\mapsto (+,1,\left[\bH\right])=(+,1,-1)\\
			\left[\bH\oplus \bH\epsilon\right]\mapsto (-,1,[\bH])=(-,1,-1)\\
			\left[\bC\oplus \bC\delta\right]\mapsto
			(+,-1,	\left[\bC\oplus \bC\delta\right])=(+,-1,[\bH])=(+,-1,-1)\\
			\left[\bR\oplus \bR\delta\right]\mapsto (-,-1,[\bR])=(-,-1,1),
		\end{array}\]
		where $\epsilon$ and $\delta$ are odd elements of $\epsilon^2=-\delta^2=1$. Moreover, the conjugation on the complex numbers by $\epsilon$ and $\delta$ is the complex conjugation in $\bC\oplus \bC\epsilon$ and $\bC\oplus \bC\delta$; $\epsilon$ and $\delta$ commute with $\bH$ in $\bH\oplus \bH\epsilon$ and $\bH\oplus \bH\delta$.
	\end{ex}
	
	\begin{note}
		For a nonnegative integer $n$, put $-^n=+$ (resp.~$-^n=-$) if $n$ is even (resp.~$n$ is odd).
	\end{note}
	
	\begin{ex}[{\cite[Theorem 4 and below it]{MR0167498}}]\label{ex:Clifford}
		Let $(V,q)$ be a quadratic space with $q$ nondegenerate. Then the Clifford superalgebra $C(V,q)$ is central simple superalgebra. To compute its class, identify $V$ with $F^n$ ($n\geq 0$) for an orthogonal basis of $V$ which exists to express $q$ as
		$q(x_1,x_2,\ldots,x_n)=\sum_{i=1}^n a_ix^2_i$
		for some $a_1,a_2,\ldots,a_n\in F^\times$. If we write
		$[C(F^n,q)]=(\epsilon,a,D)$,
		the right hand side is given by the following formulas:
		\[\begin{array}{ccc}
			\epsilon=-^n,&a=(-1)^{\binom{n}{2}} \prod_{i=1}^n a_i,
			&D=\prod_{i<j}(a_i,a_j)(\prod_{i=1}^n (-1,a_i))^{\binom{n-1}{2}} (-1,-1)^{\binom{n+1}{4}}.
		\end{array}\]
		The value $a$ is called the signed discriminant (determinant). We remark that the orthogonal basis was also used in \cite{MR0167498} for the proof of the fact that $C(V,q)$ is central simple.
	\end{ex}

	\begin{rem}\label{rem:D_1=0}
		For a central division superalgebra $D$, $D_{\bar{1}}=0$ if and only if the equality $\left[D\right]=(+,1,[A])$ holds
		for some central simple algebra $A$ (recall the definition of the similarity for the ``if'' direction).
	\end{rem}
	
	\subsection{Classification of irreducible representations}
	In this section, we classify irreducible representations over a field $F$ of characteristic not two in terms of those over $F^{\sep}$ and the Galois action. We also discuss the central property of the division (super)algebras of endomorphisms.
	
	Let $\bmG$ be an affine group superscheme over $F$.
	
	\begin{defn}\label{defn:pure}
		\begin{enumerate}
			\item An irreducible representation $V$ of $\bmG$ is called quasi-rational (resp.~super quasi-rational) if $\End_{\bmG} V$
			is a central division algebra (resp.~$\bEnd_{\bmG} V$ is a central division superalgebra). We denote the set of isomorphism classes of quasi-rational (resp.~super quasi-rational) irreducible representations by $\Irr_{\qrat}\bmG$ (resp.~$\Irr_{\sqrat}\bmG$).
			\item We call an irreducible representation $V'$ of $\bmG\otimes_F F^{\sep}$ weakly quasi-rational (resp.~weakly super quasi-rational) if
			${}^\sigma V'\cong V'$ (resp.~ ${}^\sigma V'\overset{\boldsymbol{\cdot}}{\cong} V'$)
			for every $\sigma\in\Gamma$. We say $V'$ is quasi-rational (resp.~super quasi-rational) if it is weakly quasi-rational (resp.~weakly super quasi-rational) and
			\[\End_{\bmG\otimes_F F^{\sep}}(V')=F^{\sep}\id_{V'}.\]
		\end{enumerate}
	\end{defn}
	
	\begin{rem}
		The condition $\End_{\bmG\otimes_F F^{\sep}}(V')=F^{\sep}\id_{V'}$ holds true if $F$ is perfect (Schur's lemma). In particular, weak (super) quasi-rationality and (super) quasi-rationality are equivalent in this case.
	\end{rem}
	
	According to Schur's lemma, $\bEnd_{\bmG} V$ (resp.~$\End_{\bmG}(V)$) is a division superalgebra (resp.~a division algebra) if $V$ is irreducible. Therefore we obtain maps
	\[\beta^{\super}:\Irr_{\sqrat} \bmG\to \BW(F);~V\mapsto \bEnd_{\bmG} V\reflectbox{$\coloneqq$}\beta^{\super}_V,\]
	\[\beta:\Irr_{\qrat} \bmG\to \Br(F);~V\mapsto \End_{\bmG} V\reflectbox{$\coloneqq$}\beta_V.\]
	
	\begin{note}
		If $V$ is a super quasi-rational irreducible representation of $\bmG$, we denote $\beta^{\super}_V=(\epsilon_V,a_V,D_V)$ under the identification of \eqref{eq:bw}.
	\end{note}
	
	Quasi-rational representations of $\bmG\otimes_F F^{\sep}$ are of our interests for the descent problem of representations over $F^{\sep}$. We aim to discuss relations of the four notions in Definition \ref{defn:pure} and the maps $\beta^{\super},\beta$. On this course, we would also like to give representation theoretic characterizations of the (super) quasi-rationality and these maps.
	
	The following three assertions are immediate from the definitions:
	
	\begin{prop}\label{prop:sqrat->qrat}
		\begin{enumerate}
			\item Quasi-rational irreducible representations of $\bmG$ are super quasi-rational.
			\item (Weakly) quasi-rational irreducible representations of $\bmG\otimes_F F^{\sep}$ are (weakly) super quasi-rational.
		\end{enumerate}
	\end{prop}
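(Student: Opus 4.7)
The plan is to deduce both assertions directly from the definitions, using Schur's lemma for irreducible super-representations.

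For (1), suppose $V$ is quasi-rational, i.e.\ $\End_{\bmG}(V)$ is a central division algebra over $F$. I first verify that $\bEnd_{\bmG}(V)$ is a division superalgebra: since $V$ is irreducible and each nonzero homogeneous endomorphism $f \in \bEnd_{\bmG}(V)_{\bar{i}}$ is a $\bmG$-equivariant map $V \to V$ (resp.\ $V \to \Pi V$) whose image and kernel are subrepresentations, the super version of Schur's lemma forces $f$ to be an isomorphism, hence invertible. For centrality, I take $a \in \bEnd_{\bmG}(V)_{\bar{0}} = \End_{\bmG}(V)$ that commutes with every element of $\bEnd_{\bmG}(V)$. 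Restricting the commutation condition to the even part $\bEnd_{\bmG}(V)_{\bar{0}}$ alone shows that $a$ lies in the center of $\End_{\bmG}(V)$, which equals $F$ by the quasi-rationality hypothesis. This gives $V \in \Irr_{\sqrat} \bmG$.

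For (2), the two pairs of notions differ only in the relation imposed on the Galois twists of $V'$: the weak quasi-rationality demands ${}^\sigma V' \cong V'$, while weak super quasi-rationality demands the weaker ${}^\sigma V' \overset{\boldsymbol{\cdot}}{\cong} V'$. Since $\cong$ obviously implies $\overset{\boldsymbol{\cdot}}{\cong}$, weak quasi-rationality implies weak super quasi-rationality. The supplementary condition $\End_{\bmG\otimes_F F^{\sep}}(V') = F^{\sep}\id_{V'}$ appears identically in both strengthened versions, so the quasi-rational case follows from the weakly quasi-rational case at once.

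There is no real obstacle here; the only point that needs a brief verification is that the super Schur's lemma still applies in the $\bZ/2\bZ$-graded setting of supercomodules, which is standard. Both statements are thus essentially formal consequences of the definitions collected in Definition \ref{defn:pure} and the basic structure of $\bEnd_{\bmG}(V)$ recorded in Section \ref{sec:notation}.
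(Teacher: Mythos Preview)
Your argument is correct and matches the paper's treatment: the paper simply states that this proposition (together with the two that follow) is ``immediate from the definitions,'' and your write-up merely spells out the routine verification using Schur's lemma and the containment of the super-center in the center of the even part. No additional ideas are needed.
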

	
	\begin{prop}\label{prop:pure}
		A (weakly) super quasi-rational irreducible representation $V'$ of the affine group superscheme
		$\bmG\otimes_F F^{\sep}$
		is (weakly) quasi-rational if and only if
		\begin{enumerate}
			\item[(i)] $V'\cong \Pi V'$, or 
			\item[(ii)] no $\sigma\in\Gamma$ satisfies ${}^\sigma V'\cong\Pi V'$.
		\end{enumerate}
	\end{prop}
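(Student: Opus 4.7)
The plan is to prove both assertions (the weak and strong versions) by straightforward unwinding of Definition \ref{defn:pure}, isolating the condition $\End_{\bmG\otimes_F F^{\sep}}(V')=F^{\sep}\id_{V'}$ which appears on both sides of the claimed equivalence and hence plays no role. So it suffices to prove the weak version: a weakly super quasi-rational $V'$ is weakly quasi-rational iff (i) or (ii) holds.

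For the forward direction, I would assume $V'$ is weakly quasi-rational, i.e.\ ${}^\sigma V'\cong V'$ for every $\sigma\in\Gamma$. Suppose (ii) fails, so there exists some $\sigma\in\Gamma$ with ${}^\sigma V'\cong \Pi V'$. Combining this with ${}^\sigma V'\cong V'$ yields $V'\cong\Pi V'$, which is exactly condition (i).

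For the backward direction, let $V'$ be weakly super quasi-rational, so for each $\sigma\in\Gamma$ one has ${}^\sigma V'\cong V'$ or ${}^\sigma V'\cong\Pi V'$. If (i) holds, then $V'\cong \Pi V'$, so in either alternative ${}^\sigma V'\cong V'$, giving weak quasi-rationality. If (ii) holds, then the second alternative is excluded for every $\sigma$, so automatically ${}^\sigma V'\cong V'$ for all $\sigma$, again giving weak quasi-rationality.

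There is no genuine obstacle here; the argument is a direct manipulation of the relation $\overset{\boldsymbol{\cdot}}{\cong}$ versus $\cong$. To then deduce the parenthesized (non-weak) version, I would simply note that Definition \ref{defn:pure} adds the identical scalar-endomorphism hypothesis $\End_{\bmG\otimes_F F^{\sep}}(V')=F^{\sep}\id_{V'}$ to both ``(weakly) super quasi-rational'' and ``(weakly) quasi-rational,'' so the equivalence transports verbatim from the weak to the non-weak case.
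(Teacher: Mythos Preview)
Your argument is correct and matches the paper's approach: the paper simply declares this proposition (together with Propositions \ref{prop:sqrat->qrat} and \ref{prop:type}) to be ``immediate from the definitions,'' and your unwinding of Definition \ref{defn:pure} is exactly that. In particular, your observation that the scalar-endomorphism condition is common to both sides, reducing the non-weak case to the weak one, is the right way to handle the parenthesized version.
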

	
	\begin{prop}\label{prop:type}
		An irreducible representation $V$ of $\bmG$ with $\Pi V\not\cong V$ is (weakly) quasi-rational if and only if it is (weakly) super quasi-rational.
	\end{prop}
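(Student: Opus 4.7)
The plan is to reduce both quasi-rationality conditions to the same ordinary-algebra statement by observing that the hypothesis $\Pi V \not\cong V$ forces $\bEnd_{\bmG}(V)$ to have vanishing odd part. Once that is established, checking the definitions of centrality and of being a division superalgebra will yield the equivalence with no further work.

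First I would note that by definition $\bEnd_{\bmG}(V)_{\bar{1}} = \Hom_{\bmG}(V,\Pi V)$. Since $\Pi$ is an autoequivalence of the category of representations of $\bmG$, $\Pi V$ is again irreducible; combined with the assumption $\Pi V \not\cong V$, Schur's lemma yields $\Hom_{\bmG}(V,\Pi V) = 0$. Therefore $\bEnd_{\bmG}(V)$ is concentrated in even degree and, as a superalgebra, canonically coincides with $\End_{\bmG}(V)$ viewed as a superalgebra with trivial odd part.

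Next I would unwind the two structural definitions in this special case. For centrality, the condition that every element of $\bEnd_{\bmG}(V)_{\bar 0}$ centralizing $\bEnd_{\bmG}(V)$ be a scalar collapses, because $\bEnd_{\bmG}(V)_{\bar 1}=0$, to the usual statement that the center of the ordinary algebra $\End_{\bmG}(V)$ is $F$. Similarly, the division-superalgebra condition that every nonzero \emph{homogeneous} element be invertible reduces, in the absence of odd elements, to the invertibility of every nonzero element of $\End_{\bmG}(V)$. Combining, $\bEnd_{\bmG}(V)$ is a central division superalgebra if and only if $\End_{\bmG}(V)$ is a central division algebra; this is precisely the equivalence of (super) quasi-rationality with quasi-rationality. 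The weakly version, when $V$ is instead an irreducible representation of $\bmG\otimes_F F^{\sep}$, is obtained by specializing Proposition~\ref{prop:pure}: the hypothesis $\Pi V\not\cong V$ makes clause (i) vacuous and clause (ii) equivalent to the assertion that no Galois twist of $V$ is isomorphic to $\Pi V$, which is automatic whenever $V$ is weakly quasi-rational; in the other direction, weak super quasi-rationality together with $\Pi V\not\cong V$ forces each ${}^\sigma V$ to fall into the single orbit $\{V\}$.

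There is no real obstacle here; the proof is a direct consequence of Schur's lemma plus the observation that a central division superalgebra with trivial odd part is nothing but an ordinary central division algebra. The only point requiring any care is to record that the reductions of centrality and of the division condition go through literally because $\bEnd_{\bmG}(V)_{\bar 1}=0$ eliminates all sign issues in the superalgebra definitions.
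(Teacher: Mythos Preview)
Your argument for the unparenthesized case --- $V$ an irreducible representation of $\bmG$, using Definition~\ref{defn:pure}(1) --- is correct and is exactly what the paper means by ``immediate from the definitions'': the hypothesis $\Pi V \not\cong V$ kills $\bEnd_{\bmG}(V)_{\bar 1}=\Hom_{\bmG}(V,\Pi V)$ by Schur's lemma, after which the super notions of centrality and of being a division superalgebra collapse to the ordinary ones.

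Your treatment of the ``weakly'' case, however, contains a genuine error. You assert that ``weak super quasi-rationality together with $\Pi V\not\cong V$ forces each ${}^\sigma V$ to fall into the single orbit $\{V\}$,'' but this is false: weak super quasi-rationality only says ${}^\sigma V \cong V$ \emph{or} ${}^\sigma V \cong \Pi V$ for each $\sigma$, and the hypothesis $\Pi V \not\cong V$ does nothing to exclude the second alternative for some nontrivial $\sigma$. Indeed, Section~\ref{sec:nonqrat} of the paper is devoted precisely to irreducible representations of $\bmG\otimes_F F^{\sep}$ that are super quasi-rational, \emph{not} quasi-rational, and satisfy $V \not\cong \Pi V$ (the last being forced by Proposition~\ref{prop:pure}); this is exactly the configuration your argument claims cannot occur. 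Note that the proposition is stated for $V$ a representation of $\bmG$, where the ``weakly'' terminology of Definition~\ref{defn:pure}(2) does not apply; read as a statement about $V'$ over $\bmG\otimes_F F^{\sep}$, the weakly version is simply false, so the parenthetical should be regarded as extraneous rather than as something to be proved.
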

	
	Let $V$ be an irreducible representation of $\bmG$, and $V'$ be an irreducible subrepresentation of $V\otimes_F F^{\sep}$. The first basic result is as follows:
	
	\begin{thm}\label{thm:pure}
		\begin{enumerate}
			\item The assignment $V\mapsto V'$ gives rise to a well-defined bijection 
			\begin{equation}
				\Irr\bmG\cong\Gamma\backslash\Irr\bmG\otimes_F F^{\sep}.\label{eq:classification}
			\end{equation}
			This also descends to
			\begin{equation}
				\Irr_\Pi\bmG\cong\Gamma\backslash\Irr_{\Pi}\bmG\otimes_F F^{\sep}.
				\label{eq:classification_mod_switch}
			\end{equation}
			\item The following conditions are equivalent:
			\begin{enumerate}
				\item[(a)] $V$ is super quasi-rational;
				\item[(b)] $V'$ is super quasi-rational.
			\end{enumerate}
			In particular, the bijection \eqref{eq:classification_mod_switch} restricts to
			$\Irr_{\Pi,\sqrat}\bmG\cong\Irr_{\Pi,\sqrat} \bmG\otimes_F F^{\sep}$.
		\end{enumerate}
	\end{thm}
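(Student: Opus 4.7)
The plan is to apply standard Galois descent for finite-dimensional super comodules over $F[\bmG]$, with careful bookkeeping of the parity change. First I recall that every irreducible representation is a finite-dimensional simple super comodule (any element of a super comodule generates a finite-dimensional sub-supercomodule), so $\bHom$-spaces commute with the base change $-\otimes_F F^{\sep}$ throughout. For Part (1), given $V\in\Irr\bmG$ and an irreducible subrepresentation $V'\subset V\otimes_F F^{\sep}$, the $\Gamma$-stable subspace $\sum_{\sigma\in\Gamma}{}^\sigma V'\subset V\otimes_F F^{\sep}$ descends by Galois descent to an $F$-subrepresentation of $V$, which must equal $V$ by irreducibility; this simultaneously gives well-definedness of the map $V\mapsto\Gamma\cdot[V']$ and shows that the Galois orbit of $[V']$ is finite and exhausts the constituents of $V\otimes_F F^{\sep}$. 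Injectivity follows because two preimages $V_1,V_2$ produce base changes sharing an irreducible constituent, so
\[\Hom_\bmG(V_1,V_2)\otimes_F F^{\sep}=\Hom_{\bmG\otimes_F F^{\sep}}(V_1\otimes_F F^{\sep},V_2\otimes_F F^{\sep})\ne 0,\]
forcing $V_1\cong V_2$ by Schur. For surjectivity, a finite-dimensional $V'$ is defined over some finite Galois extension $E/F$ as $V'_E$, and any irreducible $F$-subrepresentation $V$ of the scalar restriction $\Res_{E/F}V'_E$ has $V\otimes_F F^{\sep}$ sitting inside $\bigoplus_{\sigma\colon E\hookrightarrow F^{\sep}}{}^\sigma V'$, so a Galois twist of $V'$ is a constituent of $V\otimes_F F^{\sep}$. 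Since $\Pi$ commutes with $-\otimes_F F^{\sep}$ and with the $\Gamma$-action, the bijection descends to the quotients by $\Pi$.

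For Part (2), I would use the identification $\bEnd_\bmG(V)\otimes_F F^{\sep}=\bEnd_{\bmG\otimes_F F^{\sep}}(V\otimes_F F^{\sep})$ of superalgebras together with the orbit decomposition from Part (1). For $(a)\Rightarrow(b)$: if $D\coloneqq\bEnd_\bmG(V)$ is a central division superalgebra over $F$, then $D\otimes_F F^{\sep}$ is a central simple superalgebra over $F^{\sep}$, so it admits a unique finite-dimensional simple super module up to isomorphism and parity change. Consequently every irreducible constituent of $V\otimes_F F^{\sep}$ is isomorphic to $V'$ or $\Pi V'$, which by Part (1) forces ${}^\sigma V'\overset{\boldsymbol{\cdot}}{\cong}V'$ for all $\sigma\in\Gamma$. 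Reading off $\End_{\bmG\otimes_F F^{\sep}}(V')$ as a piece of the central simple $D\otimes_F F^{\sep}$ whose center is $F^{\sep}$ then yields $\End_{\bmG\otimes_F F^{\sep}}(V')=F^{\sep}\id_{V'}$. The converse $(b)\Rightarrow(a)$ is dual: with $V'$ super quasi-rational, the explicit structure of $\bEnd_{\bmG\otimes_F F^{\sep}}(V\otimes_F F^{\sep})$ computed from the orbit $\{V'\}$ or $\{V',\Pi V'\}$ together with the scalar $\End$ condition exhibits it as a central simple superalgebra over $F^{\sep}$; its $F$-form $\bEnd_\bmG(V)$ is a division superalgebra by Schur and hence central.

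The main obstacle will be the parity bookkeeping in Part (2): one must separately treat the cases $D_{\bar 1}=0$ (where $V\not\cong\Pi V$ by Proposition \ref{prop:type}) and $D_{\bar 1}\ne 0$ (where $V\cong\Pi V$ produces extra identifications to track), and verify that the possibility of parity-changing Galois twists is captured precisely by the weak condition ${}^\sigma V'\overset{\boldsymbol{\cdot}}{\cong}V'$ rather than the stricter ${}^\sigma V'\cong V'$. Once this is done, the restriction of the bijection \eqref{eq:classification_mod_switch} to $\Irr_{\Pi,\sqrat}$ follows directly from the equivalence (a)$\Leftrightarrow$(b).
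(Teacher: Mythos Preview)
Your proposal is correct and follows essentially the same route as the paper: establish the bijection via Galois descent and base change of $\Hom$, then for Part~(2) reduce to the statement that $\bEnd_{\bmG\otimes_F F^{\sep}}(V\otimes_F F^{\sep})$ is central if and only if all irreducible constituents are $\overset{\boldsymbol{\cdot}}{\cong} V'$ and $\End_{\bmG\otimes_F F^{\sep}}(V')=F^{\sep}\id_{V'}$. The paper isolates this last computation as a standalone lemma (Lemma~\ref{lem:central}), handling separately the cases $V'\cong\Pi V'$ and $V'\not\cong\Pi V'$ and invoking the triviality of $\Br(F^{\sep})$ and the absence of quadratic extensions of $F^{\sep}$; your phrases ``reading off $\End(V')$ as a piece'' and ``the explicit structure\ldots exhibits it as central simple'' are exactly where that lemma does its work, so you should expect to reproduce it when fleshing out those steps. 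A minor simplification: for surjectivity the paper avoids the passage through a finite Galois model $V'_E$ by taking an irreducible $F$-subrepresentation directly inside $\Res_{F^{\sep}/F}V'$.
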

	
	\begin{lem}\label{lem:bc_of_Hom}
		Let $F'$ be a field containing $F$. For representations $U,W$ of $\bmG$ with $U$ finite dimensional, we have a canonical isomorphism
		\begin{equation}
			\bHom_{\bmG}(U,W)\otimes_F F'\cong\bHom_{\bmG\otimes_F F'}(U\otimes_F F',W\otimes_F F').\label{eq:bc_Hom}
		\end{equation}
	\end{lem}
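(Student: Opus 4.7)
The plan is to reduce the statement to the standard fact that $\bHom_F(U,-)\otimes_F F'\cong \bHom_{F'}(U\otimes_F F',-\otimes_F F')$ for finite dimensional $U$, and then use flatness of $F'/F$ to commute base change with the equalizer defining the invariants.

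First, I would recall that representations of $\bmG$ are the same as right supercomodules over the commutative Hopf superalgebra $F[\bmG]$, and that base change along $F\to F'$ sends $F[\bmG]$ to $F'[\bmG\otimes_F F']$. For supercomodules $U,W$ over $F[\bmG]$, the superspace $\bHom_{\bmG}(U,W)$ can be described as the equalizer of the two parallel maps
\[
\bHom_F(U,W)\rightrightarrows \bHom_F(U,W\otimes_F F[\bmG])
\]
obtained from the coaction on $W$ and from the coaction on $U$ composed with the evaluation, exactly as in the definition of $\bHom_A$ given in Section~\ref{sec:notation} (with the supersign appearing through the symmetry $C_{-,-}$). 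The analogous equalizer description holds over $F'$ for $\bHom_{\bmG\otimes_F F'}(U\otimes_F F', W\otimes_F F')$.

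Next I would establish the non-equivariant base change isomorphism: for a finite dimensional vector superspace $U$ and any vector superspace $V$,
\[
\bHom_F(U,V)\otimes_F F'\xrightarrow{\ \sim\ } \bHom_{F'}(U\otimes_F F',V\otimes_F F'),
\]
sending $f\otimes \alpha$ to $\alpha\cdot(f\otimes_F F')$. This is immediate from picking a homogeneous $F$-basis of $U$ and identifying $\bHom_F(U,V)$ with a finite direct sum of shifted copies of $V$; the identification respects the $\bZ/2\bZ$-grading. I would then apply this both to $V=W$ and to $V=W\otimes_F F[\bmG]$, noting that $(W\otimes_F F[\bmG])\otimes_F F'\cong (W\otimes_F F')\otimes_{F'} F'[\bmG\otimes_F F']$.

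Finally, since $F'$ is a field, the functor $-\otimes_F F'$ is exact and therefore commutes with the finite limit defining the equalizer. Applying $-\otimes_F F'$ to the equalizer diagram computing $\bHom_{\bmG}(U,W)$ and using the non-equivariant isomorphism on each term identifies the result with the equalizer diagram computing $\bHom_{\bmG\otimes_F F'}(U\otimes_F F',W\otimes_F F')$. One checks that the two parallel maps match under these identifications because the coaction on $U\otimes_F F'$ (resp.\ $W\otimes_F F'$) is, by construction, obtained from that of $U$ (resp.\ $W$) by extension of scalars, and the symmetry constraint is natural in the tensor factors. This yields the desired canonical isomorphism \eqref{eq:bc_Hom}.

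The only mildly subtle point will be verifying that the super signs coming from the symmetry constraint $C_{\bHom_F(U,W),A}$ behave correctly under base change; however, since base change is a symmetric monoidal functor on vector superspaces, the parallel maps in the equalizer are transported to their $F'$-analogues on the nose, so this verification is formal. The finite dimensionality of $U$ enters only in the non-equivariant base change step and is essential there.
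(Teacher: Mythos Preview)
Your argument is correct and is precisely the standard approach: the paper's own proof is simply a reference to \cite[I.2.10 (7)]{MR2015057}, and what you have written unpacks that reference in the super setting, namely describing $\bHom_{\bmG}$ as an equalizer (kernel) of maps between ordinary $\bHom_F$-spaces, invoking finite dimensionality of $U$ for the non-equivariant base change, and then using flatness of $F'/F$ to commute $-\otimes_F F'$ with the equalizer. Your remark that base change is symmetric monoidal handles the super signs, so there is nothing further to add.
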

	
	\begin{proof}
		This is proved in a similar way to \cite[I.2.10 (7)]{MR2015057}.
	\end{proof}
	
	\begin{lem}\label{lem:semisimple_descent}
		Let $F'$ be field containing $F$, and $U$ be a finite dimensional representation of $\bmG$.
		\begin{enumerate}
			\item If $U\otimes_F F'$ is semisimple, so is $U$.
			\item Assume $F'/F$ to be a possibly infinite Galois extension. If $U$ is semisimple, so is $U\otimes_F F'$. 
		\end{enumerate}
	\end{lem}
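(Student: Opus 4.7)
For part (1), I would apply faithfully flat descent to the restriction map on Hom-spaces. Given a sub-supercomodule $W \subseteq U$, a complement exists if and only if $\id_W$ lies in the image of the restriction map
\[
\rho : \bHom_{\bmG}(U, W) \to \bHom_{\bmG}(W, W), \quad f \mapsto f|_W.
\]
By Lemma \ref{lem:bc_of_Hom}, base change to $F'$ yields the analogous map for $U \otimes_F F'$, and semisimplicity of $U \otimes_F F'$ furnishes $\id_W \otimes 1$ in the image of $\rho \otimes_F F'$. Since $F \hookrightarrow F'$ is faithfully flat, $\Image(\rho \otimes F') = \Image(\rho) \otimes_F F'$ and the cokernel of $\rho$ embeds into its base change, so $\id_W \in \Image(\rho)$ and the extension $0 \to W \to U \to U/W \to 0$ splits.

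For part (2), I would first reduce to the case where $U$ is simple, since base change commutes with finite direct sums. Given a sub-supercomodule $W \subseteq U \otimes_F F'$, the plan is to descend $W$ to a finite Galois subextension: an $F'$-basis of $W$, expressed in an $F$-basis of $U$, involves only finitely many elements of $F'$, which lie in some finite Galois $F_i/F$. Setting $W_i := W \cap (U \otimes_F F_i)$, a linear-algebra argument via an invertible submatrix from the $F'$-independence of the basis shows $W_i \otimes_{F_i} F' = W$, and faithful flatness of $F'/F_i$ then lets us conclude $W_i$ is a sub-supercomodule of $U \otimes_F F_i$ (since the sub-supercomodule condition is detected after base change to $F'$).

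The problem thus reduces to showing $U \otimes_F F_i$ is semisimple for each finite Galois $F_i/F$. Choose a simple sub-supercomodule $W' \subseteq U \otimes F_i$; for each $\sigma \in \mathrm{Gal}(F_i/F)$, the $\sigma$-semilinear twist $\sigma(W')$ is again a simple sub-supercomodule, because $\sigma$ acts on the $F_i$-factor only and commutes with both the $\bmG$-coaction and the $\bZ/2$-grading. The finite sum $\sum_\sigma \sigma(W')$ is $\mathrm{Gal}(F_i/F)$-stable, hence equals $W_F \otimes_F F_i$ for some nonzero $\bmG$-sub-supercomodule $W_F \subseteq U$ by ordinary Galois descent; simplicity of $U$ forces $W_F = U$, so $U \otimes F_i$ is a finite sum of simples and hence semisimple.

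The main obstacle is the descent step in part (2): verifying that the naive $F_i$-subspace $W_i = W \cap (U \otimes F_i)$ is actually stable under the $\bmG \otimes F_i$-coaction rather than just an $F_i$-linear subspace. The $\bZ/2$-grading is preserved at every stage — base change, intersection with $U \otimes F_i$, and Galois twist all respect parity — so the super setting introduces no additional difficulty beyond that already implicit in the classical Galois descent argument.
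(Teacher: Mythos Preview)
Your proof is correct and follows essentially the same approach as the paper. For (1), both arguments use Lemma~\ref{lem:bc_of_Hom} and faithful flatness to descend a retraction, though you work directly with the image of the restriction map while the paper introduces an auxiliary fiber product $\GRet(j)$ and inducts on $\dim_F U$; for (2), the paper simply invokes ``Galois descent'' without detail, and your reduction to a finite Galois subextension followed by summing Galois conjugates of a simple subobject is precisely the standard way to unpack that.
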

	
	In a sequel, we only use (2). Part (1) will be used in the next section.

	\begin{proof}
		Part (2) is an easy consequence of the Galois descent. We prove (1) by induction on $n=\dim_F U$. The assertion is clear if $n=0$. If $n\geq 1$, pick an irreducible subrepresentation $U_1$ of $U$. The assertion follows if $U=U_1$; Otherwise, $U/U_1$ is semisimple from the induction hypothesis since semisimplicity is closed under formations of subquotients.
		
		It remains to find a retract of the inclusion $j:U_1\hookrightarrow U$ if $U\neq U_1$. Define an $F$-vector space $\GRet(j)$ by
		\[\GRet(j)=
		\{p\in\Hom_{\bmG}(U,U_1):~\exists c\in F~\mathrm{s.t.}~p\circ j=c\id_{U_1}\}.\]
		This is identified with the fiber product of the injective map
		\[(-)\circ j:\Hom_{\bmG}(U,U_1)\to\End_{\bmG}(U_1)\]
		and the scalar map $F\hookrightarrow \End_{\bmG}(U_1);~c\mapsto c\id_{U_1}$.
		We define $\GRet(F'\otimes_F j)$ in a similar way. Then Proposition \ref{prop:type} implies $\tilde{F}\otimes_F \GRet(j)\cong \GRet(\tilde{F}\otimes_F j)$. Notice that $F'\otimes_F U_1$ is semisimple. Since $U\otimes_F F'$ is nonzero and semisimple, $\GRet(j)$ is nonzero. In fact, every subrepresentation of a semisimple representation admits a retract. We now obtain a retract by normalizing a nonzero element of $\GRet(j)$. This completes the proof.
	\end{proof}

	\begin{lem}\label{lem:base_change}
		\begin{enumerate}
			\item The representation $V\otimes_F F^{\sep}$ is completely reducible.
			\item Every irreducible summand of $V\otimes_F F^{\sep}$ is isomorphic to ${}^\sigma V'$ for some $\sigma\in\Gamma$. Conversely, a copy of ${}^\sigma V'$ appears in $V\otimes_F F^{\sep}$ for every $\sigma\in\Gamma$.
		\end{enumerate}
	\end{lem}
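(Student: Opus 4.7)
The plan is as follows. Part (1) follows immediately from Lemma \ref{lem:semisimple_descent}(2) applied to the possibly infinite Galois extension $F^{\sep}/F$, since the irreducible representation $V$ is finite-dimensional (any irreducible representation of an affine group superscheme is so by local finiteness) and therefore semisimple. The real content lies in part (2), which I intend to derive from the identity
\[V \otimes_F F^{\sep} = \sum_{\sigma \in \Gamma} {}^\sigma V'\]
inside $V \otimes_F F^{\sep}$, where each ${}^\sigma V'$ is realized as the image $\sigma(V')$ under the canonical $\sigma$-action on $V \otimes_F F^{\sep}$ (well-defined because $V$ is defined over $F$).

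Granting this identity, both halves of (2) are immediate. For the converse direction, each $\sigma(V')$ is literally an irreducible subrepresentation of $V \otimes_F F^{\sep}$, hence a summand of the semisimple module $V \otimes_F F^{\sep}$. For the forward direction, semisimplicity combined with the identity forces every irreducible summand of $V \otimes_F F^{\sep}$ to be isomorphic to some ${}^\sigma V'$.

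To establish the identity, set $W = \sum_{\sigma \in \Gamma} \sigma(V') \subset V \otimes_F F^{\sep}$; this is $\Gamma$-stable by construction, and the $\Gamma$-action on $V \otimes_F F^{\sep}$ commutes with the $\bmG$-action. The main (though mild) obstacle is to invoke Galois descent for the possibly infinite extension $F^{\sep}/F$: since $V \otimes_F F^{\sep}$ is finite-dimensional over $F^{\sep}$, every element is fixed by an open subgroup of $\Gamma$, so the standard descent for finite-dimensional vector superspaces yields an $F$-subsuperspace $W^\Gamma \subset V$ with $W^\Gamma \otimes_F F^{\sep} = W$, automatically a $\bmG$-subrepresentation. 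Since $W \supset V' \neq 0$, we have $W^\Gamma \neq 0$, and irreducibility of $V$ forces $W^\Gamma = V$, whence $W = V \otimes_F F^{\sep}$, completing the argument.
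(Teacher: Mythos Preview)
Your proof is correct. Part (1) and the converse half of (2) are handled the same way as in the paper: Lemma \ref{lem:semisimple_descent}(2), and the observation that the semilinear $\Gamma$-action on $V\otimes_F F^{\sep}$ carries the subrepresentation $V'$ to a copy of ${}^\sigma V'$.

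For the forward half of (2) your route is genuinely different from the paper's. You work \emph{inside} $V\otimes_F F^{\sep}$: the sum $W=\sum_{\sigma}\sigma(V')$ is $\Gamma$-stable, Galois descent produces a nonzero subrepresentation $W^\Gamma\subset V$, and irreducibility of $V$ forces $W=V\otimes_F F^{\sep}$, so every irreducible summand is already some ${}^\sigma V'$. The paper instead works \emph{outside}: using complete reducibility it picks a surjection $V\otimes_F F^{\sep}\twoheadrightarrow V'$, takes its adjoint $V\hookrightarrow \Res_{F^{\sep}/F}V'$, base-changes to obtain an embedding
\[
V\otimes_F F^{\sep}\hookrightarrow (\Res_{F^{\sep}/F}V')\otimes_F F^{\sep}\hookrightarrow \prod_{\sigma\in\Gamma}{}^\sigma V',
\]
and then projects to a finite subproduct. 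Your argument avoids the restriction-of-scalars/base-change adjunction and the product decomposition of $(\Res_{F^{\sep}/F}V')\otimes_F F^{\sep}$, relying only on descent for finite-dimensional vector superspaces; it also gives the slightly sharper equality $V\otimes_F F^{\sep}=\sum_\sigma\sigma(V')$ rather than just an embedding. The paper's approach, on the other hand, foreshadows the surjectivity step in the proof of Theorem \ref{thm:pure}, where the same adjunction is used again.
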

	
	\begin{proof}
		Part (1) is a restatement of the Lemma \ref{lem:semisimple_descent} (2) with $F'=F^{\sep}$. To prove (2), choose a nonzero homomorphism $V\otimes_F F^{\sep}\to V'$, which exists by (1). It induces an injective map $V\hookrightarrow \Res_{F^{\sep}/F} V'$. Take its base change to get
		\begin{equation}
			V\otimes_F F^{\sep}\hookrightarrow (\Res_{F^{\sep}/F} V')\otimes_F F^{\sep}
			\hookrightarrow\prod_{\sigma\in\Gamma} {}^\sigma V',\label{eq:embedding}
		\end{equation}
		where $\prod_{\sigma\in\Gamma} {}^\sigma V'$ is the set-theoretic product. Since $V$ is of finite dimension, there exists a finite subset $\Gamma'\subset\Gamma$ such that the composition of \eqref{eq:embedding} with the projection to $\oplus_{\sigma\in\Gamma'} {}^\sigma V'$ is injective. The first part of (2) now follows since this composite map is a homomorphism of representations of $\bmG\otimes_F F^{\sep}$. The latter part is verified by taking the Galois twist of the embedding $V'\hookrightarrow V\otimes_F F^{\sep}$ and the identification \[{}^\sigma(V\otimes_F F^{\sep})\cong V\otimes_F F^{\sep}.\]
	\end{proof}

	\begin{rem}
		If one hopes, he or she can prove the first part of (2) within the category of representations of $\bmG\otimes_F F^{\sep}$ by taking the product of ${}^\sigma V'$ ($\sigma\in\Gamma$) in this category. This product does not agree with the set-theoretic one, but it exists by similar arguments to \cite[Corollary 2.5.6]{MR1650134} or \cite[Proposition 1.2.2]{MR2066503}.
	\end{rem}

	\begin{lem}\label{lem:central}
		Assume $F$ to be separably closed. Let $\{V_\lambda\}_{\lambda\in\Lambda}$ be a set of irreducible representations of $\bmG$ indexed by a nonempty finite set $\Lambda$. Fix $\lambda_0\in\Lambda$. Then $\bEnd_{\bmG}(\oplus_{\lambda\in\Lambda} V_\lambda)$ is central if and only if $\End_{\bmG}(V_{\lambda_0})=F\id_{V_{\lambda_0}}$ and $V_\lambda\overset{\boldsymbol{\cdot}}{\cong} V_{\lambda_0}$ for every index $\lambda\in\Lambda$.
	\end{lem}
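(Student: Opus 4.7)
The plan is to prove the equivalence via three moves: first, use an idempotent argument to force the $V_\lambda$'s to lie in a single $\overset{\boldsymbol{\cdot}}{\cong}$-class; second, identify the full endomorphism superalgebra as a matrix superalgebra over $D := \bEnd_{\bmG}(V_{\lambda_0})$; and third, invoke the classification of central division superalgebras over a separably closed field to conclude.

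I would partition $\Lambda = \bigsqcup_{c \in C}\Lambda_c$ into $\overset{\boldsymbol{\cdot}}{\cong}$-equivalence classes of the $V_\lambda$'s, and consider for each $c$ the projection $e_c : \bigoplus_{\lambda \in \Lambda} V_\lambda \twoheadrightarrow \bigoplus_{\lambda \in \Lambda_c} V_\lambda \hookrightarrow \bigoplus_{\lambda \in \Lambda} V_\lambda$. Since Schur's lemma applied to irreducible representations and their parity shifts gives $\bHom_{\bmG}(V_\mu, V_\nu) = 0$ whenever $V_\mu \not\overset{\boldsymbol{\cdot}}{\cong} V_\nu$, each even idempotent $e_c$ commutes with all of $\bEnd_{\bmG}(\bigoplus_{\lambda}V_\lambda)$. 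If the latter is central, then $e_c \in F$ and hence $e_c \in \{0, 1\}$; but the $e_c$ are nonzero orthogonal idempotents summing to the identity, so $C$ must be a singleton, forcing every $V_\lambda \overset{\boldsymbol{\cdot}}{\cong} V_{\lambda_0}$. Next, writing $p$ for the number of $V_\lambda \cong V_{\lambda_0}$ and $q$ for the number of $V_\lambda \cong \Pi V_{\lambda_0}$ with $V_\lambda \not\cong V_{\lambda_0}$, a choice of isomorphisms yields $\bigoplus_\lambda V_\lambda \cong F^{p|q}\otimes_F V_{\lambda_0}$ and hence a superalgebra isomorphism $\bEnd_{\bmG}(\bigoplus_{\lambda}V_\lambda) \cong \bEnd_F(F^{p|q}) \otimes_F D$. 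The matrix superalgebra $\bEnd_F(F^{p|q})$ is central simple, and the tensor product of central superalgebras is central (this is the group law on $\BW(F)$), so the endomorphism superalgebra is central if and only if $D$ is central.

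Finally, since irreducible supercomodules over a supercoalgebra are automatically finite-dimensional (by the fundamental theorem of coalgebras), $V_{\lambda_0}$ is finite-dimensional and $D$ is a finite-dimensional division superalgebra by Schur. When $F$ is separably closed, $\Br(F) = 0$ and $F^\times = (F^\times)^2$, so the bijection \eqref{eq:bw} yields $\BW(F) = \{(+,1,1),\,(-,1,1)\}$, represented by the central division superalgebras $F$ and $F\oplus F\epsilon$ with $\epsilon^2 = 1$, both of whose even parts equal $F$. Thus $D$ central forces $\End_{\bmG}(V_{\lambda_0}) = D_{\bar{0}} = F\id_{V_{\lambda_0}}$. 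Conversely, under the stated hypotheses, Schur implies $D_{\bar{0}} = F$ and $D_{\bar{1}}$ is at most one-dimensional, placing $D$ in one of the above two central classes and completing the argument. The subtlest step I anticipate is tracking the parity carefully in the matrix superalgebra identification, particularly when $V_{\lambda_0} \cong \Pi V_{\lambda_0}$ and the splitting of $\Lambda$ into $V_{\lambda_0}$-copies and $\Pi V_{\lambda_0}$-copies is not canonical.
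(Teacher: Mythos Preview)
Your proof is correct and diverges from the paper's in the second half. Both arguments begin identically, reducing to a single $\overset{\boldsymbol{\cdot}}{\cong}$-class via the idempotent/product decomposition. After that, the paper sets $D = \End_{\bmG}(V_{\lambda_0})$ (even part only) and splits into two explicit cases according to whether $V_{\lambda_0}\cong\Pi V_{\lambda_0}$, computing the even supercenter directly in each and, in Case~II, invoking \cite[Lemma~6]{MR0167498} together with the nonexistence of quadratic extensions of a separably closed field. You instead take the full $D = \bEnd_{\bmG}(V_{\lambda_0})$, identify the endomorphism superalgebra uniformly as $\bEnd_F(F^{p|q})\otimes_F D$, and appeal to $\BW(F)\cong\bZ/2\bZ$ for $F$ separably closed to see that the only central division superalgebras are $F$ and $F[\epsilon]/(\epsilon^2-1)$, both with even part $F$. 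Your route avoids the case split and is cleaner, at the price of invoking the classification \eqref{eq:bw}; the paper's route is more self-contained. One minor imprecision: your phrase ``this is the group law on $\BW(F)$'' only literally justifies the ``if'' direction (both factors central simple $\Rightarrow$ tensor central simple); for the ``only if'' direction you are really using that $1\otimes(-)$ embeds the even supercenter of $D$ into that of the tensor product, which is immediate but is not a $\BW(F)$-statement since $D$ is not yet known to be central.
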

	
	\begin{proof}
		Set
		$\Lambda_{\lambda_0}\coloneqq\{\lambda\in\Lambda:~V_\lambda\overset{\boldsymbol{\cdot}}{\cong} V_{\lambda_0}\}\ni\lambda_0$.
		Then we have
		\[\bEnd_{\bmG}(\oplus_{\lambda\in\Lambda} V_\lambda)\cong
		\bEnd_{\bmG}(\oplus_{\lambda\in\Lambda_{\lambda_0}} V_\lambda)\times 
		\bEnd_{\bmG}(\oplus_{\lambda\in\Lambda\setminus \Lambda_{\lambda_0} } V_\lambda).
		\]
		Therefore $\bEnd_{\bmG}(\oplus_{\lambda\in\Lambda} V_\lambda)$ is central only if $\Lambda=\Lambda_{\lambda_0}$. Henceforth we assume $\Lambda=\Lambda_{\lambda_0}$.
		
		Let $D=\End_{\bmG}(V_{\lambda_0})$. Set
		$n\coloneqq |\Lambda|$ and $m\coloneqq|\{\lambda\in\Lambda:~V_\lambda\cong \Pi V_{\lambda_0}\}|$.
		\begin{description}
			\item[Case I] Assume $\Pi V_{\lambda_0}\not\cong V_{\lambda_0}$. Then we have an isomorphism
			\[\bEnd_{\bmG}(\oplus_{\lambda\in\Lambda} V_\lambda)\cong D\otimes_F \bEnd_F(F^m\oplus \Pi F^{n-m}).\]
			It is straightforward that the even part of the center of $\bEnd_{\bmG}(\oplus_{\lambda\in\Lambda} V_\lambda)$ agrees with the center of $D$. In particular, the ``if'' direction holds in the present assumption. The converse follows since $F$ is separably closed. In fact, if $\bEnd_{\bmG}(\oplus_{\lambda\in\Lambda} V_\lambda)$ is central, we have $D=F$ (\cite[Chapter X, \S5, Proposition 7]{MR554237}).
			\item[Case II] Assume $V_{\lambda_0}\cong \Pi V_{\lambda_0}$. We fix an isomorphism $\varphi: V_{\lambda_0}\cong \Pi V_{\lambda_0}$. Using $\varphi$ and the condition $\Lambda=\Lambda_{\lambda_0}$, we identify $\bEnd_{\bmG}(\oplus_{\lambda\in\Lambda} V_\lambda)$ with $\bEnd_{\bmG}(V_{\lambda_0}^n)$. Set $\epsilon=\prod_{i=1}^n \varphi:V_{\lambda_0}^n\cong \Pi V_{\lambda_0}^n$. Then we have
			\[\bEnd_{\bmG}(V_{\lambda_0}^n)=\End_{\bmG}(V_{\lambda_0}^n)\oplus \epsilon\End_{\bmG}(V_{\lambda_0}^n).\]
			One can then easily see from this presentation that $\bEnd_{\bmG}(V_{\lambda_0}^n)$ is a simple superalgebra.
			
			Suppose that $\bEnd_{\bmG}(\oplus_{\lambda\in\Lambda} V_\lambda)$ is central. Its even part is isomorphic to $D\otimes_F \End_F(F^n)$. The center of $D\otimes_F \End_F(F^n)$ is isomorphic to that of $D$. If $D\otimes_F \End_F(F^n)$ is not central, the center of $D$ is a quadratic extension of $F$ (\cite[Lemma 6]{MR0167498}). Since $F$ is a separably closed field of characteristic not two, we do not have such an extension. A similar argument to Case I then implies $D=F$.
			
			Conversely, suppose $D=F$. Then one can express $\varphi\circ\Pi\varphi=a\id_{V_{\lambda_0}}$ for some $a\in F^\times$. We choose a root $\sqrt{a}\in F$. This exists since $F$ is separably closed of characteristic not two. We replace $\varphi$ with $\sqrt{a}^{-1}\varphi$ to assume that the equality $\varphi\circ\Pi\varphi=\id_{V_{\lambda_0}}$ holds. We now get
			\[\bEnd_{\bmG}(\oplus_{\lambda\in\Lambda} V_\lambda)
			\cong \End_F(F^n)\oplus \End_F(F^n)\epsilon,
			\]
			where $\epsilon$ is a central odd element with $\epsilon^2=1$. It is straightforward that this is central.
		\end{description}
	\end{proof}

	\begin{proof}[Proof of Theorem \ref{thm:pure}]
		The map \eqref{eq:classification} is well-defined from Lemma \ref{lem:base_change} (2). To see that it is injective, let $V$ and $W$ be irreducible representations of $\bmG$. Take irreducible summands $V'\subset V\otimes_F F^{\sep}$ and $W'\subset W\otimes_F F^{\sep}$ such that $V'\cong {}^\sigma W'$ for some $\sigma\in\Gamma$. We wish to prove $\Hom_{\bmG}(V,W)$ is nonzero. We may take the base change to $F^{\sep}$ by the faithfully flat descent. Since $W\otimes_F F^{\sep}$ is completely reducible, one can choose a surjective map $V\otimes_F F^{\sep}\to V'$. We use it and Lemmas \ref{lem:base_change} (2), \ref{lem:bc_of_Hom} to get a series of injective maps
		\[\begin{split}
			0&\neq \Hom_{\bmG\otimes_F F^{\sep}}(V',{}^\sigma W')\\
			&\hookrightarrow\Hom_{\bmG\otimes_F F^{\sep}}(V',W\otimes_F F^{\sep})\\
			&\hookrightarrow \Hom_{\bmG\otimes_F F^{\sep}}(V\otimes_F F^{\sep},W\otimes_F F^{\sep})\\
			&\overset{\eqref{eq:bc_Hom}}{\cong}\Hom_{\bmG}(V,W)\otimes_F F^{\sep},
		\end{split}
		\]
		which implies that $\Hom_{\bmG}(V,W)$ is nonzero as desired.
		
		Conversely, suppose that we are given an irreducible representation $W'$ of the affine group superscheme $\bmG\otimes_F F^{\sep}$.
		Choose an irreducible subrepresentation $W$ of $\Res_{F^{\sep}/F} W'$. It extends to a surjective homomorphism $W \otimes_F F^{\sep}\to W'$. Since $W \otimes_F F^{\sep}$ is completely reducible, $W\otimes_F F^{\sep}$ contains $W'$ as a summand. This completes the proof of the first part of (1). Since the map \eqref{eq:classification} respects $\Pi$ by construction, it descends to the bijection \eqref{eq:classification_mod_switch}.
		
		We next prove (2). Since $\bEnd_{\bmG} V$ is a division superalgebra, the following conditions are equivalent:
		\begin{enumerate}
			\item[(a)] $V$ is super quasi-rational.
			\item[(a)'] $\bEnd_{\bmG}(V)$ is central. 
			\item[(a)''] $\bEnd_{\bmG\otimes_F F^{\sep}} (V\otimes_F F^{\sep})$ is central
		\end{enumerate}
		(use Lemma \ref{lem:bc_of_Hom} and the faithfully flat descent for the equivalence of (a)' and (a)''). According to Lemma \ref{lem:base_change} (1), we may write $V\otimes_F F^{\sep}=\oplus_{\lambda\in\Lambda} V'_\lambda$, where $V'_\lambda$ are irreducible representations of $\bmG\otimes_F F^{\sep}$. In view of Lemma \ref{lem:central}, (a)'' holds if and only if $V'_\lambda\overset{\boldsymbol{\cdot}}{\cong} V'$ for every $\lambda\in\Lambda$. Part (2) now follows from Lemma \ref{lem:base_change} (2).
	\end{proof}

	\begin{note}
		For an irreducible representation $U$ of $\bmG\otimes_F F^{\sep}$, we denote the corresponding irreducible representation of $\bmG$ by $U_F$.
	\end{note}
	
	\begin{var}\label{var:pure}
		\begin{enumerate}
			\item The irreducible representation $V$ of $\bmG$ is quasi-rational if and only if $V'$ is quasi-rational.
			\item The bijection \eqref{eq:classification} restricts to $\Irr_{\qrat}\bmG\cong\Irr_{\qrat}\bmG\otimes_F F^{\sep}$.
		\end{enumerate}
	\end{var}
	
	If a given irreducible representation $V$ of $\bmG$ is quasi-rational, the central division algebra $\End_{\bmG}(V)$ is recovered from $\beta^{\super}_V$ through the projection $\BW(F)\to \Br(F)$. As a summary, we obtain a commutative diagram
	\[\begin{tikzcd}
		\Irr_{\Pi,\sqrat}\bmG\otimes_F F^{\sep}\ar[r,"\sim"]
		&\Irr_{\Pi,\sqrat}\bmG\ar[r, "\beta^{\super}"]&\BW(F)\ar[d]\\
		\Irr_{\qrat}\bmG\otimes_F F^{\sep}\ar[r,"\sim"]\ar[u]
		&\Irr_{\qrat}\bmG\ar[r, "\beta"]\ar[u]&\Br(F),
	\end{tikzcd}\]
	where the right vertical arrow is defined by the projection. The middle and left vertical arrows are obtained by restriction of the the canonical quotient maps
	\[\begin{array}{cc}
		\Irr\bmG\otimes_F F^{\sep}\to \Irr_{\Pi}\bmG\otimes_F F^{\sep},
		&\Irr\bmG \to \Irr_{\Pi} \bmG.
	\end{array}\]
	
	For the relation of studies of $\bEnd_{\bmG}(V)$ and $\End_{\bmG}(V)$, we note that they are equal if $\bEnd_{\bmG}(V)_{\bar{1}}=0$ (cf.~Proposition \ref{prop:type}). Let us record a criterion for this equality:
	
	\begin{prop}\label{prop:even}
		We have an isomorphism $V\cong \Pi V$ if and only if $\Pi V'\cong {}^\sigma V'$ for some $\sigma\in\Gamma$. 
	\end{prop}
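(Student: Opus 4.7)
The plan is to deduce this directly from the classification bijection \eqref{eq:classification} in Theorem \ref{thm:pure}(1), together with the fact that parity change is compatible with the base change $(-)\otimes_F F^{\sep}$ and with the Galois action on $\Irr \bmG\otimes_F F^{\sep}$.

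First I would note that the bijection \eqref{eq:classification} sends $V\in\Irr\bmG$ to the $\Gamma$-orbit of the chosen irreducible summand $V'\subset V\otimes_F F^{\sep}$. Applying this to $\Pi V$ and using that $\Pi V\otimes_F F^{\sep}=\Pi(V\otimes_F F^{\sep})$, an irreducible summand of $\Pi V\otimes_F F^{\sep}$ is exactly $\Pi V'$ (since $\Pi$ is exact and preserves irreducibility). Hence, under \eqref{eq:classification}, $\Pi V$ corresponds to the $\Gamma$-orbit of $\Pi V'$.

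Since \eqref{eq:classification} is a bijection, $V\cong \Pi V$ as representations of $\bmG$ is equivalent to the equality of the two $\Gamma$-orbits $\Gamma\cdot V'=\Gamma\cdot \Pi V'$ in $\Irr \bmG\otimes_F F^{\sep}$. By the very definition of the $\Gamma$-action, this equality is in turn equivalent to the existence of some $\sigma\in\Gamma$ with $\Pi V'\cong {}^\sigma V'$, which proves both directions.

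I do not expect a genuine obstacle here: everything reduces to unwinding definitions, provided one has already established Theorem \ref{thm:pure}(1). The only minor point to be careful about is that the parity change functor commutes both with base change along $F\hookrightarrow F^{\sep}$ and with Galois twisting, i.e., $\Pi({}^\sigma V')\cong {}^\sigma(\Pi V')$, so that ``$\Gamma$-orbit of $\Pi V'$'' is well-defined independently of the chosen summand; this is immediate from the construction of the Galois twist as a base change along $\sigma\colon F^{\sep}\to F^{\sep}$, since this base change is symmetric monoidal and so preserves the $\bZ/2\bZ$-grading.
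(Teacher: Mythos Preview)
Your argument is correct. It is essentially a one-line deduction from Theorem \ref{thm:pure}(1) together with the observation (already recorded in the proof of that theorem) that the bijection \eqref{eq:classification} respects $\Pi$.

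The paper takes a slightly different, more hands-on route: rather than citing the bijection \eqref{eq:classification}, it argues each direction directly by manipulating Hom spaces. For the ``if'' direction it shows $\Hom_{\bmG}(V,\Pi V)\otimes_F F^{\sep}\neq 0$ by embedding $\Hom_{\bmG\otimes_F F^{\sep}}({}^\sigma V',\Pi V')$ into it via Lemma \ref{lem:bc_of_Hom}; for the ``only if'' direction it uses the embedding $V\hookrightarrow\Res_{F^{\sep}/F}V'$ and the adjunction to show $\Hom_{\bmG\otimes_F F^{\sep}}(V\otimes_F F^{\sep},\Pi V')\neq 0$, then concludes with Lemma \ref{lem:base_change}. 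In effect the paper's proof is self-contained relative to the lemmas and reproves, in this special case, the injectivity portion of \eqref{eq:classification}. Your approach is more economical since Theorem \ref{thm:pure} is already available; the paper's approach has the minor advantage of making the mechanism transparent without appealing to the full classification statement. Both are fine.
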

	
	\begin{proof}
		If $\Pi V'\cong {}^\sigma V'$ for some $\sigma\in\Gamma$ then we have
		\[\begin{split}
			\Hom_{\bmG}(V,\Pi V)\otimes_F F^{\sep}
			&\cong \Hom_{\bmG\otimes_F F^{\sep}}(V\otimes_F F^{\sep},\Pi V\otimes_F F^{\sep})\\
			&\reflectbox{$\hookrightarrow$} \Hom_{\bmG\otimes_F F^{\sep}}(V\otimes_F F^{\sep},\Pi V')\\
			&\reflectbox{$\hookrightarrow$} \Hom_{\bmG\otimes_F F^{\sep}}({}^\sigma V',\Pi V')\\
			&\neq 0.
		\end{split}\]
		Conversely, suppose $\Pi V\cong V$. Choose a nonzero homomorphism $V\otimes_FF^{\sep}\to V'$ to get an injective map $V\hookrightarrow \Res_{F^{\sep}/F} V'$. One can see
		\[\Hom_{\bmG\otimes_F F^{\sep}}(V\otimes_F F^{\sep},\Pi V')\neq 0\]
		from the sequence
		\[0\neq \Hom_{\bmG}(V,\Pi V)
		\hookrightarrow \Hom_{\bmG}(V,\Pi\Res_{F^{\sep}/F} V')
		\cong \Hom_{\bmG\otimes_F F^{\sep}}(V\otimes_F F^{\sep},\Pi V').\]
		The assertion now follows from Lemma \ref{lem:base_change}.
	\end{proof}

	We note that the study of the division superalgebras of endomorphisms is reduced to the super quasi-rational case:
	
	\begin{prop}\label{prop:nonsuperpure}
		Let $V$ be an irreducible representation of $\bmG$. Let $F'$ be the even part of the center of $\bEnd_{\bmG}(V)$.
		\begin{enumerate}
			\item There exists a finite dimensional super quasi-rational irreducible representation $\tilde{V}$ of $\bmG\otimes_F F'$ such that $V\cong\Res_{F'/F} \tilde{V}$ and $\bEnd_{\bmG}(V)\cong \bEnd_{\bmG\otimes_F F'}(\tilde{V})$ as superalgebras over $F'$.
			\item Assume $F'$ to be a separable extension of $F$. Fix an embedding $F'\hookrightarrow F^{\sep}$ over $F$. Choose an irreducible representation $U$ of $\bmG\otimes_F F^{\sep}$ with an injective homomorphism $\tilde{V}\hookrightarrow \Res_{F^{\sep}/F'} U$. Then we have $V=U_F$.
		\end{enumerate}
	\end{prop}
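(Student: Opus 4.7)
The plan is to construct $\tilde V$ in Part~(1) by endowing $V$ with its canonical $F'$-module structure coming from the inclusion $F'\subset \bEnd_{\bmG}(V)_{\bar 0}$. Since $F'$ lies in the center of $\bEnd_{\bmG}(V)$, this action commutes with the $\bmG$-action and promotes $V$ to a representation $\tilde V$ of $\bmG\otimes_F F'$, with $V=\Res_{F'/F}\tilde V$ tautologically. For the identification $\bEnd_{\bmG}(V)\cong \bEnd_{\bmG\otimes_F F'}(\tilde V)$ of $F'$-superalgebras, the inclusion $\supseteq$ is clear because any $F'$-linear $\bmG$-equivariant endomorphism is in particular $F$-linear; conversely any $\varphi\in\bEnd_{\bmG}(V)$ commutes with the central subalgebra $F'$ and is therefore $F'$-linear. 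Under this identification, the even part of the center of $\bEnd_{\bmG\otimes_F F'}(\tilde V)$ as an $F'$-superalgebra is $F'$ by the very choice of $F'$, so $\tilde V$ is super quasi-rational. Irreducibility of $\tilde V$ is inherited from $V$ because any $F'$-stable subrepresentation is in particular $F$-stable; finite dimensionality of $\tilde V$ over $F'$ follows from that of $V$ over $F$, since $F\subseteq F'$.

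For Part~(2), I would descend through adjunctions. Applying $\Res_{F'/F}$ to $\tilde V\hookrightarrow \Res_{F^{\sep}/F'}U$, and using the transitivity $\Res_{F'/F}\circ\Res_{F^{\sep}/F'}=\Res_{F^{\sep}/F}$, produces an injection $V\hookrightarrow \Res_{F^{\sep}/F}U$ of representations of $\bmG$. By the standard base-change/restriction-of-scalars adjunction this corresponds to a nonzero homomorphism $V\otimes_F F^{\sep}\to U$ of $\bmG\otimes_F F^{\sep}$-representations, which is surjective since $U$ is irreducible. By Lemma~\ref{lem:base_change}(1) the source is completely reducible, so $U$ appears as a direct summand of $V\otimes_F F^{\sep}$. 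Combining Lemma~\ref{lem:base_change}(2) with Theorem~\ref{thm:pure}(1), the Galois orbit of $U$ is precisely the orbit corresponding to $V$, which is by definition the statement $V=U_F$.

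The main subtlety lies in Part~(1), namely the super-center bookkeeping: the definition of ``central superalgebra'' refers to even elements centralizing the \emph{whole} superalgebra, including the odd part, so one must check that the even part of the super-center of $\bEnd_{\bmG}(V)$ as an $F$-superalgebra coincides with that of $\bEnd_{\bmG\otimes_F F'}(\tilde V)$ as an $F'$-superalgebra. But once the two superalgebras are canonically identified, the super-centers agree as sets and centrality over $F'$ is immediate from the definition of $F'$; the remainder of the argument is formal manipulation with the adjunctions and the previously established lemmas.
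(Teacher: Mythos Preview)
Your proof is correct and follows essentially the same approach as the paper: in Part~(1) you both build $\tilde V$ by letting $F'\subset\End_{\bmG}(V)$ act on $V$ and then argue the two inclusions $\bEnd_{\bmG\otimes_F F'}(\tilde V)\subset\bEnd_{\bmG}(V)$ and vice versa; in Part~(2) you both obtain the injection $V\hookrightarrow\Res_{F^{\sep}/F}U$ via transitivity of restriction. The only cosmetic difference is that for Part~(2) the paper stops immediately after this injection, invoking directly the construction of the \emph{inverse} of the bijection in Theorem~\ref{thm:pure}(1) (namely, $U_F$ is by definition any irreducible subrepresentation of $\Res_{F^{\sep}/F}U$), whereas you pass through the adjunction back to $V\otimes_F F^{\sep}$ and appeal to Lemma~\ref{lem:base_change}; your route is slightly longer but equally valid.
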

	
	We will see how to compute $\beta^{\super}$ in the next section.
	
	\begin{proof}
		It is evident that $F'$ is a field finite over $F$. Notice that $V$ is naturally equipped with the structure of a vector superspace over $F'$ through the action of $\End_{\bmG}(V)\supset F'$. We denote it by $\tilde{V}$.
		
		We identify the induced coaction $V\to V\otimes_F F[\bmG]$
		with
		$\tilde{V}\to \tilde{V}\otimes_{F'}(F'\otimes_F F[\bmG])$,
		which clearly exhibits the coaction of $F'\otimes_F F[\bmG]$ on $\tilde{V}$. Moreover, it is clear by construction that $V\cong\Res_{F'/F} \tilde{V}$. It is clear that if $\tilde{V}$ is reducible then so is $V$. This shows that $\tilde{V}$ is irreducible.
		
		It is evident that $\bEnd_{\bmG\otimes_F F'}(\tilde{V})\subset \bEnd_{\bmG}(V)$. To see the converse containment, take $\Phi\in \bEnd_{\bmG}(V)$. It follows by definition of $F'$ that $\Phi$ is $F'$-linear. In view of the definition of the structure of the right supercomodule on $\tilde{V}$, $\Phi$ lies in $\bEnd_{\bmG\otimes_F F'}(\tilde{V})$. The equality $\bEnd_{\bmG\otimes_F F'}(\tilde{V})= \bEnd_{\bmG}(V)$ now implies that $\tilde{V}$ is super quasi-rational.
		
		Finally, assume $F'$ to be separable over $F$. Fix $F'\hookrightarrow F^{\sep}$ and $U$ as in (2). Then we obtain an injective map
		$V\cong\Res_{F'/F} \tilde{V}\hookrightarrow \Res_{F'/F} \Res_{F^{\sep}/F'} U=\Res_{F^{\sep}/F} U$.
		This implies $V= U_F$.
	\end{proof}
	
	Let us record how to compute $F'$ canonically:
	
	\begin{cor}\label{cor:compute_F'}
		Assume $F$ to be perfect. Let $V'$ be an irreducible representation of $\bmG\otimes_F \bar{F}$.
		\begin{enumerate}
			\item The even part $F'$ of the center of $\bEnd_{\bmG}(V'_F)$ is isomorphic onto the $\Gamma_{V'}$-fixed point subfield $\bar{F}^{\Gamma_{V'}}$ of $\bar{F}$, where $\Gamma_{V'}=\{\sigma\in\Gamma:~{}^\sigma V'\overset{\boldsymbol{\cdot}}{\cong} V'\}$.
			\item The irreducible representation $V_{F'}$ is super quasi-rational. Moreover, we have $V'_F=\Res_{F'/F} V'_{F'}$ and $\bEnd_{\bmG}(V'_F)=\bEnd_{\bmG\otimes_F F'}(V'_{F'})$.
		\end{enumerate}
	\end{cor}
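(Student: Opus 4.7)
The plan is to derive part (1) from Proposition \ref{prop:nonsuperpure} by a Galois-theoretic dimension count, and then deduce part (2) almost immediately.

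First, apply Proposition \ref{prop:nonsuperpure}(1) to $V'_F$ to obtain a super quasi-rational irreducible representation $\tilde{V}$ of $\bmG \otimes_F F'$ with $V'_F \cong \Res_{F'/F}\tilde{V}$ and $\bEnd_{\bmG}(V'_F) \cong \bEnd_{\bmG\otimes_F F'}(\tilde{V})$. Since $F$ is perfect, $F'$ is separable over $F$. Decomposing via the splitting $F' \otimes_F F^{\sep} \cong \prod_\tau F^{\sep}$ over the $F$-embeddings $\tau : F' \hookrightarrow F^{\sep}$, we have
\[V'_F \otimes_F F^{\sep} \cong \bigoplus_{\tau} \tilde{V} \otimes_{F',\tau} F^{\sep}.\]
Since $V'$ is a summand of the left side by Theorem \ref{thm:pure}(1), we may choose the embedding $\tau_0 : F' \hookrightarrow F^{\sep}$ so that $V'$ is a summand of $\tilde{V} \otimes_{F',\tau_0} F^{\sep}$. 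The goal is then to prove the equality $\tau_0(F') = \bar{F}^{\Gamma_{V'}}$.

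For the inclusion $\tau_0(F') \supseteq \bar{F}^{\Gamma_{V'}}$, equivalently $\Gamma(F^{\sep}/\tau_0(F')) \subseteq \Gamma_{V'}$, I use that by super quasi-rationality of $\tilde{V}$ the endomorphism superalgebra $\bEnd_{\bmG \otimes_F F^{\sep}}(\tilde{V} \otimes_{F',\tau_0} F^{\sep})$ is central (base change of a central superalgebra), so Lemma \ref{lem:central} forces every irreducible summand to lie in the single $\Pi$-class $[V']_\Pi$. For any $\sigma \in \Gamma(F^{\sep}/\tau_0(F'))$ the Galois twist preserves the $\tau_0$-piece, hence ${}^\sigma V'$ remains a summand of $\tilde{V} \otimes_{F',\tau_0} F^{\sep}$, and consequently ${}^\sigma V' \overset{\boldsymbol{\cdot}}{\cong} V'$. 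For the reverse inclusion I compute, via Lemma \ref{lem:bc_of_Hom},
\[\bEnd_{\bmG \otimes_F F^{\sep}}(V'_F \otimes_F F^{\sep}) \cong \bEnd_{\bmG \otimes_F F'}(\tilde{V}) \otimes_{F'} (F' \otimes_F F^{\sep}) \cong \prod_\tau \bEnd_{\bmG \otimes_F F^{\sep}}(\tilde{V} \otimes_{F',\tau} F^{\sep}).\]
The product form on the right forbids nonzero super-homomorphisms between summands coming from different $\tau$, so irreducible summands in distinct $\tau$-pieces are not even isomorphic up to $\Pi$. Combined with the single-$\Pi$-class conclusion above, this exhibits exactly $[F':F]$ pairwise distinct $\Pi$-classes of irreducible summands in $V'_F \otimes_F F^{\sep}$. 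By Lemma \ref{lem:base_change}(2) these $\Pi$-classes form precisely the $\Gamma$-orbit of $[V']_\Pi$, of cardinality $[\Gamma : \Gamma_{V'}]$. Continuity of the Galois action on $\Irr\bmG \otimes_F F^{\sep}$ (to be treated in Section \ref{sec:rationality}) ensures that $\Gamma_{V'}$ is closed, so $[\bar{F}^{\Gamma_{V'}}:F] = [\Gamma : \Gamma_{V'}] = [F':F]$, and the equality of fields follows from both inclusions.

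With part (1) in hand, the fact that $V'$ is a summand of $\tilde{V} \otimes_{F',\tau_0} F^{\sep}$ translates, under the bijection of Theorem \ref{thm:pure}(1) applied over $F'$, into the identification $\tilde{V} \cong V'_{F'}$; part (2) is then a rephrasing of Proposition \ref{prop:nonsuperpure}(1) with $\tilde{V}$ relabeled as $V'_{F'}$. The principal difficulty is the counting step: one must verify carefully that super quasi-rationality delivers exactly one $\Pi$-class of summand per $\tau$-piece and that the tensor-product description of the endomorphism superalgebra genuinely prevents $\Pi$-classes from being shared across different embeddings $\tau$; everything else in the argument is essentially a combination of the preceding lemmas with standard Galois correspondence.
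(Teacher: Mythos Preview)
Your argument is correct but proceeds along a genuinely different route from the paper's. The paper decomposes $\bEnd_{\bmG}(V'_F)\otimes_F\bar F$ directly according to the isotypic $\Pi$-components $V'_W$ of $V'_F\otimes_F\bar F$ indexed by the Galois orbit $\GT(V')$ of $[V']_\Pi$; Lemma~\ref{lem:central} identifies the even center of each factor with $\bar F$, yielding $F'\otimes_F\bar F\cong\prod_{W\in\GT(V')}\bar F$, and then one reads off the Galois action on this product and takes invariants to obtain an explicit isomorphism $F'\cong\bar F^{\Gamma_{V'}}$ via projection onto the $V'$-component. By contrast, you first invoke Proposition~\ref{prop:nonsuperpure} to produce $\tilde V$ over $F'$, decompose $V'_F\otimes_F F^{\sep}$ by the embeddings $\tau:F'\hookrightarrow F^{\sep}$, and finish by a counting argument: one inclusion $\tau_0(F')\supseteq\bar F^{\Gamma_{V'}}$ plus a degree comparison. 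The paper's method is shorter and yields a canonical isomorphism rather than an equality of degrees; your approach has the merit of recycling Proposition~\ref{prop:nonsuperpure} and making the role of the $\tau$-decomposition explicit. One small remark: your forward reference to Section~\ref{sec:rationality} for the closedness of $\Gamma_{V'}$ is unnecessary, since the inclusion $\Gamma(F^{\sep}/\tau_0(F'))\subset\Gamma_{V'}$ you already established shows that $\Gamma_{V'}$ contains an open subgroup and is therefore open.
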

	
	\begin{proof}
		We prove (1). Let $\GT(V')$ be the image of the map
		\[\Gamma\to\Irr_{\Pi} \bmG\otimes_F\bar{F};~\sigma\mapsto {}^\sigma V'.\]
		For each $W\in \GT(V)$, let $V'_W$ denote the sum of irreducible subrepresentations of $V'_F\otimes_F \bar{F}$ which are isomorphic to $W$ up to parity change. Then we obtain a canonical isomorphism
		$\bEnd_{\bmG}(V'_F)\otimes_F \bar{F}\cong\prod_{W\in\GT(V)} \bEnd_{\bmG\otimes_F\bar{F}}(V'_W)$.
		This restricts to an isomorphism
		\begin{equation}
			F'\otimes_F\bar{F}\cong \prod_{W\in\GT(V)} \bar{F}\label{eq:Galois}
		\end{equation}
		by Lemma \ref{lem:central}.
		
		Unwinding the definitions, we see that the induced canonical Galois action on $\prod_{W\in\GT(V)} \bar{F}$ through the bijection \eqref{eq:Galois} is given by $\sigma\cdot (a_W)\coloneqq (\sigma(a_{{}^{\sigma^{-1}}W}))$. In particular, $F'$ is expressed as its fixed point subalgebra over $F$. We can easily check that the projection into the $V'$-component gives rise to an isomorphism $F'\cong\bar{F}^{\Gamma_{V'}}$ under this identification. In fact, the inverse is given by $a\mapsto (\sigma(a))_{{}^\sigma V'}$.
		
		Part (2) is a restatement of Proposition \ref{prop:nonsuperpure} through the bijection \eqref{eq:classification}.
	\end{proof}

	\begin{ex}\label{ex:nonsuperpure_real}
		Put $F=\bR$. Let $V$ be an irreducible representation of $\bmG\otimes_\bR\bC$ which is not super quasi-rational. Then we have $V_\bR=\Res_{\bC/\bR} V$ and
		\[\bEnd_{\bmG}(V_\bR)=\bEnd_{\bmG\otimes_\bR\bC} (V)\cong\begin{cases}
			\bC&(V\not\cong\Pi V)\\
			\bC\left[\epsilon\right]/(\epsilon^2-1)&(V\cong \Pi V).
		\end{cases}\]
	\end{ex}
	
	\subsection{Rationality}\label{sec:rationality}
	
	In this section, we give rationality results on irreducible representations.
	
	\begin{defn-prop}\label{defprop:absirr}
		Let $\bmG$ be an affine group superscheme over a field $F$ of characteristic not two. Then for an irreducible representation $V$ of $\bmG$, the following conditions are equivalent:
		\begin{enumerate}
			\item[(a)] $V\otimes_F F'$ is irreducible for every finite Galois extension $F'/F$ in $F^{\sep}$.
			\item[(b)] $V\otimes_F F^{\sep}$ is irreducible. 
		\end{enumerate}
		We say that $V$ is pseudo-absolutely irreducible if these equivalent conditions hold.
	\end{defn-prop}
	
	\begin{proof}
		The implication (b) $\Rightarrow$ (a) is clear. Suppose that (a) is satisfied. Lemma \ref{lem:bc_of_Hom} implies an isomorphism
		\[\End_{\bmG\otimes_{F}F^{\sep}} (V\otimes_{F} F^{\sep})
		\cong \varinjlim_{F'/F} \End_{\bmG\otimes_{F}F'} (V\otimes_{F} F'),
		\]
		where $F'$ runs through all finite Galois extension of $F$ in $F^{\sep}$. Schur's lemma then implies that $\End_{\bmG\otimes_{F}F^{\sep}} (V\otimes_{F} F^{\sep})$ is a division algebra over $F^{\sep}$. In view of Lemma \ref{lem:semisimple_descent}, we deduce that $V\otimes_{F} F^{\sep}$ is irreducible as desired.
	\end{proof}

	In \cite{shibata}, Shibata introduced the notion of absolute irreducibility: A representation is called absolutely irreducible if its base change to an arbitrary field extension is irreducible. Let us record the relation with the (pseudo-absolute) irreducibility:
	
	\begin{prop}\label{prop:abs_irr}
		Let $\bmG$ be an affine group superscheme over a field $F$ of characteristic not two. For an irreducible representation $V$ of $\bmG$, the following conditions are equivalent:
		\begin{enumerate}
			\item[(a)] $V$ is irreducible and $\End_F(V)=F\id_V$;
			\item[(b)] $V$ is pseudo-absolutely irreducible and
			\[\End_{\bmG\otimes_FF^{\sep}}(V\otimes_F F^{\sep})=F^{\sep}\id_{V\otimes_F F^{\sep}};\]
			\item[(c)] $V\otimes_F \bar{F}$ is irreducible;
			\item[(d)] $V$ is absolutely irreducible.
		\end{enumerate}
	\end{prop}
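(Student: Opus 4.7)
The plan is to establish the cycle (d)$\Rightarrow$(c)$\Rightarrow$(a)$\Leftrightarrow$(b)$\Rightarrow$(d). The first three steps are essentially formal; the closing implication (a)$\Rightarrow$(d) is the core of the argument and requires a super version of the Jacobson density theorem.

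The implication (d)$\Rightarrow$(c) is immediate by taking $F'=\bar F$. For (c)$\Rightarrow$(a), Lemma \ref{lem:bc_of_Hom} gives $\End_\bmG(V)\otimes_F\bar F\cong\End_{\bmG\otimes_F\bar F}(V\otimes_F\bar F)$; applying Schur's lemma over the algebraically closed field $\bar F$ to the finite-dimensional irreducible $V\otimes_F\bar F$ forces the right-hand side to equal $\bar F\id$, so $\End_\bmG(V)$ is one-dimensional, namely $F\id_V$. For (a)$\Leftrightarrow$(b), Lemma \ref{lem:bc_of_Hom} transfers the endomorphism condition across $F\subset F^{\sep}$; the irreducibility conditions are linked via Lemma \ref{lem:base_change}(1), since decomposing $V\otimes_FF^{\sep}=\bigoplus_i{V'_i}^{\oplus n_i}$ into pairwise non-isomorphic simple summands yields an even endomorphism algebra $\prod_iM_{n_i}(\End_{\bmG\otimes_FF^{\sep}}V'_i)$ that can equal the field $F^{\sep}$ only if there is a single factor with $n_i=1$ and trivial endomorphism ring, i.e.\ only if $V\otimes_FF^{\sep}$ is already simple.

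For (a)$\Rightarrow$(d), fix an arbitrary field extension $F'/F$. Since $V$ is finite dimensional, its coaction factors through some finite-dimensional subsupercoalgebra $C\subset F[\bmG]$; putting $A\coloneqq C^*$, the representation $V$ becomes a simple supermodule over the finite-dimensional superalgebra $A$, with $\bEnd_A(V)=\bEnd_\bmG(V)=:D$ a division superalgebra whose even part is $F$ by hypothesis. The super Jacobson--Wedderburn density theorem (cf.\ \cite{MR0167498}) identifies the image of $A$ in $\bEnd_F(V)$ with the centralizer $\bEnd_{D^{\op}}(V)$. Tensoring this surjection with $F'$ yields a surjection $A\otimes_FF'\twoheadrightarrow\bEnd_{D^{\op}\otimes_FF'}(V\otimes_FF')$. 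The superalgebra $D^{\op}\otimes_FF'$ remains a division superalgebra in characteristic not two---in the only potentially nontrivial case $D\cong F[u]/(u^2-a)$, a direct check shows that every nonzero homogeneous element of $D\otimes_FF'$ is invertible---so the standard supermodule $V\otimes_FF'$ is simple over the centralizer, and hence irreducible as a representation of $\bmG\otimes_FF'$. The main obstacle is precisely this step: correctly invoking the super density theorem and verifying that base change preserves both the graded-simple structure of the centralizer and the simplicity of its standard supermodule, especially in the case $V\cong\Pi V$ where $D$ has nontrivial odd part.
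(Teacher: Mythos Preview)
Your proof is correct and follows essentially the same route as the paper: the easy implications are dispatched via Lemma~\ref{lem:bc_of_Hom} and Schur's lemma, and the substantive step (a)$\Rightarrow$(d) is closed by the super Jacobson density theorem together with the observation that $D\otimes_FF'$ remains a division superalgebra (since $D_{\bar 0}=F$). The only cosmetic differences are that the paper orders the cycle as (d)$\Rightarrow$(c)$\Rightarrow$(b)$\Rightarrow$(a)$\Rightarrow$(d), and that it works directly with the full dual $F[\bmG]^\vee$ (citing Sweedler for the superalgebra and module structures) rather than first factoring the coaction through a finite-dimensional subsupercoalgebra $C$ and passing to $A=C^\ast$; the density argument and the verification that $V\otimes_FF'$ is simple over $\bEnd_{D}(V)\otimes_FF'$ are the same in both.
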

	
	\begin{lem}[The Jacobson density theorem]
		Let $A$ be a superalgebra over a field $F$ of characteristic not two, and $V$ be a finite dimensional simple left $A$-supermodule. Set $D=\bEnd_{A}(V)$. Then the action map $A\to \bEnd_{D}(V)$ is surjective.
	\end{lem}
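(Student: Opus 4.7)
The plan is to adapt the classical Jacobson density argument to the super setting. First, by a super-Schur argument applied componentwise (any nonzero homogeneous endomorphism of a simple supermodule is invertible), $D = \bEnd_A(V)$ is a division superalgebra. Since $D$ is a super-division algebra and $V$ is finite dimensional, $V$ admits a homogeneous left $D$-basis $v_1,\ldots, v_n$. A homogeneous $\phi \in \bEnd_D(V)$ of parity $p$ is determined by the values $\phi(v_i) = w_i \in V_{|v_i|+p}$, and any such choice of homogeneous $w_i$ extends uniquely to such a $\phi$.

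Thus surjectivity of the action map reduces to the following super density claim: for any $p \in \bZ/2\bZ$ and any homogeneous $w_1,\ldots, w_n \in V$ with $|w_i| = |v_i| + p$, there exists $a \in A_p$ with $a v_i = w_i$ for all $i$. I would prove this by induction on $n$. The base case $n = 1$ is immediate since $A v_1 \subseteq V$ is a nonzero sub-supermodule and hence equals $V$. For the inductive step, set
\[
B = \{a \in A : av_2 = \cdots = av_n = 0\},
\]
a graded left ideal, and look at the sub-supermodule $Bv_1 \subseteq V$. If $Bv_1 = V$, I combine an $a' \in A_p$ supplied by the inductive hypothesis applied to $v_2,\ldots, v_n$ (giving $a' v_i = w_i$ for $i \geq 2$) with a $b \in B_p$ correcting the discrepancy at $v_1$ (using $Bv_1 = V$ to write $w_1 - a' v_1 = b v_1$); then $a = a' + b$ does the job.

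The serious case is to rule out $Bv_1 = 0$. Here the inductive hypothesis tells me that the map
\[
\rho \colon A \longrightarrow \bigoplus_{i=2}^{n} \Pi^{|v_i|} V, \qquad a \longmapsto (av_2,\ldots, av_n),
\]
is a surjective morphism of left $A$-supermodules (the shift $\Pi^{|v_i|}$ ensures parities match) with kernel $B$. Under the assumption $Bv_1 = 0$, the map $a \mapsto av_1$ descends to a morphism $\psi \colon \bigoplus_i \Pi^{|v_i|}V \to \Pi^{|v_1|}V$ of left $A$-supermodules. Its restriction $\psi_i$ to the $i$-th summand is a parity-preserving $A$-linear map $\Pi^{|v_i|}V \to \Pi^{|v_1|}V$, which under the canonical identification $\bHom_A(\Pi^{|v_i|}V, \Pi^{|v_1|}V) \cong \Pi^{|v_i|+|v_1|} D$ corresponds to a homogeneous element of $D_{|v_i|+|v_1|}$. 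Evaluating $\psi$ at $(v_2,\ldots, v_n)$ (an even element of the shifted direct sum) yields $v_1 = \sum_{i\geq 2} \psi_i \cdot v_i$, a nontrivial $D$-linear relation, contradicting the $D$-linear independence of $v_1,\ldots, v_n$.

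I expect the main obstacle to be purely the parity bookkeeping in the last step — verifying that the parity-shifted direct sum $\bigoplus_i \Pi^{|v_i|}V$ is the correct target for $\rho$ to become a graded surjection, and that the resulting components $\psi_i$ really live in the appropriate homogeneous piece of $D$. Modulo those shifts, the logical skeleton is exactly that of the classical Jacobson density argument.
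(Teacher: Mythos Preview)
Your proposal is correct and follows precisely the route the paper has in mind: the paper itself omits the proof, stating only that ``it is proved in the same way as the non-super setting,'' and your write-up supplies exactly that adaptation with the necessary parity bookkeeping. One small presentational point: the induction should be phrased over the size of an arbitrary $D$-linearly independent homogeneous subset of $V$, not over the fixed size $n$ of a basis, so that the inductive hypothesis legitimately applies to $v_2,\ldots,v_n$; with that tweak the argument goes through as written.
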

	
	We omit its proof since it is proved in the same way as the non-super setting.
	
	\begin{proof}[Proof of Proposition \ref{prop:abs_irr}]
		The implication (d) $\Rightarrow$ (c) is clear. The implication (c) $\Rightarrow$ (b) follows from Lemma \ref{lem:bc_of_Hom} and Schur's lemma. The implication (b) $\Rightarrow$ (a) follows from Lemma \ref{lem:bc_of_Hom}.
		
		Finally, we prove (a) $\Rightarrow$ (d). Let $F'/F$ be any field extension. We wish to show that $V\otimes_F F'$ is irreducible as a representation of $\bmG\otimes_F F'$.
		Set $D=\bEnd_{\bmG}(V)$. Then $D$ is a central division superalgebra whose even part is $F$. It is easy to show that $V\otimes_F F'$ is a simple left $\bEnd_D(V)\otimes_F F'$-supermodule for the natural action (use the explicit form of $D$ to show that $D\otimes_F F'$ is a central division superalgebra of dimension at most two).
		
		We put the structure of a superalgebra on the superdual $F[\bmG]^\vee$ in a similar way to \cite[Proposition 1.1.1]{MR0252485}. Notice that $V$ is naturally endowed with the structure of a left $F[\bmG]^\vee$-supermodule (\cite[Proposition 2.1.1]{MR0252485}). In view of \cite[Theorem 2.1.3]{MR0252485}, one can apply the Jacobson density theorem to show that $F[\bmG]^\vee \to\bEnd_{D}(V)$ is surjective.
		
		We now deduce that $(F[\bmG]\otimes_F F')^\vee \to \bEnd_D(V)\otimes_F F'$ is surjective. In view of the second paragraph, $V\otimes_F F'$ is a simple left $(F[\bmG]\otimes_F F')^\vee$-supermodule. By virtue of the naturality of the action of $(F[\bmG]\otimes_F F')^\vee$, we deduce that $V\otimes_F F'$ is irreducible. This completes the proof.
	\end{proof}
	
	The next result is used in the next section for the computation of the division superalgebra of endomorphisms.
	
	\begin{prop}\label{prop:rationality}
		Let $V$ be an irreducible representation of $\bmG\otimes_F F^{\sep}$. Then there exist a finite Galois extension $F'/F$ of fields in $F^{\sep}$ and a finite dimensional (irreducible) representation $U$ of $\bmG\otimes_F F'$ such that $V\cong U\otimes_{F'} F^{\sep}$.
	\end{prop}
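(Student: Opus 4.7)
The plan is to descend the coaction of $V$ to a finite Galois subextension by bounding the finitely many structure constants that appear. First, I would observe that any supercomodule over a Hopf superalgebra is locally finite, i.e., every homogeneous element is contained in a finite-dimensional sub-supercomodule. The argument is the standard one: if the coaction sends an element $v$ to $\sum_i v_i \otimes g_i$ for some basis $\{g_i\}$ of the Hopf superalgebra, then the $v_i$ span a finite-dimensional subcomodule containing $v$. Since $V$ is irreducible and hence cyclic over any nonzero homogeneous vector, $V$ must itself be finite-dimensional. Write $n = \dim_{F^{\sep}} V$ and fix a homogeneous $F^{\sep}$-basis $v_1,\ldots,v_n$ of $V$.

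Next, I would use the identification $F^{\sep}[\bmG\otimes_F F^{\sep}] = F^{\sep}\otimes_F F[\bmG]$ together with the presentation $F^{\sep} = \varinjlim_{F'} F'$, where $F'/F$ runs over finite Galois subextensions of $F^{\sep}/F$. Since tensor products commute with filtered colimits, one obtains
\[
F^{\sep}\otimes_F F[\bmG] \;\cong\; \varinjlim_{F'} \bigl(F'\otimes_F F[\bmG]\bigr),
\]
and the transition maps are injective. The coaction on $V$ reads
\[
v_i \;\longmapsto\; \sum_{j=1}^n v_j \otimes f_{ji}, \qquad f_{ji}\in F^{\sep}\otimes_F F[\bmG].
\]
As only finitely many elements $f_{ji}$ appear, there exists a single finite Galois subextension $F'/F$ of $F^{\sep}/F$ such that $f_{ji}\in F'\otimes_F F[\bmG]$ for all $i,j$; if the initial finite extension is not Galois, pass to its Galois closure inside $F^{\sep}$, which is still finite over $F$.

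Set $U \coloneqq \bigoplus_{i=1}^n F'\cdot v_i \subset V$, a $\bZ/2\bZ$-graded $F'$-sub-supermodule of $V$ of dimension $n$. By our choice of $F'$, the coaction restricts to a map $U\to U\otimes_{F'}(F'\otimes_F F[\bmG])$, which endows $U$ with the structure of a finite-dimensional representation of $\bmG\otimes_F F'$. The canonical map $U\otimes_{F'}F^{\sep}\to V$ sends $v_i\otimes 1\mapsto v_i$ and is therefore an isomorphism of representations of $\bmG\otimes_F F^{\sep}$. Finally, if $W\subset U$ is any nonzero sub-representation, then by flatness of $F^{\sep}$ over $F'$ the base change $W\otimes_{F'}F^{\sep}$ is a nonzero sub-representation of $V\cong U\otimes_{F'}F^{\sep}$, hence equals $V$ by irreducibility of $V$; comparing $F'$-dimensions yields $W=U$, so $U$ is irreducible.

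The only genuinely non-formal step is the local finiteness of supercomodules, which is proved exactly as in the classical (non-super) case; after that, the argument is a straightforward bookkeeping of structure constants together with flat base change. The parity grading causes no trouble as long as one works with homogeneous bases throughout.
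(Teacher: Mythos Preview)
Your proof is correct and takes a genuinely different route from the paper's own argument. You descend the structure constants of the coaction directly: fix a homogeneous basis of $V$, observe that the finitely many matrix coefficients $f_{ji}$ already live in $F'\otimes_F F[\bmG]$ for some finite Galois $F'/F$, and take the $F'$-span of the basis as $U$. This is elementary and self-contained, using nothing beyond local finiteness of supercomodules and flatness of $F^{\sep}/F'$.

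The paper instead leverages the classification bijection of Theorem~\ref{thm:pure}: for each finite Galois $F'/F$ it takes the irreducible representation $V_{F'}$ of $\bmG\otimes_F F'$ corresponding to $V$, produces surjections $V_F\otimes_F F'\twoheadrightarrow V_{F'}$, and argues that the dimensions $\dim_{F'}V_{F'}$ cannot strictly decrease forever. At the terminal $F'$ the representation $V_{F'}$ is pseudo-absolutely irreducible (Definition-Proposition~\ref{defprop:absirr}), hence $V_{F'}\otimes_{F'}F^{\sep}\cong V$. Your approach is shorter and more explicit, and in fact works for supercomodules over any supercoalgebra defined over $F$ (cf.\ the Remark in the introduction). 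The paper's approach, by contrast, stays inside the framework of irreducible representations and the bijection \eqref{eq:classification}, which is natural given that Proposition~\ref{prop:rationality} is used downstream precisely to guarantee continuity of cocycles built from the isomorphisms ${}^\sigma V\cong V$; the paper's $U=V_{F'}$ is intrinsically defined as an isomorphism class rather than via a chosen basis.
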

	
	\begin{proof}
		For each finite Galois extension $F'/F$ in $F^{\sep}$, we apply Theorem \ref{thm:pure} to $\bmG\otimes_F F'$ to get an irreducible representation $V_{F'}$ of $\bmG\otimes_F F'$. Since $V_{F}\otimes_F F^{\sep}$ and $V_{F'}\otimes_{F'} F^{\sep}$ contain $V$ as summands, we deduce
		\[\Hom_{\bmG\otimes_FF'}
		(V_F\otimes_F F',V_{F'})\otimes_{F'}F^{\sep}
		\overset{\eqref{eq:bc_Hom}}{\cong} \Hom_{\bmG\otimes_FF^{\sep}}
		(V_F\otimes_F F^{\sep},V_{F'}\otimes_{F'} F^{\sep})\neq 0.
		\]
		One can therefore find a surjective homomorphism $V_F\otimes_F F'\to V_{F'}$. In particular, we have $\dim_FV_F\geq \dim_{F'} V_{F'}$.
		
		Consider a sequence of finite Galois extensions $\cdots F^{(2)}/F^{(1)}/F^{(0)}=F$ such that $\dim_{F^{(i)}} V_{F^{(i)}}>\dim_{F^{(i+1)}} V_{F^{(i+1)}}$ for $i\geq 0$. Since $\dim_F V_F$ is finite, this sequence stops. In particular, one can find a finite Galois extension $F'/F$ in $F^{\sep}$ such that $V_{F'}$ is pseudo-absolutely irreducible. It follows by Definition-Proposition \ref{defprop:absirr} and the construction of the bijection \eqref{eq:classification} that $U=V_{F'}$ satisfies the condition. In fact, we have $V_{F'}\otimes_{F'} F^{\sep}\cong V$. This completes the proof.
	\end{proof}

	\section{Compute $\beta^{\super}_{V_F}$}\label{sec:BT}
	
	Throughout this section, let $\bmG$ be an affine group superscheme over $F$, and $V$ be super quasi-rational irreducible representation of $\bmG\otimes_FF^{\sep}$. Here we aim to discuss how to compute $\beta^{\super}_{V_F}$ in terms of $V$.
	
	\begin{prop}\label{prop:epsilon}
		We have $\epsilon_{V_F}=-$ if and only if $V$ is quasi-rational and $V\cong \Pi V$.
	\end{prop}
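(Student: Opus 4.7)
The plan is to decode what $\epsilon_{V_F}=-$ means for the central division superalgebra $D\coloneqq\bEnd_{\bmG}(V_F)$, then translate the two resulting conditions back to $V$ via the descent results already established. By the characterization recorded right after Example \ref{ex:realbrauer-wall}, $\epsilon_{V_F}=-$ holds exactly when $D_{\bar{0}}=\End_{\bmG}(V_F)$ is a central simple algebra and $D_{\bar{1}}=\Hom_{\bmG}(V_F,\Pi V_F)\neq 0$.

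The first half---that $\End_{\bmG}(V_F)$ is a central (division) algebra---is precisely the statement that $V_F$ is quasi-rational, which by Variant \ref{var:pure} is equivalent to the quasi-rationality of $V$. The second half---$\Hom_{\bmG}(V_F,\Pi V_F)\neq 0$---is, by Schur's lemma applied to the irreducible representations $V_F$ and $\Pi V_F$, equivalent to $V_F\cong\Pi V_F$, which by Proposition \ref{prop:even} translates to the existence of some $\sigma\in\Gamma$ with ${}^\sigma V\cong \Pi V$.

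For the backward direction, suppose $V$ is quasi-rational and $V\cong\Pi V$. Variant \ref{var:pure} gives $\End_{\bmG}(V_F)$ central simple, and Proposition \ref{prop:even} applied with $\sigma=1$ yields $V_F\cong\Pi V_F$, hence $\Hom_{\bmG}(V_F,\Pi V_F)\neq 0$. Both conditions for $\epsilon_{V_F}=-$ are satisfied.

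For the forward direction, assume $\epsilon_{V_F}=-$. The centrality of $\End_{\bmG}(V_F)$ together with Variant \ref{var:pure} gives that $V$ is quasi-rational, while the nonvanishing of $D_{\bar{1}}$ combined with Proposition \ref{prop:even} supplies a $\sigma\in\Gamma$ with ${}^\sigma V\cong \Pi V$. The existence of such a $\sigma$ means that alternative (ii) of Proposition \ref{prop:pure} fails; since $V$ is quasi-rational, the dichotomy in Proposition \ref{prop:pure} forces alternative (i), namely $V\cong\Pi V$. I expect no serious obstacle: the argument is a bookkeeping assembly of the parity dichotomy in Proposition \ref{prop:pure}, the identification in Proposition \ref{prop:even}, and the descent of quasi-rationality in Variant \ref{var:pure}.
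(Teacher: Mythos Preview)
Your proof is correct and follows essentially the same route as the paper, which simply records that the proposition is an easy consequence of Variant \ref{var:pure} and Propositions \ref{prop:pure}, \ref{prop:even}. You have spelled out explicitly how these three ingredients combine with the characterization of $\epsilon_A=-$ (as $A_{\bar{0}}$ central simple and $A_{\bar{1}}\neq 0$), which is precisely the assembly the paper leaves to the reader.
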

	
	\begin{proof}
		This is an easy consequence of Variant \ref{var:pure} and Propositions \ref{prop:pure}, \ref{prop:even}.
	\end{proof}
	
	\subsection{The second Galois cohomology and the Brauer group}
	
	We plan to give a numerical formula to $D_{V_F}$ in terms of the second Galois cohomology group $H^2(\Gamma,(F^{\sep})^\times)$. For this, we give a quick review on the relation of the Brauer group $\Br(F)$ and $H^2(\Gamma,(F^{\sep})^\times)$, based on \cite[Chapter X, \S 5]{MR554237}.
	
	\begin{cons}
		Let $A$ be a finite dimensional central simple algebra over $F$.
		Then one can find a finite dimensional vector space $V$ over $F$, a finite Galois extension $F'/F$, and an algebra isomorphism $f:\End_{F'}(V\otimes_F F')\cong A\otimes_F F'$ (\cite[Chapter X, \S 5, Proposition 7]{MR554237}). We let $\Gamma_{F'/F}$ denote the Galois group of $F'/F$.
		
		Pick $\sigma\in\Gamma_{F'/F}$. Then $f^{-1}\circ \sigma(f)$ is an algebra automorphism of $\End_{F'}(V\otimes_F F')$, where $\sigma(f)$ is defined by the base change of $f$ by $\sigma$ and the cancellation of $F'$. Therefore one can find a linear automorphism $g_\sigma$ of $V\otimes_F F'$ such that $f^{-1}\circ \sigma(f)$ is the conjugation by $g_\sigma$ (see \cite[the paragraph above Chapter X, \S 5, Proposition 8]{MR554237} if necessary). The matrices $g_\sigma$ form a 1-cocycle with coefficient the projective general linear group of $V\otimes_F F'$.
		
		We send this cocycle to $H^2(\Gamma_{F'/F},(F')^\times)$ by the coboundary operator (\cite[Appendix of Chapter VII]{MR554237}). That is, for $\sigma,\tau\in\Gamma_{F'/F}$, we have
		\[g_\sigma \sigma(g_\tau) g^{-1}_{\sigma\tau}=c^{F'}_A(\sigma,\tau)\id_{V\otimes_F F'}\]
		for some $c_A(\sigma,\tau)\in F'$
		since the left hand side is trivial in the projective general linear group. We now obtain $c^{F'}_A\in H^2(\Gamma_{F'/F},(F')^\times)$.
		
		Finally, we fix an embedding $F'\hookrightarrow F^{\sep}$ over $F$. This gives rise to a homomorphism
		$H^2(\Gamma_{F'/F},(F')^\times)\to H^2(\Gamma,(F^{\sep})^\times)$
		(\cite[Chapter X, \S3]{MR554237}, \cite[Chapter I, Section 2.2, Proposition 8]{MR1867431}).
		We denote the image of $c^{F'}_A$ by $c_A$.
	\end{cons}
	
	Let us record a basic example as a statement:
	
	\begin{prop}\label{prop:cocycle_computation}
		Let $F'/F$ be a finite Galois extension, and $V$ be a nonzero finite dimensional vector space over $F'$. For each $\sigma\in\Gamma_{F'/F}$, 
		suppose that we are given a linear isomorphism $\Phi_\sigma:{}^\sigma V\cong V$. Assume that $\Phi_\sigma\circ {}^\tau \Phi\circ \Phi^{-1}_{\sigma\tau}$ is a scalar map for every pair $(\sigma,\tau)\in \Gamma^2_{F'/F}$. We write $c(\sigma,\tau)\in F'$ for the corresponding scalar.
		
		Then $x\mapsto \Phi_\sigma\circ {}^\sigma x\circ \Phi^{-1}_\sigma$ determines a semilinear action of $\Gamma_{F'/F}$ on $\End_{F'}(V)$. Let $A\subset \End_{F'}(V)$ be the $\Gamma$-invariant part. The Galois descent implies an isomorphism $F'\otimes_F A\cong \End_{F'}(V)$ which is obtained by the scalar extension. We denoote its inverse by $f$.
		
		We fix an $F$-form $V_F$ of $V$ to identify $f$ with
		$\End_{F'}(V_F\otimes_F F')\cong F'\otimes_F A$. Then the corresponding 2-cocycle $c^{F'}_{A}$ is given by
		$c^{F'}_A(\sigma,\tau)=c(\sigma,\tau)$.
	\end{prop}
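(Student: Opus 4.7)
The plan is to translate the given data $\Phi_\sigma:{}^\sigma V\cong V$ into $\sigma$-semilinear automorphisms of $V$ and then directly match Serre's recipe for $c^{F'}_A$ recalled from \cite[Chapter X, \S 5]{MR554237}. For each $\sigma\in\Gamma_{F'/F}$, let $\iota_\sigma:V\to{}^\sigma V$ denote the canonical $\sigma$-semilinear identity bijection; then $\tilde\Phi_\sigma\coloneqq \Phi_\sigma\circ\iota_\sigma:V\to V$ is $\sigma$-semilinear, the hypothesis becomes the twisted multiplicativity $\tilde\Phi_\sigma\circ\tilde\Phi_\tau=c(\sigma,\tau)\,\tilde\Phi_{\sigma\tau}$, and the given semilinear action $x\mapsto \Phi_\sigma\circ{}^\sigma x\circ\Phi_\sigma^{-1}$ on $\End_{F'}(V)$ is the inner action $\mathrm{desc}_\sigma(x)\coloneqq \tilde\Phi_\sigma\, x\,\tilde\Phi_\sigma^{-1}$, whose fixed subalgebra is $A$.

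Next, I would exploit the chosen $F$-form $V_F$ to introduce a second family of $\sigma$-semilinear maps $\sigma_0:V\to V$ defined by $\sigma_0(v\otimes b)=v\otimes\sigma(b)$ for $v\in V_F$ and $b\in F'$. These satisfy the strict identity $\sigma_0\circ\tau_0=(\sigma\tau)_0$, and the induced inner action $\mathrm{nat}_\sigma(x)\coloneqq \sigma_0 x \sigma_0^{-1}$ on $\End_{F'}(V)=\End_F(V_F)\otimes_F F'$ is the natural $\Gamma_{F'/F}$-action that matches, under $f$, the natural action on $F'\otimes_F A$ through the second factor. I then set $g_\sigma\coloneqq \tilde\Phi_\sigma\circ\sigma_0^{-1}\in\GL_{F'}(V)$; this is genuinely $F'$-linear as the composite of a $\sigma$-semilinear and a $\sigma^{-1}$-semilinear map, and a direct check yields $\mathrm{desc}_\sigma=\mathrm{Ad}(g_\sigma)\circ\mathrm{nat}_\sigma$.

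Since the descent isomorphism $f$ intertwines $\mathrm{desc}_\sigma$ on $\End_{F'}(V)$ with $\mathrm{nat}_\sigma$ on $F'\otimes_F A$, a one-line manipulation gives $f^{-1}\circ\sigma(f)=\mathrm{desc}_\sigma\circ\mathrm{nat}_{\sigma^{-1}}=\mathrm{Ad}(g_\sigma)$, so $g_\sigma$ is a valid Skolem--Noether lift appearing in the construction of $c^{F'}_A$. The coboundary $g_\sigma\cdot\sigma(g_\tau)\cdot g_{\sigma\tau}^{-1}$ then collapses: using $\sigma(g_\tau)=\sigma_0 g_\tau \sigma_0^{-1}$, the strict relation $\sigma_0\tau_0=(\sigma\tau)_0$ cancels the intermediate semilinear factors, and the twisted multiplicativity of $\tilde\Phi$ reduces the expression to $\tilde\Phi_\sigma\tilde\Phi_\tau\tilde\Phi_{\sigma\tau}^{-1}=c(\sigma,\tau)\,\id_V$, as claimed.

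The only real obstacle I anticipate is careful book-keeping between the two $\sigma$-semilinear structures on $V$ (namely $\tilde\Phi_\sigma$ and $\sigma_0$) and the two associated semilinear $\Gamma$-actions on $\End_{F'}(V)$; in particular, one must verify that the construction of $c^{F'}_A$ in \cite[Chapter X, \S 5]{MR554237} is performed with respect to the natural $\Gamma$-action coming from the chosen $F$-form, so that the identity $f^{-1}\circ\sigma(f)=\mathrm{Ad}(g_\sigma)$ is the one actually used there. Once this bookkeeping is pinned down, the remaining steps are purely formal.
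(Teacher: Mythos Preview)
Your proposal is correct and follows essentially the same approach as the paper. The paper works directly with the twisted spaces ${}^\sigma V$ and the canonical cancellation isomorphisms $\Phi^{\can}_\sigma:{}^\sigma V\cong V$ coming from the $F$-form $V_F$, whereas you translate everything into $\sigma$-semilinear endomorphisms of $V$; under this dictionary your $\sigma_0$ is $\Phi^{\can}_\sigma\circ\iota_\sigma$, your $g_\sigma=\tilde\Phi_\sigma\circ\sigma_0^{-1}$ is exactly the paper's $\Phi_\sigma\circ{}^\sigma\Phi^{\can}_{\sigma^{-1}}$ (using the cocycle identity $\Phi^{\can}_\sigma\circ{}^\sigma\Phi^{\can}_{\sigma^{-1}}=\id$), and the final cocycle computation unwinds in the same way.
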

	
	\begin{proof}
		We define an action of $\Gamma_{F'/F}$ on $\End_{F'}(V)$ (resp.~$F'\otimes_F A$) by the base change by $\sigma\in\Gamma_{F'/F}$ and the cancellation of $F'$ with respect to the $F$-form $V_F$ (resp.~by twisting the first variable). If we write $\Phi^{\can}_\sigma:{}^\sigma V\cong V$ for the cancellation isomorphism with respect to $V_F$, we have
		$\sigma(x)=\Phi^{\can}_\sigma\circ{}^\sigma x\circ \Phi^{\can,-1}_\sigma$
		for $x\in \End_{F'}(V)$ and $\sigma\in\Gamma_{F'/F}$.
		
		It follows by construction of $f^{-1}$ that 
		$f(\Phi_\sigma\circ {}^\sigma x\circ\Phi^{-1}_\sigma)=\sigma(f(x))$
		for $x\in \End_{F'}(V)$ and $\sigma\in \Gamma_{F'/F}$. Using this equality, we get
		\[f^{-1}(\sigma(f)(x))=\Phi_\sigma \circ {}^\sigma(\sigma^{-1}(x))\circ \Phi^{-1}_\sigma=\Phi_\sigma\circ {}^\sigma\Phi^{\can}_{\sigma^{-1}}\circ x\circ {}^\sigma\Phi^{\can,-1}_{\sigma^{-1}}
		\circ \Phi^{-1}_\sigma.\]
		
		The cocycle $c^{F'}_A$ is now computed as
		\begin{flalign*}
			&c^{F'}_A(\sigma,\tau) \id_{\End_{F'}(V)}\\
			&=\Phi_\sigma\circ {}^\sigma\Phi^{\can}_{\sigma^{-1}}
			\circ \sigma(\Phi_\tau\circ {}^\tau\Phi^{\can}_{\tau^{-1}})
			\circ {}^{\sigma\tau}\Phi^{\can,-1}_{(\sigma\tau)^{1}}\circ \Phi^{-1}_{\sigma\tau}\\
			&=\Phi_\sigma\circ {}^\sigma\Phi^{\can}_{\sigma^{-1}}
			\circ\Phi^{\can}_\sigma \circ {}^\sigma(\Phi_\tau\circ {}^\tau\Phi^{\can}_{\tau^{-1}})\circ \Phi^{\can,-1}_\sigma
			\circ {}^{\sigma\tau}\Phi^{\can,-1}_{(\sigma\tau)^{1}}\circ \Phi^{-1}_{\sigma\tau}\\
			&=\Phi_\sigma \circ {}^\sigma\Phi_\tau\circ {}^{\sigma\tau}\Phi^{\can}_{\tau^{-1}}\circ \Phi^{\can,-1}_\sigma
			\circ {}^{\sigma\tau}\Phi^{\can,-1}_{(\sigma\tau)^{1}}\circ \Phi^{-1}_{\sigma\tau}\\
			&=\Phi_\sigma \circ {}^\sigma\Phi_\tau\circ \Phi^{-1}_{\sigma\tau}\\
			&=c(\sigma,\tau)\id_{\End_{F'}(V)}
		\end{flalign*}
		($\sigma,\tau\in\Gamma_{F'/F}$). In fact, we recall that $(\Phi^{\can}_\sigma)_{\sigma\in\Gamma_{F'/F}}$ satisfies the cocycle condition.
	\end{proof}

	\begin{thm}[{\cite[Chapter X, $\S5$]{MR554237}}]
		The assignment $A\mapsto c_A$ gives a well-defined group isomorphism 
		\begin{equation}
			\Br(F)\cong H^2(\Gamma,(F^{\sep})^\times).\label{eq:Serre}
		\end{equation}
	\end{thm}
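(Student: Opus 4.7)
The plan is to follow the classical strategy from Galois cohomology. I need to verify four things: (i) the class $c_A$ is well-defined (independent of all the auxiliary choices made in the construction), (ii) the assignment $A \mapsto c_A$ respects the group structures, (iii) it is surjective, and (iv) it is injective.

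For well-definedness, I proceed in layers. First, given the splitting $f: \End_{F'}(V \otimes_F F') \cong A \otimes_F F'$, any other choice of lift $g_\sigma$ of the automorphism $f^{-1}\circ \sigma(f)$ differs by a scalar $\lambda_\sigma \in (F')^\times$ (since $\PGL$ is the quotient by scalars), which modifies $c^{F'}_A$ by the coboundary $d\lambda$. Second, if I replace $f$ by another splitting $f'$, then $f^{-1}\circ f'$ is an automorphism of $\End_{F'}(V \otimes_F F')$, hence inner by the Skolem--Noether theorem, and this again modifies the $g_\sigma$ by scalars. Third, to show independence of the splitting field $F'$, it suffices to check that enlarging $F' \subset F''$ is compatible with inflation $H^2(\Gamma_{F'/F}, (F')^\times) \to H^2(\Gamma_{F''/F}, (F'')^\times)$, which is immediate from the construction with $V \otimes_F F''$ and the lifts $g_\sigma \otimes 1$.

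For the group homomorphism property, given central simple algebras $A, B$ split by a common finite Galois extension $F'/F$ via $f_A, f_B$, the algebra $A \otimes_F B$ is split by $f_A \otimes f_B$ on $\End_{F'}((V_A \otimes V_B) \otimes_F F')$, and one may take $g^{A\otimes B}_\sigma = g^A_\sigma \otimes g^B_\sigma$. The cocycle is then the pointwise product $c^{F'}_A \cdot c^{F'}_B$, which is exactly the image of $[A][B]$. For surjectivity, given a 2-cocycle $c$ representing a class in $H^2(\Gamma_{F'/F}, (F')^\times)$, I construct the associated crossed product algebra $A_c$, which is known to be central simple of degree $[F':F]$ and to satisfy $c_{A_c} = c$.

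The most delicate part, in my view, is injectivity. Suppose $c_A$ is trivial in $H^2(\Gamma, (F^{\sep})^\times)$. Passing to a sufficiently large finite Galois extension $F'/F$, we may assume $c^{F'}_A$ is already a coboundary: $c^{F'}_A(\sigma,\tau) = \lambda_\sigma \sigma(\lambda_\tau)\lambda^{-1}_{\sigma\tau}$. Replacing $g_\sigma$ by $\lambda^{-1}_\sigma g_\sigma$ yields a genuine 1-cocycle valued in $\GL(V \otimes_F F')$. The generalized Hilbert 90 theorem, $H^1(\Gamma_{F'/F}, \GL_n(F')) = 0$, produces $h \in \GL(V \otimes_F F')$ with $g_\sigma = h^{-1}\sigma(h)$. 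Adjusting $f$ by composition with conjugation by $h$ makes $f$ Galois-equivariant, so it descends to an isomorphism $\End_F(V) \cong A$ over $F$, yielding $[A] = 1 \in \Br(F)$. Granting standard material on Galois cohomology, these steps assemble into the stated isomorphism; I would reference \cite[Chapter X, \S5]{MR554237} for the full details.
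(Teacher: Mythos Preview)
The paper does not actually prove this theorem; it is stated as a citation of \cite[Chapter X, \S5]{MR554237} and no argument is given in the paper itself. Your sketch is a faithful outline of the classical proof from that reference (well-definedness via Skolem--Noether and inflation, multiplicativity via tensor product of splittings, surjectivity via crossed products, injectivity via Hilbert 90 for $\GL_n$), so there is nothing to compare beyond noting that you have supplied what the paper deliberately omits.
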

	
	\begin{rem}\label{rem:continuity}
		Let $\PGL(V\otimes_F F')$ and $\PGL(V\otimes_F F^{\sep})$ denote the projective general linear groups of $V\otimes_F F'$ and $V\otimes_F F^{\sep}$ respectively. Then for a fixed embedding $F'\hookrightarrow F^{\sep}$, the diagram 
		\[\begin{tikzcd}
			H^1(\Gamma_{F'/F},\PGL(V\otimes_F F'))\ar[r]\ar[d]
			&H^2(\Gamma_{F'/F}, (F')^\times)\ar[d]\\
			H^1(\Gamma,\PGL(V\otimes_F F^{\sep}))\ar[r]
			&H^2(\Gamma,(F^{\sep})^\times)
		\end{tikzcd}\]
		of the canonical maps commutes (see \cite[Chapter I, Section 2.2, Proposition 8 and the paragraph above Chapter I, Section 5.7, Proposition 43]{MR1867431}). That is, take the pullback of $(g_\sigma)$ to obtain a 1-cocycle of $\Gamma$ with coefficient $\PGL(V\otimes_F F^{\sep})$. Then the coboundary operator sends it to a continuous cocycle, and in fact it coincides with $c_A$.
	\end{rem}

	\subsection{Quasi-rational case}
	
	We assume $V$ to be quasi-rational in this small section. We compute $\beta^{\super}_{V_F}$, based on ideas of \cite[Proof of 12.6. Proposition]{MR207712}. We choose a finite Galois extension $F'/F$ such that $V$ admits an $F'$-form which we denote by $V_{F'}$ (Proposition \ref{prop:rationality}).
	
	For each element $\bar{\sigma}$ of the Galois group $\Gamma_{F'/F}$ of $F'/F$, we fix an isomorphism ${}^{\bar{\sigma}} V_{F'}\cong V_{F'}$ (recall Lemma \ref{lem:bc_of_Hom}). We choose an embedding $F'\hookrightarrow F^{\sep}$ over $F$ and lift them to isomorphisms
	$\Phi_\sigma:{}^\sigma V\cong V$ for $\sigma\in\Gamma$ by the projection $\Gamma\to \Gamma_{F'/F}$ and an isomorphism $V\cong V_{F'}\otimes_{F'} F^{\sep}$.
	
	\begin{cons}\label{cons:BT}
		\begin{enumerate}
			\item For each pair $\sigma,\tau\in\Gamma$, observe that we have an equality
			$\Phi_\sigma \circ {}^\sigma\Phi_{\tau}\circ \Phi_{\sigma\tau}^{-1}=c_{\sigma,\tau}\id_{V}$
			for some element $c_{\sigma,\tau}\in (F^{\sep})^\times$ since $V$ is (super) quasi-rational. This determines a continuous 2-cocycle, which we denote by $\beta^{\BT}_{V}$, and call it the Borel--Tits cocycle. We will regard $\beta^{\BT}_{V}$ as an element of $\Br(F)$ for the isomorphism \eqref{eq:Serre}.
			\item Thanks to the fact that $\Phi_\sigma \circ {}^\sigma\Phi_{\tau}\circ \Phi_{\sigma\tau}^{-1}$ are scalar maps, the conjugation of $\Phi_\sigma$ determines a continuous Galois action on $\bEnd_{F^{\sep}}(V)$. Define a central simple superalgebra $A$ over $F$ as the corresponding $F$-form of $\bEnd_{F^{\sep}}(V)$ to this Galois action (notice that the even part of the center satisfies the base change property). Explicitly, $A$ is described as
			\begin{flalign*}
				&A_{\bar{0}}=\{f\in\End_{F^{\sep}}(V):~\forall \sigma\in\Gamma,~
				f\circ\Phi_\sigma=\Phi_{\sigma} \circ {}^\sigma f\},\\
				&A_{\bar{1}}=\{f\in\Hom_{F^{\sep}}(V,\Pi V):~\forall \sigma\in\Gamma,~
				\Pi f\circ\Phi_\sigma=\Pi\Phi_{\sigma} \circ {}^\sigma f\}.
			\end{flalign*}
		\end{enumerate}	
	\end{cons}
	
	\begin{cons}\label{cons:BT_representation}
		Let $W$ be a left $\bEnd_{F^{\sep}}(V)$-supermodule. Then $W$ is naturally endowed with the structure of a representation of $\bmG\otimes_F F^{\sep}$ in the following way: Let $R$ be a commutative superalgebra over $F^{\sep}$, and $g\in \bmG(R)$. Thanks to the property that $V$ is of finite dimension, we then obtain
		$\rho(g)\in (R\otimes_{F^{\sep}}\bEnd_{F^{\sep}}(V))_{\bar{0}}$ after a canonical identification
		$\bEnd_R(R\otimes_{F^{\sep}} V)\cong R\otimes_{F^{\sep}}\bEnd_{F^{\sep}}(V)$
		(recall the definition of a representation). Then $\rho(g)$ acts on $R\otimes_{F^{\sep}} W$ through the base change $R\otimes_{F^{\sep}}-$ of the structure of the left $\bEnd_{F^{\sep}}(V)$-module on $W$. Run through all $R$ and $g$ to get the structure of a representation of $\bmG\otimes_F F^{\sep}$ on $W$.
	\end{cons}

	\begin{rem}\label{rem:comodule}
		One can define it in a comodule theoretic way: Identify the superalgebra $\bEnd_{F^{\sep}}(V)$ with its superdual to regard $\bEnd_{F^{\sep}}(V)$ as a supercoalgebra. Then $W$ is a right supercomodule over $\bEnd_{F^{\sep}}(V)$. The structure of a representation of $\bmG\otimes_F F^{\sep}$ on $V$ gives rise to a homomorphism of supercoalgebras from $\bEnd_{F^{\sep}}(V)$ to $F[\bmG]\otimes_F F^{\sep}$. Combine them to obtain the right supercomodule corresponding to the representation constructed above.
	\end{rem}

	Since $A$ is central simple, $A$ admits a unique finite dimensional simple left supermodule $U$ up to isomorphism and parity change (see \cite[Section 3.4]{MR1701598}). We fix $U$. A similar argument to Lemma \ref{lem:base_change} shows that $U\otimes_FF^{\sep}$ is decomposed into a direct sum of a simple left supermodule $W$ over $A\otimes_FF^{\sep}\cong \bEnd_{F^{\sep}}(V)$ and its Galois twists with respect to the Galois action in Construction \ref{cons:BT} (2), which agrees with that as a representation of $\bmG\otimes_FF^{\sep}$. Since the simple finite dimensional left $\bEnd_{F^{\sep}}(V)$-modules are only $V$ and $\Pi V$ up to isomorphism, we replace $U$ with $\Pi U$ if necessary to assume $W=V$. Since $V$ is quasi-rational, we obtain $U\otimes_F F^{\sep}\cong V^m$ for some positive integer $m$.
	
	\begin{lem}\label{lem:division_algebra_description}
		Let $V$ be a quasi-rational irreducible representation of $\bmG\otimes_F F^{\sep}$. Define $A$ and $U$ be as above.
		\begin{enumerate}
			\item We have $\End_A(U)\cong\End_{\bmG}(V_F)$ and $\Hom_A(U,\Pi U)=0$.
			\item There is an isomorphism $U\cong V_F$ of representations of $\bmG$.
		\end{enumerate}	
	\end{lem}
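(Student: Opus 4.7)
The plan is to first endow $U$ with a $\bmG$-representation structure, identify it with $V_F$, and then extract~(1) by comparing $A$-linear and $\bmG$-linear endomorphisms after base change to $F^{\sep}$.

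First, I equip $U$ with a $\bmG$-representation structure. Construction~\ref{cons:BT_representation} turns any left $(A\otimes_F F^{\sep})=\bEnd_{F^{\sep}}(V)$-supermodule into a $\bmG\otimes_F F^{\sep}$-representation; equivalently, by Remark~\ref{rem:comodule}, it supplies a supercoalgebra map $\bEnd_{F^{\sep}}(V)^\vee\to F[\bmG]\otimes_F F^{\sep}$. Both sides carry Galois actions (by $\Phi_\sigma$-conjugation on $\bEnd_{F^{\sep}}(V)=A\otimes_F F^{\sep}$, and on the second tensor factor on $F[\bmG]\otimes_F F^{\sep}$), and the intertwining $\rho(g)\circ\Phi_\sigma=\Phi_\sigma\circ {}^\sigma\rho(g)$ coming from $\Phi_\sigma\colon{}^\sigma V\cong V$ being a $\bmG\otimes_F F^{\sep}$-isomorphism makes the coalgebra map Galois-equivariant. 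It therefore descends to a supercoalgebra map $A^\vee\to F[\bmG]$, giving any left $A$-supermodule --- in particular $U$ --- the structure of a $\bmG$-representation whose base change to $F^{\sep}$ recovers the diagonal representation $V^m$.

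Next, I identify $U$ with $V_F$. Since $V^m$ is $\bmG\otimes_F F^{\sep}$-semisimple with every irreducible summand $\cong V$, Lemma~\ref{lem:semisimple_descent}(1) and Theorem~\ref{thm:pure} force $U\cong V_F^{\oplus k}$ for some $k\geq 1$. Both $\End_{\bmG\otimes_F F^{\sep}}(V^m)$ and $\End_{\bEnd_{F^{\sep}}(V)}(V^m)$ equal $M_m(F^{\sep})$ as subalgebras of $\End_{F^{\sep}}(V^m)$: the first because $V$ is quasi-rational, so $\End_{\bmG\otimes_F F^{\sep}}(V)=F^{\sep}$; the second by the double centralizer theorem for the central simple superalgebra $\bEnd_{F^{\sep}}(V)$ acting on the first factor of $V^m=V\otimes_{F^{\sep}}(F^{\sep})^m$. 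Since these subalgebras coincide and inherit a common Galois action from the $F$-form $U$, Galois descent gives $\End_A(U)=\End_{\bmG}(U)=M_k(\End_{\bmG}(V_F))$. As $\End_A(U)$ is a division algebra by Schur, $k=1$ and $\End_A(U)\cong\End_{\bmG}(V_F)$, proving~(2) and the first half of~(1).

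Finally, for $\Hom_A(U,\Pi U)=0$, Lemma~\ref{lem:bc_of_Hom} reduces the statement to $\Hom_{A\otimes_F F^{\sep}}(V^m,\Pi V^m)=0$. With $V^m=V\otimes_{F^{\sep}}(F^{\sep})^m$ and $(F^{\sep})^m$ purely even, the supercommutant of $\bEnd_{F^{\sep}}(V)$ on $V^m$ equals the supercenter of $\bEnd_{F^{\sep}}(V)$ tensored with $\End_{F^{\sep}}((F^{\sep})^m)$, i.e., $F^{\sep}\otimes_{F^{\sep}} M_m(F^{\sep})$, which is purely even. The main delicate step is the first: descending the Construction~\ref{cons:BT_representation} structure requires carefully unwinding how the twisted Galois action defining $A$ combines with the $\Phi_\sigma$-intertwining to yield Galois-equivariance of the supercoalgebra map. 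Once that is handled, the remaining steps are routine applications of centralizer computations and base change of Hom spaces.
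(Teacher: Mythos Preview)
Your proof is correct and follows essentially the same strategy as the paper: endow $U$ with a $\bmG$-structure by Galois descent, then identify $\End_A(U)$ with $\End_{\bmG}(U)$ via the key centralizer computation $\End_{\bmG\otimes_F F^{\sep}}(V)=\End_{\bEnd_{F^{\sep}}(V)}(V)=F^{\sep}\id_V$ after base change. The only organizational difference is that the paper first builds an explicit injection $U\hookrightarrow\Res_{F^{\sep}/F}V$ and deduces irreducibility of $U$ from $\End_{\bmG}(U)$ being a division algebra, then reads off $U\cong V_F$ from that embedding; you instead invoke Theorem~\ref{thm:pure} to write $U\cong V_F^{\oplus k}$ a priori and then force $k=1$ from the same division-algebra observation, which is a slightly cleaner packaging of the same idea.
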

	
	\begin{proof}
		Choose a projection $U\otimes_FF^{\sep}\to V$ as a left $\bEnd_{F^{\sep}}(V)$-supermodule to get an injective homomorphism $U\to V$ of left $A$-supermodules.
		
		Regard $U\otimes_F F^{\sep}$ as a representation of $\bmG\otimes_F F^{\sep}$ by Construction \ref{cons:BT_representation}. In virtue of the Galois descent, it gives rise to the structure of a representation of $\bmG$ on $U$. It is clear that $\End_A(U)\subset\End_{\bmG}(U)$. To see the converse containment, consider an element
		$\Phi\in\End_{\bmG}(U)$.
		To prove that $\Phi$ is $A$-linear, we may work over $F^{\sep}$. Consider the sequence
		$V^m\cong U\otimes_F F^{\sep}\overset{\Phi\otimes_F F^{\sep}}{\cong}
		U\otimes_F F^{\sep} \cong V^m$.
		Recall that the left and right bijections are $A\otimes_F F^{\sep}$-linear. The total map is $G\otimes_FF^{\sep}$-equivariant. Since
		$\End_{G\otimes_F F^{\sep}}(V)=\End_{A\otimes_F F^{\sep}}(V)
		=\End_{\bEnd_{F^{\sep}}(V)}(V)=F^{\sep}\id_V$,
		the total map is $A\otimes_F F^{\sep}$-linear. We thus obtain
		\begin{equation}
			\End_A(U)=\End_{\bmG}(U).\label{eq:End}
		\end{equation}
		Thanks to $U\otimes_F F^{\sep}\cong V^m$ and $\Hom_{\End_{F^{\sep}}(V)}(V,\Pi V)=0$, a similar base change argument shows
		$\Hom_A(U,\Pi U)=0$.
		
		Notice that $U$ is semisimple by Lemma \ref{lem:semisimple_descent}. Since $\End_{\bmG}(U)$ is a division algebra by \eqref{eq:End}, $U$ is irreducible. Since $U$ is a subrepresentation of $\Res_{F^{\sep}/F} V$ by the first paragraph, we get $U\cong V_F$ as stated in (2) (recall the construction of the bijection \eqref{eq:classification}). We obtain (1) by application of this isomorphism to \eqref{eq:End}. This completes the proof.
	\end{proof}

	\begin{thm}\label{thm:BT}
		Let $V$ be a quasi-rational irreducible representation of $\bmG\otimes_F F^{\sep}$.
		\begin{enumerate}
			\item We have $\beta_{V_F}=(\beta^{\BT}_V)^{-1}$.
			\item We have $\beta^{\super}_{V_F}=(+,1,(\beta^{\BT}_V)^{-1})$ if $V\not\cong\Pi V$.
			\item Assume $V\cong \Pi V$. 
			\begin{enumerate}
				\item[(i)] There exists a unique isomorphism $S:V\cong \Pi V$ of representations of $\bmG\otimes_FF^{\sep}$ up to $F^\times$ such that $S\circ \Phi_{\sigma}=\Pi\Phi_\sigma\circ {}^\sigma S$ for every $\sigma\in\Gamma$.
				\item[(ii)] Choose any $S$ in (i). Then $\Pi S\circ S=a\id_{V}$ for some $a\in F^\times$. Moreover, we have $a_{V_F}=a$ for this $a$.
			\end{enumerate}
		\end{enumerate}
		
	\end{thm}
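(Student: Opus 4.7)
Overall, the plan is to reduce each part to a combination of Lemma \ref{lem:division_algebra_description}, Proposition \ref{prop:cocycle_computation}, and standard Galois descent. For (1), the two ingredients are: (a) $[A]=\beta^{\BT}_V$ in $\Br(F)$, and (b) $[\End_A(U)]=[A]^{-1}$ by Wedderburn--Morita. For (a), note that $A\otimes_FF^{\sep}\cong\bEnd_{F^{\sep}}(V)$ is a matrix algebra as a plain algebra, so $A$ is ordinary central simple over $F$; then apply Proposition \ref{prop:cocycle_computation} to an $F'$-form $V_{F'}$ of $V$ (obtained via Proposition \ref{prop:rationality}) with the chosen $\Phi_{\bar\sigma}$ to identify the corresponding cocycle with $\beta^{\BT}_V$ under the isomorphism \eqref{eq:Serre}. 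Ingredient (b) is the standard fact that the endomorphism ring of a nonzero module over a central simple algebra is Brauer-equivalent to the opposite algebra. Combining (a), (b), and Lemma \ref{lem:division_algebra_description}(1) gives $\beta_{V_F}=[\End_{\bmG}(V_F)]=[\End_A(U)]=[A]^{-1}=(\beta^{\BT}_V)^{-1}$.

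For (2), we refine (1) to the super setting. Under $V\not\cong\Pi V$, Schur's lemma forces $\bHom_{\bmG\otimes_FF^{\sep}}(V,\Pi V)=0$, whence $\bEnd_{\bmG}(V_F)_{\bar 1}=0$ by the base-change isomorphism $V_F\otimes_FF^{\sep}\cong V^m$ together with Lemma \ref{lem:bc_of_Hom} and faithfully flat descent. Remark \ref{rem:D_1=0} and our convention then translate (1) into $\beta^{\super}_{V_F}=(+,1,(\beta^{\BT}_V)^{-1})$, in line with $\epsilon_{V_F}=+$ granted by Proposition \ref{prop:epsilon}.

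For (3)(i), quasi-rationality together with $V\cong\Pi V$ forces $\bHom_{\bmG\otimes_FF^{\sep}}(V,\Pi V)$ to be one-dimensional over $F^{\sep}$. The formula $\sigma\cdot T\coloneqq\Pi\Phi_\sigma\circ{}^\sigma T\circ\Phi_\sigma^{-1}$ defines a semilinear $\Gamma$-action on it (the scalars $c_{\sigma,\tau}$ cancel when verifying the $\Gamma$-action axioms), and Galois descent for $1$-dimensional $F^{\sep}$-spaces (Hilbert 90) yields the required $S$, unique up to $F^\times$. For (3)(ii), the composite $\Pi S\circ S$ is an even $(\bmG\otimes_FF^{\sep})$-endomorphism of $V$ and hence equals $a\id_V$ by Schur, while the descent condition on $S$ forces $a\in F^\times$. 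To identify $a$ with $a_{V_F}$, use Proposition \ref{prop:epsilon}: we have $\epsilon_{V_F}=-$, so $a_{V_F}=u^2$ modulo $(F^\times)^2$ for any $u\in\bEnd_{\bmG}(V_F)_{\bar 1}$ centralizing $\bEnd_{\bmG}(V_F)_{\bar 0}$. Under $\bEnd_{\bmG}(V_F)\otimes_FF^{\sep}\cong M_m(F^{\sep})\otimes_{F^{\sep}}\bEnd_{\bmG\otimes_FF^{\sep}}(V)$, the diagonal element $\id_m\otimes S$ is $\Gamma$-invariant, centralizes the even part $M_m(F^{\sep})\otimes 1$, and squares to $a\id$; descending produces the desired $u$ with $u^2=a$.

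The subtle step will be the descent/centralizing argument in (3)(ii): one must verify that the Galois action used to descend $\id_m\otimes S$ on the super-tensor decomposition is compatible with the action making $S$ invariant in (3)(i), so that $\Gamma$-invariance of $S$ genuinely produces a $\Gamma$-invariant element of $\bEnd_{\bmG}(V_F)\otimes_FF^{\sep}$. Everything else reduces to standard cocycle/Morita bookkeeping.
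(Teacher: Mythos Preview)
Your argument is correct. Parts (1) and (2) are essentially the paper's proof: the paper likewise combines Lemma~\ref{lem:division_algebra_description}(1), the identity $[\bEnd_A(U)]=[A]^{-1}$ in $\BW(F)$, and Proposition~\ref{prop:cocycle_computation} (together with Remark~\ref{rem:continuity}) to obtain $[A]=\beta^{\BT}_V$ and hence $\beta_{V_F}=(\beta^{\BT}_V)^{-1}$; (2) is then immediate from the shape of the bijection \eqref{eq:bw}.

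For part (3) your route is genuinely different from the paper's. You argue directly: the conjugation $T\mapsto\Pi\Phi_\sigma\circ{}^\sigma T\circ\Phi_\sigma^{-1}$ is a continuous semilinear $\Gamma$-action on the one-dimensional line $\Hom_{\bmG\otimes_FF^{\sep}}(V,\Pi V)$ (the cocycle scalars $c_{\sigma,\tau}$ cancel), so Hilbert~90 produces $S$; then you descend $\id_m\otimes S$ through $\bEnd_{\bmG}(V_F)\otimes_FF^{\sep}\cong M_m(F^{\sep})\otimes\bEnd_{\bmG\otimes_FF^{\sep}}(V)$ to locate the centralizing odd element $u$ with $u^2=a$. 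The paper instead uses the double-centralizer identification
\[
A\cong\bEnd_{\bEnd_A(U)}(U)=\bEnd_{\End_{\bmG}(V_F)}(V_F)
\]
(equation \eqref{eq:technical_identification}), equipped with the $\bmG$-conjugation action, to match the $\Gamma$-invariant odd $\bmG\otimes_FF^{\sep}$-intertwiners $V\to\Pi V$ directly with the odd elements of $\bEnd_{\bmG}(V_F)$ centralizing $\End_{\bmG}(V_F)$; both spaces are then one-dimensional by \cite[Lemma~5]{MR0167498}, and (ii) follows because the identification respects the superalgebra structure. Your approach is more elementary and self-contained (it avoids the double-centralizer machinery), while the paper's approach treats (i) and (ii) uniformly and makes the identification $a=a_{V_F}$ tautological rather than requiring the explicit descent of $\id_m\otimes S$. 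The ``subtle step'' you flag is indeed the only nontrivial verification: writing the transported Galois action on $\bEnd_{\bmG\otimes_FF^{\sep}}(V^m)$ as conjugation by $T_\sigma\cdot\Phi_\sigma^{\oplus m}$ with $T_\sigma\in\GL_m(F^{\sep})$, one checks that conjugation by $\Phi_\sigma^{\oplus m}$ fixes $\id_m\otimes S$ by the defining property of $S$, while conjugation by $T_\sigma$ fixes it because $\id_m\otimes S$ commutes with $M_m(F^{\sep})\otimes 1$.
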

	
	\begin{proof}
		To distinguish the similarity classes of $A$ in $\BW(F)$ and $\Br(F)$, we temporally refer to them as $[A]^{\super}$ and $[A]$ respectively. Lemma \ref{lem:division_algebra_description} (1) implies
		\[[A]^{\super}=[\bEnd_A(U)]^{-1}=[\End_A(U)]^{-1}=(+,1,[\End_A(U)]^{-1})\]
		in $\BW(F)$ (see \cite[Section 3.4]{MR1701598} for the first equality). It then follows by construction of the bijection \eqref{eq:bw} that $[A]=[\End_A(U)]^{-1}$ (see the $\Br(F)$-component of $[A]^{\super}$). On the other hand, it follows from Proposition \ref{prop:cocycle_computation} and Remark \ref{rem:continuity} that $\beta^{\BT}_{V}=[A]$. We have shown
		$\End_A(U)\cong\End_{\bmG}(V_F)$ (Lemma \ref{lem:division_algebra_description} (1)).
		Recall that we defined $\beta_{V_F}=[\End_{\bmG}(V_F)]\in\Br(F)$. Combine these to deduce (1).
		
		Part (2) is clear by construction of the bijection \eqref{eq:bw}.
		
		To prove (3), assume $V\cong \Pi V$. We use \cite[(3.3.3)]{MR1701598} to obtain
		\begin{equation}
			A\cong\bEnd_{\bEnd_A(U)}(U)
			=\bEnd_{\End_A(U)}(U)=\bEnd_{\End_{\bmG}(V_F)}(U)\cong\bEnd_{\End_{\bmG}(V_F)}(V_F).
			\label{eq:technical_identification}
		\end{equation}
		Observe that $\bmG\otimes_FF^{\sep}$ acts on the superalgebra $\bEnd_{F^{\sep}}(V)$ by the conjugation. It descends to an action of $\bmG$ on $A$. We notice that an element of $A\subset\bEnd_{F^{\sep}}(V)$ is $\bmG\otimes_FF^{\sep}$-equivariant if and only if it is $\bmG$-invariant with respect to this action. Similarly, the other terms of \eqref{eq:technical_identification} are equipped with actions of $\bmG$ for the conjugation. Moreover, the isomorphisms of \eqref{eq:technical_identification} are $\bmG$-equivariant. Take the odd part of the $\bmG$-invariant parts to obtain an isomorphism of the vector spaces of
		\begin{itemize}
			\item homomorphisms $V\to \Pi V$ of representations of $\bmG\otimes_FF^{\sep}$ satisfying the equality
			$S\circ \Phi_{\sigma}=\Pi\Phi_\sigma\circ {}^\sigma S$
			for every $\sigma\in\Gamma$, and of
			\item homomorphisms $V_F\to \Pi V_F$ of representations of $\bmG$ commuting with endomorphisms of $V_F$ as a representation. 
		\end{itemize}
		They are of one dimension by \cite[Lemma 5]{MR0167498}. This proves (i) of (3). Part (ii) of (3) follows by construction of the bijection \eqref{eq:bw} since \eqref{eq:technical_identification} respects the structures of superalgebras.
	\end{proof}
	
	\begin{cor}[{\cite[12.8. Proposition]{MR207712}}]\label{cor:abs_irr}
		For a quasi-rational irreducible representation $V$ of $\bmG\otimes_F F^{\sep}$, the following conditions are equivalent:
		\begin{enumerate}
			\renewcommand{\labelenumi}{(\alph{enumi})}
			\item $V$ admits an $F$-form,
			\item $V_F$ is absolutely irreducible, and
			\item $\beta_{V_F}$ is trivial.
		\end{enumerate}
		Moreover, if these equivalent conditions hold, we have
		\[\beta^{\super}_{V_F}\in\{(+,1,[F])\}\cup\{(-,a,[F])\in\BW(F):~a\in F^\times/(F^\times)^2\}.\]
	\end{cor}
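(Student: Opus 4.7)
The plan is to prove (a)$\Rightarrow$(b)$\Rightarrow$(c)$\Rightarrow$(a) and then derive the moreover clause. The implications (a)$\Rightarrow$(b) and (b)$\Rightarrow$(c) are direct from Proposition~\ref{prop:abs_irr} and Theorem~\ref{thm:pure}. Specifically, if $W$ is an $F$-form of $V$, then $W\otimes_F F^{\sep}\cong V$ is irreducible and $\End_{\bmG\otimes_FF^{\sep}}(V)=F^{\sep}\id_V$ by (super) quasi-rationality of $V$; thus condition (b) of Proposition~\ref{prop:abs_irr} is satisfied, hence (d) holds, i.e., $W$ is absolutely irreducible. Theorem~\ref{thm:pure} identifies $W$ with $V_F$. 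Conversely, if $V_F$ is absolutely irreducible, Proposition~\ref{prop:abs_irr} yields $\End_F(V_F)=F\id_{V_F}$, which forces the division algebra $\End_{\bmG}(V_F)$ to equal $F$, so $\beta_{V_F}$ is trivial.

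The heart of the argument is (c)$\Rightarrow$(a), but it collapses to a short dimension count. Quasi-rationality of $V$ ensures that every Galois twist of $V$ is isomorphic to $V$, so Lemma~\ref{lem:base_change}(2) gives $V_F\otimes_F F^{\sep}\cong V^{\oplus m}$ for some positive integer $m$. Using $\End_{\bmG\otimes_FF^{\sep}}(V)=F^{\sep}\id_V$, base change (Lemma~\ref{lem:bc_of_Hom}) yields an isomorphism of $F^{\sep}$-algebras between $\End_{\bmG}(V_F)\otimes_F F^{\sep}$ and the matrix algebra of size $m$ over $F^{\sep}$. If $\beta_{V_F}$ is trivial, the central simple $F$-algebra $\End_{\bmG}(V_F)$ is Morita trivial; since it is also a division algebra by Schur's lemma, it must equal $F$, forcing $m=1$. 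Then $V_F\otimes_F F^{\sep}\cong V$, i.e., $V_F$ is an $F$-form of $V$.

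For the moreover clause, I split on whether $V\cong\Pi V$. Proposition~\ref{prop:epsilon} gives $\epsilon_{V_F}=-$ if and only if $V\cong\Pi V$ (using that $V$ is quasi-rational, which is our hypothesis). If $V\not\cong\Pi V$, Theorem~\ref{thm:BT}(2), combined with the triviality of $\beta^{\BT}_V$ (equivalent to (c) by Theorem~\ref{thm:BT}(1)), yields $\beta^{\super}_{V_F}=(+,1,[F])$. If $V\cong\Pi V$, the construction of the bijection~\eqref{eq:bw} shows that the Brauer component of a class with $\epsilon=-$ is $[A_{\bar{0}}]$ for a representative superalgebra $A$; taking $A=\bEnd_{\bmG}(V_F)$ gives Brauer component $[\End_{\bmG}(V_F)]=[F]$, so $\beta^{\super}_{V_F}=(-,a_{V_F},[F])$ for some $a_{V_F}\in F^\times/(F^\times)^2$.

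I do not anticipate any serious obstacle; the only real insight required is that quasi-rationality already tightly constrains the structure of $V_F\otimes_F F^{\sep}$, reducing the nontrivial equivalence (c)$\Rightarrow$(a) to a dimension count on a central simple algebra rather than a direct manipulation of the Borel--Tits cocycle.
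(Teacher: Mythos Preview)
Your proof is correct and follows the approach the paper implicitly intends: the corollary is stated without proof, as an immediate consequence of Theorem~\ref{thm:BT}, Proposition~\ref{prop:abs_irr}, and Lemma~\ref{lem:base_change}, and you have filled in exactly those steps. Your dimension-count argument for (c)$\Rightarrow$(a) is a clean alternative to invoking the vanishing of the Borel--Tits cocycle directly (which would give a Galois descent datum on $V$); both routes are standard and yield the same conclusion with comparable effort.
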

	
	\begin{ex}
		Put $F=\bR$. Then $\beta^{\BT}_{V_F}$ agrees with the index. Namely, choose $\Phi:\bar{V}\cong V$. Then $\Phi\circ \bar{\Phi}=a\id_{V_F}$ for some $a\in \bR^\times$. The sign of $a$ is independent of the choice of $\Phi$. Moreover, the following conditions are equivalent:
		\begin{enumerate}
			\item[(a)] $\beta^{\BT}_{V}$ is trivial;
			\item[(b)] $a>0$;
			\item[(c)] the index of $V$ is one;
			\item[(d)] $V$ admits a real form;
			\item[(e)] $V_\bR$ is absolutely irreducible.
		\end{enumerate}
	\end{ex}
	
	\subsection{Non quasi-rational case}\label{sec:nonqrat}
	
	Let $V$ be an irreducible representation of the affine group superscheme $\bmG\otimes_{F} F^{\sep}$ which is not quasi-rational but super quasi-rational. We note that $V\not\cong\Pi V$ by Proposition \ref{prop:pure}. We already know $\epsilon_{V_F}=+$ from Proposition \ref{prop:epsilon}. 
	
	To compute $D_{V}=[\bEnd_{\bmG}(V_F)]\in\Br(F)$, choose a finite Galois extension $F'/F$ such that $V$ admits an $F'$-form which we denote by $V_{F'}$ (Proposition \ref{prop:rationality}). For each element $\bar{\sigma}$ of the Galois group $\Gamma_{F'/F}$ of $F'/F$, we fix an isomorphism ${}^{\bar{\sigma}} V_{F'}\overset{\boldsymbol{\cdot}}{\cong} V_{F'}$. We choose an embedding $F'\hookrightarrow F^{\sep}$ over $F$ and lift them to isomorphisms
	$\Phi_\sigma:{}^\sigma V\overset{\boldsymbol{\cdot}}{\cong} V$ for $\sigma\in\Gamma$ by the projection $\Gamma\to \Gamma_{F'/F}$ and an isomorphism
	$V\cong V_{F'}\otimes_{F'} F^{\sep}$.
	
	In this small section, we think that the compositions are defined in the enriched sense \eqref{eq:enriched_composition} or forget the parity in order to omit $\Pi$. For each pair $\sigma,\tau\in\Gamma$, one can write
	$\Phi_\sigma \circ {}^\sigma\Phi_{\tau}\circ \Phi_{\sigma\tau}^{-1}=c(\sigma,\tau)\id_{V}$
	for some $c(\sigma,\tau)\in (F^{\sep})^\times$ since it belongs to $\bEnd_{\bmG\otimes_F F^{\sep}}(V)= F^{\sep}\id_{V}$. Then we obtain a continuous 2-cocycle $\beta^{\BT}_V$. We again call it the Borel--Tits cocycle.
	
	\begin{cons}\label{cons:a}
		Let $V$ be an irreducible representation of $\bmG\otimes_F F^{\sep}$ which is not quasi-rational but super quasi-rational. For each $\sigma\in\Gamma$, ${}^\sigma V$ is isomorphic to exactly one of $V$ and $\Pi V$ by $V\not\cong\Pi V$. This gives rise to a map $\Gamma\to \{V,\Pi V\}\cong\bZ/2\bZ$. By virtue of Hilbert's Theorem 90, one can find $a\in F^\times$ such that the map agrees with the Galois action on $\{\pm\sqrt{a}\}$.
	\end{cons}
	
	\begin{thm}\label{thm:BT_nonqrat}
		Let $V$ be an irreducible representation of $\bmG\otimes_F F^{\sep}$ which is not quasi-rational but super quasi-rational.
		\begin{enumerate}
			\item We have $D_{V_F}=(\beta^{\BT}_V)^{-1}(-1,a_{V_F})$.
			\item We have $a_{V_F}=a$ for $a$ in Construction \ref{cons:a}.
		\end{enumerate}
	\end{thm}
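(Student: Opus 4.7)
The plan is to mirror the proof of Theorem \ref{thm:BT} by constructing the $F$-form $A$ of $\bEnd_{F^{\sep}}(V)$ from Construction \ref{cons:BT}(2), computing $[A]^{\super}\in\BW(F)$, and reading off $\beta^{\super}_{V_F}$ via super-Morita. The new feature compared with the quasi-rational case is that some $\Phi_\sigma$ reverse parity, which forces $A_{\bar{0}}$ to be central simple over $F(\sqrt{a})$ rather than $F$, producing the Hilbert-symbol correction $(-1,a_{V_F})$.

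First I verify that the construction goes through. Although individual $\Phi_\sigma$ may be parity-reversing, the composition $\Phi_\sigma\circ{}^\sigma\Phi_\tau\circ\Phi_{\sigma\tau}^{-1}$ has total parity $|\Phi_\sigma|+|\Phi_\tau|-|\Phi_{\sigma\tau}|=0$ since the parity assignment $\sigma\mapsto|\Phi_\sigma|$ is a group homomorphism---this is precisely the character used to define $a$ in Construction \ref{cons:a}. Hence the composition is scalar by super quasi-rationality of $V$, and $\beta^{\BT}_V$ is well-defined. Moreover, conjugation by $\Phi_\sigma$ preserves the $\bZ/2$-grading regardless of $|\Phi_\sigma|$, so Galois descent still produces $A$ as a super $F$-algebra.

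Next I compute $[A]^{\super}$. Since $A\otimes_F F^{\sep}$ is an ungraded matrix algebra over $F^{\sep}$, $A$ is CSA as an ungraded $F$-algebra and $\epsilon_A=+$; Proposition \ref{prop:cocycle_computation} identifies $[A]\in\Br(F)$ with $\beta^{\BT}_V$. For the middle slot, $Z(A_{\bar{0}})\otimes_F F^{\sep}=F^{\sep}\oplus F^{\sep}$ (spanned by $\id_{V_{\bar{0}}}$ and $\id_{V_{\bar{1}}}$), and a direct check shows that conjugation by $\Phi_\sigma$ swaps these two central idempotents precisely when $\Phi_\sigma$ is parity-reversing. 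Therefore the twisted Galois action factors through the parity character from Construction \ref{cons:a}, whose Galois descent of $F^{\sep}\oplus F^{\sep}$ is $F(\sqrt{a})$; choosing $u=\sqrt{a}$ gives $u^2=a$ and hence $[A]^{\super}=(+,a,\beta^{\BT}_V)$.

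The analogue of Lemma \ref{lem:division_algebra_description} then identifies the unique simple super $A$-module $U$, equipped with the $\bmG$-action of Construction \ref{cons:BT_representation}, with $V_F$, and yields $\bEnd_{\bmG}(V_F)\cong\bEnd_A(U)$; the proof is parallel, the only difference being that $U\otimes_F F^{\sep}\cong V^k\oplus(\Pi V)^k$ contains both $V$ and $\Pi V$, so $\bEnd_A(U)_{\bar{1}}\neq 0$, consistently with $V_F\cong\Pi V_F$. Super-Morita gives $[\bEnd_A(U)]^{\super}=[A]^{-1,\super}$, and the inversion formula of Example \ref{ex:BW_inverse} computes
\[\beta^{\super}_{V_F}=(+,a,\beta^{\BT}_V)^{-1}=\bigl(+,\;a,\;(\beta^{\BT}_V)^{-1}(-1,a)\bigr),\]
from which (2) ($a_{V_F}=a$) and (1) ($D_{V_F}=(\beta^{\BT}_V)^{-1}(-1,a_{V_F})$) both fall out. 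I expect the main delicate point to be the identification of the $F$-form of $Z(A_{\bar{0}})$ with $F(\sqrt{a})$: one must carefully verify that conjugation by parity-reversing $\Phi_\sigma$ interchanges $\id_{V_{\bar{0}}}$ and $\id_{V_{\bar{1}}}$ (and that it preserves them in the parity-preserving case), so that the twisted Galois action on $Z(A_{\bar{0}})\otimes F^{\sep}$ really does agree with the character of Construction \ref{cons:a}. The remaining steps are essentially bookkeeping over the quasi-rational template.
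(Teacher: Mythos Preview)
Your proposal is correct and matches the paper's own (omitted) argument: construct the $F$-form $A$ of $\bEnd_{F^{\sep}}(V)$ by Galois descent along the $\Phi_\sigma$, identify $\bEnd_A(U)\cong\bEnd_{\bmG}(V_F)$ as superalgebras (the paper stresses this full superalgebra isomorphism, as opposed to the even-part statement in Lemma~\ref{lem:division_algebra_description}), and read off $\beta^{\super}_{V_F}=[A]^{-1}$ via Example~\ref{ex:BW_inverse}. Your direct computation of the middle slot of $[A]^{\super}$ through $Z(A_{\bar 0})\cong F(\sqrt{a})$ is exactly what the paper means by invoking ``Deligne's description in \cite[Proof of Lemma~3.7]{MR1701598}''; your observation that $U\otimes_F F^{\sep}\cong V^k\oplus(\Pi V)^k$ with equal multiplicities is slightly sharper than the paper's $V^m\oplus(\Pi V)^n$ with $m,n\geq 1$, and follows once $U\cong V_F\cong\Pi V_F$.
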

	
	We omit its proof because this is verified in a similar way to the quasi-rational case. The main differences are $U\otimes_FF^{\sep}\cong V^m\oplus\Pi V^n$ with $m,n\geq 1$ and the vanishing
	$\Hom_{\bmG\otimes_F F^{\sep}}(V,\Pi V)=0$.
	In particular, the latter vanishing implies
	\[\bEnd_{\bmG\otimes_F F^{\sep}}(V)=\bEnd_{\bEnd_{F^{\sep}}(V)}(V)= F^{\sep}\id_V,\]
	which leads us to an isomorphism $\bEnd_A(U)\cong\bEnd_{\bmG}(V_F)$ of superalgebras this time. In particular, we obtain
	\begin{equation}
		[\bEnd_{\bmG}(V_F)]=[\bEnd_A(U)]=[A]^{-1}.\label{eq:End=Ainverse}
	\end{equation}
	For computation of $a_{V_F}$, we use \eqref{eq:End=Ainverse}, Example \ref{ex:BW_inverse},
	and Deligne's description in \cite[Proof of Lemma 3.7]{MR1701598}.
	
	\begin{ex}
		Put $F=\bR$. One sees from Example \ref{ex:realbrauer-wall} that central division superalgebras over $\bR$ with non-central even part have the value $-1\in\{\pm 1\}\cong\bR^\times/(\bR^\times)^2$. This implies $a_{V_\bR}=-1$.
	\end{ex}
	
	\begin{rem}\label{rem:uniform_expression}
		The descriptions of $D_{V_F}$ in Theorems \ref{thm:BT} and \ref{thm:BT_nonqrat} are compatible. In fact, we identify $\pm$ with $\pm 1$ to consider $(-\epsilon,a_{V_F})$. In the setting of Theorem \ref{thm:BT} (2), we have $a_{V_F}=1$ and thus $(-\epsilon,a_{V_F})=[F]$; In the setting of Theorem \ref{thm:BT} (3), we have $\epsilon_{V_F}=+$ from Proposition \ref{prop:epsilon} and thus $(-\epsilon_{V_F},a_{V_F})=[F]$; In the setting of Theorem \ref{thm:BT_nonqrat}, recall that we have $\epsilon_{V_F}=+$ and thus $(-\epsilon_{V_F},a_{V_F})=(-1,a_{V_F})$. Therefore in all cases, we have $D_{V_F}=(\beta^{\BT}_V)^{-1}(-\epsilon_{V_F},a_{V_F})$.
	\end{rem}
	
	\section{Quasi-reductive case I}\label{sec:qred}
	
	We assume $\bmG$ to be quasi-reductive in the sense of Definition \ref{defn:qred} from now on. In this section, we aim to classify irreducible representations of $\bmG$ and to determine the division superalgebras of their endomorphisms, based on results of \cite{MR4039427} and the preceding arguments. Let $H$ be a maximal torus of $G$. Following our notations, write $\fg$ for the Lie superalgebra of $\bmG$. Let $\fh$ be the $H$-invariant part of $\fg$. Then $\fh$ and $H$ give rise to a subgroup $\bmH\subset \bmG$ by \cite[Proposition 2.6]{MR4039427} (cf.~\cite[Lemma 3.5]{MR4039427}). Since every torus over a field is split over its finite separable extension, $H\otimes_F F^{\sep}$ and therefore $\bmG\otimes_F F^{\sep}$ are split. Let $\Delta$ (resp.~$\Delta_{\bar{0}}$, $\Delta_{\bar{1}}$) be the set of roots of $\fg\otimes_F F^{\sep}$ (resp.~$\fg_{\bar{0}}\otimes_F F^{\sep}$, $\fg_{\bar{1}}\otimes_F F^{\sep}$). We refer to elements of $\Delta_{\bar{0}}$ (resp.~$\Delta_{\bar{1}}$) as even (resp.~odd) roots. Let $Q$ be the abelian subgroup of $X^\ast(H\otimes_F F^{\sep})$ generated by $\Delta$. Choose an abelian group homomorphism $\gamma:Q\to\bR$ such that $\gamma(\alpha)\neq 0$ for every root. In practice, we will frequently construct $\bR\otimes_\bZ X^\ast(H\otimes_F F^{\sep})\to \bR$ and restrict it to $Q$. Following \cite[Sections 3.2, 3.3]{MR4039427}, we set
	\begin{align*}
		\Delta^+&=\{\alpha\in \Delta:~\gamma(\alpha)>0\},
		&\Delta^-&=\{\alpha\in \Delta:~\gamma(\alpha)<0\},\\
		\Delta^+_{\bar{0}}&=\{\alpha\in \Delta_{\bar{0}}:~\gamma(\alpha)>0\},
		&\Delta^-_{\bar{0}}&=\{\alpha\in \Delta_{\bar{0}}:~\gamma(\alpha)<0\}
		=-\Delta^+_{\bar{0}}.
	\end{align*}
	Sets of roots of the form $\Delta^+$ are called positive systems. An indecomposable element of $\Delta^+$ is called a simple root. Let $(\bmU')^+,\bmU'\subset\bmG\otimes_F F^{\sep}$ be the unipotent subgroups corresponding to $\Delta^+,\Delta^-$ respectively as in \cite[Section 3.2]{MR4039427}. Define the subgroup $\bmB'\subset \bmG\otimes_F F^{\sep}$ corresponding to $\Delta^-\cup\{0\}$ in a similar way as in \cite[Section 3.3]{MR4039427}. The subgroups obtained in this way are called BPS-subgroups in this paper\footnote{BPS stands for Borel--Penkov--Serganova.}\footnote{For generalities, we should start with a maximal torus over $F^{\sep}$. We took an $F$-form for technical reasons towards the descent problem.}. We remark that $B'$ is exhibits a Borel subgroup of $G\otimes_F F^{\sep}$ containing $H\otimes_F F^{\sep}$. 
	
	\begin{rem}
		There always exists $\gamma$ by the standard argument, i.e., we see that such $\gamma$ arises from a complementary dense open subset to finitely many hyperplanes. One can find an extension of any Borel subgroup of $G\otimes_F F^{\sep}$ containing $H\otimes_F F^{\sep}$ to that of $\bmG\otimes_F F^{\sep}$ by translating $\gamma$ by the Weyl group of $(G\otimes_F F^{\sep},H\otimes_F F^{\sep})$.
	\end{rem}
	
	In \cite{MR4039427}, BPS-subgroups are called Borel super-subgroups. This could be up to convention. We followed \cite{MR2906817} in order to avoid conflict with \cite{MR3012224}. Indeed, we will use some results in \cite{MR3012224}. On the other hand, these definitions agree in many cases:
	
	\begin{defn}\label{defn:basic}
		We say $\bmG$ is basic of the main type if the following conditions are satisfied:
		\begin{enumerate}
			\renewcommand{\labelenumi}{(\roman{enumi})}
			\item the characteristic of $F$ is zero, and
			\item $\fg\otimes_F \bar{F}$ is a product of basic simple Lie superalgebras over $\bar{F}$ of the main type in the sense of \cite[Section 2.4]{MR2336486} and $\fgl_{m|n}$ for some $m,n\geq 0$.
		\end{enumerate}
	\end{defn}
	
	In fact, the notions of our positive systems (cf.~\cite[(3.2.3), Lemma 3.2.1]{MR2906817}) and those in \cite[Section 1.3.1]{MR3012224} agree for $\bmG$ basic of the main type since the simple roots are linearly independent for any positive systems under both definitions in this case (see \cite[Proposition 1.28]{MR3012224} and \cite[Sections 2.3, 2.4]{MR2336486}). Let us remark that if $\bmG$ is basic of the main type, then $\fg\otimes_F \bar{F}$ is a product of basic Lie superalgebras in the sense of \cite[Definition 1.14]{MR3012224} up to isomorphism (unless $\fg_{\bar{1}}=0$). To compare the lists, see \cite[Remarks 1.6 and 1.9, Sections 1.1.5 and 1.1.7]{MR3012224} if necessary.
	
	\subsection{Split case}\label{sec:split}
	
	Let us recall Shibata's Borel--Weil theory for split quasi-reductive algebraic supergroups. For this, assume $H$ to be split in this section. Through the canonical bijection $X^\ast(H)\cong X^\ast(H\otimes_F F^{\sep})$, we obtain $F$-forms $\bmB,\bmU^+\subset \bmG$ of $\bmB',(\bmU')^+$ respectively. We call $\bmB$ a BPS-subgroup.
	
	According to \cite[Proposition 4.4]{MR4039427}, there is a bijection
	\begin{equation}
		\Irr_{\Pi} \bmH\cong X^\ast(H).\label{eq:IrrH}
	\end{equation}
	For each irreducible representation of $\bmH$, the corresponding character is given by its unique weight (\cite[Remark 4.5]{MR4039427}). The converse direction will be reviewed later when $F$ is separably closed. The projection $\bmB\to \bmH$ induces a $\Pi$-preserving bijection $\Irr \bmH\cong \Irr \bmB$ by \cite[Proof of Proposition 4.9]{MR4039427}. Henceforth we identify irreducible representations of $\bmH$ and $\bmB$ for the projection. For an irreducible representation $\fu$ of $\bmB$, let $V_{F,\bmB}(\fu)$ be the socle of $\Ind^{\bmG}_{\bmB}\fu$ as a representation of $\bmG$ (cf.~\cite[Proposition 4.17]{MR4039427}). We put the subscript $F$ in order to avoid the conflict of notations later; we will mainly concern the case $F=F^{\sep}$ after the base change. We will remove this subscript in the next section. We frequently omit the index $\bmB$ to write $V_{F}(\fu)$ when the BPS-subgroup $\bmB$ is clear from the context. Observe that $V_F(\Pi\fu)\cong \Pi V_F(\fu)$ by
	\begin{equation}
		\Ind^{\bmG}_{\bmB}\Pi\fu
		\cong\Pi \Ind^{\bmG}_{\bmB}\fu.\label{eq:parity_change}
	\end{equation}
	The socle $V_F(\fu)$ is a unique irreducible subrepresentation of $\Ind^{\bmG}_{\bmB} \fu$ unless
	$\Ind^{\bmG}_{\bmB}\fu$ vanishes
	(\cite[Propositions 4.4 and 4.11]{MR4039427}). For later applications, let us note that $\fu$ is recovered from $V(\fu)$ unless $V(\fu)=0$:
	
	\begin{prop}[{\cite[Lemma 4.10, Proof of Proposition 4.11, Proposition 4.15]{MR4039427}}]\label{prop:Vknowsu}
		For an irreducible representation $\fu$ of $\bmB$, the counit $\Ind^{\bmG}_{\bmB}\fu\to \fu$ restricts to an isomorphism
		\begin{equation}
			H^0(\bmU^+,V_F(\fu))\cong\fu\label{eq:canonical_splitting}
		\end{equation}
		of representations of $\bmH$ unless $V_F(\fu)= 0$. In particular, $\fu$ is a direct summand of $V_F(\fu)$ as an $\bmH$-module.
	\end{prop}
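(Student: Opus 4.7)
The plan is to combine Frobenius reciprocity with a weight-space/highest-weight analysis of $V_F(\fu)$ in the spirit of the classical Borel--Weil theorem.

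First, by Frobenius reciprocity the counit $\epsilon:\Ind^{\bmG}_{\bmB}\fu\to\fu$ is $\bmB$-equivariant, and the inclusion $\iota:V_F(\fu)\hookrightarrow\Ind^{\bmG}_{\bmB}\fu$ corresponds under the adjunction to $\pi\coloneqq\epsilon\circ\iota:V_F(\fu)\to\fu$. Since $\iota\neq 0$ whenever $V_F(\fu)\neq 0$, the same holds for $\pi$; being a nonzero $\bmB$-equivariant map to the $\bmB$-irreducible $\fu$, $\pi$ is surjective. As $\bmH\subset\bmB$ normalizes $\bmU^+$, the subspace $H^0(\bmU^+,V_F(\fu))$ is $\bmH$-stable, and the restriction $\phi\coloneqq\pi|_{H^0(\bmU^+,V_F(\fu))}$ is a well-defined $\bmH$-equivariant map to $\fu$.

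Let $\lambda\in X^\ast(H)$ denote the unique weight of $\fu$ regarded as an irreducible $\bmH$-module via \eqref{eq:IrrH}. The core of the argument is to identify $H^0(\bmU^+,V_F(\fu))$ with the weight space $V_F(\fu)_\lambda$, and to show that $\pi$ restricts to an isomorphism $V_F(\fu)_\lambda\cong\fu$. For the first point, using the (co)induced description of $\Ind^{\bmG}_{\bmB}\fu$, I would first verify that $(\Ind^{\bmG}_{\bmB}\fu)_\lambda\cong\fu$ via the counit and that every other weight $\mu$ of $\Ind^{\bmG}_{\bmB}\fu$ satisfies $\mu<\lambda$ in the partial order induced by $\Delta^+$. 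Since $V_F(\fu)_\lambda\subseteq(\Ind^{\bmG}_{\bmB}\fu)_\lambda$ and $\pi$ is surjective with target of weight $\lambda$, the surjection $V_F(\fu)_\lambda\twoheadrightarrow\fu$ must be an isomorphism. Conversely, for any weight vector $v\in H^0(\bmU^+,V_F(\fu))$ of weight $\mu$, irreducibility of $V_F(\fu)$ forces $V_F(\fu)=\hy(\bmG)\cdot v$; a PBW-type decomposition of $\hy(\bmG)$ according to the chosen positive system, combined with the $\bmU^+$-invariance and $\bmH$-weight of $v$, forces every weight of $V_F(\fu)$ to be $\leq\mu$, whence $\mu=\lambda$. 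The ``direct summand'' assertion is then immediate from the $\bmH$-weight space decomposition of $V_F(\fu)$.

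The main obstacle I anticipate is rigorously establishing the triangular/PBW-type decomposition of the distribution superalgebra $\hy(\bmG)$ matching the root space decomposition. In characteristic zero this reduces to the PBW theorem for the universal enveloping superalgebra of $\fg$, but over general fields of characteristic not two (as in the setting of \cite{MR4039427}) it requires a more careful analysis of $\hy(\bmG)$ aligned with the BPS-subgroup $\bmB$ and the opposite unipotent subgroup $\bmU^+$, essentially relying on the structural results developed earlier in the cited reference.
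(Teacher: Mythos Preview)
The paper does not supply its own proof of this proposition; it is stated with a citation to \cite[Lemma 4.10, Proof of Proposition 4.11, Proposition 4.15]{MR4039427} and used as a black box. So there is nothing in the present paper to compare your argument against directly.

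That said, your sketch is the standard highest-weight argument and is essentially what the cited results in Shibata establish. Your identification of the key technical ingredient---a triangular decomposition of $\hy(\bmG)$ compatible with $\bmU^+$, $\bmH$, and the negative unipotent part, valid in arbitrary characteristic $\neq 2$---is exactly right; this is precisely the content packaged in the structural lemmas of \cite{MR4039427} (notably the triangular decomposition underlying their Propositions 4.11 and 4.15). One small point worth tightening: when you argue that a $\bmU^+$-invariant weight vector $v$ generates a subrepresentation whose weights are all $\leq\mu$, you implicitly use that irreducibility as a $\bmG$-representation implies $V_F(\fu)$ is generated over $\hy(\bmG)$ by any nonzero homogeneous vector. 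This is true here because $G$ is connected reductive (so $\bmG$-subrepresentations coincide with $\hy(\bmG)$-stable subspaces that are also $H$-stable; cf.\ \cite[Section 2.3]{MR4039427}), but it deserves a sentence of justification rather than being folded into ``irreducibility forces.'' With that caveat, your outline is correct and matches the approach of the cited source.
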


	\begin{cor}\label{cor:uknowsv}
		Let $\fu,\fv$ be irreducible representations of $\bmB$ with $V_F(\fu)\neq 0$. Then we have $\fu\cong\fv$ if and only if $V_F(\fu)\cong V_F(\fv)$.
	\end{cor}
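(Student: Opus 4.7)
The forward direction is immediate from functoriality: an isomorphism $\fu \cong \fv$ of $\bmB$-representations induces an isomorphism $\Ind^{\bmG}_{\bmB}\fu \cong \Ind^{\bmG}_{\bmB}\fv$ of $\bmG$-representations, and the socle is a functorial invariant, so $V_F(\fu) \cong V_F(\fv)$.

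For the converse, assume $V_F(\fu) \cong V_F(\fv)$. Since $V_F(\fu) \neq 0$ by hypothesis, $V_F(\fv) \neq 0$ as well, so both $\fu$ and $\fv$ fall under the hypotheses of Proposition \ref{prop:Vknowsu}. The plan is simply to apply the $\bmU^+$-invariants functor $H^0(\bmU^+, -)$ to both sides of the assumed isomorphism: we obtain
\[
\fu \cong H^0(\bmU^+, V_F(\fu)) \cong H^0(\bmU^+, V_F(\fv)) \cong \fv
\]
as representations of $\bmH$, where the outer isomorphisms are the content of Proposition \ref{prop:Vknowsu} and the middle isomorphism is induced by the given one.

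There is no real obstacle here; the work has all been done in Proposition \ref{prop:Vknowsu}, and this corollary is just the observation that the canonical splitting \eqref{eq:canonical_splitting} exhibits $\fu$ as a functorial invariant of $V_F(\fu)$. Once one identifies the recovered $\bmH$-module with $\fu$ (using the identification of $\Irr \bmH$ with $\Irr \bmB$ via the projection $\bmB \to \bmH$, under which irreducibles are preserved), there is nothing more to check.
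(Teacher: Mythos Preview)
Your proof is correct and follows essentially the same route as the paper: both directions are handled identically, with the converse obtained by applying $H^0(\bmU^+,-)$ and invoking Proposition~\ref{prop:Vknowsu}, then upgrading the resulting $\bmH$-isomorphism to a $\bmB$-isomorphism via the identification $\Irr\bmH\cong\Irr\bmB$.
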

	
	\begin{proof}
		The ``only if'' direction follows by definition of $V_F(\fu),V_F(\fv)$. Conversely, suppose $V_F(\fu)\cong V_F(\fv)$. Then $V_F(\fv)$ is nonzero since so is $V_F(\fu)$. Proposition \ref{prop:Vknowsu} now implies an isomorphism
		$\fu\cong H^0(\bmU^+,V_F(\fu))\cong H^0(\bmU^+,V_F(\fv))\cong\fv$
		as $\bmH$-modules. Since irreducible representations of $\bmB$ are obtained by the projection $\bmB\to\bmH$, this is an isomorphism of $\bmB$-modules. This completes the proof.
	\end{proof}

	We now choose a corresponding irreducible representation $\fu_F(\lambda)\in \Irr_{\Pi} \bmB$ to each $\lambda\in X^\ast(H)$. Following \cite[Section 4.4]{MR4039427}, we set
	\[X^\flat(H)=\{\lambda\in X^\ast(H):~\Ind^{\bmG}_{\bmB}\fu_F(\lambda)\neq 0\}.\]
	The condition $\Ind^{\bmG}_{\bmB}\fu_F(\lambda)\neq 0$ is independent of choice of $\fu_F(\lambda)$ by \eqref{eq:parity_change} since the whole choices of $\fu_F(\lambda)$ are $\fu_F(\lambda)$ and $\Pi \fu_F(\lambda)$. We note that $X^\flat(H)$ is independent of separable extension of base fields in the following sense:
	
	\begin{prop}
		We identify $X^\ast(H\otimes_F F^{\sep})$ with $X^\ast(H)$ through the canonical bijection. Take $\lambda\in X^\ast(H)$. Choose $\fu_{F^{\sep}}(\lambda)$. Then we have $\Ind^{\bmG}_{\bmB}\fu_F(\lambda)\neq 0$ if and only if $\Ind^{\bmG\otimes_F F^{\sep}}_{\bmB\otimes_F F^{\sep}}\fu_{F^{\sep}}(\lambda)\neq 0$.
	\end{prop}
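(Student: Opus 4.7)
The plan is to combine flat base change for induction with a decomposition of $\fu_F(\lambda)\otimes_F F^{\sep}$ into weight-$\lambda$ irreducibles of $\bmB\otimes_F F^{\sep}$.

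First I would establish (or invoke from \cite[Appendix A.3]{MR4039427}) the base change isomorphism
\[
(\Ind^{\bmG}_{\bmB} \fu)\otimes_F F^{\sep}
\;\cong\;
\Ind^{\bmG\otimes_F F^{\sep}}_{\bmB\otimes_F F^{\sep}}(\fu\otimes_F F^{\sep})
\]
for the finite-dimensional $\bmB$-module $\fu=\fu_F(\lambda)$. The natural route is to model induction as the cotensor product $F[\bmG]\square_{F[\bmB]}\fu$, which commutes with flat base change because the underlying tensor products do. Since $F^{\sep}/F$ is faithfully flat, $(\Ind^{\bmG}_{\bmB}\fu_F(\lambda))\otimes_F F^{\sep}$ vanishes if and only if $\Ind^{\bmG}_{\bmB}\fu_F(\lambda)$ does, so it suffices to relate the vanishing of $\Ind^{\bmG\otimes_F F^{\sep}}_{\bmB\otimes_F F^{\sep}}(\fu_F(\lambda)\otimes_F F^{\sep})$ to that of $\Ind^{\bmG\otimes_F F^{\sep}}_{\bmB\otimes_F F^{\sep}}\fu_{F^{\sep}}(\lambda)$.

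Second I would analyse the $\bmB\otimes_F F^{\sep}$-module $\fu_F(\lambda)\otimes_F F^{\sep}$. Applying Lemma \ref{lem:semisimple_descent} (2) to the finite-dimensional irreducible $\bmB$-module $\fu_F(\lambda)$ and the (possibly infinite) Galois extension $F^{\sep}/F$, we see that $\fu_F(\lambda)\otimes_F F^{\sep}$ is semisimple as a $\bmB\otimes_F F^{\sep}$-module. Since $\fu_F(\lambda)$ carries the unique $H$-weight $\lambda$ by the bijection \eqref{eq:IrrH} and \cite[Remark 4.5]{MR4039427}, so does $\fu_F(\lambda)\otimes_F F^{\sep}$ as a representation of $H\otimes_F F^{\sep}$. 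Using the same bijection applied over $F^{\sep}$ and the identification $\Irr\bmH\otimes_F F^{\sep}\cong\Irr\bmB\otimes_F F^{\sep}$, each irreducible summand must then be isomorphic to $\fu_{F^{\sep}}(\lambda)$ or to $\Pi\fu_{F^{\sep}}(\lambda)$, so $\fu_F(\lambda)\otimes_F F^{\sep}$ is a nonzero direct sum of copies of these two modules.

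Finally, induction is additive and commutes with $\Pi$ by \eqref{eq:parity_change}, so applying it to the decomposition above expresses $\Ind^{\bmG\otimes_F F^{\sep}}_{\bmB\otimes_F F^{\sep}}(\fu_F(\lambda)\otimes_F F^{\sep})$ as a nonzero direct sum of copies of $\Ind^{\bmG\otimes_F F^{\sep}}_{\bmB\otimes_F F^{\sep}}\fu_{F^{\sep}}(\lambda)$ and its parity change; this direct sum therefore vanishes precisely when $\Ind^{\bmG\otimes_F F^{\sep}}_{\bmB\otimes_F F^{\sep}}\fu_{F^{\sep}}(\lambda)$ does. Combining the three steps yields the desired equivalence. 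The main technical hurdle is the flat base change isomorphism in the first step: while such an isomorphism is expected to be standard, one should verify it carefully in the super-Hopf setting to ensure that no sign issues arise from the cotensor construction.
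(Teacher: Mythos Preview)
Your proposal is correct and follows essentially the same route as the paper's proof: base change of induction, decomposition of $\fu_F(\lambda)\otimes_F F^{\sep}$ into copies of $\fu_{F^{\sep}}(\lambda)$ and $\Pi\fu_{F^{\sep}}(\lambda)$ via semisimplicity plus the weight constraint, then faithfully flat descent. The only cosmetic differences are that the paper cites \cite[Proposition~I.4.13]{MR2015057} for the base change isomorphism \eqref{eq:bc_Ind} rather than sketching a cotensor argument, and invokes Lemma~\ref{lem:base_change}~(1) (itself a consequence of Lemma~\ref{lem:semisimple_descent}~(2)) for the semisimplicity of $\fu_F(\lambda)\otimes_F F^{\sep}$.
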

	
	\begin{proof}
		One can prove in a similar way to \cite[Proposition I.4.13]{MR2015057} that
		\begin{equation}
			(\Ind^{\bmG}_{\bmB}\fu_F(\lambda))\otimes_F F^{\sep}
			\cong \Ind^{\bmG\otimes_F F^{\sep}}_{\bmB\otimes_F F^{\sep}}(\fu_F(\lambda)\otimes_F F^{\sep}).\label{eq:bc_Ind}
		\end{equation}
		On the other hand, we see the weight to get
		\[\fu_F(\lambda)\otimes_F F^{\sep}\cong
		\fu_{F^{\sep}}(\lambda)^{m_0}\oplus \Pi \fu_{F^{\sep}}(\lambda)^{m_1}
		\]
		for some $m_0,m_1\geq 0$ with $m_0+m_1\geq 1$ (Lemma \ref{lem:base_change} (1)). We thus obtain
		\[(\Ind^{\bmG}_{\bmB}\fu_F(\lambda))\otimes_F F^{\sep}
		\cong
		\Ind^{\bmG\otimes_F F^{\sep}}_{\bmB\otimes_F F^{\sep}}\fu_{F^{\sep}}(\lambda)^{m_0}
		\oplus \Pi \Ind^{\bmG\otimes_F F^{\sep}}_{\bmB\otimes_F F^{\sep}}\fu_{F^{\sep}}(\lambda)^{m_1}.\]
		The assertion now follows by the faithfully flat descent. This completes the proof.
	\end{proof}
	
	\begin{rem}\label{rem:one-dim}
		If $\fh_{\bar{1}}=0$, $\fu_F(\lambda)$ is one dimensional by the proof of \cite[Proposition 4.2]{MR4039427}, and therefore $\fu_F(\lambda)\otimes_F F^{\sep}$ is irreducible; Otherwise, $\fu_F(\lambda)\otimes_F F^{\sep}$ is not irreducible in general. Its obstruction comes from the rationality problem of irreducible representations of $\bmH\otimes_F F^{\sep}$ (see Proposition \ref{prop:split} and Example \ref{ex:Clifford}).
	\end{rem}

	According to \cite[Theorem 4.12]{MR4039427}, we have a bijection
	
	\begin{equation}
		X^\flat(H)\cong\Irr_{\Pi}\bmG ;~
		\lambda\mapsto V_F(\fu_F(\lambda)),\label{eq:superhwtheory}
	\end{equation} 
	which is canonical in the sense that this map is independent of the choice of $\fu_F(\lambda)$ for each $\lambda$. In view of \cite[Lemma 4.10 and Proof of Proposition 4.11]{MR4039427}, the converse direction is constructed as follows: For each irreducible representation $V$ of $\bmG$, $H^0(\bmU^+,V)$ is an irreducible representation of $\bmH$ (Proposition \ref{prop:Vknowsu}). The element of $X^\flat(H)$ corresponding to $V$ is given by the unique $H$-weight of the $H^0(\bmU^+,V)$ (recall the construction of the bijection \eqref{eq:IrrH}).
	
	\begin{rem}
		For explicit description of $X^\flat(H)$ in special cases, see \cite[Proposition 2.2]{zbMATH03603460}, \cite[Theorem 2]{MR0831047}, \cite[Theorem 4.5]{zbMATH01985814}, \cite[Theorem 6.11]{MR1973576}, \cite[Lemma 2.3]{zbMATH05117269}, \cite[Theorem 5.3]{MR2392322}, \cite[Theorems 2.1, 3.5, 4.8]{MR3936085}, and \cite[Section 5]{MR4039427}.
	\end{rem}
	
	The bijection \eqref{eq:superhwtheory} gives a classification of irreducible representations of $\bmG$ up to parity change. For the complete classification, we are still interested in whether $V_F(\fu(\lambda))\cong \Pi V_F(\fu(\lambda))$, or equivalently, $\fu_F(\lambda)\cong \Pi \fu_F(\lambda)$. Shibata judged it by using the module theory of Clifford superalgebras (\cite[Theorem 4.12]{MR4039427}). Here we reprove it as a consequence of determination of the central division superalgebras of irreducible representations of $\bmG$. For $\lambda\in X^\ast(H)$, define a quadratic form $q^\lambda_F$ on $\fh_{\bar{1}}$ by
	\begin{equation}
		q^\lambda_F(x,y)=\frac{1}{2}\lambda(\left[x,x\right]).\label{eq:q^lambda_F}
	\end{equation}
	Choose a nondegenerate orthogonal subspace $(\fh_{\bar{1}})_s$ to the radical of $q^\lambda_F$ in $\fh_{\bar{1}}$. Let $d_{\lambda,F}$ denote the dimension of $(\fh_{\bar{1}})_s$. We also set $\delta_{\lambda,F}\in F^\times/(F^\times)^2$ as the signed discriminant of $q^\lambda_F|_{(\fh_{\bar{1}})_s}$ if $(\fh_{\bar{1}})_s\neq 0$; We set $\delta_{\lambda,F}=0$ if $(\fh_{\bar{1}})_s=0$ by convention. These are independent of the choice of $(\fh_{\bar{1}})_s$.
	
	\begin{prop}[{\cite[Theorem 4.12]{MR4039427}}]\label{prop:split}
		Let $\bmG$ be a split quasi-reductive algebraic supergroup over $F$, and $H$ be a split maximal torus of the even part $G$ of $\bmG$. We fix a BPS-subgroup to define $X^\flat(H)$. 
		\begin{enumerate}
			\item Every irreducible representation of $\bmG$ is super quasi-rational.
			\item We have
			$\beta^{\super}_{V_F(\fu_F(\lambda))}=[\bar{C}(\fh_{\bar{1}},q^{-\lambda}_F)]$
			for every $\lambda\in X^\flat(H)$.
			\item Let $\lambda\in X^\flat(H)$. Then $V_F(\fu_F(\lambda))\not\cong \Pi V_F(\fu_F(\lambda))$ if and only if $d_{\lambda,F}$ is even and $\delta_{\lambda,F}\in (F^\times)^2\cup\{0\}$.
		\end{enumerate}
	\end{prop}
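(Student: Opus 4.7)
My plan is to reduce everything to a Clifford-superalgebra computation on $\bmH$, transfer the resulting formula across the induction from $\bmB$ to $\bmG$, and then extract (1), (2), and (3) in that order.

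First I would analyse $\fu_F(\lambda)$. Since irreducible $\bmB$-representations are inflated from $\bmH$, it suffices to view $\fu_F(\lambda)$ as an $\bmH$-representation with $H$-weight $\lambda$. On such a representation an element $x\in\fh_{\bar 1}$ of the super-hyperalgebra acts with $x^2=\tfrac{1}{2}[x,x]$ acting as the scalar $q^\lambda_F(x)$, and more generally the Clifford relations for $(\fh_{\bar 1},q^\lambda_F)$ are satisfied. Hence isomorphism classes of irreducible $\bmH$-representations of weight $\lambda$ (up to parity change) correspond bijectively to simple supermodules over $\bar C(\fh_{\bar 1},q^\lambda_F)$, which is a central simple superalgebra by \cite[Lemma B.1]{MR4039427}; in particular $\bEnd_{\bmB}(\fu_F(\lambda))=\bEnd_{\bar C(\fh_{\bar 1},q^\lambda_F)}(\fu_F(\lambda))$. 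By the definition of similarity in $\BW(F)$ together with Example \ref{ex:cliff_op},
\begin{equation*}
[\bEnd_{\bmB}(\fu_F(\lambda))]
=[\bar C(\fh_{\bar 1},q^\lambda_F)]^{-1}
=[\bar C(\fh_{\bar 1},q^{-\lambda}_F)].
\end{equation*}

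Next I would transport this identity across the induction. The canonical inclusion $\fu_F(\lambda)\hookrightarrow V_F(\fu_F(\lambda))$ from Proposition \ref{prop:Vknowsu} induces a restriction map $\rho\colon\bEnd_{\bmG}(V_F(\fu_F(\lambda)))\to\bEnd_{\bmB}(\fu_F(\lambda))$, which is injective because $V_F(\fu_F(\lambda))$ is irreducible. For surjectivity, given $\psi\in\bEnd_{\bmB}(\fu_F(\lambda))$ I would compose the counit-restriction $V_F(\fu_F(\lambda))\twoheadrightarrow\fu_F(\lambda)$ with $\psi$ and apply Frobenius reciprocity, invoking \eqref{eq:parity_change} in the odd case; the resulting $\bmG$-equivariant map into $\Ind^{\bmG}_{\bmB}\fu_F(\lambda)$ or into $\Pi\Ind^{\bmG}_{\bmB}\fu_F(\lambda)$ has irreducible image and therefore factors through the socle, which is $V_F(\fu_F(\lambda))$ or $\Pi V_F(\fu_F(\lambda))$ respectively, yielding a preimage of $\psi$ with matching parity. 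Hence $\rho$ is a superalgebra isomorphism, which together with the previous paragraph gives (2); then (1) is automatic because $\bar C(\fh_{\bar 1},q^{-\lambda}_F)$ represents a class in $\BW(F)$ and so is central simple.

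Finally, for (3) I would combine Remark \ref{rem:D_1=0} with Example \ref{ex:Clifford}. The condition $V_F(\fu_F(\lambda))\not\cong\Pi V_F(\fu_F(\lambda))$ is equivalent to $\bEnd_{\bmG}(V_F(\fu_F(\lambda)))_{\bar 1}=0$, and by Remark \ref{rem:D_1=0} this holds exactly when the class $[\bar C(\fh_{\bar 1},q^{-\lambda}_F)]$ has the form $(+,1,[A])$. Writing $\bar C(\fh_{\bar 1},q^{-\lambda}_F)=C((\fh_{\bar 1})_s,-q^\lambda_F|_{(\fh_{\bar 1})_s})$ and applying Example \ref{ex:Clifford}, one reads off $\epsilon=-^{d_{\lambda,F}}$ and signed discriminant $(-1)^{d_{\lambda,F}}\delta_{\lambda,F}$ modulo squares; requiring $\epsilon=+$ forces $d_{\lambda,F}$ to be even, in which case the signed discriminant equals $\delta_{\lambda,F}$, and the further condition that it be a square recovers $\delta_{\lambda,F}\in(F^\times)^2$; the degenerate case $(\fh_{\bar 1})_s=0$ (where $\delta_{\lambda,F}=0$ by convention and $\bar C=F$) is trivially fine, producing exactly the stated criterion. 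The main obstacle I expect is the Frobenius reciprocity step in the second paragraph: tracking the identifications of $V_F(\fu_F(\lambda))$ as the socle of $\Ind^{\bmG}_{\bmB}\fu_F(\lambda)$, the counit splitting, and the parity change \eqref{eq:parity_change} must be done carefully enough that the superalgebra structure on $\bEnd_{\bmG}(V_F(\fu_F(\lambda)))$ (which is what controls the $\BW(F)$-class) is transported correctly.
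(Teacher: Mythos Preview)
Your proposal is correct and follows essentially the same route as the paper: both reduce to showing $\bEnd_{\bmG}(V_F(\fu_F(\lambda)))\cong\bEnd_{\bmH}(\fu_F(\lambda))$ via Proposition~\ref{prop:Vknowsu} and the socle property, identify the latter with $\bEnd_{\bar C(\fh_{\bar 1},q^\lambda_F)}(\fu_F(\lambda))$ through \cite[Lemma~4.1]{MR4039427}, and then invoke Example~\ref{ex:cliff_op}, Remark~\ref{rem:D_1=0}, and Example~\ref{ex:Clifford}. The only cosmetic difference is that the paper phrases the surjectivity step as functoriality of $\Ind^{\bmG}_{\bmB}$ followed by restriction to the socle, whereas you phrase it via Frobenius reciprocity and the counit; these are the same map.
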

	
	\begin{proof}
		We first prove an isomorphism
		$\bEnd_{\bmG}(V_F(\fu_F(\lambda)))\cong \bEnd_{\bmH}(\fu_F(\lambda))$. For this, recall that $V_F(\fu_F(\lambda))$ is realized as the unique irreducible subrepresentation of $\Ind^{\bmG}_{\bmB} \fu_F(\lambda)$. Hence any endomorphism of $\fu_F(\lambda)$ extends to that of $\Ind^{\bmG}_{\bmB} \fu_F(\lambda)$ and then retricts to that of $V_F(\fu_F(\lambda))$. Conversely, $\fu_F(\lambda)$ is recovered from $V_F(\fu_F(\lambda))$ as the $\mathbbmss{U}^+$-invariant part (Proposition \ref{prop:Vknowsu}). Hence any endomorphism of $V_F(\fu_F(\lambda))$ restricts to that of $\fu_F(\lambda)$. We have similar correspondences for the odd parts. These give the isomorphism.
		
		We next prove (1) and (2).
		Observe that $\fu_F(\lambda)$ can be regarded as an irreducible representation of $C(\fh_{\bar{1}},q^\lambda_F)$ and thus of $\bar{C}(\fh_{\bar{1}},q^\lambda_F)$ by \cite[Lemma 4.1]{MR4039427}. This implies
		an isomorphism $\bEnd_{\bmH}(\fu_F(\lambda))
		\cong \bEnd_{\bar{C}(\fh_{\bar{1}},q^\lambda_F)}(\fu_F(\lambda))$ (recall that $C(\fh_{\bar{1}},q^\lambda_F)$ is a subquotient of the dual superalgebra to $F[\bmH]$).
		Since $\bar{C}(\fh_{\bar{1}},q^\lambda_F)$ is central simple, $\bEnd_{\bmH}(\fu_F(\lambda))$ and $\bEnd_{\bar{C}(\fh_{\bar{1}},q^\lambda_F)}(\fu_F(\lambda))$ are central division superalgebras. In particular (1) follows. Moreover, we obtain the equalities
		\[\left[\bEnd_{\bmH}(\fu_F(\lambda))\right]
		=\left[\bEnd_{\bar{C}(\fh_{\bar{1}},q^\lambda_F)}(\fu_F(\lambda))\right]
		=[\bar{C}(\fh_{\bar{1}},q^\lambda_F)]^{-1}
		=[\bar{C}(\fh_{\bar{1}},q^{-\lambda}_F)]\]
		in $\BW(F)$ (recall Example \ref{ex:BW_inverse} for the last equality). This shows (2).
		
		Finally, (3) follows from Remark \ref{rem:D_1=0}, Example \ref{ex:Clifford}, and \cite[Theorem 3]{MR0167498}.
	\end{proof}
	
	\begin{cor}\label{cor:schur's_lemma}
		Suppose that $F$ is separably closed. Then we have an equality
		$\End_{\bmG}(V)=F\id_V$
		for any irreducible representation $V$ of a quasi-reductive algebraic supergroup $\bmG$ over $F$. In particular, the notions of weak (super) quasi-rationality and (super) quasi-rationality in Definition \ref{defn:pure} (2) are equivalent for quasi-reductive algebraic supergroups over a field of characteristic not two.
	\end{cor}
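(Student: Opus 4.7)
The plan is to reduce the first assertion to the observation that, over a separably closed field of characteristic not two, the even part of every central division superalgebra must coincide with $F$. The ``in particular'' statement then follows formally by applying the first part to the base change $\bmG \otimes_F F^{\sep}$.

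Since any torus over a separably closed field is split, a maximal torus $H$ of the even part $G$ of $\bmG$ is automatically split, so that Proposition \ref{prop:split} applies to $\bmG$. Part (1) of that proposition tells us that $V$ is super quasi-rational, so $D := \bEnd_{\bmG}(V)$ is a central division superalgebra over $F$. It suffices to prove $D_{\bar{0}} = F \id_V$.

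For this I would invoke the structure of $\BW(F)$ via the bijection \eqref{eq:bw}. Because $F$ is separably closed of characteristic not two, both $\Br(F)$ and $F^\times/(F^\times)^2$ are trivial, so $\BW(F) \cong \{+,-\}$. If $\epsilon_D = +$, then $[D] = (+,1,[F])$, and Remark \ref{rem:D_1=0} forces $D_{\bar 1} = 0$; hence $D = D_{\bar 0}$ is a central division algebra over $F$, which must equal $F$ since $\Br(F) = 0$. If instead $\epsilon_D = -$, then by the construction of \eqref{eq:bw} the even part $D_{\bar 0}$ is central simple over $F$, and being simultaneously a division algebra it again coincides with $F$. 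In either case $\End_{\bmG}(V) = D_{\bar 0} = F \id_V$.

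The remaining claim is then automatic: for any quasi-reductive $\bmG$ over a field $F$ of characteristic not two, $\bmG \otimes_F F^{\sep}$ is quasi-reductive over the separably closed field $F^{\sep}$, so the first part yields $\End_{\bmG \otimes_F F^{\sep}}(V') = F^{\sep}\id_{V'}$ for every irreducible representation $V'$ of $\bmG \otimes_F F^{\sep}$. This is precisely the extra clause in Definition \ref{defn:pure}(2) that upgrades weak (super) quasi-rationality to (super) quasi-rationality. I do not expect any serious obstacle; the only point requiring care is ruling out the case $\epsilon_D = +$ with $D_{\bar 1} \neq 0$, but this is already handled by Remark \ref{rem:D_1=0} combined with the triviality of $\Br(F)$ and $F^\times/(F^\times)^2$ over a separably closed field.
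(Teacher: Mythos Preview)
Your proof is correct and follows the approach implicit in the paper: the corollary is stated without proof immediately after Proposition~\ref{prop:split}, and the intended argument is precisely what you give---use Proposition~\ref{prop:split}(1) to get that $\bEnd_{\bmG}(V)$ is a central division superalgebra, then invoke the triviality of $\Br(F)$ and $F^\times/(F^\times)^2$ over a separably closed field of characteristic not two (cf.\ Example~\ref{ex:type_sep_clo_case}) to conclude that its even part is $F$. Your case split on $\epsilon_D$ using Remark~\ref{rem:D_1=0} and the description preceding it is clean and complete.
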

	
	\begin{cor}\label{cor:absolute_irreducibility}
		Consider the setting of Proposition \ref{prop:split}. Let $\lambda\in X^\flat(H)$. Assume that $\fu_F(\lambda)\otimes_F F^{\sep}$ is irreducible. Then $V_F(\fu_F(\lambda))$ is absolutely irreducible.
	\end{cor}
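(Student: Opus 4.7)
The plan is to verify condition (b) of Proposition \ref{prop:abs_irr} for $V=V_F(\fu_F(\lambda))$, namely that $V$ is pseudo-absolutely irreducible and that $\End_{\bmG\otimes_F F^{\sep}}(V\otimes_F F^{\sep})=F^{\sep}\id$. Most of the work will go into the pseudo-absolute irreducibility; the endomorphism condition then follows essentially for free from Corollary \ref{cor:schur's_lemma}.

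To handle the first point, I would start from the base change formula \eqref{eq:bc_Ind}. Since $\fu_F(\lambda)\otimes_F F^{\sep}$ is by hypothesis irreducible with $H$-weight $\lambda$, we may take $\fu_{F^{\sep}}(\lambda)=\fu_F(\lambda)\otimes_F F^{\sep}$ (the parity ambiguity in the choice of $\fu_{F^{\sep}}(\lambda)$ is absorbed by \eqref{eq:parity_change}), yielding
\[(\Ind^{\bmG}_{\bmB}\fu_F(\lambda))\otimes_F F^{\sep}\cong \Ind^{\bmG\otimes_F F^{\sep}}_{\bmB\otimes_F F^{\sep}}\fu_{F^{\sep}}(\lambda).\]
Base changing the inclusion $V_F(\fu_F(\lambda))\hookrightarrow \Ind^{\bmG}_{\bmB}\fu_F(\lambda)$ embeds $V_F(\fu_F(\lambda))\otimes_F F^{\sep}$ into the right-hand side. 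By Lemma \ref{lem:base_change} (1), the source is semisimple, so its image lies in the socle of $\Ind^{\bmG\otimes_F F^{\sep}}_{\bmB\otimes_F F^{\sep}}\fu_{F^{\sep}}(\lambda)$. That socle is precisely the irreducible representation $V_{F^{\sep}}(\fu_{F^{\sep}}(\lambda))$, and $V_F(\fu_F(\lambda))\otimes_F F^{\sep}$ is nonzero (its $F^{\sep}$-dimension equals the $F$-dimension of $V_F(\fu_F(\lambda))$, which is positive because $\lambda\in X^\flat(H)$). The two must therefore coincide, so $V_F(\fu_F(\lambda))\otimes_F F^{\sep}$ is irreducible, which is the pseudo-absolute irreducibility.

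To close, I would apply Corollary \ref{cor:schur's_lemma} to the irreducible representation $V_{F^{\sep}}(\fu_{F^{\sep}}(\lambda))$ of the quasi-reductive algebraic supergroup $\bmG\otimes_F F^{\sep}$ to conclude that $\End_{\bmG\otimes_F F^{\sep}}(V_F(\fu_F(\lambda))\otimes_F F^{\sep})=F^{\sep}\id$, and Proposition \ref{prop:abs_irr} then delivers absolute irreducibility. I do not anticipate a serious obstacle; the only delicate bookkeeping is the parity ambiguity in the identification $\fu_F(\lambda)\otimes_F F^{\sep}\overset{\boldsymbol{\cdot}}{\cong}\fu_{F^{\sep}}(\lambda)$, which \eqref{eq:parity_change} allows us to absorb silently by replacing $\fu_{F^{\sep}}(\lambda)$ with its parity change if necessary.
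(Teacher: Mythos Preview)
Your proof is correct and follows essentially the same approach as the paper: use the base-change isomorphism \eqref{eq:bc_Ind}, note that $V_F(\fu_F(\lambda))\otimes_F F^{\sep}$ is semisimple and hence lands in the socle $V_{F^{\sep}}(\fu_{F^{\sep}}(\lambda))$, and conclude equality since the latter is irreducible. The paper's proof stops there, leaving implicit the upgrade from pseudo-absolute to absolute irreducibility; you make that step explicit via Corollary~\ref{cor:schur's_lemma} and Proposition~\ref{prop:abs_irr}, which is a welcome clarification.
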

	
	\begin{proof}
		We use the isomorphism \eqref{eq:bc_Ind} to obtain an isomorphism
		\[(\Ind^{\bmG}_{\bmB}\fu_F(\lambda))\otimes_F F^{\sep} \cong 
		\Ind^{\bmG\otimes_F F^{\sep}}_{\bmB\otimes_F F^{\sep}}(\fu_{F^{\sep}}(\lambda)).\]
		We obtain an isomorphism $V_F(\fu_F(\lambda))\otimes_F F^{\sep}\cong V_{F^{\sep}}(\fu_{F^{\sep}}(\lambda))$ by restriction. In fact, $V_F(\fu_F(\lambda))\otimes_F F^{\sep}$ is semisimple (Lemma \ref{lem:semisimple_descent} (2)). Since $V_{F^{\sep}}(\fu_{F^{\sep}}(\lambda))$ is the socle of $\Ind^{\bmG\otimes_F F^{\sep}}_{\bmB\otimes_F F^{\sep}}(\fu_{F^{\sep}}(\lambda))$, the restriction to $V_F(\fu_F(\lambda))\otimes_F F^{\sep}$ is onto $V_{F^{\sep}}(\fu_{F^{\sep}}(\lambda))$ as desired.
	\end{proof}
	
	\begin{cor}[{\cite[Proposition 5.6]{shibata}}]\label{cor:split_h1=0}
		Let $\lambda\in X^\flat(H)$ with $\delta_\lambda=0$. Then $V_F(\fu_F(\lambda))$ is absolutely irreducible. In particular, every irreducible representation of $\bmG$ is absolutely irreducible if $\fh_{\bar{1}}=0$.
	\end{cor}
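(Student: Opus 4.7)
The strategy is to apply Corollary \ref{cor:absolute_irreducibility}, thereby reducing absolute irreducibility of $V_F(\fu_F(\lambda))$ to irreducibility of the base change $\fu_F(\lambda)\otimes_F F^{\sep}$. The hypothesis $\delta_\lambda = 0$ is, by the convention introduced just before Proposition \ref{prop:split}, the statement that $(\fh_{\bar{1}})_s = 0$, i.e., that the radical of the quadratic form $q^\lambda_F$ on $\fh_{\bar{1}}$ is all of $\fh_{\bar{1}}$. Since $\operatorname{char} F \neq 2$, a quadratic form is determined by its polarization, and so this forces $q^\lambda_F \equiv 0$.

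With $q^\lambda_F$ identically zero, the Clifford superalgebra $C(\fh_{\bar{1}}, q^\lambda_F)$ reduces to the exterior superalgebra on $\fh_{\bar{1}}$ (generators anticommute and square to $0$); its augmentation ideal is nilpotent, so it is a local superalgebra, and hence its maximal semisimple quotient $\bar{C}(\fh_{\bar{1}}, q^\lambda_F)$ equals $F$. As recorded in the proof of Proposition \ref{prop:split}, $\fu_F(\lambda)$ can be regarded as an irreducible $\bar{C}(\fh_{\bar{1}}, q^\lambda_F)$-supermodule, so $\fu_F(\lambda)$ must be one-dimensional over $F$. Therefore $\fu_F(\lambda)\otimes_F F^{\sep}$ is one-dimensional over $F^{\sep}$, hence irreducible, and Corollary \ref{cor:absolute_irreducibility} yields absolute irreducibility of $V_F(\fu_F(\lambda))$.

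For the final clause, when $\fh_{\bar{1}} = 0$ the vanishing $\delta_\lambda = 0$ is automatic for every $\lambda \in X^\flat(H)$, and Shibata's bijection \eqref{eq:superhwtheory} exhibits every irreducible representation of $\bmG$, up to parity change, as some $V_F(\fu_F(\lambda))$. Since parity change preserves absolute irreducibility, the conclusion follows. No genuine obstacle arises in this argument; the only point requiring care is unpacking the convention $\delta_\lambda = 0$ and observing that a vanishing quadratic form produces an exterior, and hence local, Clifford superalgebra whose semisimple quotient is trivial.
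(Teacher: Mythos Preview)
Your proof is correct and follows the paper's intended route. The paper gives no explicit proof for this corollary, but your derivation from Corollary~\ref{cor:absolute_irreducibility} via the one-dimensionality of $\fu_F(\lambda)$ when $\delta_\lambda=0$ is precisely the argument implicit in the placement of the result and later confirmed by Convention~\ref{conv}, which attributes this one-dimensionality to the proofs of \cite[Proposition~4.2]{MR4039427}, Proposition~\ref{prop:split}, and the present corollary.
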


	\begin{ex}[{\cite[Theorem 1.18]{MR3012224}}]
		We have $\fh_{\bar{1}}=0$ if the characteristic of $F$ is zero and $\fg\otimes_F \bar{F}$ is basic in the sense of \cite[Definition 1.14]{MR3012224}.
	\end{ex}
	
	\begin{ex}\label{ex:type_sep_clo_case}
		Assume $F$ to be separably closed. Then we observe
		$(F^\times)^2=F^\times$
		since the characteristic of $F$ is not two. Hence for $\lambda\in X^\flat(H)$,
		\[V_F(\fu_F(\lambda))\cong \Pi V_F(\fu_F(\lambda))\]
		if and only if $d_{\lambda,F}$ is odd. In particular, these equivalent conditions fail if $\fh_{\bar{1}}=0$. One also has a simple description of $\delta_{\lambda,F}$:
		\[\delta_{\lambda,F}=\begin{cases}
			1&(q^\lambda_F\neq 0)\\
			0&(q^\lambda_F=0).
		\end{cases}\]
	\end{ex}
	
	\begin{ex}\label{ex:Q_n}
		Let $n\geq 1$. Put $\bmG=\Qq_n$ (see Section \ref{sec:notation}). Let $H\subset \Qq_n$ be the subgroup of diagonal matrices.
		Identify $X^\ast(H)\cong \bZ^n$ in the standard way. Take the standard positive system as in \cite[Section 1]{MR0831047}. If we write $p$ for the characteristic of $F$, we have
		\begin{flalign*}
			&X^\flat(H)\\
			&=\{\lambda=(\lambda_i)\in\bZ^n:~\lambda_1\geq \lambda_2\geq\cdots\geq \lambda_n,~\mathrm{if}~i\in\{1,2,\ldots,n-1\}~
			\mathrm{satisfies}~\lambda_i=\lambda_{i+1},~p|\lambda_i\}
		\end{flalign*}
		after \cite[Theorem 6.11]{MR1973576} (and \cite[Section 6]{MR0831047} for $p=0$). Here if $p=0$, the condition $p|\lambda_i$ means $\lambda_i=0$.
		
		Identify the Lie superalgebra $\fgl_{n|n}$ of $\GL_{n|n}$ with that of square matrices of size $2n$ of two diagonal blocks of square matrices of size $n$ for the even part as usual (see \cite[Section 1.1.2]{MR3012224} for example). Then we have
		\[\fh_{\bar{1}}=\left\{\left(\begin{array}{cc}
			0 & y \\
			y & 0
		\end{array}\right)\in\fgl_{n|n}(\bR):~y~\mathrm{is~diagonal}\right\}.\]
		For $1\leq i\leq n$, write $E_{ii}$ for the square matrix unit of size $n$ with $1$ at the $(i,i)$-entry, and
		\[\eta_i=\left(\begin{array}{cc}
			0 & E_{ii} \\
			E_{ii} & 0
		\end{array}\right).\]
		Then for $\lambda=(\lambda_i)\in\bZ^n$, the quadratic form $q^\lambda_F$ is given by
		$q^\lambda_F(\sum_{i=1}^na_i\eta_i)=\sum_{i=1}^n \lambda_i a^2_i$.
		See \cite[Lemma B.1]{MR4039427}, Example \ref{ex:Clifford}, and \cite[Theorem 3]{MR0167498} for computation of the similarity class
		$[\bar{C}(\fh_{\bar{1}},q^\lambda_F)]$.
	\end{ex}
	
	\begin{ex}\label{ex:Q_n_real}
		Consider the setting of Example \ref{ex:Q_n} with $F=\bR$. Take $\lambda\in X^\flat(H)$. Write
		\[\begin{array}{cc}
			n_+\coloneqq |\{i\in\{1,2,\ldots,n\}:~\lambda_i>0\}|,
			&n_-\coloneqq |\{i\in\{1,2,\ldots,n\}:~\lambda_i<0\}|.
		\end{array}\]
		Then we have
		\begin{align*}
			d_\lambda&=n_++n_-,
			&\epsilon_{V_\bR(\fu_\bR(\lambda))}&=-^{d_\lambda},\\
			a_{V_\bR(\fu_\bR(\lambda))}&=(-1)^{\binom{d_\lambda}{2}+n_+},
			&D_{V_\bR(\fu_\bR(\lambda))}&=(-1)^{\binom{n_+}{2}+\binom{d_\lambda-1}{2}n_++\binom{d_\lambda+1}{4}}.
		\end{align*}
	\end{ex}
	
	\begin{ex}
		Consider the setting of Example \ref{ex:Q_n}. Assume $F$ to be a finite field of characteristic $p>0$. Take $\lambda\in X^\flat(H)$. Write
		\[\begin{array}{cc}
			S\coloneqq \{i\in\{1,2,\ldots,n\}:~\lambda_i\not\equiv0\pmod{p}\},
			&m=|S|.
		\end{array}\]
		Then we have
		\[\begin{array}{cccc}
			d_\lambda=m,&\epsilon_{V_F(\fu_F(\lambda))}=-^{d_\lambda},
			&a_{V_F(\fu_F(\lambda))}=(-1)^{\binom{d_\lambda+1}{2}} \prod_{i\in S} \lambda_i,
			&D_{V_F(\fu_F(\lambda))_F}=[F]
		\end{array}\]
		(see \cite[Chapter X, \S 7]{MR554237} for the last equality).
	\end{ex}

	Finally, we discuss how the bijection \eqref{eq:superhwtheory} changes by replacing $\bmB$ with another BPS-subgroup. The easy part is the translation by $\bmG(F)=G(F)$:
	
	\begin{cons}\label{cons:pullback}
		Take $w\in G(F)$. Then for a representation $\fu$ of $w\bmB w^{-1}$, let ${}^w \fu$ denote the representation of $\bmB$ defined by the pullback through the isomorphism
		$\bmB\cong w \bmB w^{-1};~b\mapsto w bw^{-1}$.
		We will apply similar notations to homomorphisms.
	\end{cons}

	\begin{prop}\label{prop:G-conjugate}
		Let $w\in G(F)$. Then for an irreducible representation $\fu$ of $w\bmB w^{-1}$, there are isomorphisms
		\begin{equation}
			\Ind^{\bmG}_{w\bmB w^{-1}} \fu\cong
			\Ind^{\bmG}_{\bmB} {}^{w}\fu,
			\label{eq:w-twist_Ind}
		\end{equation}
		\begin{equation}
			V_{F,w\bmB w^{-1}}(\fu)\cong V_{F,\bmB}({}^w\fu)
			\label{eq:w-twist_V(u)}.
		\end{equation}
	\end{prop}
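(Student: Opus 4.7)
The plan is to exploit the fact that conjugation by $w\in G(F)$ is an inner automorphism of $\bmG$ and therefore acts trivially on the category of representations of $\bmG$, up to a canonical natural isomorphism. Let $c_w:\bmG\to\bmG$ denote the inner automorphism defined on $R$-points by $g\mapsto wgw^{-1}$. It restricts to the isomorphism $\bmB\to w\bmB w^{-1}$ used in Construction~\ref{cons:pullback}, and fits into a commutative square
\[\begin{tikzcd}
\bmB\ar[r,hook]\ar[d,"c_w|_{\bmB}"']&\bmG\ar[d,"c_w"]\\
w\bmB w^{-1}\ar[r,hook]&\bmG
\end{tikzcd}\]
in the category of affine group superschemes over $F$.

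First I would verify the following ``inner twist'' statement: for any representation $V$ of $\bmG$ with action map $\rho$, the $F$-point $\rho_F(w)$ defines a natural isomorphism $c_w^* V\cong V$ of representations of $\bmG$. The required intertwining relation is the familiar identity $\rho(wgw^{-1})\rho(w)=\rho(w)\rho(g)$, which one checks after evaluating at an $R$-point $g\in\bmG(R)$ for any $R\in\CSAlg_F$. Pulling back further to $\bmB$ along the commutative square yields a natural isomorphism $(c_w|_\bmB)^*(V|_{w\bmB w^{-1}})\cong V|_\bmB$ of representations of $\bmB$.

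Combining this with the restriction-induction adjunctions, I obtain for every representation $V$ of $\bmG$ natural bijections
\begin{align*}
\Hom_\bmG(V,\Ind^\bmG_{w\bmB w^{-1}}\fu)
&\cong\Hom_{w\bmB w^{-1}}(V|_{w\bmB w^{-1}},\fu)\\
&\cong\Hom_\bmB((c_w|_\bmB)^*V|_{w\bmB w^{-1}},(c_w|_\bmB)^*\fu)\\
&\cong\Hom_\bmB(V|_\bmB,{}^w\fu)\\
&\cong\Hom_\bmG(V,\Ind^\bmG_\bmB{}^w\fu),
\end{align*}
where the second line uses that $(c_w|_\bmB)^*$ is an equivalence of categories, the third uses the inner-twist isomorphism from the previous paragraph together with the definition ${}^w\fu=(c_w|_\bmB)^*\fu$, and the last is the adjunction for $\Ind^\bmG_\bmB$. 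Yoneda's lemma then delivers the first isomorphism \eqref{eq:w-twist_Ind}.

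The isomorphism \eqref{eq:w-twist_V(u)} follows immediately by passing to socles, since socles are functorial and any isomorphism of representations of $\bmG$ restricts to an isomorphism on socles. The argument is essentially formal, so no serious obstacle is expected; the one point that requires care is the verification that pullback along an inner automorphism is naturally isomorphic to the identity functor on representations, which is the standard inner-twist check performed at the level of $R$-points.
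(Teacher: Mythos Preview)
Your proof is correct and follows essentially the same idea as the paper's: the paper's one-line proof (``The translation by $w$ gives rise to the first isomorphism. Take the socles to obtain \eqref{eq:w-twist_V(u)}.'') names the explicit isomorphism directly, while you instead deduce it via Yoneda from the inner-twist identification $c_w^*V\cong V$ and the adjunction; these are two packagings of the same fact. One tiny cosmetic point: the identity $\rho(wgw^{-1})\rho(w)=\rho(w)\rho(g)$ you check exhibits $\rho(w)$ as an isomorphism $V\to c_w^*V$ rather than $c_w^*V\to V$, but of course taking the inverse fixes the direction and nothing in the argument is affected.
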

	
	\begin{proof}
		The translation by $w$ gives rise to the first isomorphism. Take the socles to obtain \eqref{eq:w-twist_V(u)}.	
	\end{proof}
	
	For further analysis, suppose that $\bmG$ is basic of the main type in order to make use of the odd reflections. In fact, all the positive systems are transferred to each other by the Weyl group of $(G,H)$ and odd reflections (\cite[Proposition 1.32, Section 1.4.3]{MR3012224}). There an odd root is called isotropic if $2\alpha\not\in\Delta$ (\cite[(1.18)]{MR3012224}). For this reason, we may restrict ourselves now to odd reflections.
	
	Consider an isotropic simple odd root $\alpha$. Write $s_{-\alpha}$ for the odd reflection with respect to $-\alpha$ (\cite[Lemma 1.30]{MR3012224}). Let $\bmB_\alpha$ denote the BPS-subgroup attached to $s_{-\alpha}\Delta^-=(\Delta^-\setminus\{-\alpha\})\cup\{\alpha\}$. Define $\bmU^+_\alpha$ in a similar way.
	
	Let $\lambda\in X^\ast(H)$. According to \cite[Theorem 1.18]{MR3012224}, we have $\bH=H$ since $\fh_{\bar{1}}=0$. Regard the field $F$ as a vector superspace with trivial odd part. We define 
	the structure of a $\bmB$-module on $F$ by $\lambda$. We denote it by $F_\lambda$ and put $\fu_F(\lambda)=F_\lambda$.
	Let $\pi_{\lambda,\bmB}:\Ind^{\bmG}_{\bmB} F_\lambda\to F_\lambda$ denote the counit.
	
	Recall that if $\lambda\in X^\flat(H)$, $\pi_{\lambda,\bmB}$ restricts to an isomorphism
	\[H^0(\bmU^+,V_{F,\bmB}(F_\lambda))\overset{\eqref{eq:canonical_splitting}}{\cong} F_\lambda.\]
	We denote the preimage of $1\in F$ by $v_{\lambda,F}$.
	
	Write $(-,-)$ for the product of the bilinear forms in \cite[Theorem 1.18 (5)]{MR3012224}. 
	
	\begin{prop}\label{prop:hw_odd_reflection}
		Let $\bmG$ be a split basic quasi-reductive algebraic supergroup of the main type over a field $F$ of characteristic zero, $H$ be a split maximal torus of the even part $G$, and $\bmB$ be a BPS-subgroup containing $H$. Let $\lambda\in X^\flat(H)$, and $\alpha$ be an isotropic simple odd root. Define $\pi_{\lambda,\bmB}$ as above.
		
		\begin{enumerate}
			\item If $(\lambda,\alpha)=0$ then the composite map
			$V_{F,\bmB}(F_\lambda)\hookrightarrow \Ind^{\bmG}_{\bmB} F_\lambda
			\overset{\pi_{\lambda,\bmB}}{\to} F_\lambda$
			is $\bmB_\alpha$-equivariant. Moreover, it induces an isomorphism
			$V_{F,\bmB}(F_\lambda)\cong V_{F,\bmB_\alpha}(F_\lambda)$.
			\item Suppose $(\lambda,\alpha)\neq 0$. Choose a root vector $E_{\alpha}$ of root $\alpha$.
			\begin{enumerate}
				\item[(i)] The composite linear map
				\[V_{F,\bmB}(F_\lambda)
				\overset{E_\alpha}{\to} \Pi V_{F,\bmB}(F_\lambda)
				\xrightarrow{\Pi \pi_{\lambda,\bmB}|_{V_{F,\bmB}(F_\lambda)}}
				\Pi F\]
				of vector superspaces is $\bmB_\alpha$-equivariant for $\lambda-\alpha$ as the structure of a $\bmB_\alpha$-module on the target. Moreover, it induces an isomorphism
				\begin{equation}
					V_{F,\bmB}(F_\lambda)\cong\Pi
					V_{F,\bmB_\alpha}(F_{\lambda-\alpha}).
					\label{eq:odd_reflection}
				\end{equation}
				\item[(ii)] Choose a root vector $E_{-\alpha}$ of root $-\alpha$. Then apply (i) to
				\[\begin{array}{cccc}
					\bmB_\alpha,&\lambda-\alpha,&-\alpha,&E_{-\alpha}
				\end{array}\]
				to obtain an isomorphism 
				\begin{equation}
					V_{F,\bmB_\alpha}(F_{\lambda-\alpha})\cong \Pi V_{F,\bmB}(F_\lambda).
					\label{eq:odd_reflection_2}
				\end{equation}
				Its parity change is inverse to \eqref{eq:odd_reflection} if we normalize $E_{-\alpha}$ so that we have
				$\lambda([E_\alpha,E_{-\alpha}])=1$.
			\end{enumerate}
		\end{enumerate}
	\end{prop}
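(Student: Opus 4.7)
The plan is to produce explicit highest-weight vectors for $\bmB_\alpha$ inside $V_{F,\bmB}(F_\lambda)$ (or its parity flip), invoke Proposition \ref{prop:Vknowsu} together with Frobenius reciprocity to realize $V_{F,\bmB_\alpha}(F_\lambda)$ or $V_{F,\bmB_\alpha}(F_{\lambda-\alpha})$ as a subrepresentation of an induced module, and then verify the $\bmB_\alpha$-equivariance of the displayed composite by super-bracket computations. Since the characteristic is zero and $\bmG$ is basic of the main type, the hyperalgebra coincides with $U(\fg)$, so the $\bmG$-action is determined by the $\fg$-action; the key ingredients are $[E_\alpha,E_{-\alpha}] = H_\alpha$ with $\lambda(H_\alpha)$ proportional to $(\lambda,\alpha)$, the relation $[E_\alpha,H] = -\alpha(H)E_\alpha$ for $H \in \fh$, and $E_{\pm\alpha}^2 = 0$ (which follows from $2\alpha \notin \Delta$).

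For part (1), I first show $E_{-\alpha} v_{\lambda,F} = 0$. For $\beta \in \Delta^+$, the super-bracket identity $E_\beta E_{-\alpha} = (-1)^{|E_\beta||E_\alpha|} E_{-\alpha} E_\beta + [E_\beta,E_{-\alpha}]$ reduces the analysis to $[E_\beta,E_{-\alpha}] v_{\lambda,F}$. Simplicity of $\alpha$ rules out $\beta - \alpha \in \Delta^-$ when $\beta \in \Delta^+ \setminus \{\alpha\}$ (otherwise $\alpha = \beta + (\alpha - \beta)$ expresses $\alpha$ as a sum of two positive roots), so $[E_\beta,E_{-\alpha}]$ is either zero or a positive-root vector annihilating $v_{\lambda,F}$; for $\beta = \alpha$ the bracket gives $H_\alpha v_{\lambda,F}$, which vanishes because $(\lambda,\alpha) = 0$. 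Hence $E_{-\alpha} v_{\lambda,F}$ is $\bmU^+$-invariant of weight $\lambda - \alpha$; since \eqref{eq:canonical_splitting} exhibits $H^0(\bmU^+,V_{F,\bmB}(F_\lambda))$ as concentrated in weight $\lambda$, it must vanish. Consequently $v_{\lambda,F}$ is $\bmU^+_\alpha$-invariant, Frobenius reciprocity with Proposition \ref{prop:Vknowsu} yields the isomorphism $V_{F,\bmB}(F_\lambda) \cong V_{F,\bmB_\alpha}(F_\lambda)$, and the $\bmB_\alpha$-equivariance of the composite reduces to $\pi_{\lambda,\bmB}(E_\alpha v) = 0$, which holds because the weight $\lambda - \alpha$ space of $V_{F,\bmB}(F_\lambda)$ is zero.

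For (2)(i), set $w := E_{-\alpha} v_{\lambda,F}$. The same bracket argument shows $E_\beta w = 0$ for $\beta \in \Delta^+ \setminus \{\alpha\}$, and $E_{-\alpha} w = E_{-\alpha}^2 v_{\lambda,F} = 0$ by isotropy, so $w$ is $\bmU^+_\alpha$-invariant of weight $\lambda - \alpha$; it is nonzero since $E_\alpha w = H_\alpha v_{\lambda,F}$ is a nonzero multiple of $v_{\lambda,F}$. Viewing $w$ as an even vector of $\Pi V_{F,\bmB}(F_\lambda)$ and combining Proposition \ref{prop:Vknowsu} with irreducibility produces the isomorphism $V_{F,\bmB}(F_\lambda) \cong \Pi V_{F,\bmB_\alpha}(F_{\lambda-\alpha})$. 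The $\bmB_\alpha$-equivariance of the displayed composite is verified one generator at a time: on $\fh$ the identity $E_\alpha H = H E_\alpha - \alpha(H) E_\alpha$ produces the weight shift $\lambda \mapsto \lambda - \alpha$; for $E_\gamma$ with $\gamma \in \Delta^- \setminus \{-\alpha\}$, the first term in $E_\alpha E_\gamma = \pm E_\gamma E_\alpha + [E_\alpha,E_\gamma]$ dies under $\pi_{\lambda,\bmB}$ by $\bmB$-equivariance, while $\alpha + \gamma$ is not a positive root (again by simplicity) so the second term's image vanishes; and $E_\alpha^2 = 0$ handles $E_\alpha$ itself.

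Part (2)(ii) follows by applying (i) to $(\bmB_\alpha, \lambda - \alpha, -\alpha, E_{-\alpha})$; the hypothesis $(\lambda - \alpha, -\alpha) \neq 0$ reduces to $(\lambda,\alpha) \neq 0$ via $(\alpha,\alpha) = 0$, and $(\bmB_\alpha)_{-\alpha} = \bmB$ by involutivity of odd reflections. Schur's lemma reduces the inverse assertion to tracing one vector through the composite: with the normalization $\lambda([E_\alpha,E_{-\alpha}]) = 1$ (which also gives $(\lambda - \alpha)(H_\alpha) = 1$ by isotropy), \eqref{eq:odd_reflection} sends $w$ to the canonical highest-weight vector of $V_{F,\bmB_\alpha}(F_{\lambda-\alpha})$ viewed inside $\Pi V_{F,\bmB_\alpha}(F_{\lambda-\alpha})$, and a short $\bmG$-linearity argument (using that $E_\alpha$ applied to this canonical vector recovers $v_{\lambda,F}$ under \eqref{eq:odd_reflection_2}) identifies its image under \eqref{eq:odd_reflection_2} as $w$, so the composite is the identity. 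The main technical point throughout is the simplicity argument forbidding $\beta - \alpha \in \Delta^-$ for $\beta \in \Delta^+ \setminus \{\alpha\}$; this is what makes all the unwanted bracket terms vanish.
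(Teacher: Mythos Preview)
Your proof is correct and takes a more hands-on route than the paper. For parts (1) and (2)(i), the paper argues by duality: it identifies $\pi_{\lambda,\bmB}|_{V_{F,\bmB}(F_\lambda)}$ with an element of $\Hom_{\bmB}(F_{-\lambda}, V_{F,\bmB}(F_\lambda)^\vee)$ and then invokes \cite[Lemma~1.40]{MR3012224} (the odd reflection lemma for highest weights) together with the Harish-Chandra pair interpretation of \cite[Proposition~5.4]{MR3605977} to conclude that this Hom space coincides with $\Hom_{\bmB_\alpha}(F_{-\lambda}, V_{F,\bmB}(F_\lambda)^\vee)$, whence $\bmB_\alpha$-equivariance follows at once. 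You instead verify everything directly by root-vector computations: the simplicity argument forbidding $\beta - \alpha \in \Delta^-$ for $\beta \in \Delta^+ \setminus \{\alpha\}$, the isotropy condition $E_{\pm\alpha}^2 = 0$, and the consequence that the $\lambda - \alpha$ weight space vanishes. Your approach is self-contained and makes the mechanism transparent; the paper's approach is shorter but defers the Lie-algebraic content to the cited results. For part (2)(ii) the two arguments essentially coincide: both trace highest-weight vectors through the composite and invoke Schur's lemma (the paper traces $v_\lambda$ and $E_{-\alpha}v_\lambda$; you trace $w = E_{-\alpha}v_{\lambda,F}$). One point worth spelling out in your write-up: the vanishing of the $\lambda - \alpha$ weight space in part (1) uses not only $E_{-\alpha}v_{\lambda,F} = 0$ but also the PBW fact that, for $\alpha$ simple, this weight space is spanned by $E_{-\alpha}v_{\lambda,F}$.
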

	
	\begin{proof}
		For (1), assume $(\lambda,\alpha)=0$. Put the canonical structure of a representation of $\bmG$ on the superdual $V_{F,\bmB}(F_\lambda)^\vee\coloneqq \bHom_F(V_{F,\bmB}(F_\lambda),F)$. Then one can regard $\pi_{\lambda,\bmB}|_{V_{F,\bmB}(F_\lambda)}$ as an element of
		$\Hom_{\bmB}(F_{-\lambda},V_{F,\bmB}(F_\lambda)^\vee)$. Combine \cite[Lemma 1.40]{MR3012224} with the interpretation into the Harish-Chandra modules in \cite[Proposition 5.4]{MR3605977} to obtain
		\[\begin{split}
			\pi_{\lambda,\bmB}|_{V_{F,\bmB}(F_\lambda)}
			&\in \Hom_{\bmB}(F_{-\lambda},V_{F,\bmB}(F_\lambda)^\vee)\\
			&=\Hom_{\bmB_\alpha}(F_{-\lambda},V_{F,\bmB}(F_\lambda)^\vee)\\
			&\cong \Hom_{\bmB_\alpha}(V_{F,\bmB}(F_\lambda),F_\lambda)
		\end{split}.\]
		Follow the last canonical bijection to deduce (1).
		
		A similar argument shows (i) of (2). Unwinding the definitions, we see that the map \eqref{eq:odd_reflection} sends $E_{-\alpha}v_{\lambda}$ to $\lambda([E_\alpha,E_{-\alpha}])v_{\lambda-\alpha}\neq 0$ by
		\[E_\alpha E_{-\alpha}v_{\lambda}=
		[E_\alpha,E_{-\alpha}] v_\lambda=\lambda([E_\alpha,E_{-\alpha}])v_\lambda\]
		(recall $E_\alpha v_\lambda=0$).
		Since \eqref{eq:odd_reflection} is $\fg$-equivariant, it sends
		$v_{\lambda}=\frac{1}{\lambda([E_\alpha,E_{-\alpha}])}E_\alpha E_{-\alpha}v_\lambda$
		to $E_\alpha v_{\lambda-\alpha}$.
		Similarly, the isomorphism \eqref{eq:odd_reflection_2} sends $v_{\lambda-\alpha}$ to $E_{-\alpha} v_{\lambda}$. Therefore the composite map
		\[V_{F,\bmB}(F_\lambda)\overset{\eqref{eq:odd_reflection}}{\cong}\Pi
		V_{F,\bmB_\alpha}(F_{\lambda-\alpha})
		\overset{\Pi \eqref{eq:odd_reflection_2}}{\cong} V_{F,\bmB}(F_\lambda)
		\]
		sends $v_\lambda$ to
		$E_\alpha E_{-\alpha} v_\lambda=\lambda([E_\alpha,E_{-\alpha}])v_{\lambda}$.
		Part (ii) is now clear.
	\end{proof}

	\subsection{General case I}\label{sec:general}
	
	In this section, we study irreducible representations of $\bmG$ without the split hypothesis. We wish to do this by combination of the bijection \eqref{eq:superhwtheory} for $F=F^{\sep}$ and the Galois descent. In fact, though we no longer have the Borel--Weil construction over $F$, we are still able to apply Shibata's theory in the previous section to $\bmG\otimes_F F^{\sep}$. That is, for each irreducible representation $\fu$ of $\bmH\otimes_F F^{\sep}$, we can define $V(\fu)=V_{\bmB'}(\fu)\coloneqq V_{F^{\sep},{\bmB'}}(\fu)$. We have a bijection
	\begin{equation}
		\Irr_{\Pi} \bmH\otimes_F F^{\sep}\cong X^\ast(H\otimes_F F^{\sep})\label{eq:IrrH'}
	\end{equation}
	and a subset $X^\flat(H\otimes_F F^{\sep})\subset X^\ast(H\otimes_F F^{\sep})$. Then we obtain a bijection
	\begin{equation}
		X^\flat(H\otimes_F F^{\sep})\cong\Irr_{\Pi} \bmG\otimes_F F^{\sep}.\label{eq:superhwtheory'}
	\end{equation}
	
	We would like to analyze how \eqref{eq:superhwtheory'} and $V(\fu)$ change by the Galois twists. We choose a finite Galois extension $F'/F$ in $F^{\sep}$ such that $H\otimes_F F'$ is split. Correspondingly, we get an $F'$-form of $B'$ which we denote by $B'_{F'}$. For each element $\sigma$ of the Galois group of $F'/F$, choose $w_{\bar{\sigma}}\in G(F')$ such that
	\[\begin{array}{cc}
		w_{\bar{\sigma}} (H\otimes_F F')w^{-1}_{\bar{\sigma}}=H\otimes_F F',
		&{}^{\bar{\sigma}} B'_{F'}=w_{\bar{\sigma}} B'_{F'}w^{-1}_{\bar{\sigma}}.
	\end{array}\]
	We lift them to define $w_\sigma\in G(F^{\sep})$ for each element $\sigma\in\Gamma$.
	It gives rise to Tits' $\ast$-action $\sigma\ast\lambda\coloneqq w_\sigma^{-1}{}^\sigma\lambda$ on $X^\ast(H\otimes_F F^{\sep})$ (see \cite[Section 2.3]{MR0224710}). We remark that this action depends on the choice of the positive system of roots of
	\[(G\otimes_F F^{\sep},H\otimes_F F^{\sep}).\]
	
	\begin{notedefn}\label{notedefn}
		When $F=\bR$ and $\sigma$ is the nontrivial element of $\Gamma$, we denote $w_{\sigma}=w$. We call $\sigma\ast(-)$ for this $\sigma$ the $\ast$-involution.
	\end{notedefn}
	
	\begin{ass}\label{ass}
		We have ${}^\sigma \bmB'=w_\sigma \bmB'w^{-1}_\sigma$ for every $\sigma\in\Gamma$.
	\end{ass}
	
	It is clear that Assumption \ref{ass} is equivalent to $\sigma\ast\Delta^-=\Delta^-$ for every $\sigma\in\Gamma$. This holds, for example, if the $\ast$-action is trivial. We remark that the triviality of the $\ast$-action is independent of choice of $H$ and $\bmB'$. To check Assumption \ref{ass}, let us record that this is independent of the choice of $H$ and $B'$ in the following sense:
	
	\begin{prop}
		Let $\tilde{H}$ be a maximal torus of $G$, and $\tilde{\bmB}'$ be a BPS-subgroup containing $\tilde{H}\otimes_F F^{\sep}$. Assume that there exists $g\in G(F^{\sep})$ such that
		\[\begin{array}{cc}
			g (H\otimes_F F^{\sep})g^{-1}=\tilde{H}\otimes_F F^{\sep},
			& g \bmB' g^{-1}=\tilde{\bmB}'.
		\end{array}\]
		Then $(H,\bmB')$ satisfies Assumption \ref{ass} if and only if so does $(\tilde{H},\tilde{\bmB}')$.
	\end{prop}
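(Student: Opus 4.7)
The strategy is to transport the datum $(w_\sigma)_{\sigma\in\Gamma}$ across the conjugation by $g$. Assume that $(H,\bmB')$ satisfies Assumption \ref{ass}, witnessed by elements $w_\sigma\in G(F^{\sep})$ normalizing $H\otimes_F F^{\sep}$ and satisfying ${}^\sigma\bmB'=w_\sigma\bmB'w_\sigma^{-1}$ for each $\sigma\in\Gamma$. I propose to set
\[\tilde{w}_\sigma\coloneqq {}^\sigma g\cdot w_\sigma\cdot g^{-1}\in G(F^{\sep}),\]
and verify that these elements exhibit Assumption \ref{ass} for the pair $(\tilde{H},\tilde{\bmB}')$.

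First, since $H$ and $\tilde{H}$ are defined over $F$, both $H\otimes_F F^{\sep}$ and $\tilde{H}\otimes_F F^{\sep}$ are $\Gamma$-stable as closed subschemes of $G\otimes_F F^{\sep}$. Combining this with the hypothesis $g(H\otimes_F F^{\sep})g^{-1}=\tilde{H}\otimes_F F^{\sep}$ and the fact that $w_\sigma$ normalizes $H\otimes_F F^{\sep}$, I compute
\[\tilde{w}_\sigma(\tilde{H}\otimes_F F^{\sep})\tilde{w}_\sigma^{-1}
={}^\sigma g\cdot w_\sigma(H\otimes_F F^{\sep})w_\sigma^{-1}\cdot {}^\sigma g^{-1}
={}^\sigma\bigl(g(H\otimes_F F^{\sep})g^{-1}\bigr)=\tilde{H}\otimes_F F^{\sep},\]
so $\tilde{w}_\sigma$ does lift an element of the Weyl group of $(G\otimes_F F^{\sep},\tilde{H}\otimes_F F^{\sep})$. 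Next, using $\tilde{\bmB}'=g\bmB'g^{-1}$ and ${}^\sigma\bmB'=w_\sigma\bmB'w_\sigma^{-1}$, I get
\[{}^\sigma\tilde{\bmB}'={}^\sigma g\cdot{}^\sigma\bmB'\cdot{}^\sigma g^{-1}
={}^\sigma g\cdot w_\sigma\bmB'w_\sigma^{-1}\cdot{}^\sigma g^{-1}
=\tilde{w}_\sigma\tilde{\bmB}'\tilde{w}_\sigma^{-1},\]
which is the required relation.

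The converse is obtained symmetrically: given elements $\tilde{w}_\sigma$ witnessing Assumption \ref{ass} for $(\tilde{H},\tilde{\bmB}')$, the elements $w_\sigma\coloneqq{}^\sigma(g^{-1})\cdot\tilde{w}_\sigma\cdot g$ satisfy Assumption \ref{ass} for $(H,\bmB')$ by the same computation with the roles of the two pairs swapped and $g$ replaced by $g^{-1}$. I do not anticipate any real obstacle here; the argument is pure bookkeeping with the Galois action on $G(F^{\sep})$, using only that $H$ and $\tilde{H}$ are defined over $F$ and hence $\Gamma$-invariant, together with the given conjugation relations for $g$.
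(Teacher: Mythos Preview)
Your proof is correct and follows essentially the same approach as the paper: both define $\tilde{w}_\sigma = {}^\sigma g \cdot w_\sigma \cdot g^{-1}$ and verify the two required conditions by direct computation. The paper only writes out the ``only if'' direction, appealing to symmetry for the converse, which you spell out explicitly.
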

	
	\begin{proof}
		It suffices to prove the ``only if'' direction. If we are given $w_\sigma\in G(F^{\sep})$ normalizing $H\otimes_F F^{\sep}$ and satisfying
		${}^\sigma \bmB'=w_\sigma \bmB' w^{-1}_\sigma$,
		set $\tilde{w}_\sigma=\sigma(g) w_\sigma g^{-1}$. Then we have
		\[\begin{split}
			\tilde{w}_\sigma(\tilde{H}\otimes_F F^{\sep})\tilde{w}^{-1}_\sigma
			&=\sigma(g) w_\sigma g^{-1}(\tilde{H}\otimes_F F^{\sep}) gw^{-1}_\sigma \sigma(g)^{-1}\\
			&=\sigma(g) w_\sigma (H\otimes_F F^{\sep})w^{-1}_\sigma \sigma(g)^{-1}\\
			&=\sigma(g) (H\otimes_F F^{\sep}) \sigma(g)^{-1}\\
			&={}^\sigma (g (H\otimes_F F^{\sep}) g^{-1})\\
			&={}^\sigma (\tilde{H}\otimes_F F^{\sep})\\
			&\cong\tilde{H}\otimes_F F^{\sep},
		\end{split}\]
		\[\begin{split}
			{}^\sigma \tilde{\bmB}'&=\sigma(g) {}^{\sigma}\bmB'\sigma(g)^{-1}\\
			&=\sigma(g) w_\sigma \bmB'w^{-1}_\sigma \sigma(g)^{-1}\\
			&=\sigma(g)w_\sigma g^{-1} \tilde{\bmB}' gw^{-1}_\sigma \sigma(g)^{-1}\\
			&=\tilde{w}_\sigma \tilde{\bmB}'\tilde{w}^{-1}_\sigma.
		\end{split}\]
	\end{proof}
	
	\begin{rem}
		Even if $(H,\bmB')$ satisfies Assumption \ref{ass}, it is not necessarily true that another pair $(H,\bmB'')$ of the maximal torus $H$ and a BPS-subgroup satisfies Assumption \ref{ass}. For instances, see Examples \ref{ex:u(p,q|r,s)} and \ref{ex:spo(m|2p+1,2q+1)}.
	\end{rem}
	
	Hence we may fix $H$ and $B'$. For the BPS-subgroups, we may only think of those whose even part is $B'$. If $\fg\otimes_F F^{\sep}\cong\fgl,\fspo$, such BPS-subgroups are parameterized by the so-called $\epsilon\delta$-sequences (\cite[Section 1.3]{MR3012224}). Therefore it is enough to find an $\epsilon\delta$-sequence fixed by the induced $\Gamma$-action which we will call the Galois action (or the complex conjugate action, Galois involution if $F=\bR$). Indeed, since $\epsilon\delta$-sequences parameterize the conjugacy classes of positive systems by the Weyl group of the even part (\cite[Proposition 1.27]{MR3012224}), this action is only determined by the usual $\Gamma$-action on $X^\ast(H\otimes_F F^{\sep})$. The action of the Weyl group action does not affect the sequences. However, we still have to take $w_\sigma$ into account when we want to find the positive system in the conjugacy class corresponding to $B'$. Let us also note that thinking of the $\ast$-action is sometimes helpful for computation of the induced action for this reason.
	
	\begin{ex}\label{ex:res}
		Let $\tilde{F}/F$ be a finite separable extension of fields of characteristic not two, and $\tilde{\bmG}$ be a split quasi-reductive algebraic supergroup over $\tilde{F}$. Take a split maximal torus $\tilde{H}\subset \tilde{G}$. Take any BPS-subgroup $\tilde{\bmB}\subset \tilde{\bmG}$ containing $\tilde{H}$. Then $\Res_{\tilde{F}/F}\tilde{\bmG}$ is a quasi-reductive algebraic supergroup over $F$. One can see that $(\Res_{\tilde{F}/F} \tilde{\bmB})\otimes_F F^{\sep}$ is a BPS-subgroup. This clearly satisfies Assumption \ref{ass} (put the unit for $w_{\bar{\sigma}}$).
	\end{ex}
	
	\begin{ex}[Indefinite unitary supergroup]\label{ex:u(p,q|r,s)}
		Put $F=\bR$. Let $p,q,r,s$ be nonnegative integers. Set
		$n=p+q+r+s$ and $I_{p,q|r,s}=\diag(I_p,-I_q,I_r,-I_s)$.
		Define a linear algebraic supergroup $\Uu(p,q|r,s)$ by
		\[\Uu(p,q|r,s;A)=\{g\in \GL_{p+q|r+s}(A\otimes_\bR\bC):~(\delta g)^\ast I_{p,q|r,s}g=I_{p,q|r,s}\}.\]
		This is a real form of $\GL_{p+q|r+s}$ for the restriction of the counit of the adjunction $(-\otimes_\bR\bC,\Res_{\bC/\bR})$, i.e.,
		\begin{equation}
			\Uu(p,q|r,s)\otimes_\bR\bC\cong\GL_{p+q|r+s};~g=(g_{ij}+g'_{ij}\otimes\sqrt{-1})\mapsto (g_{ij}+g'_{ij}\sqrt{-1}),\label{eq:u(p,q|r,s)}
		\end{equation}
		where $g_{ij},g'_{ij}$ are the real and imaginary parts in the $(i,j)$-entry. In particular, it is a quasi-reductive algebraic supergroup over $\bR$. The even part is given by the product	
		$\Uu(p,q)\times\Uu(r,s)$
		of indefinite unitary groups. See \cite[Section 3.4]{MR2069561} for the linear algebraic background on the appearance of $\delta$. Take the subgroup of diagonal matrices for $H$. Identify $H\otimes_\bR\bC$ with the split torus $H^{\spl}\subset\GL_{p+q|r+s}$ of diagonal matrices for \eqref{eq:u(p,q|r,s)}. This gives rise to an isomorphism
		$X^\ast(H\otimes_\bR\bC)\cong X^\ast(H^{\spl})$.
		Write $\{e_i\}_{1\leq i\leq n}$ for the standard basis of $X^\ast(H^{\spl})$. Then we have
		\[\Delta=\{\pm(e_i-e_j)\in X^\ast(H^{\spl}):~1\leq i<j\leq n\}\]
		\[\Delta_{\bar{0}}=\{\pm(e_i-e_j)\in X^\ast(H^{\spl}):~1\leq i<j\leq p+q,
		~p+q+1\leq i<j\leq n\}.\]
		Following \cite{MR3012224}, consider the standard positive system
		\[\Delta^+_{\bar{0}}=\{e_i-e_j\in X^\ast(H^{\spl}):~1\leq i<j\leq p+q,
		~p+q+1\leq i<j\leq n\}\]
		of the even part. Since the complex conjugation acts on $X^\ast(H\otimes_\bR\bC)$ as $-1$, we have $\bar{\Delta}^+=-\Delta^+$. For the same reason, $w$ is the longest element at the level of the Weyl group $W$ of $\Delta_{\bar{0}}$. Under the identification of $W$ with $\fS_{p+q}\times\fS_{r+s}$, we have
		\[w=\left(\left(\begin{array}{cccc}
			1&2&\cdots&p+q\\
			p+q&p+q-1&\cdots&1
		\end{array}\right),\left(\begin{array}{cccc}
			p+q+1&p+q+2&\cdots&n\\
			n&n-1&\cdots&p+q+1
		\end{array}\right)\right).\]
		It is straightforward to see that the complex conjugate action on the set of $\epsilon\delta$-sequences is given by reversing the sequences, for instance,
		$\epsilon\delta\delta\epsilon\epsilon\mapsto \epsilon\epsilon\delta\delta\epsilon$.
		Therefore $\Uu(p,q|r,s)$ satisfies Assumption \ref{ass} for a certain positive system if and only if $(p+q)(r+s)$ is even. 
	\end{ex}
	
	\begin{ex}[Unitary periplectic supergroup]\label{ex:p(n)}
		Put $F=\bR$. For $n\geq 0$, define a linear algebraic supergroup $\Pp(n)$ by
		$\Pp(n;A)=\{g\in\GL_{n|n}(A\otimes_\bR\bC):~\Pi g=(g^\ast)^{-1}\}$.
		This is a real form of $\GL_{n|n}$. We identify $\Res_{\bC/\bR}\GL_n\cong G$ for $g\mapsto \diag(g,(g^\ast)^{-1})$.
		Let $H^{\spl}$ be the base change to $\bC$ of the subgroup of diagonal matrices in $\GL_n$ over $\bR$. We chose this real structure for the computation right below on the $\ast$-involution. We set $H=\Res_{\bC/\bR} H^{\spl}$. Identify $H\otimes_\bR\bC$ with the subgroup of diagonal matrices of $\GL_{n|n}$ to write $X^\ast(H\otimes_\bR\bC)\cong\bZ^{2n}$ in the standard way. We regard
		$\bZ^{2n}=(\bZ^n)^2$
		for separating tuples into the first and last $n$ entries. In virtue of a minor modification of \cite[Proof of Proposition 3.2.2]{MR4627704}, the induced complex conjugate action is given by
		$(\lambda,\lambda')\mapsto (-\lambda',-\lambda)$.
		We set $\gamma:\bR^{2n}\cong \bR\otimes_\bZ X^\ast(H\otimes_\bR\bC)\to\bR$ as
		\[\gamma(i)=\begin{cases}
			i&(1\leq i\leq n),\\
			3n-i+1&(n+1\leq i\leq 2n).
		\end{cases}\]
		Then the set of positive roots is given by
		\[\Delta^+=\{e_i-e_j\in\bZ^{2n}:~1\leq i<j\leq 2n,~i\leq n\}\cup
		\{e_j-e_i\in\bZ^{2n}:~n+1\leq i<j\leq 2n\},
		\]
		where $\{e_i\}$ is the standard basis of $\bZ^{2n}$.
		If we write $B$ for the subgroups of upper triangular matrices in $\GL_n$ over $\bC$, we have
		$B'=(\Res_{\bC/\bR} B)\otimes_\bR\bC$.
		Therefore $w$ can be the unit, and the $\ast$-involution is equal to the complex conjugation.
		One can easily check that $\Delta^+$ is closed under the formation of the complex conjugation. Hence $\Pp(n)$ satisfies Assumption \ref{ass} for the present $H$ and $\gamma$.
	\end{ex}

	\begin{ex}\label{ex:0q(n)}
		Put $F=\bR$. For $n\geq 0$, define a linear algebraic supergroup ${}^0\Qq(n)$ by
		${}^0\Qq(n;A)=\{g\in\GL_{n|n}(A\otimes_\bR\bC):~\Pi \bar{g} =g\}$.
		This is a real form of $\GL_{n|n}$. We identify $\Res_{\bC/\bR}\GL_n\cong G$ for $g\mapsto \diag(g,\bar{g})$.
		Take the same $H$ and $B$ as Example \ref{ex:p(n)}. We identify $X^\ast(H\otimes_\bR\bC)$ with $(\bZ^n)^2$ in a similar way to Example \ref{ex:p(n)}. Then the complex conjugate action is given by
		\begin{equation}
			(\lambda,\lambda')\mapsto (\lambda',\lambda)\label{eq:switch}
		\end{equation}
		by \cite[Proposition 3.2.2]{MR4627704}. Let $\{e_i\}$ be the standard basis of $\bZ^{2n}$. The $\ast$-involution attached to $(H\otimes_\bR\bC,(\Res_{\bC/\bR} B)\otimes_\bR\bC)$ is equal to the complex conjugation. Since
		$\overline{e_1-e_{n+1}}=-(e_1-e_{n+1})$,
		${}^0\Qq(n)$ does not satisfy Assumption \ref{ass} for any $H$ unless $n=0$.
	\end{ex}

	\begin{ex}[Quaternion general linear supergroup]\label{ex:uast(2m|2n)}
		Put $F=\bR$. Let $m,n$ be nonnegative integers. Set $J_{m|n}=\diag(J_m,J_n)$.
		Define a linear algebraic supergroup $\Uu^\ast(2m|2n)$ by
		$\Uu^\ast(2m|2n;A)=\{g\in \GL_{2m|2n}(A\otimes_\bR\bC):~\bar{g} J_{m|n}=J_{m|n}g\}$.
		This is a real form of $\GL_{2m|2n}$ in a similar way to Example \ref{ex:u(p,q|r,s)}. In particular, it is a quasi-reductive algebraic supergroup over $\bR$. The even part is the product $\Uu^\ast(2m)\times\Uu^\ast(2n)$ of quaternion general linear groups.
		In this case, $\ast$-action is trivial for some and therefore any $H$ and $\gamma$ by \cite[Example 4.1.4]{MR4627704}. 
	\end{ex}
	
	\begin{ex}[Quaternion periplectic supergroup]\label{ex:p^ast}
		Put $F=\bR$. Let $n$ be nonnegative integer. Set
		$\Pp^\ast(2n)=\Uu^\ast(2n|2n)\cap \Res_{\bC/\bR} \Pp_{2n}$. This is a real form of the complex periplectic supergroup $\Pp_{2n}$. Its even part is isomorphic to $\Uu^\ast(2n)$. Hence the $\ast$-action is trivial for any $H$ and $\gamma$.
	\end{ex}
	
	\begin{ex}\label{ex:assumption}
		Put $F=\bR$. Then Assumption \ref{ass} holds if
		\begin{enumerate}
			\renewcommand{\labelenumi}{(\roman{enumi})}
			\item the Lie group $H(\bR)$ is compact, and
			\item the longest element of the Weyl group of $G\otimes_\bR\bC$ acts on $X^\ast(H\otimes_\bR\bC)$ as $-1$.
		\end{enumerate}
	\end{ex}
	
	\begin{ex}[Orthosymplectic supergroup]\label{ex:orthosymp}
		Put $F=\bR$. Take nonnegative integers $p,q,n$. Set $I_{n|p,q}=\diag(J_n,I_p,-I_q)$.
		Define a linear algebraic supergroup $\SpO(2n|p,q)$ by
		$\SpO(2n|p,q;A)=\{g\in \SL_{2n|p+q}(A):~g^{ST} I_{n|p,q}g= I_{n|p,q}\}$.
		Then $\SpO(2n|p,q)$ is a quasi-reductive algebraic supergroup over $\bR$ since this is a real form of $\SpO_{2n|p+q}$. Moreover, $\SpO(2n|p,q)$ satisfies the conditions of Example \ref{ex:assumption} for certain $H$ if and only if either
		\begin{enumerate}
			\renewcommand{\labelenumi}{(\roman{enumi})}
			\item $p+q$ is odd,
			\item $p,q$ are even and $p+q\equiv 0\pmod 4$.
		\end{enumerate}
		If $p,q$ are odd and $p+q\equiv 2\pmod 4$, then $\SpO(2n|p,q)$ does not satisfy the conditions of Example \ref{ex:assumption}, but the $\ast$-action is trivial for any $H$ and $\gamma$ (\cite[Example 4.1.10]{MR4627704}).
	\end{ex}

	\begin{ex}[Quaternion orthosymplectic supergroup]\label{ex:osp^ast}
		Put $F=\bR$. Take nonnegative integers $p,q,r$. Set
		\[J_{p,q|2r}=\left(\begin{array}{cccccc}
			0 & 0 & I_p & 0 & 0 & 0 \\
			0 & 0 & 0 & -I_q & 0 & 0 \\
			-I_p & 0 & 0 & 0 & 0 & 0 \\
			0 & I_q & 0 & 0 & 0 & 0 \\
			0 & 0 & 0 & 0 & 0 & I_r \\
			0 & 0 & 0 & 0 & -I_r & 0
		\end{array}\right).\]
		Define a linear algebraic supergroup $\SpO^\ast(p,q|2r)$ by
		\[\SpO^\ast(p,q|2r;A)=\{g\in \SpO_{2p+2q|2r}(A\otimes_\bR\bC):~
		\bar{g}J_{p,q|2r}=J_{p,q|2r}g\}.\]
		Then $\SpO^\ast(p,q|2r)$ is a real form of $\SpO_{p+q|2r}$. The even part is isomorphic to $\Sp(p,q)\times \SO^\ast(2r)$ of the indefinite unitary symplectic group\footnote{For the definition of $\Sp(p,q)$ in this paper, we adopt $\Sp(p,q)'$ in \cite[Example 3.3.9]{MR4627704}.} and the quaternion special orthogonal group. In particular, $\SpO^\ast(p,q|2r)$ satisfies the conditions of Example \ref{ex:assumption} if and only if $r$ is even.
	\end{ex}

	We shall see whether the last two groups can satisfy Assumption \ref{ass} in terms of $\epsilon\delta$-sequences:
	
	\begin{ex}\label{ex:SpO_advanced}
		Put $F=\bR$. Take nonnegative integers $m,p,q$ with $n\coloneqq p+q$ odd. Set $\bmG=\SpO(2m|2p,2q)$. Take the product of the maximal tori of diagonal matrices of $\Sp_{2m}$\footnote{Here $\Sp_{2m}$ is the split symplectic group of rank $m$ in the usual sense. In the notation of \cite[Section 1.5]{MR4627704}, this agrees with $\Sp_m$.} and of the indefinite special orthogonal group $\SO(2p,2q)$ in \cite[Example 3.4.8]{MR4627704} for a maximal torus $H$ of $G$. Identify $X^\ast(H\otimes_\bR\bC)$ with $\bZ^{m+n}$ by \cite[Section 3.2]{MR4627704}. Write $\{e_i\}$ for the standard basis of $\bZ^{m+n}$. Consider the standard positive system
		\[\begin{split}
			\Delta^+_{\bar{0}}
			&=\{e_i\pm e_j\in X^\ast(H\otimes_\bR\bC):~1\leq i<j\leq m,
			~m+1\leq i<j\leq m+n\}\\
			&\cup\{2e_i\in X^\ast(H\otimes_\bR\bC):~1\leq i\leq m\}
		\end{split}\]
		of the even part. Then $w$ is given by the product of that in \cite[Example 3.4.8]{MR4627704} and the unit matrix $I_{2n}$. In fact, the action of $\Gamma$ and $w$ on $X^\ast(H\otimes_\bR\bC)$ are given by
		\[\bar{\lambda}
		=(\lambda_1,\lambda_2,\ldots,\lambda_m,-\lambda_{m+1},-\lambda_{m+2},\ldots,-\lambda_{m+n})\]
		\[w\lambda=(\lambda_1,\lambda_2,\ldots,\lambda_m,-\lambda_{m+1},
		-\lambda_{m+2},\ldots,-\lambda_{m+n-1},\lambda_{m+n})\]
		Therefore $\Gamma$ switches the sign of $\epsilon$ in $\epsilon\delta$ sequences if $\delta$ lies at the right end; $\Gamma$ fixes the other $\epsilon\delta$ sequences.
		For example, $\delta\delta\cdots \delta\epsilon\epsilon\cdots\epsilon$
		is fixed by $\Gamma$. This corresponds to the standard positive system in \cite[Section 1.3.4]{MR3012224}. Explicitly, the set $\Pi$ of simple roots is given by
		\[\begin{split}
			\Pi&=\{e_i-e_{i+1}\in X^\ast(H\otimes_\bR\bC):~1\leq i\leq m-1,~m+1\leq i\leq m+n-1\}\\
			&\cup\{e_m-e_{m+1},e_{m+n-1}+e_{m+n}\}.
		\end{split}\]
	\end{ex}
	
	\begin{ex}\label{ex:spo(m|2p+1,2q+1)}
		Put $F=\bR$. Take nonnegative integers $m,p,q$ with $n\coloneqq p+q$ odd. Set $\bmG=\SpO(2m|2p+1,2q+1)$. Take the product of the maximal tori of diagonal matrices of $\Sp_{2m}$ and of the indefinite special orthogonal group $\SO(2p+1,2q+1)$ in \cite[Example 3.4.9]{MR4627704} for a maximal torus $H$ of $G$. Identify $X^\ast(H\otimes_\bR\bC)$ with $\bZ^{m+n+1}$ by \cite[Section 3.2 and Example 3.3.5]{MR4627704}. Write $\{e_i\}$ for the standard basis of $\bZ^{m+n+1}$. Consider the standard positive system
		\[\begin{split}
			\Delta^+_{\bar{0}}
			&=\{e_i\pm e_j\in X^\ast(H\otimes_\bR\bC):~1\leq i<j\leq m,
			~m+1\leq i<j\leq m+n+1\}\\
			&\cup\{2e_i\in X^\ast(H\otimes_\bR\bC):~1\leq i\leq m\}
		\end{split}\]
		of the even part. Then $w$ is given by the diagonal matrix $w$ whose diagonal entries are given by
		\[(\overbrace{1,1,\ldots,1}^{2m},\overbrace{1,-1,1,-1,\ldots,1,-1}^{2p},
		1,1,\overbrace{1,-1,\ldots,1,-1}^{2q-2},1,1).\]
		In fact, the action of $\Gamma$ and $w$ on $X^\ast(H\otimes_\bR\bC)$ are given by
		\[\bar{\lambda}
		=(\lambda_1,\lambda_2,\ldots,\lambda_m,-\lambda_{m+1},-\lambda_{m+2},\ldots,
		-\lambda_{m+p},\lambda_{m+p+1},-\lambda_{m+p+1},\ldots,-\lambda_{m+n+1})\]
		\[w\lambda=(\lambda_1,\ldots,\lambda_m,-\lambda_{m+1},
		-\lambda_{m+2},\ldots,-\lambda_{m+p},\lambda_{m+p+1},-\lambda_{m+p+2},\ldots,
		-\lambda_{m+n},\lambda_{m+n+1}).\]
		Therefore $\Gamma$ switches the sign of $\epsilon$ in $\epsilon\delta$ sequences if $\delta$ lies at the right end; $\Gamma$ fixes the other $\epsilon\delta$ sequences. The standard positive system in \cite[Section 1.3.4]{MR3012224} is again fixed by $\Gamma$. 
	\end{ex}
	
	\begin{ex}
		Put $F=\bR$. Take nonnegative integers $p,q,r$. Set $n=p+q$ and $\bmG=\SpO^\ast(p,q|4r+2)$. Take the product of the maximal tori in \cite[Examples 3.4.7 and 3.4.10]{MR4627704} for a maximal torus $H$ of $G$. Identify $X^\ast(H\otimes_\bR\bC)$ with $\bZ^{m+n}$ by \cite[Section 3.2]{MR4627704}. Write $\{e_i\}$ for the standard basis of $\bZ^{n+2r+1}$. Consider the standard positive system
		\[\begin{split}
			\Delta^+_{\bar{0}}
			&=\{e_i\pm e_j\in X^\ast(H\otimes_\bR\bC):~1\leq i<j\leq m,
			~m+1\leq i<j\leq m+n\}\\
			&\cup\{2e_i\in X^\ast(H\otimes_\bR\bC):~1\leq i\leq m\}
		\end{split}\]
		of the even part. Since $H(\bR)$ is compact, $w$ is the longest element in the Weyl group. See \cite[Examples 4.1.7 and 4.1.13]{MR4627704} for a lift $w$ in $G(\bR)$. We now obtain an equality
		$w^{-1}\bar{\lambda}=(\lambda_1,\lambda_2,\ldots,\lambda_{n+2r},-\lambda_{m+2r+1})$
		for $\lambda=(\lambda_i)\in X^\ast(H\otimes_\bR\bC)$. The same argument as Example \ref{ex:SpO_advanced} again implies that the standard positive system in \cite[Section 1.3.4]{MR3012224} is fixed by the $\ast$-involution.
	\end{ex}
	
	There is a completely different situation:
	
	\begin{ex}
		Any $F$-form of the split queer supergroup over $F^{\sep}$ satisfies the assumption since the sets of roots in the even and odd parts are equal in this case.
	\end{ex}
	
	\begin{ex}[Indefinite unitary queer supergroup]
		Put $F=\bR$. Let $p,q$ be nonnegative integers. Set
		$\Qq(p,q)\coloneqq \Uu(p,q|p,q)\cap \Res_{\bC/\bR}\Qq_{p+q}$.
		This is a real form of $\Qq_{p+q}$. Its even part is the indefinite unitary group $\Uu(p,q)$.
	\end{ex}
	
	\begin{ex}[Quaternion queer supergroup]
		Put $F=\bR$. Let $n$ be a nonnegative integer. Define a real form $\Qq^\ast(2n)$ of $\Qq_{2n}$ by
		\[\Qq^\ast(2n;A)=\Uu^\ast(2n|2n) \cap \Res_{\bC/\bR}\Qq_{2n}.\]
		Its even part is the quaternion general linear group $\Uu^\ast(2n)$.
	\end{ex}
	
	Henceforth suppose that Assumption \ref{ass} holds. Let $\fu$ be an irreducible representation of $\bmB'$. We observe that ${}^\sigma\fu$ is an irreducible representation of the BPS-subgroup
	${}^\sigma\bmB'=w_\sigma\bmB'w^{-1}_{\sigma}$.

	For each $\lambda\in X^\ast(H\otimes_F F^{\sep})$, let us choose a corresponding irreducible representation $\fu(\lambda)=\fu_{F^{\sep}}(\lambda)$ of $\bmH\otimes_F F^{\sep}$ according to the bijection \eqref{eq:IrrH'}.
	
	\begin{thm}\label{thm:Galois_action}
		Let $\bmG$ be a quasi-reductive algebraic supergroup over a field $F$ of characteristic not two with a maximal torus $H\subset G$. Suppose that we are given a BPS-subgroup $\bmB'\subset \bmG\otimes_F F^{\sep}$ and $w_\sigma\in G(F^{\sep})$ in Assumption \ref{ass} (see also the paragraph above Notation-Definition \ref{notedefn} for $w_\sigma$).
		\begin{enumerate}
			\item For an irreducible representation $\fu$ of $\bmB'$, there are isomorphisms
			\begin{equation}
				{}^\sigma\Ind^{\bmG\otimes_F F^{\sep}}_{\bmB'} \fu\cong
				\Ind^{\bmG\otimes_F F^{\sep}}_{\bmB'} {}^{w_\sigma}({}^\sigma\fu),
				\label{eq:Galois_twist_Ind}
			\end{equation}
			\begin{equation}
				{}^\sigma V(\fu)\cong V({}^{w_\sigma}({}^\sigma\fu)).
				\label{eq:Galois_twist_V(u)}
			\end{equation}
			\item For $\sigma\in\Gamma$ and $\lambda\in X^\flat(H\otimes_F F^{\sep})$, we have
			${}^{w_\sigma}({}^\sigma\fu(\lambda))\overset{\boldsymbol{\cdot}}{\cong}
			\fu(\sigma\ast\lambda)$.
		\end{enumerate} 
	\end{thm}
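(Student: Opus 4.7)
The plan is to treat (1) and (2) separately: (1) combines the naturality of induction under Galois twists with the conjugation formula of Proposition \ref{prop:G-conjugate} (applied over $F^{\sep}$), while (2) is a weight computation. For (1), first I would establish a canonical natural isomorphism
\[
  {}^\sigma \Ind^{\bmG\otimes_F F^{\sep}}_{\bmB'}\fu
  \cong \Ind^{\bmG\otimes_F F^{\sep}}_{{}^\sigma \bmB'}{}^\sigma\fu
\]
for each $\sigma \in \Gamma$. This stems from the fact that ${}^\sigma(-)$ is an exact symmetric monoidal autoequivalence on supercomodules and that $\bmG$ is defined over $F$, so ${}^\sigma(\bmG\otimes_F F^{\sep})=\bmG\otimes_F F^{\sep}$ canonically; hence the right adjoint $\Ind$ interacts naturally with the twist. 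Then Assumption \ref{ass} rewrites ${}^\sigma\bmB' = w_\sigma\bmB'w_\sigma^{-1}$, so the $F^{\sep}$-version of Proposition \ref{prop:G-conjugate} (with $w_\sigma \in G(F^{\sep})$ playing the role of $w \in G(F)$; the proof goes through verbatim over any base field) delivers \eqref{eq:Galois_twist_Ind}. The companion \eqref{eq:Galois_twist_V(u)} follows by taking socles, since the Galois twist is an exact autoequivalence and commutes with socle formation.

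For (2), I would track weights. Under \eqref{eq:IrrH'}, the irreducible $\bmH\otimes_F F^{\sep}$-module $\fu(\lambda)$ has unique weight $\lambda$, so ${}^\sigma\fu(\lambda)$ has unique weight ${}^\sigma\lambda$. Because $w_\sigma$ normalizes $H\otimes_F F^{\sep}$ by construction, it also normalizes $\bmH\otimes_F F^{\sep}$ (conjugation preserves the $H$-fixed part of $\fg\otimes_F F^{\sep}$), and pulling back a character $\chi\in X^\ast(H\otimes_F F^{\sep})$ along $h\mapsto w_\sigma h w_\sigma^{-1}$ produces the character $w_\sigma^{-1}\cdot\chi$. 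Hence the unique weight of ${}^{w_\sigma}({}^\sigma\fu(\lambda))$ is $w_\sigma^{-1}\,{}^\sigma\lambda = \sigma\ast\lambda$, and the bijection \eqref{eq:IrrH'} yields ${}^{w_\sigma}({}^\sigma\fu(\lambda)) \overset{\boldsymbol{\cdot}}{\cong} \fu(\sigma\ast\lambda)$.

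The main technical obstacle is making the naturality statement for ${}^\sigma\Ind$ precise, since one must verify carefully how the supercomodule structure over $F[\bmG]\otimes_F F^{\sep}$ descends along $\sigma$ and re-ascends along ${}^\sigma\bmB'$; most cleanly this is done by translating to the Hopf superalgebra level, where $\Ind$ becomes a cotensor product that visibly commutes with the twist. Once this bookkeeping is set up, both (1) and (2) are essentially formal.
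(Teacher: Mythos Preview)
Your proposal is correct and follows essentially the same approach as the paper: the paper's proof of (1) is precisely the chain ${}^\sigma\Ind^{\bmG\otimes_F F^{\sep}}_{\bmB'}\fu \cong \Ind^{\bmG\otimes_F F^{\sep}}_{{}^\sigma\bmB'}{}^\sigma\fu = \Ind^{\bmG\otimes_F F^{\sep}}_{w_\sigma\bmB'w_\sigma^{-1}}{}^\sigma\fu \cong \Ind^{\bmG\otimes_F F^{\sep}}_{\bmB'}{}^{w_\sigma}({}^\sigma\fu)$ (invoking \eqref{eq:w-twist_Ind}) followed by taking socles, and for (2) it simply says ``see the weight.'' Your write-up supplies more justification for each step (the adjunction argument for commuting ${}^\sigma(-)$ with $\Ind$, and the normalization of $\bmH\otimes_F F^{\sep}$ by $w_\sigma$), but the architecture is identical.
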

	
	\begin{proof}
		The first isomorphism \eqref{eq:Galois_twist_Ind} in (1) is formal:
		\[{}^\sigma\Ind^{\bmG\otimes_F F^{\sep}}_{\bmB'} \fu
		\cong \Ind^{\bmG\otimes_F F^{\sep}}_{{}^\sigma\bmB'} {}^\sigma\fu
		=\Ind^{\bmG\otimes_F F^{\sep}}_{w_\sigma\bmB'w^{-1}_{\sigma}} {}^\sigma\fu
		\overset{\eqref{eq:w-twist_Ind}}{\cong}
		\Ind^{\bmG\otimes_F F^{\sep}}_{\bmB'} {}^{w_\sigma}({}^\sigma\fu(\lambda)).\]
		Take their socles to obtain \eqref{eq:Galois_twist_V(u)} since the Galois twists respect socles.
		
		Part (2) is straightforward (see the weight).
	\end{proof}
	
	\begin{cor}\label{cor:superpure}
		Consider the setting of Theorem \ref{thm:Galois_action}.
		\begin{enumerate}
			\item The subset $X^\flat(H\otimes_F F^{\sep})\subset X^\ast(H\otimes_F F^{\sep})$ is $\Gamma$-invariant.
			\item The map \eqref{eq:superhwtheory'} is $\Gamma$-equivariant, where $\Gamma$ acts on $\Irr_{\Pi} \bmG\otimes_F F^{\sep}$ by the Galois twist.
			\item The irreducible representation $V(\fu(\lambda))$ is super quasi-rational if and only if ${}^\sigma\lambda=w_\sigma \lambda$ for all $\sigma\in\Gamma$.
		\end{enumerate}
	\end{cor}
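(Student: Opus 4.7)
The plan is to derive all three parts as essentially immediate consequences of Theorem \ref{thm:Galois_action}, combined with earlier structural results. The central observation is that Theorem \ref{thm:Galois_action} tells us how the Galois twist interacts with $V(\fu(\lambda))$: namely, chaining \eqref{eq:Galois_twist_V(u)} with part (2) of the theorem gives
\[{}^\sigma V(\fu(\lambda))\overset{\boldsymbol{\cdot}}{\cong} V(\fu(\sigma\ast\lambda)).\]
Everything to be proved will follow by reading off the consequences of this identity.

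For (1), I would argue that $\lambda\in X^\flat(H\otimes_F F^{\sep})$ is equivalent to $V(\fu(\lambda))\neq 0$ (either by definition after noting that the socle is nonzero precisely when the induced representation is, or via Shibata's bijection). Since Galois twists are exact and send nonzero supercomodules to nonzero supercomodules, ${}^\sigma V(\fu(\lambda))\neq 0$, hence by the displayed identity $V(\fu(\sigma\ast\lambda))\neq 0$, proving $\sigma\ast\lambda\in X^\flat(H\otimes_F F^{\sep})$. For (2), the bijection \eqref{eq:superhwtheory'} sends $\lambda$ to the class of $V(\fu(\lambda))$ in $\Irr_\Pi \bmG\otimes_F F^{\sep}$. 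The above identity says that applying $\sigma$ to $\lambda$ via the $\ast$-action corresponds, under \eqref{eq:superhwtheory'}, to applying $\sigma$ to $V(\fu(\lambda))$ via the Galois twist, modulo $\Pi$; this is exactly $\Gamma$-equivariance.

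For (3), I would first invoke Corollary \ref{cor:schur's_lemma}: since $F^{\sep}$ is separably closed and $\bmG\otimes_F F^{\sep}$ is quasi-reductive, the Schur condition $\End_{\bmG\otimes_F F^{\sep}}(V(\fu(\lambda)))=F^{\sep}\id$ holds automatically. Hence super quasi-rationality of $V(\fu(\lambda))$ reduces to weak super quasi-rationality, i.e.\ to ${}^\sigma V(\fu(\lambda))\overset{\boldsymbol{\cdot}}{\cong} V(\fu(\lambda))$ for every $\sigma\in\Gamma$. By part (2) and the injectivity of Shibata's bijection \eqref{eq:superhwtheory'}, this is equivalent to $\sigma\ast\lambda=\lambda$ for all $\sigma\in\Gamma$, which by the definition $\sigma\ast\lambda=w_\sigma^{-1}\,{}^\sigma\lambda$ is the condition ${}^\sigma\lambda=w_\sigma\lambda$. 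There is no real obstacle in any of these steps; the only mild care is in tracking the ``up to parity change'' throughout, but since the bijection \eqref{eq:superhwtheory'} is already modulo $\Pi$ and the Galois action descends to $\Irr_\Pi$, this bookkeeping is handled by passing to the quotient once at the start.
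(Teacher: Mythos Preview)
Your proof is correct and follows essentially the same approach as the paper: parts (1) and (2) are derived from Theorem \ref{thm:Galois_action}, and part (3) from (2) together with Corollary \ref{cor:schur's_lemma}. The paper's own proof is terser but records exactly this line of reasoning.
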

	
	\begin{proof}
		The assertions (1) and (2) follow from Theorem \ref{thm:Galois_action}. Part (3) is an immediate consequence of (2) and Corollary \ref{cor:schur's_lemma}.
	\end{proof}

	For a complete classification of irreducible representations of $\bmG$, we are interested in judging ${}^{w_\sigma}({}^\sigma\fu)\cong\fu$ and ${}^{w_\sigma}({}^\sigma\fu)\cong\Pi\fu$ (cf.~Corollary \ref{cor:uknowsv}). In particular, we have to handle with the twist by $w_\sigma$ in addition to the Galois twist. This obstructs us to achieve an inductive argument in \cite[Proof of Theorem 4]{MR0167498}. We note that the first isomorphism in question is also a key observation to the descent problem. We wish to deal with these questions by application of \cite[Proposition 4.2]{MR4039427}. For $\lambda\in X^\ast(H\otimes_F F^{\sep})$, set $q^\lambda=q^\lambda_{F^{\sep}}$, $d_\lambda=d_{\lambda,F^{\sep}}$, $\delta_\lambda=\delta_{\lambda,F^{\sep}}$ as before. That is, $q^\lambda$ is a quadratic form on $\fh_{\bar{1}}\otimes_F F^{\sep}$ defined by
	$q^\lambda(x)=\frac{1}{2}\lambda([x,x])$.
	The nonnegative integer $d_{\lambda}$ is the dimension of a complementary subspace to the radical of $q^\lambda$ in $\fh_{\bar{1}}\otimes_F F^{\sep}$.
	According to Example \ref{ex:type_sep_clo_case}, $\delta_\lambda$ is simply given by
	\begin{equation}
		\delta_{\lambda}=\begin{cases}
			1&(q^\lambda\neq 0)\\
			0&(q^\lambda= 0).\label{eq:delta_lambda}
		\end{cases}
	\end{equation}
	
	\begin{lem}\label{lem:delta_lambda}
		For $\lambda\in X^\flat(H\otimes_F F^{\sep})$ and $\sigma\in\Gamma$, $\delta_{\sigma\ast\lambda}$ is nonzero if and only if so is $\delta_\lambda$.
	\end{lem}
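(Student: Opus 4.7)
The plan is to reduce the lemma to the explicit formula \eqref{eq:delta_lambda}, which says $\delta_\mu\neq 0$ if and only if $q^\mu\not\equiv 0$. It therefore suffices to prove that $q^{\sigma\ast\lambda}\not\equiv 0$ iff $q^\lambda\not\equiv 0$, and I will establish this by tracking the two operations composing $\sigma\ast(-)=w_\sigma^{-1}\circ{}^\sigma(-)$ separately.

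First I would handle the Galois twist. Since $\bmG$, and hence $\fg$ and $\fh_{\bar{1}}$, are defined over $F$, the absolute Galois group $\Gamma$ acts $\sigma$-semilinearly and bracket-preservingly on $\fg\otimes_F F^{\sep}$, stabilizing $\fh_{\bar{1}}\otimes_F F^{\sep}$. The Galois twist of a character is characterized by ${}^\sigma\lambda(h)=\sigma(\lambda(\sigma^{-1}h))$ and descends to the identity $d({}^\sigma\lambda)=\sigma\circ d\lambda\circ\sigma^{-1}$ on the Lie algebra; extending to bracket squares gives, for $x\in\fh_{\bar{1}}\otimes_F F^{\sep}$,
\[q^{{}^\sigma\lambda}(x)=\tfrac{1}{2}\,{}^\sigma\lambda([x,x])=\sigma\bigl(\tfrac{1}{2}\lambda([\sigma^{-1}x,\sigma^{-1}x])\bigr)=\sigma(q^\lambda(\sigma^{-1}x)).\]
As $\sigma$ permutes $\fh_{\bar{1}}\otimes_F F^{\sep}$ bijectively and fixes $0$, the form $q^{{}^\sigma\lambda}$ vanishes identically iff $q^\lambda$ does.

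Next I would handle the twist by $w_\sigma$. Because $w_\sigma$ normalizes $H\otimes_F F^{\sep}$, the adjoint action $\Ad(w_\sigma)$ is a linear automorphism of $\fh_{\bar{1}}\otimes_F F^{\sep}$. From $(w_\sigma^{-1}\mu)(h)=\mu(w_\sigma h w_\sigma^{-1})$ one derives on the Lie algebra that $(w_\sigma^{-1}\mu)\circ[-,-]=\mu\circ[\Ad(w_\sigma)(-),\Ad(w_\sigma)(-)]$, whence
\[q^{w_\sigma^{-1}\mu}(x)=q^{\mu}(\Ad(w_\sigma)x).\]
Applied with $\mu={}^\sigma\lambda$, this shows that $q^{\sigma\ast\lambda}$ and $q^{{}^\sigma\lambda}$ vanish simultaneously. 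Chaining with the previous step yields the lemma.

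The only delicate point is to justify rigorously that the two formulas — for the Galois action and the Weyl-style action on characters — pass from $\fh_{\bar{0}}\otimes_F F^{\sep}$ to the bracket squares $[x,x]$. This is a standard consequence of the fact that both ${}^\sigma(-)$ (as a $\sigma$-semilinear map) and $\Ad(w_\sigma)$ (as an $F^{\sep}$-linear map) are endomorphisms of the Lie superalgebra $\fg\otimes_F F^{\sep}$ preserving the bracket, so I do not anticipate a genuine obstacle.
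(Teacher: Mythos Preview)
Your proof is correct and follows the same approach as the paper: the paper simply states that the claim is ``evident by \eqref{eq:delta_lambda} and definitions,'' and what you have done is precisely to unpack those definitions, showing that $q^{\sigma\ast\lambda}$ is obtained from $q^\lambda$ by precomposing with the bijection $\Ad(w_\sigma)\circ\sigma^{-1}$ of $\fh_{\bar{1}}\otimes_F F^{\sep}$ (up to applying $\sigma$ to the value), whence one form vanishes identically iff the other does. The only point you might state more crisply is that $\Ad(w_\sigma)$ preserves $\fh_{\bar{1}}\otimes_F F^{\sep}$ because $w_\sigma$ normalizes $H\otimes_F F^{\sep}$ and $\fh$ is the zero $H$-weight space of $\fg$.
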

	
	\begin{proof}
		This is evident by \eqref{eq:delta_lambda} and definitions.
	\end{proof}
	
	\begin{conv}\label{conv}
		Let $\lambda\in X^\ast(H\otimes_F F^{\sep})$ with $\delta_\lambda=0$. According to the proofs of \cite[Proposition 4.2]{MR4039427} or of Proposition \ref{prop:split} and Corollary \ref{cor:split_h1=0}, the irreducible representations of $\bmH\otimes_F F^{\sep}$ corresponding to $\lambda$ are of one dimension. Henceforth we choose the even one for $\fu(\lambda)$.
	\end{conv}

	\begin{thm}\label{thm:u(lambda)_easy_case}
		Consider the setting of Theorem \ref{thm:Galois_action}.
		Let $\lambda\in X^\flat(H\otimes_F F^{\sep})$. Define $\fu(\lambda)$, $d_\lambda$, $\delta_\lambda$ as above.
		\begin{enumerate}
			\item If $\delta_\lambda=0$ then we have
			${}^{w_\sigma}({}^\sigma\fu(\lambda))\cong\fu(\sigma\ast\lambda)
			\not\cong\Pi \fu(\sigma\ast\lambda)$
			for all $\sigma\in\Gamma$.
			\item If $\delta_\lambda\neq 0$ and $d_\lambda$ odd then we have
			${}^{w_\sigma}({}^\sigma\fu(\lambda))\cong\fu(\sigma\ast\lambda)
			\cong\Pi \fu(\sigma\ast\lambda)$.
		\end{enumerate}
	\end{thm}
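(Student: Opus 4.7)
My starting point is Theorem \ref{thm:Galois_action}(2), which already supplies the weaker statement ${}^{w_\sigma}({}^\sigma\fu(\lambda))\overset{\boldsymbol{\cdot}}{\cong}\fu(\sigma\ast\lambda)$, i.e.\ an isomorphism up to parity change. The plan is therefore to decide, case by case, (a) whether this $\overset{\boldsymbol{\cdot}}{\cong}$ can be upgraded to a genuine isomorphism and (b) whether $\fu(\sigma\ast\lambda)\cong\Pi\fu(\sigma\ast\lambda)$.

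For part (1), assume $\delta_\lambda=0$. By Lemma \ref{lem:delta_lambda} we have $\delta_{\sigma\ast\lambda}=0$, so by Convention \ref{conv} both $\fu(\lambda)$ and $\fu(\sigma\ast\lambda)$ are the chosen one-dimensional, purely even representations of $\bmH\otimes_F F^{\sep}$. Galois twisting and the $w_\sigma$-twist of Construction \ref{cons:pullback} preserve dimension and parity, so ${}^{w_\sigma}({}^\sigma\fu(\lambda))$ is also one-dimensional and purely even. Hence the $\overset{\boldsymbol{\cdot}}{\cong}$ of Theorem \ref{thm:Galois_action}(2) must be a genuine isomorphism, and $\Pi\fu(\sigma\ast\lambda)$ is purely odd, so it cannot be isomorphic to $\fu(\sigma\ast\lambda)$.

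For part (2), assume $\delta_\lambda\neq 0$ and $d_\lambda$ odd. The key intermediate claim is $d_{\sigma\ast\lambda}=d_\lambda$. To verify it, write $\sigma\ast\lambda=w_\sigma^{-1}({}^\sigma\lambda)$ and unwind the definition of $q^{\mu}(x)=\tfrac{1}{2}d\mu([x,x])$: applying $\sigma$ to $\lambda$ gives $q^{{}^\sigma\lambda}(x)=\sigma\bigl(q^\lambda(\phi_{\sigma^{-1}}(x))\bigr)$ where $\phi_{\sigma^{-1}}$ is the $\sigma^{-1}$-semilinear Galois action on $\fh_{\bar 1}\otimes_F F^{\sep}$ (using that the Lie bracket is defined over $F$), and the further $w_\sigma^{-1}$-twist yields $q^{w_\sigma^{-1}\mu}(x)=q^{\mu}(\Ad(w_\sigma)x)$. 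Since $w_\sigma$ normalizes $H\otimes_F F^{\sep}$, $\Ad(w_\sigma)$ permutes the $H$-weight spaces of $\fg\otimes_F F^{\sep}$ and hence preserves the weight-zero piece $\fh_{\bar 1}\otimes_F F^{\sep}$; thus $q^{\sigma\ast\lambda}$ is obtained from $q^\lambda$ by composing with a (semi)linear automorphism, so it has the same rank. In particular $d_{\sigma\ast\lambda}$ is odd.

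With this in hand, Example \ref{ex:type_sep_clo_case} applied over the separably closed field $F^{\sep}$ at the weight $\sigma\ast\lambda\in X^\flat(H\otimes_F F^{\sep})$ gives $V(\fu(\sigma\ast\lambda))\cong \Pi V(\fu(\sigma\ast\lambda))$. Since $V(\Pi\fu(\sigma\ast\lambda))\cong \Pi V(\fu(\sigma\ast\lambda))$ by \eqref{eq:parity_change}, Corollary \ref{cor:uknowsv} promotes this to $\fu(\sigma\ast\lambda)\cong \Pi\fu(\sigma\ast\lambda)$, which collapses the two alternatives of $\overset{\boldsymbol{\cdot}}{\cong}$ in Theorem \ref{thm:Galois_action}(2) and yields the full chain of isomorphisms in (2). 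The only mildly nontrivial step in the whole argument is the invariance $d_{\sigma\ast\lambda}=d_\lambda$; everything else is a bookkeeping consequence of the results already assembled.
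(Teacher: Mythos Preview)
Your proof is correct and follows essentially the same approach as the paper. Part~(1) is identical in substance: both argue via Lemma~\ref{lem:delta_lambda} and Convention~\ref{conv} that everything in sight is one-dimensional and purely even. For part~(2) there is a minor packaging difference worth noting: the paper invokes \cite[Proposition~4.2]{MR4039427} directly at the $\bmH$-level to obtain $\fu(\sigma\ast\lambda)\cong\Pi\fu(\sigma\ast\lambda)$, whereas you detour up to $V(\fu(\sigma\ast\lambda))$ via Example~\ref{ex:type_sep_clo_case} and then back down via Corollary~\ref{cor:uknowsv} (which is legitimate since $\sigma\ast\lambda\in X^\flat(H\otimes_F F^{\sep})$ by Corollary~\ref{cor:superpure}(1)). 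You also make the rank invariance $d_{\sigma\ast\lambda}=d_\lambda$ explicit, which the paper leaves to the reader; your argument for this via the semilinear and $\Ad(w_\sigma)$-twists of $q^\lambda$ is sound.
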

	
	\begin{proof}
		Thanks to our choice of $\fu(\lambda)$ in Convention \ref{conv}, (1) follows from Lemma \ref{lem:delta_lambda}. Part (2) follows from Theorem \ref{thm:Galois_action} (2) and \cite[Proposition 4.2]{MR4039427}. 
	\end{proof}
	
	\begin{cor}\label{cor:compute_a}
		Consider the setting of Theorem \ref{thm:Galois_action}.
		Let $\lambda$ be a $\Gamma$-invariant element of $X^\flat(H\otimes_F F^{\sep})$.
		\begin{enumerate}
			\item If $\delta_\lambda=0$, $V(\fu(\lambda))$ is quasi-rational. Moreover, we have
			\[\begin{array}{cc}
				\epsilon_{V(\fu(\lambda))_F}=+,&a_{V(\fu(\lambda))_F}=1.
			\end{array}\]
			\item If $d_\lambda$ is odd (in particular, $\delta_\lambda\neq 0$), $V(\fu(\lambda))$ is quasi-rational. Moreover, we have $\epsilon_{V(\fu(\lambda))_F}=-$.
		\end{enumerate}
	\end{cor}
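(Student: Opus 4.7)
The plan is to combine Theorem \ref{thm:u(lambda)_easy_case} with the descent machinery of Section 3.2 and the Borel--Tits computation of Theorem \ref{thm:BT}. Since $\lambda$ is $\Gamma$-invariant, we have $\sigma\ast\lambda=\lambda$ for every $\sigma\in\Gamma$, so Theorem \ref{thm:u(lambda)_easy_case} yields ${}^{w_\sigma}({}^\sigma\fu(\lambda))\cong\fu(\lambda)$ in both cases, with the additional information that $\fu(\lambda)\not\cong\Pi\fu(\lambda)$ in case (1) and $\fu(\lambda)\cong\Pi\fu(\lambda)$ in case (2). Applying the isomorphism \eqref{eq:Galois_twist_V(u)} of Theorem \ref{thm:Galois_action}, I would then obtain ${}^\sigma V(\fu(\lambda))\cong V(\fu(\lambda))$ for every $\sigma\in\Gamma$. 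Since $F^{\sep}$ is separably closed, Corollary \ref{cor:schur's_lemma} promotes this weak super quasi-rationality to super quasi-rationality of $V(\fu(\lambda))$, which also validates the definition of $\beta^{\super}_{V(\fu(\lambda))_F}$.

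To separate the two cases I would transfer the $\Pi$-invariance status between $\fu(\lambda)$ and $V(\fu(\lambda))$. In case (2), the isomorphism \eqref{eq:parity_change} gives $V(\fu(\lambda))\cong V(\Pi\fu(\lambda))\cong\Pi V(\fu(\lambda))$, so condition (i) of Proposition \ref{prop:pure} is satisfied and $V(\fu(\lambda))$ is quasi-rational; Proposition \ref{prop:epsilon} then forces $\epsilon_{V(\fu(\lambda))_F}=-$, as asserted. In case (1), suppose for contradiction that $V(\fu(\lambda))\cong\Pi V(\fu(\lambda))\cong V(\Pi\fu(\lambda))$: Corollary \ref{cor:uknowsv} would then give $\fu(\lambda)\cong\Pi\fu(\lambda)$, contradicting Theorem \ref{thm:u(lambda)_easy_case}(1). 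Hence $V(\fu(\lambda))\not\cong\Pi V(\fu(\lambda))$, and combined with the $\Gamma$-invariance already established this means condition (ii) of Proposition \ref{prop:pure} holds trivially; thus $V(\fu(\lambda))$ is quasi-rational. Proposition \ref{prop:epsilon} yields $\epsilon_{V(\fu(\lambda))_F}=+$, and Theorem \ref{thm:BT}(2) gives $\beta^{\super}_{V(\fu(\lambda))_F}=(+,1,(\beta^{\BT}_{V(\fu(\lambda))})^{-1})$, so $a_{V(\fu(\lambda))_F}=1$.

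No serious obstacle is anticipated; the proof is a bookkeeping composition of previously established facts. The only slightly delicate point is the transfer of $\Pi$-invariance between the $\bmH$-module $\fu(\lambda)$ and the $\bmG\otimes_F F^{\sep}$-module $V(\fu(\lambda))$ in case (1), which relies on the canonical recovery $H^0(\bmU^+,V(\fu(\lambda)))\cong\fu(\lambda)$ of Proposition \ref{prop:Vknowsu} together with Corollary \ref{cor:uknowsv}.
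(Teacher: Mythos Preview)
Your proof is correct and follows the route the paper intends: the corollary is stated without proof immediately after Theorem \ref{thm:u(lambda)_easy_case}, and your argument spells out precisely how that theorem, together with \eqref{eq:Galois_twist_V(u)}, Corollary \ref{cor:uknowsv}, Proposition \ref{prop:epsilon}, and Theorem \ref{thm:BT}(2), yields the claim. One small efficiency remark: once you have ${}^\sigma V(\fu(\lambda))\cong V(\fu(\lambda))$ for all $\sigma$ (a genuine isomorphism, not merely $\overset{\boldsymbol{\cdot}}{\cong}$), Corollary \ref{cor:schur's_lemma} already gives \emph{quasi}-rationality directly, so the appeal to Proposition \ref{prop:pure} for that purpose is redundant; it is only the $\Pi$-invariance status of $V(\fu(\lambda))$ that you still need to determine in each case, and you handle that correctly via Corollary \ref{cor:uknowsv} and \eqref{eq:parity_change}.
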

	
	\begin{cor}\label{cor:h_1=0case}
		Consider the setting of Theorem \ref{thm:Galois_action}. Suppose $\fh_{\bar{1}}=0$.
		\begin{enumerate}
			\item The quotient map $\Gamma\backslash\Irr\bmG\otimes_F F^{\sep}\to \Gamma\backslash\Irr_{\Pi}\bmG\otimes_F F^{\sep}$ is two-to-one.
			\item The two classes in each fiber of (1) are distinguished by $H^0((\bmU')^+,-)$. To be precise, we denote the set of irreducible representations $V$ of $\bmG\otimes_F F^{\sep}$ with $H^0((\bmU')^+,V)_{\bar{1}}=0$ by $\Irr_{\bar{0}}\bmG\otimes_F F^{\sep}$. Then $\Irr_{\bar{0}}\bmG\otimes_F F^{\sep}$ is a $\Gamma$-subset of $\Irr\bmG\otimes_F F^{\sep}$. Moreover, the map of (1) restricts to a bijection
			\[\Gamma\backslash\Irr_{\bar{0}}\bmG\otimes_F F^{\sep}
			\cong \Gamma\backslash\Irr_{\Pi}\bmG\otimes_F F^{\sep}.\]
			\item For $\lambda\in X^\flat(H\otimes_F F^{\sep})$, the following conditions are equivalent:
			\begin{enumerate}
				\renewcommand{\labelenumi}{(\roman{enumi})}
				\item $V(\fu(\lambda))$ is quasi-rational;
				\item $V(\fu(\lambda))$ is super quasi-rational;
				\item $\sigma\ast\lambda=\lambda$ for every $\sigma\in\Gamma$.
			\end{enumerate}
		\end{enumerate}
	\end{cor}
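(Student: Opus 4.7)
The plan is to exploit that, when $\fh_{\bar{1}} = 0$, Convention~\ref{conv} pins each $\fu(\lambda)$ down to the \emph{even} one-dimensional character of weight $\lambda$, so parity is always visible. I would first record that $\fu(\lambda) \not\cong \Pi \fu(\mu)$ for any $\lambda, \mu \in X^\flat(H \otimes_F F^{\sep})$ (opposite parities); via \eqref{eq:parity_change} and Corollary~\ref{cor:uknowsv} this yields $V(\fu(\lambda)) \not\cong \Pi V(\fu(\mu))$ for all $\lambda, \mu$, and in particular $V(\fu(\lambda)) \not\cong \Pi V(\fu(\lambda))$.

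Next I would upgrade the Galois formula in Theorem~\ref{thm:Galois_action} from a statement modulo $\Pi$ to a genuine isomorphism by combining it with Theorem~\ref{thm:u(lambda)_easy_case}(1); this applies because $\delta_\lambda = 0$ when $\fh_{\bar{1}} = 0$ (see~\eqref{eq:delta_lambda}). The outcome is ${}^{w_\sigma}({}^\sigma \fu(\lambda)) \cong \fu(\sigma \ast \lambda)$ on the nose, hence ${}^\sigma V(\fu(\lambda)) \cong V(\fu(\sigma \ast \lambda))$. Combined with the previous paragraph this proves (1): the preimage of the class of $V(\fu(\lambda))$ in $\Gamma \backslash \Irr \bmG \otimes_F F^{\sep}$ consists precisely of the two orbits of $V(\fu(\lambda))$ and of $\Pi V(\fu(\lambda))$, and these are distinct because an identification would force $V(\fu(\sigma \ast \lambda)) \cong \Pi V(\fu(\lambda))$ for some $\sigma$, contradicting the first paragraph.

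For (2), Proposition~\ref{prop:Vknowsu} gives $H^0((\bmU')^+, V(\fu(\lambda))) \cong \fu(\lambda)$ (purely even) and $H^0((\bmU')^+, \Pi V(\fu(\lambda))) \cong \Pi \fu(\lambda)$ (purely odd), so exactly one representative of each $\Pi$-orbit lies in $\Irr_{\bar{0}} \bmG \otimes_F F^{\sep}$. Its $\Gamma$-stability is immediate from ${}^\sigma V(\fu(\lambda)) \cong V(\fu(\sigma \ast \lambda))$ above, and the asserted bijection then follows by combining with (1).

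For (3), (i) $\Rightarrow$ (ii) is Proposition~\ref{prop:sqrat->qrat}, and (ii) $\Leftrightarrow$ (iii) is a rewriting of Corollary~\ref{cor:superpure}(3) via the identity $\sigma \ast \lambda = w_\sigma^{-1}{}^\sigma \lambda$. For (ii) $\Rightarrow$ (i) I would appeal to Proposition~\ref{prop:pure}: its condition (i) fails since $V(\fu(\lambda)) \not\cong \Pi V(\fu(\lambda))$, while its condition (ii) holds because an isomorphism ${}^\sigma V(\fu(\lambda)) \cong \Pi V(\fu(\lambda))$ would translate, via the Galois formula and Corollary~\ref{cor:uknowsv}, into $\fu(\sigma \ast \lambda) \cong \Pi \fu(\lambda)$, a parity contradiction. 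The only subtle point throughout is the upgrade of Theorem~\ref{thm:Galois_action}(2) from $\overset{\boldsymbol{\cdot}}{\cong}$ to $\cong$, which is exactly where the hypothesis $\fh_{\bar{1}} = 0$ enters.
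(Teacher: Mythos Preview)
Your proof is correct and follows essentially the same approach the paper sets up: the paper does not spell out a proof of this corollary, treating it as immediate from Theorem~\ref{thm:u(lambda)_easy_case}(1), Corollary~\ref{cor:superpure}(3), and Corollary~\ref{cor:compute_a}(1). Your argument unpacks exactly these ingredients; the only cosmetic difference is that for (ii)~$\Rightarrow$~(i) you invoke Proposition~\ref{prop:pure} directly rather than citing Corollary~\ref{cor:compute_a}(1), but the content is the same.
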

	
	This gives a complete classification of irreducible representations of $\bmG$ under the hypothesis $\fh_{\bar{1}}=0$ (modulo the description of $X^\flat(H\otimes_F F^{\sep})$). See Examples \ref{ex:res}, \ref{ex:p(n)}, \ref{ex:uast(2m|2n)}, \ref{ex:p^ast}, \ref{ex:assumption}, \ref{ex:orthosymp}, and \ref{ex:osp^ast} for instances of $\bmG$ satisfying this condition. The split periplectic supergroups $\Pp_n$ also satisfy the condition $\fh_{\bar{1}}=0$.
	
	To complete the classification problem when $\fh_{\bar{1}}\neq 0$, it remains to discuss the case that $d_\lambda$ is positive even. As we can see in Example \ref{ex:Q_n_real}, whether
	\[{}^{w_\sigma}({}^\sigma\fu(\lambda))\cong\fu(\sigma\ast\lambda)\]
	depends on $\lambda$ in general. When $\delta_\lambda=1$, $a_{V(\fu(\lambda))_F}$ also depends on $\lambda$ in general. We should need a deeper analysis to deal with them. Since we no longer have the Clifford algebra over $F$, we may have to work with the ``twisted'' descent problem by $w_\sigma$. We leave this hard part to latter sections.
	
	To close this section, let us see how we can compute $D_{V(\fu(\lambda))_F}$. Pick a character $\lambda\in X^\flat(H\otimes_F F^{\sep})$
	with $V(\fu(\lambda))$ super quasi-rational. Recall that we chose a finite Galois extension $F'/F$ above Notation-Definition \ref{notedefn}. For the continuity of the cocycle, we may enlarge $F'$ if necessary to assume $\fu_{F'}(\lambda)\otimes_{F'} F^{\sep}\cong \fu(\lambda)$ through a similar argument to Proposition \ref{prop:rationality}.
	
	For each element $\bar{\sigma}$ of the Galois group of $F'/F$, one can and does choose an isomorphism
	\[\begin{array}{cc}
		\varphi_{\bar{\sigma}}:{}^{w_{\bar{\sigma}}}({}^{\bar{\sigma}}\fu_{F'}(\lambda))
		\cong \fu_{F'}(\lambda)
		&(\mathrm{resp.~}\varphi_{\bar{\sigma}}:
		{}^{w_{\bar{\sigma}}}({}^{\bar{\sigma}}\fu_{F'}(\lambda))
		\overset{\boldsymbol{\cdot}}{\cong} \fu_{F'}(\lambda))
	\end{array}\]
	if $V(\fu(\lambda))$ is quasi-rational (resp.~not quasi-rational) by Theorem \ref{thm:Galois_action} (1) and Corollary \ref{cor:uknowsv}. We lift them to define an isomorphism
	\[\begin{array}{cc}
		\varphi_\sigma:{}^{w_\sigma}({}^\sigma\fu(\lambda))\cong \fu(\lambda)&(\mathrm{resp.~}\varphi_\sigma:{}^{w_\sigma}({}^\sigma\fu(\lambda))\overset{\boldsymbol{\cdot}}{\cong} \fu(\lambda))
	\end{array}\]
	for $\sigma\in\Gamma$ and $V(\fu(\lambda))$ quasi-rational (resp.~not quasi-rational).
	
	Henceforth to think of the two cases simultaneously, let us omit $\Pi$. In a sequel, the symbol $\overset{\boldsymbol{\cdot}}{\cong}$ is replaced with $\cong$ when $V(\fu(\lambda))$ is quasi-rational.
	
	The chosen isomorphisms give rise to a sequence
	\[{}^\sigma\Ind^{\bmG\otimes_F F^{\sep}}_{\bmB'} \fu(\lambda)
	\overset{\eqref{eq:Galois_twist_Ind}}{\cong} 
	\Ind^{\bmG\otimes_F F^{\sep}}_{\bmB'} {}^{w_\sigma}({}^\sigma\fu(\lambda)) 
	\xcong{\overset{\Ind^{\bmG\otimes_F F^{\sep}}_{\bmB'}\varphi_\sigma}{\boldsymbol{\cdot}}}
	\Ind^{\bmG\otimes_F F^{\sep}}_{\bmB'} \fu(\lambda).
	\]
	It restricts to ${}^\sigma V(\fu(\lambda))\overset{\boldsymbol{\cdot}}{\cong} V(\fu(\lambda))$, which we denote by $\Phi_\sigma$. The commutative diagram
	\begin{equation}
		\begin{tikzcd}
			V_{\bmB'}(\fu(\lambda))\ar[r, "\overset{V_{\bmB'}(\varphi^{-1}_{\sigma\tau})}{\boldsymbol{\cdot}}"]
			&V_{\bmB'}({}^{w_{\sigma\tau}}({}^{\sigma\tau}\fu(\lambda)))\ar[r, "w^{-1}_{\sigma\tau}"]
			\ar[rd, "w_{\sigma\tau}^{-1}\sigma(w_\tau)"']
			\ar[dd, "w_{\sigma\tau}^{-1}\sigma(w_\tau)w_\sigma"']
			&V_{{}^{\sigma\tau}\bmB'}({}^{\sigma\tau}\fu(\lambda))\ar[d, "\sigma(w_\tau)"]\\
			&&V_{{}^{\sigma}\bmB'}({}^{\sigma(w_\tau)}({}^{\sigma\tau}\fu(\lambda)))\ar[d, equal]\\
			&V_{\bmB'}({}^{w_\sigma}({}^\sigma({}^{w_{\tau}}({}^{\tau}\fu(\lambda)))))
			\ar[d, "V_{\bmB'}({}^{w_\sigma}({}^\sigma\varphi_\tau))\boldsymbol{\cdot}"']
			&V_{{}^{\sigma}\bmB'}({}^\sigma({}^{w_{\tau}}({}^{\tau}\fu(\lambda))))
			\ar[d, "\boldsymbol{\cdot} V_{{}^\sigma\bmB'}({}^\sigma\varphi_\tau)"]\ar[l, "w_\sigma"']\\
			V_{\bmB'}(\fu(\lambda))&V_{\bmB'}({}^{w_{\sigma}}({}^{\sigma}\fu(\lambda)))
			\ar[l, "\overset{V_{\bmB'}(\varphi_\sigma)}{\boldsymbol{\cdot}}"']
			&V_{{}^{\sigma}\bmB'}({}^{\sigma}\fu(\lambda))\ar[l, "w_\sigma"']
		\end{tikzcd}\label{diag:BT}
	\end{equation}
	implies
	\[\Phi_\sigma\circ{}^{\sigma}\Phi_\tau\circ\Phi^{-1}_{\sigma\tau}
	=\lambda(w_{\sigma\tau}^{-1}\sigma(w_\tau)w_\sigma)
	V_{\bmB'}(\varphi_\tau\circ {}^{w_\sigma}({}^\sigma\varphi_\tau)\circ \varphi^{-1}_{\sigma\tau}).
	\]
	Here the formula
	$\varphi_\tau\circ {}^{w_\sigma}({}^\sigma\varphi_\tau)\circ \varphi^{-1}_{\sigma\tau}$ makes a sense in the following way: Firstly, think of 
	$\varphi^{-1}_{\sigma\tau}:\fu(\lambda)\overset{\boldsymbol{\cdot}}{\cong}
	{}^{w_{\sigma\tau}}({}^{\sigma\tau}\fu(\lambda))$.
	Observe that ${}^\sigma \varphi_\tau$ is a homomorphism between the representations ${}^{\sigma}({}^{w_\tau}({}^\tau \fu(\lambda)))$ and ${}^{\sigma}\fu(\lambda)$ of ${}^\sigma\bmB'=w_\sigma \bmB'w^{-1}_{\sigma}$. Take the pullback by $w_\sigma$ to obtain
	${}^{w_\sigma}({}^{\sigma}({}^{w_\tau}({}^\tau \fu(\lambda))))\overset{\boldsymbol{\cdot}}{\cong}
	{}^{w_\sigma}({}^\sigma\fu(\lambda))$.
	One can naturally identify the domain with
	${}^{w_{\sigma\tau}}({}^{\sigma\tau}\fu(\lambda))$
	by restricting $\bmB'$ to $\bmH\otimes_F F^{\sep}$
	since we have $\sigma(w_\tau)w_\sigma=w_{\sigma\tau}$
	at the level of the Weyl group (cf.~\cite[Section 2.1]{MR4627704}). Here we used the fact that $H$ centralizes $\bmH$ to show that the twists by $w_\sigma\sigma(w_\tau)$ and $w_{\sigma\tau}$ are equal. This fact follows by definition of $\bH$.
	
	One can write
	$\varphi_\tau\circ {}^{w_\sigma}({}^\sigma\varphi_\tau)\circ \varphi^{-1}_{\sigma\tau}=
	c_{\lambda}(\sigma,\lambda)\id_{\fu(\lambda)}$
	for some element $c_{\lambda}(\sigma,\lambda)\in (F^{\sep})^\times$. The Borel--Tits cocycle is now computed as
	\[\beta^{\BT}_{V(\fu(\lambda))}(\sigma,\tau)=\lambda(w_{\sigma\tau}^{-1}\sigma(w_\tau)w_\sigma)
	c_\lambda(\sigma,\tau).\]
	By abuse of notation, let us write
	\[\begin{array}{cc}
		\beta^{\BT}_\lambda(\sigma,\tau)\coloneqq \lambda(w_{\sigma\tau}^{-1}\sigma(w_\tau)w_\sigma),
		&\beta^{\BT}_{\fu(\lambda)}(\sigma,\tau)=c_\lambda(\sigma,\tau).
	\end{array}\]
	to define 2-cocycles $\beta^{\BT}_\lambda$ and $\beta^{\BT}_{\fu(\lambda)}$. We shall summarize the preceding argument as a statement:
	
	\begin{thm}[Product formula]\label{thm:prod_formula}
		Consider the setting of Theorem \ref{thm:Galois_action}.
		Let $\lambda\in X^\flat(H\otimes_FF^{\sep})$ with $V(\fu(\lambda))$ super quasi-rational. Then we have an equality
		$\beta^{\BT}_{V(\fu(\lambda))}=\beta^{\BT}_\lambda \beta^{\BT}_{\fu(\lambda)}$
		in $H^2(\Gamma,(F^{\sep})^\times)$.
	\end{thm}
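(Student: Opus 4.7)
The theorem is essentially already established by the calculation performed immediately above its statement: the commutative diagram~\eqref{diag:BT} yields
\[
\Phi_\sigma\circ {}^\sigma\Phi_\tau\circ \Phi_{\sigma\tau}^{-1}
= \lambda(w_{\sigma\tau}^{-1}\sigma(w_\tau)w_\sigma)\,c_\lambda(\sigma,\tau)\,\id_{V(\fu(\lambda))},
\]
which is exactly the product $\beta^{\BT}_\lambda(\sigma,\tau)\,\beta^{\BT}_{\fu(\lambda)}(\sigma,\tau)$. So the plan is to formalize this diagram chase and verify that the two factors genuinely make sense as elements of $H^2(\Gamma,(F^{\sep})^\times)$.

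First I would check that $\beta^{\BT}_\lambda$ and $\beta^{\BT}_{\fu(\lambda)}$ are themselves continuous $2$-cocycles. For $\beta^{\BT}_\lambda$, the key point is that $w_{\sigma\tau}^{-1}\sigma(w_\tau)w_\sigma$ lies in the normalizer of $H\otimes_F F^{\sep}$ and in fact maps to the identity in the Weyl group (because at the Weyl-group level we have $\sigma(\bar{w}_\tau)\bar{w}_\sigma = \bar{w}_{\sigma\tau}$); hence it lies in $H(F^{\sep})$, and $\lambda$ can be evaluated on it. The cocycle condition then follows from a routine computation that mirrors the standard computation showing the failure of a section of a group extension to be a homomorphism gives a $2$-cocycle. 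For $\beta^{\BT}_{\fu(\lambda)}$, the scalar $c_\lambda(\sigma,\tau)$ is well defined because $\End_{\bmH\otimes_F F^{\sep}}(\fu(\lambda))=F^{\sep}\id$ by Corollary~\ref{cor:schur's_lemma}, and the cocycle identity comes from associativity of the threefold composition $\varphi_\rho\circ {}^{w_\sigma}({}^\sigma\varphi_\tau)\circ{}^{w_{\sigma\tau}}({}^{\sigma\tau}\varphi_\rho)\circ\varphi_{\sigma\tau\rho}^{-1}$ computed in two ways. Continuity in both cases is automatic because having enlarged $F'$ we may assume all the $\varphi_\sigma$ are pulled back from a fixed $F'$-form, so the cocycles factor through $\Gamma_{F'/F}$.

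Next I would read off the product formula from the diagram~\eqref{diag:BT} and combine with Construction~\ref{cons:BT}: this identifies $\Phi_\sigma\circ {}^\sigma\Phi_\tau\circ\Phi_{\sigma\tau}^{-1}$ with $\beta^{\BT}_{V(\fu(\lambda))}(\sigma,\tau)\id_{V(\fu(\lambda))}$, and the diagram shows this scalar equals $\beta^{\BT}_\lambda(\sigma,\tau)\,\beta^{\BT}_{\fu(\lambda)}(\sigma,\tau)$. Passing to cohomology classes then gives the claimed equality in $H^2(\Gamma,(F^{\sep})^\times)$.

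The main technical obstacle, and the only nontrivial bookkeeping in the argument, is justifying the identifications used in the diagram to view ${}^{w_\sigma}({}^\sigma\varphi_\tau)$ as a map from ${}^{w_{\sigma\tau}}({}^{\sigma\tau}\fu(\lambda))$ to ${}^{w_\sigma}({}^\sigma\fu(\lambda))$: a priori its domain is ${}^{w_\sigma\sigma(w_\tau)}({}^{\sigma\tau}\fu(\lambda))$, and one must verify that $w_\sigma\sigma(w_\tau)$ and $w_{\sigma\tau}$ act identically on $\fu(\lambda)$. This is exactly where the factor $\lambda(w_{\sigma\tau}^{-1}\sigma(w_\tau)w_\sigma)$ appears: the two twists differ by conjugation by an element of $H(F^{\sep})$ (since $\bmH$ is centralized by $H$), and evaluating this element on the central character $\lambda$ of $\fu(\lambda)$ produces precisely $\beta^{\BT}_\lambda(\sigma,\tau)$. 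Once this point is made precise, the remainder of the proof is a direct diagram chase.
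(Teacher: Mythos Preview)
Your proposal is correct and follows essentially the same approach as the paper: the paper itself presents Theorem~\ref{thm:prod_formula} as a summary of the calculation carried out in the paragraphs immediately preceding it (culminating in diagram~\eqref{diag:BT}), and you have faithfully reconstructed that calculation, adding the explicit verification that $\beta^{\BT}_\lambda$ and $\beta^{\BT}_{\fu(\lambda)}$ are continuous $2$-cocycles. One small imprecision in your final paragraph: the paper's point is that the twists ${}^{w_\sigma\sigma(w_\tau)}(-)$ and ${}^{w_{\sigma\tau}}(-)$ are \emph{equal} as $\bmH\otimes_F F^{\sep}$-representations (since $H$ centralizes $\bmH$), and the scalar $\lambda(w_{\sigma\tau}^{-1}\sigma(w_\tau)w_\sigma)$ arises instead from the translation-by-$w$ isomorphisms at the level of the induced representations $V_{\bmB'}(-)$ in the diagram; but this does not affect the validity of your argument.
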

	
	\begin{rem}\label{rem:BT}
		A similar argument for $G$ implies that $\beta^{\BT}_\lambda$ exhibits the Borel--Tits cocycle of the irreducible representation of $G\otimes_F F^{\sep}$ of highest weight $\lambda$. Morally speaking, the product formula therefore says that $\beta^{\BT}_{V(\fu(\lambda))}$ is determined by the contribution from representation theory of the even part and from $\fu(\lambda)$. For explicit computation of $\beta^{\BT}_\lambda$ in the case $F=\bR$, see \cite{E1914} and its subsequent works (e.g.~\cite{MR0209401,MR1611385,MR2041548,MR4627704}). 
	\end{rem}
	
	\begin{ex}
		Consider the setting of Theorem \ref{thm:prod_formula}. Assume $\delta_\lambda=0$. Then $\beta^{\BT}_{\fu(\lambda)}$ is a boundary cocycle since $\dim_{F^{\sep}}\fu(\lambda)=1$. In particular, we get $\beta^{\BT}_{V(\fu(\lambda))}=\beta^{\BT}_\lambda$.
	\end{ex}
	
	\begin{cor}\label{cor:h_1=0case_beta}
		Consider the setting of Theorem \ref{thm:prod_formula}. Suppose $\fh_{\bar{1}}=0$. Let $\lambda\in X^\flat(H\otimes_F F^{\sep})$ with $V(\fu(\lambda))$ super quasi-rational. Then we have
		\[\beta^{\super}_{V(\fu(\lambda))_F}=[\End_{\bmG}(V(\fu(\lambda))_F)]
		=(+,1,(\beta^{\BT}_{\lambda})^{-1}).\]
	\end{cor}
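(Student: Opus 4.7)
The plan is to reduce everything to the quasi-rational case (Theorem \ref{thm:BT}) and then apply the product formula, with the odd factor collapsing because $\fh_{\bar{1}}=0$ forces $\fu(\lambda)$ to be one dimensional. First I would observe that under the hypothesis $\fh_{\bar 1}=0$, the quadratic form $q^\lambda$ on $\fh_{\bar{1}}\otimes_F F^{\sep}$ is identically zero, so $\delta_\lambda=0$ and $d_\lambda=0$. Combined with Theorem \ref{thm:u(lambda)_easy_case} (1) (using our choice of $\fu(\lambda)$ in Convention \ref{conv}), this gives $\fu(\lambda)\not\cong\Pi\fu(\lambda)$; by Corollary \ref{cor:uknowsv} (or \eqref{eq:parity_change}) we then also have $V(\fu(\lambda))\not\cong\Pi V(\fu(\lambda))$.

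Next, since $V(\fu(\lambda))$ is super quasi-rational, Proposition \ref{prop:pure} together with the conclusion of the previous paragraph lets us upgrade super quasi-rationality to quasi-rationality of $V(\fu(\lambda))$ (condition (ii) there holds because $V(\fu(\lambda))\not\overset{\boldsymbol{\cdot}}{\cong}\Pi V(\fu(\lambda))$ and $\Pi$ is compatible with Galois twists on the one-dimensional $\fu$). In particular Corollary \ref{cor:compute_a} (1) applies, yielding $\epsilon_{V(\fu(\lambda))_F}=+$ and $a_{V(\fu(\lambda))_F}=1$. Moreover, since $V(\fu(\lambda))_F\not\cong \Pi V(\fu(\lambda))_F$, Proposition \ref{prop:type} (or direct inspection) gives $\bEnd_{\bmG}(V(\fu(\lambda))_F)_{\bar{1}}=0$, so the super-Brauer class equals the class of the ordinary endomorphism algebra viewed with trivial odd part, i.e.\ $\beta^{\super}_{V(\fu(\lambda))_F}=(+,1,[\End_{\bmG}(V(\fu(\lambda))_F)])$.

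It remains to identify the $\Br(F)$-factor with $(\beta^{\BT}_\lambda)^{-1}$. By Theorem \ref{thm:BT} (1) the endomorphism class equals $(\beta^{\BT}_{V(\fu(\lambda))})^{-1}$, and by the product formula (Theorem \ref{thm:prod_formula}) we have $\beta^{\BT}_{V(\fu(\lambda))}=\beta^{\BT}_\lambda\,\beta^{\BT}_{\fu(\lambda)}$ in $H^2(\Gamma,(F^{\sep})^\times)$. Because $\fh_{\bar{1}}=0$, $\fu(\lambda)$ is one dimensional, so each $\varphi_\sigma:{}^{w_\sigma}({}^\sigma\fu(\lambda))\cong \fu(\lambda)$ is a scalar; the cocycle identity then forces $c_\lambda(\sigma,\tau)$ to be a coboundary, whence $\beta^{\BT}_{\fu(\lambda)}$ is trivial. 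Combining gives $[\End_{\bmG}(V(\fu(\lambda))_F)]=(\beta^{\BT}_\lambda)^{-1}$, which finishes the proof. The only potentially subtle point — and the one I would check most carefully — is the claim that $\beta^{\BT}_{\fu(\lambda)}$ vanishes; this is routine here only because $\dim_{F^{\sep}}\fu(\lambda)=1$, and would be the main obstacle in the general ($\fh_{\bar 1}\neq 0$) case.
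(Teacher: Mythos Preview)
Your proof is correct and follows exactly the approach the paper has in mind; indeed, the paper states this corollary without proof, relying on the immediately preceding Example (which observes that $\beta^{\BT}_{\fu(\lambda)}$ is a coboundary when $\delta_\lambda=0$ because $\dim_{F^{\sep}}\fu(\lambda)=1$), together with Corollary \ref{cor:compute_a} (1) and Theorem \ref{thm:BT} (2). One small notational slip: you write ``$V(\fu(\lambda))\not\overset{\boldsymbol{\cdot}}{\cong}\Pi V(\fu(\lambda))$'', but $V\overset{\boldsymbol{\cdot}}{\cong}\Pi V$ always holds trivially (since $V\cong\Pi\Pi V$); what you mean --- and what your argument actually establishes --- is that ${}^\sigma V(\fu(\lambda))\not\cong\Pi V(\fu(\lambda))$ for every $\sigma\in\Gamma$, which follows from Theorem \ref{thm:u(lambda)_easy_case} (1) combined with the $\ast$-invariance of $\lambda$ (Corollary \ref{cor:superpure} (3)).
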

	
	\section{Quasi-reductive case II}\label{sec:qredII}
	
	In the last section, we explained how to classify irreducible representations of quasi-reductive algebraic supergroups under Assumption \ref{ass}. We partially determined the division superalgebras of their endomorphisms. In this section, we discuss the remaining issues on these topics in examples. Throughout this section, we consider the same setting as that at the beginning of Section \ref{sec:qred}. Choose $w_\sigma$ as in Section \ref{sec:general}.

	\subsection{Basic case}\label{sec:odd_reflection}
	
	In this small section, we assume $\bmG$ to be basic of the main type in order to remove Assumption \ref{ass}. In virtue of a minor modification of \cite[Proof of Proposition 1.32]{MR3012224}, one can find a sequence $\alpha(\sigma,1),\alpha(\sigma,2)\ldots,\alpha(\sigma,\nu_\sigma)$ of isotropic odd roots such that for each $0\leq i\leq \nu_\sigma-1$, $\alpha(\sigma,i+1)$ is simple in the positive system 
	$s_{\alpha(\sigma,i)}s_{\alpha(\sigma,i-1)}\cdots s_{\alpha(\sigma,1)}
	w^{-1}_\sigma{}^\sigma\Delta^+$
	and
	$s_{\alpha(\sigma,\nu_\sigma)}s_{\alpha(\sigma,\nu_\sigma-1)}\cdots s_{\alpha(\sigma,1)}
	w^{-1}_\sigma{}^\sigma\Delta^+=\Delta^+$.
	
	\begin{ex}\label{ex:u(odd|odd)}
		Put $F=\bR$. Let $p,q,r,s$ be nonnegative integers with $(p+q)(r+s)$ odd. Set $m\coloneqq \frac{p+q-1}{2}$ and $n\coloneqq \frac{r+s-1}{2}$.
		Put $\bmG=\Uu(p,q|r,s)$. We may assume
		\[2m+1=p+q \geq r+s=2n+1.\]
		Let $H,\{e_i\},\Delta^+_{\bar{0}},w$ be as in Example \ref{ex:u(p,q|r,s)}. Consider the positive system $\Delta^+$ containing $\Delta^+_{\bar{0}}$ and corresponding to the following $\epsilon\delta$-sequence:
		\[\overbrace{\epsilon\delta\epsilon\delta\cdots \epsilon\delta}^{2n}
		\overbrace{\epsilon\epsilon\cdots \epsilon}^{m-n} \epsilon\delta \overbrace{\epsilon\epsilon\cdots\epsilon}^{m-n} 
		\overbrace{\delta\epsilon \delta\epsilon\cdots \delta\epsilon}^{2n}.
		\]
		Set $\alpha\coloneqq e_{m+1}-e_{2m+n+2}$. This root corresponds to the middle term $\epsilon\delta$ in the sequence above. Recall that $w^{-1}\bar{\Delta}^+$ is the reverse to the above:
		\[\overbrace{\epsilon\delta\epsilon\delta\cdots \epsilon\delta}^{2n}
		\overbrace{\epsilon\epsilon\cdots \epsilon}^{m-n} \delta\epsilon \overbrace{\epsilon\epsilon\cdots\epsilon}^{m-n} 
		\overbrace{\delta\epsilon \delta\epsilon\cdots \delta\epsilon}^{2n}.
		\]
		Since odd reflections switch $\epsilon$ and $\delta$, we have $s_\alpha w^{-1}\bar{\Delta}^+=\Delta^+$.
	\end{ex}
	
	\begin{ex}\label{ex:{}^0q(n)}
		Put $F=\bR$. Let $n$ be a positive integer.
		Set $\bmG={}^0\Qq(n)$. Let $H,\{e_i\},B$ be as in Example \ref{ex:0q(n)}. The positive system $\Delta^+_{\bar{0}}$ attached to
		\[(H\otimes_\bR\bC,(\Res_{\bC/\bR} B)\otimes_\bR\bC)\]
		is $\{e_i-e_j\in \bZ^{2n}:~1\leq i<j\leq n,~n+1\leq i<j\leq 2n\}$.
		In this case, $w$ can be the unit.
		Consider the following $\epsilon\delta$-sequence:
		$\epsilon\delta\epsilon\delta\cdots \epsilon\delta$.
		
		Recall that the complex conjugate action on $X^\ast(H\otimes_\bR\bC)$ switches the even and odd parts by \eqref{eq:switch}. This implies that $\Gamma$ switches $\epsilon$ and $\delta$. One can therefore recover the sequence above from its Galois involution by taking the odd reflections corresponding to $\delta\epsilon$ in the $(2i-1,2i)$th terms in order ($1\leq i\leq n$). Namely, we set
		$\alpha_i=e_i-e_{i+n}$ ($1\leq i\leq n$).
		Then we have
		$s_{\alpha_n}\cdots s_{\alpha_2}s_{\alpha_1}w^{-1}\bar{\Delta}^+=\Delta^+$.
	\end{ex}
	
	Let $\lambda\in X^\flat(H\otimes_F \bar{F})$ and $\sigma\in\Gamma$. Since $\fh_{\bar{1}}=0$, we may put $\fu(\lambda)=\bar{F}_\lambda$ as in the end of Section \ref{sec:split}. We would like to compare ${}^\sigma V(\bar{F}_\lambda)$ with $V(\bar{F}_\lambda)$ and $\Pi V(\bar{F}_\lambda)$. We note that ${}^\sigma V(\bar{F}_\lambda)$ is isomorphic to at most one of them since $\fh_{\bar{1}}=0$ (Example \ref{ex:type_sep_clo_case}). 
	
	Take a bilinear form $(-,-)$ on $\fh\otimes_F\bar{F}$ as in \cite[Theorem 1.18 (7)]{MR3012224}. To compute ${}^\sigma V(\bar{F}_\lambda)$, we define a sequence
	$(\lambda(\sigma,i))_{0\leq i\leq \nu_\sigma}$ by
	$\lambda(\sigma,0)=w^{-1}_\sigma{}^\sigma\lambda$ and
	\[\lambda(\sigma,i+1)=\begin{cases}
		\lambda(\sigma,i)&((\alpha(\sigma,i+1),\lambda(\sigma,i))=0)\\
		\lambda(\sigma,i)-\alpha(\sigma,i+1)&((\alpha(\sigma,i+1),\lambda(\sigma,i))\neq 0).
	\end{cases}\]
	Set $I(\lambda,\sigma)
	=\{i\in\{1,2,\ldots,\nu_\sigma\}:~(\alpha(\sigma,i),\lambda(\sigma,i-1))\neq 0\}$.
	We choose a root vector $E_{\alpha(\sigma,i)}$ of root $\alpha(\sigma,i)$ for each $i\in I(\lambda)$.
	
	Proposition \ref{prop:G-conjugate} and iterated use of Proposition \ref{prop:hw_odd_reflection} imply:
	
	\begin{thm}\label{thm:galois_twist_basic}
		The data above determine an isomorphism
		\[\Phi_\sigma:{}^\sigma V(\bar{F}_\lambda)\cong\begin{cases}
			V(\bar{F}_{\lambda(\sigma,\nu_\sigma)})&(|I(\lambda,\sigma)|\in 2\bZ_{\geq 0})\\
			\Pi V(\bar{F}_{\lambda(\sigma,\nu_\sigma)})&(|I(\lambda,\sigma)|\in 2\bZ_{\geq 0}+1).\\
		\end{cases}\]
	\end{thm}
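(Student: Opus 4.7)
The plan is to chain three kinds of manipulation: the formal commutation of Galois twist with socle-of-induction, Proposition \ref{prop:G-conjugate} for conjugation by $w_\sigma$, and an iterated application of the odd-reflection formulas in Proposition \ref{prop:hw_odd_reflection}. These three pieces exactly parallel the construction of the sequences $(\alpha(\sigma,i))$ and $(\lambda(\sigma,i))$ in the statement.

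First I would observe that Galois twist commutes with $\Ind$ and with socles, so ${}^\sigma V_{\bmB'}(\bar F_\lambda)\cong V_{{}^\sigma\bmB'}(\bar F_{{}^\sigma\lambda})$. Setting $\bmB_0'\coloneqq w_\sigma^{-1}({}^\sigma\bmB')w_\sigma$ and invoking Proposition \ref{prop:G-conjugate} with $w=w_\sigma$, this rewrites as $V_{\bmB_0'}(\bar F_{\lambda(\sigma,0)})$ with $\lambda(\sigma,0)=w_\sigma^{-1}{}^\sigma\lambda$. The BPS-subgroup $\bmB_0'$ is attached to the positive system $w_\sigma^{-1}{}^\sigma\Delta^+$, whose even part coincides with $\Delta_{\bar 0}^+$ by the defining property of $w_\sigma$.

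Next, for $0\le i\le\nu_\sigma$ set $\Delta_i^+\coloneqq s_{\alpha(\sigma,i)}\cdots s_{\alpha(\sigma,1)}w_\sigma^{-1}{}^\sigma\Delta^+$ with corresponding BPS-subgroup $\bmB_i'$; by construction $\bmB_{\nu_\sigma}'=\bmB'$. At each step $i\to i+1$ I would apply Proposition \ref{prop:hw_odd_reflection} to the pair $(\bmB_i',\alpha(\sigma,i+1))$: if $(\alpha(\sigma,i+1),\lambda(\sigma,i))=0$, part (1) provides
\[V_{\bmB_i'}(\bar F_{\lambda(\sigma,i)})\cong V_{\bmB_{i+1}'}(\bar F_{\lambda(\sigma,i+1)})\]
without parity change; otherwise part (2)(i) with the chosen root vector $E_{\alpha(\sigma,i+1)}$ provides
\[V_{\bmB_i'}(\bar F_{\lambda(\sigma,i)})\cong\Pi V_{\bmB_{i+1}'}(\bar F_{\lambda(\sigma,i+1)}),\]
since $\lambda(\sigma,i+1)=\lambda(\sigma,i)-\alpha(\sigma,i+1)$ in this case. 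Composing the $\nu_\sigma$ steps accumulates exactly $|I(\lambda,\sigma)|$ parity changes and terminates at $V_{\bmB'}(\bar F_{\lambda(\sigma,\nu_\sigma)})$, which is the desired formula.

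The main obstacle is not a single hard estimate but the bookkeeping needed to fit the three pieces together and to ensure that the odd-reflection identities apply at every intermediate stage — in particular, one needs $\lambda(\sigma,i)\in X^\flat$ with respect to $\bmB_i'$ so that the socle is nonzero at each step. This propagates inductively from the fact that ${}^\sigma V_{\bmB'}(\bar F_\lambda)$ is nonzero and each invocation of Proposition \ref{prop:hw_odd_reflection} is an isomorphism. A minor formal point is that Proposition \ref{prop:G-conjugate} is stated for $w\in G(F)$, whereas here $w_\sigma\in G(\bar F)$; this is harmless because the proof of the proposition is pure translation by $w$ and carries over verbatim with base field $\bar F$.
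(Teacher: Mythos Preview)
Your argument is correct and is precisely the approach the paper takes: its proof of this theorem is the single sentence ``Proposition \ref{prop:G-conjugate} and iterated use of Proposition \ref{prop:hw_odd_reflection} imply,'' which you have unpacked in detail, including the bookkeeping on $X^\flat$ and the harmless base-field issue for Proposition \ref{prop:G-conjugate}.
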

	
	\begin{cor}
		\begin{enumerate}
			\item The irreducible representation $V(\bar{F}_\lambda)$ is super quasi-rational if and only if $\lambda(\sigma,\nu_\sigma)=\lambda$ for every $\sigma\in\Gamma$.
			\item The irreducible representation $V(\bar{F}_\lambda)$ is quasi-rational if and only if we have
			$\lambda(\sigma,\nu_\sigma)=\lambda$
			and $|I(\lambda,\sigma)|$ is even for every element $\sigma\in\Gamma$. If these equivalent conditions are satisfied, we have $a_{V(\bar{F}_\lambda)_F}=1$.
		\end{enumerate}
	\end{cor}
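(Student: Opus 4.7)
The plan is to derive the corollary as a direct consequence of Theorem~\ref{thm:galois_twist_basic} together with the injectivity of Shibata's bijection \eqref{eq:superhwtheory'} and the observation that the hypothesis $\fh_{\bar{1}}=0$ forces $V(\bar{F}_\mu)\not\cong\Pi V(\bar{F}_\mu)$ for every $\mu\in X^\flat(H\otimes_F\bar{F})$. Indeed, since $\bmG$ is basic of the main type, the characteristic is zero, so $F$ is perfect and $\bar{F}=F^{\sep}$; by Corollary~\ref{cor:schur's_lemma} Schur's lemma holds over $\bar{F}$, so weak (super) quasi-rationality and (super) quasi-rationality coincide. Hence $V(\bar{F}_\lambda)$ is super quasi-rational (resp.\ quasi-rational) if and only if ${}^\sigma V(\bar{F}_\lambda)\overset{\boldsymbol{\cdot}}{\cong} V(\bar{F}_\lambda)$ (resp.\ ${}^\sigma V(\bar{F}_\lambda)\cong V(\bar{F}_\lambda)$) for every $\sigma\in\Gamma$.

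For part (1), Theorem~\ref{thm:galois_twist_basic} identifies ${}^\sigma V(\bar{F}_\lambda)$ with either $V(\bar{F}_{\lambda(\sigma,\nu_\sigma)})$ or its parity change. Because $\fh_{\bar{1}}=0$, Example~\ref{ex:type_sep_clo_case} (with $d_\mu=0$) yields $V(\bar{F}_\mu)\not\cong\Pi V(\bar{F}_\mu)$ for every $\mu$, so the relation ${}^\sigma V(\bar{F}_\lambda)\overset{\boldsymbol{\cdot}}{\cong} V(\bar{F}_\lambda)$ is equivalent to $V(\bar{F}_{\lambda(\sigma,\nu_\sigma)})\cong V(\bar{F}_\lambda)$ in $\Irr\bmG\otimes_F\bar{F}$. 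Since the Borel--Weil bijection \eqref{eq:superhwtheory'} is injective, this in turn is equivalent to $\lambda(\sigma,\nu_\sigma)=\lambda$.

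For part (2), the analogous argument shows that ${}^\sigma V(\bar{F}_\lambda)\cong V(\bar{F}_\lambda)$ (without parity ambiguity) forces both $\lambda(\sigma,\nu_\sigma)=\lambda$ and that the non--parity-swapping branch is the one that occurs, i.e.\ $|I(\lambda,\sigma)|$ is even. Conversely, if these two conditions hold for every $\sigma$, Theorem~\ref{thm:galois_twist_basic} directly yields ${}^\sigma V(\bar{F}_\lambda)\cong V(\bar{F}_\lambda)$ for all $\sigma$, so $V(\bar{F}_\lambda)$ is quasi-rational. Finally, the equality $a_{V(\bar{F}_\lambda)_F}=1$ is immediate from Theorem~\ref{thm:BT}(2): $V(\bar{F}_\lambda)\not\cong\Pi V(\bar{F}_\lambda)$ gives $\beta^{\super}_{V(\bar{F}_\lambda)_F}=(+,1,(\beta^{\BT}_{V(\bar{F}_\lambda)})^{-1})$, whose $(F^\times)/(F^\times)^2$-component is trivial.

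There is essentially no obstacle: all the substantive work has been done in Theorem~\ref{thm:galois_twist_basic} (which encodes the odd-reflection computation of the Galois twist) and in the preparatory material on (super) quasi-rationality over perfect fields. The only minor point to keep straight is the bookkeeping of parities in passing between $\overset{\boldsymbol{\cdot}}{\cong}$ and $\cong$, which is handled uniformly by invoking $\fh_{\bar{1}}=0$.
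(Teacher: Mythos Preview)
Your argument is correct and is exactly the unwinding the paper intends: the corollary is stated without proof because it follows immediately from Theorem~\ref{thm:galois_twist_basic}, the fact that $\fh_{\bar{1}}=0$ in the basic case (so $V(\bar{F}_\mu)\not\cong\Pi V(\bar{F}_\mu)$ via Example~\ref{ex:type_sep_clo_case}), Corollary~\ref{cor:schur's_lemma}, and the injectivity of \eqref{eq:superhwtheory'}. One cosmetic point: in part~(1) your intermediate claim ``equivalent to $V(\bar{F}_{\lambda(\sigma,\nu_\sigma)})\cong V(\bar{F}_\lambda)$'' is more cleanly phrased as $\overset{\boldsymbol{\cdot}}{\cong}$ (you can then invoke \eqref{eq:superhwtheory'} directly without the $\fh_{\bar{1}}=0$ step for that part), but either route reaches the same conclusion.
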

	
	\begin{ex}
		Put $F=\bR$. If $V(\bC_\lambda)$ is not super quasi-rational then Examples \ref{ex:nonsuperpure_real} and \ref{ex:type_sep_clo_case} imply
		$\bEnd_{\bmG}(V(\bC_\lambda)_\bR)\cong\bC$.
	\end{ex}
	
	\begin{rem}
		Put $F=\bR$. Let $\sigma$ be the nontrivial element of the absolute Galois group $\Gamma$. In Example \ref{ex:u(odd|odd)}, no $\lambda$ can satisfy $\lambda(\sigma,1)=\lambda$ with $|I(\lambda,\sigma)|$ odd. In particular, every super quasi-rational representation is quasi-rational in this case. In fact, write $\lambda=\sum_{i=1}^{p+q+r+s}\lambda_i e_i\in X^\ast(H\otimes_\bR\bC)$. Then
		we have
		\[\lambda(\sigma,0)=-\sum_{i=1}^{p+q}\lambda_{p+q+1-i}e_i
		-\sum_{i=1}^{r+s} \lambda_{p+q+r+s+1-i} e_{i+p+q}.\]
		If $\lambda(\sigma,1)=\lambda$ and $|I(\lambda,\sigma)|$ is odd, we have
		\[\lambda=-\sum_{i=1}^{p+q}\lambda_{p+q+1-i}e_i
		-\sum_{i=1}^{r+s} \lambda_{p+q+r+s+1-i} e_{i+p+q}
		-e_{m+1}+e_{2m+n+2}.\]
		We compare the coefficients of $e_{m+1}$ to get
		$\lambda_{m+1}=-\lambda_{m+1}-1$, which cannot hold since $\lambda_{m+1}$ is an integer (recall $p+q+1-(m+1)=m+1$).
		
		On the other hand, consider the case $\bmG={}^0\Qq(1)$ of Example \ref{ex:{}^0q(n)}. In this case, we have
		$X^\flat(H\otimes_\bR\bC)=X^\ast(H\otimes_\bR\bC)$ (see \cite[Theorem 5.5]{MR4039427} for example).
		Put $\lambda=e_2$. Then we have
		$\lambda(\sigma,0)=e_1$ and $\lambda_{(\sigma,1)}=e_1-(e_1-e_2)=e_2=\lambda$
		since $(\lambda(\sigma,0),\alpha(\sigma,1))=(e_1,e_1-e_2)=1\neq 0$. In particular, we have $\lambda(1,\sigma)=\lambda$ and $|I(\sigma,1)|=1$ (recall $\nu_\sigma=1$). We can find more examples for ${}^0\Qq(n)$ with general $n$ by a similar computation.
	\end{rem}
	
	In the non quasi-rational but super quasi-rational case, $a_{V(\bar{F}_\lambda)_F}$ is determined by Theorem \ref{thm:galois_twist_basic} through Deligne's description explained in Section \ref{sec:nonqrat}.
	
	Assume $V(\bar{F}_\lambda)$ to be super quasi-rational. To determine $\bEnd_{\bmG}(V(\bar{F}_\lambda)_F)$, it remains to compute $D_{V(\bar{F}_\lambda)_F}$. For this, take the normalized root vector $E_{-\alpha(\sigma,i)}$ in the sense of Proposition \ref{prop:hw_odd_reflection} (2) (ii).
	Define elements
	$u^+(\lambda,\sigma), u(\lambda,\sigma)\in\hy(\fg\otimes_F \bar{F})$
	by $u^+(\lambda,\sigma)=\prod_{i\in I(\lambda,\sigma)} E_{\alpha(\sigma,i)}$ and
	$u(\lambda,\sigma)=\prod_{i\in I(\lambda,\sigma)} E_{-\alpha(\sigma,i)}$
	where the products are taken in the increasing and decreasing orders of indices from left to right respectively.
	
	Take any nonzero element $v\in H^0((\bmU')^+,V(\bar{F}_\lambda))$. Then unwinding the definitions, we obtain
	\begin{flalign*}
		&(\Phi_\sigma\circ{}^\sigma\Phi_\tau\circ\Phi^{-1}_{\sigma\tau})(v)\\
		&=(\sigma\tau\ast\lambda)(w_{\sigma\tau}^{-1}\sigma(w_\tau)w_\sigma)
		u(\lambda,\sigma\tau)\Ad(w_\sigma)^{-1}(\sigma(u^+(\lambda,\tau)))u^+(\lambda,\sigma) v
	\end{flalign*}
	for $\sigma,\tau\in\Gamma$ (write down a similar diagram to \eqref{diag:BT}). One may write
	\[u(\lambda,\sigma\tau)\Ad(w_\sigma)^{-1}(\sigma(u^+(\lambda,\tau)))u^+(\lambda,\sigma) v
	=c_\lambda(\sigma,\tau)v\]
	for a unique element $c_\lambda(\sigma,\tau)\in\bar{F}^\times$.
	
	\begin{thm}
		The Borel--Tits cocycle of $V(\bar{F}_\lambda)$ is computed by
		\[\beta^{\BT}_{V(\bar{F}_\lambda)}(\sigma,\tau)
		=(\sigma\tau\ast\lambda)(w_{\sigma\tau}^{-1}\sigma(w_\tau)w_\sigma)c_\lambda(\sigma,\tau).\]	
	\end{thm}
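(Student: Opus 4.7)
The plan is to unwind the definitions exactly as sketched in the paragraph immediately preceding the statement, in direct analogy with the proof of Theorem \ref{thm:prod_formula}. Concretely, I fix a nonzero highest weight vector $v\in H^0((\bmU')^+,V(\bar{F}_\lambda))$ and track its image under $\Phi_\sigma\circ{}^\sigma\Phi_\tau\circ\Phi_{\sigma\tau}^{-1}$, using the explicit description of $\Phi_\sigma$ afforded by Theorem \ref{thm:galois_twist_basic}.

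First, I decompose each $\Phi_\sigma$ into three stages: the tautological Galois-twist identification ${}^\sigma V(\bar{F}_\lambda)=V_{{}^\sigma\bmB'}(\bar{F}_{{}^\sigma\lambda})$; the $w_\sigma$-translation isomorphism from \eqref{eq:w-twist_V(u)}, which lands in $V_{\bmB'}(\bar{F}_{\lambda(\sigma,0)})$; and the chain of $\nu_\sigma$ odd-reflection isomorphisms supplied by Proposition \ref{prop:hw_odd_reflection}. Using that each nontrivial step ($i\in I(\lambda,\sigma)$) sends the current highest weight vector $v_{\lambda(\sigma,i-1)}$ to $E_{\alpha(\sigma,i)}v_{\lambda(\sigma,i)}$, iteration shows that the image of the hwv through the first two stages is carried by the third to $u^+(\lambda,\sigma)v$. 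Dually, Proposition \ref{prop:hw_odd_reflection}(2)(ii) together with the normalization $\lambda(\sigma\tau,i-1)([E_{\alpha(\sigma\tau,i)},E_{-\alpha(\sigma\tau,i)}])=1$ ensures that inverting the $\sigma\tau$-reflection chain introduces the factor $u(\lambda,\sigma\tau)$.

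Next I arrange the composition into a commutative diagram modeled on \eqref{diag:BT}, with the three odd-reflection chains interposed between the Galois-twist rows. Reading off the resulting scalar splits it into two factors. The weight-theoretic factor arises because the Weyl-group element $w_{\sigma\tau}^{-1}\sigma(w_\tau)w_\sigma$ actually lies in $H$ (as in the paragraph below \eqref{diag:BT}, using that $\sigma(w_\tau)w_\sigma=w_{\sigma\tau}$ in the Weyl group), so evaluating a character of $H$ on it returns a well-defined scalar; the relevant character is $\sigma\tau\ast\lambda$ rather than $\lambda$ because the weight space visited at this stage of the chase is $V_{\bmB'}(\bar{F}_{\lambda(\sigma\tau,0)})=V_{\bmB'}(\bar{F}_{\sigma\tau\ast\lambda})$. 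The remaining factor is by construction the scalar through which the element $u(\lambda,\sigma\tau)\,\Ad(w_\sigma)^{-1}(\sigma(u^+(\lambda,\tau)))\,u^+(\lambda,\sigma)$ of $\hy(\fg\otimes_F\bar{F})$ acts on $v$, which equals $c_\lambda(\sigma,\tau)$ by definition. The twist $\Ad(w_\sigma)^{-1}\sigma(\cdot)$ on $u^+(\lambda,\tau)$ emerges inevitably from the fact that the odd reflections appearing in ${}^\sigma\Phi_\tau$ are first Galois-twisted by $\sigma$ and then pulled back through the $w_\sigma$-translation step built into $\Phi_\sigma$.

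The main obstacle will be the careful bookkeeping of three items: the ordering of the $E_{\pm\alpha(\bullet,i)}$ factors in the super-hyperalgebra, since any reordering introduces signs that would otherwise contaminate $c_\lambda(\sigma,\tau)$; the parity factors introduced by the possible $\Pi$-shifts in Proposition \ref{prop:hw_odd_reflection}(2)(i), which must cancel across the composition so that no extra sign leaks into the final formula; and the compatibility between the three independent reflection sequences $(\alpha(\sigma,i))$, $(\alpha(\tau,i))$, $(\alpha(\sigma\tau,i))$, which need not match factor-by-factor, so that the scalar must be extracted from the action on a single hwv rather than from any direct term-by-term cancellation of the chains.
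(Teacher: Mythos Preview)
Your proposal is correct and follows essentially the same approach as the paper: the paper's proof is precisely the computation in the paragraphs preceding the statement, where one fixes a nonzero $v\in H^0((\bmU')^+,V(\bar{F}_\lambda))$, unwinds the definition of $\Phi_\sigma$ via Theorem \ref{thm:galois_twist_basic} and Proposition \ref{prop:hw_odd_reflection}, and arranges the composition $\Phi_\sigma\circ{}^\sigma\Phi_\tau\circ\Phi_{\sigma\tau}^{-1}$ into a diagram analogous to \eqref{diag:BT} (the paper explicitly says ``write down a similar diagram to \eqref{diag:BT}''). Your decomposition into three stages, the identification of the weight-theoretic factor $(\sigma\tau\ast\lambda)(w_{\sigma\tau}^{-1}\sigma(w_\tau)w_\sigma)$, and the emergence of the $\Ad(w_\sigma)^{-1}\sigma(\cdot)$ twist on $u^+(\lambda,\tau)$ all match the paper's computation exactly; indeed your write-up is more detailed than what the paper provides.
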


	\subsection{Transfer method}\label{sec:ast-trivial}
	
	Consider the setting of Theorem \ref{thm:Galois_action}.
	We resume the study of case $\delta_\lambda\neq 0$ in Section \ref{sec:general}. Here we deal with an easy case:
	
	\begin{ass}\label{ass_asttriv}
		\begin{enumerate}
			\renewcommand{\labelenumi}{(\roman{enumi})}
			\item The $\ast$-action on $X^\ast(H\otimes_F F^{\sep})$ is trivial.
			\item We are given a split quasi-reductive group superscheme $\bmG^{\spl}$ and its split maximal torus $H^{\spl}$, equipped with compatible isomorphisms
			\[\begin{array}{cc}
				\bmG\otimes_F F^{\sep}\cong \bmG^{\spl}\otimes_F F^{\sep},
				&	H\otimes_F F^{\sep}\cong H^{\spl}\otimes_F F^{\sep}.
			\end{array}\]
			Let $\fh^{\spl}_{\{1\}}$ denote the $H^{\spl}$-invariant part of $\fg_{\{1\}}$. Define $\bmH^{\spl}\subset\bmG^{\spl}$ in a similar way to $\bmH$.
			\item The isomorphism
			$\fh_{\bar{1}}\otimes_F F^{\sep}\cong\fh^{\spl}_{\bar{1}}\otimes_F F^{\sep}$
			induced from (ii) is $\Gamma$-equivariant with respect to the semilinear $\Gamma$-actions defined by
			\[\begin{array}{cc}
				x\otimes c\mapsto \Ad(w^{-1}_\sigma) (x\otimes\sigma(c))
				&(x\in \fh_{\bar{1}},~c\in F^{\sep},~\sigma\in\Gamma)\\
				x\otimes c\mapsto x\otimes\sigma(c)&(x\in \fh^{\spl}_{\bar{1}},~c\in F^{\sep},~\sigma\in\Gamma).
			\end{array}\]
		\end{enumerate}
	\end{ass}
	
	Suppose that Assumption \ref{ass_asttriv} holds. We would like to compute $\beta^{\super}$ by transferring the problems of $\bmG$ to those of $\bmG^{\spl}$. Define a BPS-subgroup of $\bmG\otimes_F F^{\sep}$ by the pullback of $\bmB'$. It is defined over $F$ in $\bmG^{\spl}$ since $H^{\spl}$ is split. Define the $\ast$-action on $X^\ast(H^{\spl}\otimes_F F^{\sep})$ for this choice. The key fact is that the induced bijection
	\begin{equation}
		X^\ast(H\otimes_F F^{\sep})\cong X^\ast(H^{\spl}\otimes_F F^{\sep})
		\label{eq:char_group}
	\end{equation}
	is $\Gamma$-equivariant. This holds since the $\ast$-actions on both sides are trivial. Therefore a similar equivariance statement to (iii) for $\fh_{\bar{0}}$ holds. Take $\lambda\in X^\ast(H\otimes_F F^{\sep})$. If we write $\fu^{\spl}(\lambda)$ for the pullback of $\fu(\lambda)$ to an irreducible representation of $\bmH^{\spl}\otimes_F F^{\sep}$, (iii) implies that the pullback of ${}^{w_\sigma}({}^\sigma \fu(\lambda))$ is ${}^\sigma\fu^{\spl}(\lambda)$.
	
	For $\lambda\in X^\ast(H\otimes_F F^{\sep})$, define a quadratic form $q^{\lambda,\spl}_F$ on $\fh^{\spl}_{\bar{1}}$ as usual:
	\[q^{\lambda,\spl}_F(x)=\frac{1}{2}\lambda([x,x]).\]
	We define $d_{\lambda,F}^{\spl}$, and $\delta^{\spl}_{\lambda,F}$ accordingly. Thanks to the isomorphism
	\[\fh\otimes_F F^{\sep}\cong\fh^{\spl}\otimes_F F^{\sep},\]
	we have $d_{\lambda,F}^{\spl}=d_\lambda$ since the radicals of symmetric bilinear forms on finite dimensional vector spaces are compatible with base change of fields.
	
	\begin{prop}\label{prop:classification_ast_trivial}
		Consider the setting of Theorem \ref{thm:Galois_action}.
		Moreover, suppose that Assumption \ref{ass_asttriv} is satisfied.
		For $\lambda\in X^\flat(H\otimes_F F^{\sep})$, the following conditions are equivalent:
		\begin{enumerate}
			\renewcommand{\labelenumi}{(\alph{enumi})}
			\item There exists $\sigma\in\Gamma$ such that $\Pi \fu(\lambda)\cong {}^{w_\sigma}({}^\sigma \fu(\lambda))$,
			\item there exists $\sigma\in\Gamma$ such that $\Pi V(\fu(\lambda))\cong {}^\sigma V(\fu(\lambda))$,
			\item $\Pi V(\fu(\lambda))_F\cong V(\fu(\lambda))_F$,
			\item there exists $\sigma\in\Gamma$ such that $\Pi \fu^{\spl}(\lambda)\cong {}^\sigma \fu^{\spl}(\lambda)$,
			\item $\Pi \fu^{\spl}(\lambda)_F\cong \fu^{\spl}(\lambda)_F$,
			\item $\Pi V_F(\fu^{\spl}(\lambda)_F)\cong V_F(\fu^{\spl}(\lambda)_F)$, and
			\item $d_{\lambda,F}^{\spl}$ is odd or $\delta^{\spl}_{\lambda,F}\not\in (F^\times)^2\cup\{0\}$.
		\end{enumerate}
	\end{prop}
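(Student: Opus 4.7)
The plan is to split the seven conditions into two chains, (a) $\Leftrightarrow$ (b) $\Leftrightarrow$ (c) on the $\bmG$-side and (d) $\Leftrightarrow$ (e) $\Leftrightarrow$ (f) $\Leftrightarrow$ (g) on the $\bmG^{\spl}$-side, and then bridge them via (a) $\Leftrightarrow$ (d) using Assumption~\ref{ass_asttriv}~(iii).

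For the $\bmG$-side, (a) $\Leftrightarrow$ (b) is obtained by combining Theorem~\ref{thm:Galois_action}~(1), which gives the isomorphism ${}^\sigma V(\fu(\lambda))\cong V({}^{w_\sigma}({}^\sigma\fu(\lambda)))$, with the isomorphism $V(\Pi\fu)\cong \Pi V(\fu)$ from \eqref{eq:parity_change} and the injectivity statement of Corollary~\ref{cor:uknowsv} (applicable because $\lambda\in X^\flat$ ensures nonvanishing of the $V$'s). Then (b) $\Leftrightarrow$ (c) is Proposition~\ref{prop:even} applied to $V(\fu(\lambda))_F$ and its base change.

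For the $\bmG^{\spl}$-side, (d) $\Leftrightarrow$ (e) is Proposition~\ref{prop:even} applied to $\bmH^{\spl}$ and $\fu^{\spl}(\lambda)_F$. The implication (e) $\Rightarrow$ (f) is immediate from \eqref{eq:parity_change} and the converse follows by applying $H^0(\bmU^{\spl,+},-)$ to an isomorphism $V_F(\fu^{\spl}(\lambda)_F)\cong\Pi V_F(\fu^{\spl}(\lambda)_F)$ and invoking Proposition~\ref{prop:Vknowsu}. Finally, (f) $\Leftrightarrow$ (g) is exactly Proposition~\ref{prop:split}~(3) applied to the split quasi-reductive algebraic supergroup $\bmG^{\spl}$ with the character $\lambda$, which lies in $X^\flat(H^{\spl})\cong X^\flat(H^{\spl}\otimes_FF^{\sep})$ under the canonical identification; here I would also note that $\lambda\in X^\flat(H\otimes_F F^{\sep})$ translates to $\lambda\in X^\flat(H^{\spl}\otimes_F F^{\sep})$ via the isomorphism of Assumption~\ref{ass_asttriv}~(ii) and faithfully flat descent applied to \eqref{eq:bc_Ind}.

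The bridge (a) $\Leftrightarrow$ (d) is the key step and is handled by the remark recorded in the paragraph just above the proposition: the pullback of $\fu(\lambda)$ along $\bmH^{\spl}\otimes_FF^{\sep}\cong\bmH\otimes_FF^{\sep}$ is $\fu^{\spl}(\lambda)$, and the pullback of ${}^{w_\sigma}({}^\sigma\fu(\lambda))$ is ${}^\sigma\fu^{\spl}(\lambda)$; since pullback commutes with parity change, $\Pi\fu(\lambda)$ pulls back to $\Pi\fu^{\spl}(\lambda)$, so (a) and (d) coincide under the pullback equivalence. The main obstacle is conceptual rather than technical: one must verify that the Galois-theoretic comparison genuinely holds, which reduces to checking that under Assumption~\ref{ass_asttriv}~(iii) the twisted semilinear action $x\otimes c\mapsto \Ad(w^{-1}_\sigma)(x\otimes\sigma(c))$ on $\fh_{\bar 1}\otimes_F F^{\sep}$ corresponds to the untwisted Galois action on $\fh^{\spl}_{\bar 1}\otimes_F F^{\sep}$. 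This equivariance, together with the identification of the quadratic forms $q^\lambda$ and $q^{\lambda,\spl}$ induced by the Lie bracket (recall \eqref{eq:q^lambda_F}) and the triviality of the $\ast$-action ensuring \eqref{eq:char_group} is $\Gamma$-equivariant, is exactly what promotes the set-theoretic correspondence of pullbacks to a Galois-equivariant correspondence, thereby closing the loop.
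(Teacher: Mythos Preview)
Your proof is correct and follows essentially the same approach as the paper: the same chain of equivalences using Proposition~\ref{prop:even} for (b)$\Leftrightarrow$(c) and (d)$\Leftrightarrow$(e), Theorem~\ref{thm:Galois_action}~(1) with Corollary~\ref{cor:uknowsv} for (a)$\Leftrightarrow$(b), Proposition~\ref{prop:split}~(3) for (f)$\Leftrightarrow$(g), and the pullback argument from the paragraph preceding the proposition for the bridge (a)$\Leftrightarrow$(d). The only cosmetic difference is that for (e)$\Leftrightarrow$(f) the paper invokes Corollary~\ref{cor:uknowsv} directly, whereas you unpack its content via $H^0(\bmU^{\spl,+},-)$ and Proposition~\ref{prop:Vknowsu}; these are the same argument.
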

	
	\begin{proof}
		The equivalence of (a) and (d) follows by the preceding arguments. Proposition \ref{prop:even} implies (b) $\iff$ (c) and (d) $\iff$ (e). Combine Corollary \ref{cor:uknowsv} and Theorem \ref{thm:Galois_action} (1) to deduce (a) $\iff$ (b). Corollary \ref{cor:uknowsv} implies (e) $\iff$ (f). Finally, the equivalence of (f) and (g) follows from Proposition \ref{prop:split} (3).
	\end{proof}
	
	This gives a complete classification of irreducible representations of $\bmG$.
	
	\begin{cor}\label{cor:transfer_qrat}
		Consider the setting of Proposition \ref{prop:classification_ast_trivial}.
		\begin{enumerate}
			\item Every irreducible representation of $\bmG\otimes_F F^{\sep}$ is super quasi-rational. 
			\item For $\lambda\in X^\flat(H\otimes_F F^{\sep})$, $V(\fu(\lambda))$ is quasi-rational if and only if $d_{\lambda,F}^{\spl}$ is odd or
			$\delta^{\spl}_{\lambda,F}\in (F^\times)^2\cup\{0\}$.
		\end{enumerate}
	\end{cor}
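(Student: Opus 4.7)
For (1), my plan is simply to note that $\bmG\otimes_F F^{\sep}$ is itself a split quasi-reductive algebraic supergroup, since every torus over a separably closed field is split (in particular $H\otimes_F F^{\sep}$ is split, as used throughout Section \ref{sec:qred}). Part (1) of Proposition \ref{prop:split}, applied directly to $\bmG\otimes_F F^{\sep}$ rather than going through the isomorphism with $\bmG^{\spl}\otimes_F F^{\sep}$, then gives the super quasi-rationality of every irreducible representation.

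For (2), with super quasi-rationality of $V(\fu(\lambda))$ in hand, the plan is to apply Proposition \ref{prop:pure}: $V(\fu(\lambda))$ is quasi-rational if and only if either (i) $V(\fu(\lambda))\cong\Pi V(\fu(\lambda))$, or (ii) no $\sigma\in\Gamma$ satisfies ${}^\sigma V(\fu(\lambda))\cong\Pi V(\fu(\lambda))$. I would analyse each of these two conditions via the results already proved. Condition (i) is an intrinsic condition over $F^{\sep}$; via Corollary \ref{cor:uknowsv} it is equivalent to $\fu(\lambda)\cong\Pi\fu(\lambda)$, and Proposition \ref{prop:split} (3) applied to the split group $\bmG\otimes_F F^{\sep}$ characterizes this in terms of the data $d_\lambda^{\spl}$ and $\delta_\lambda^{\spl}$ over $F^{\sep}$. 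Since $(F^{\sep,\times})^2=F^{\sep,\times}$ (Example \ref{ex:type_sep_clo_case}) and since the dimension of a complement to the radical of a quadratic form is invariant under base change, condition (i) simplifies to the statement that $d_{\lambda,F}^{\spl}$ is odd. Condition (ii) is exactly the negation of condition (b) in Proposition \ref{prop:classification_ast_trivial}, whose equivalence with condition (g) tells us that the existence of such a $\sigma$ is equivalent to ``$d_{\lambda,F}^{\spl}$ is odd or $\delta_{\lambda,F}^{\spl}\notin (F^\times)^2\cup\{0\}$''.

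Setting $P\coloneqq (d_{\lambda,F}^{\spl}\text{ is odd})$ and $Q\coloneqq (d_{\lambda,F}^{\spl}\text{ is odd or }\delta_{\lambda,F}^{\spl}\notin (F^\times)^2\cup\{0\})$, Proposition \ref{prop:pure} gives that $V(\fu(\lambda))$ is quasi-rational iff $P\vee \neg Q$. A routine Boolean simplification (using $P\Rightarrow Q$) rewrites this as $P\vee (\neg P\wedge (d_{\lambda,F}^{\spl}\text{ even}\wedge \delta_{\lambda,F}^{\spl}\in (F^\times)^2\cup\{0\}))$, i.e., $d_{\lambda,F}^{\spl}$ is odd or $\delta_{\lambda,F}^{\spl}\in (F^\times)^2\cup\{0\}$, which is exactly the claimed criterion. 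The ``hard part'' here is not any genuine obstacle but rather verifying that the two equivalences in Propositions \ref{prop:classification_ast_trivial} and \ref{prop:split} (3) are being applied with matching conventions (same $\fu(\lambda)$, same $d_{\lambda,F}^{\spl}$, compatible $H$-structures under Assumption \ref{ass_asttriv}), and that condition (i) of Proposition \ref{prop:pure} is not already subsumed under condition (ii) in a way that would make the Boolean combination collapse trivially.
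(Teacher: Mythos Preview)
Your argument for (2) is correct and essentially identical to the paper's: both split on the parity of $d_{\lambda,F}^{\spl}=d_\lambda$, use Example \ref{ex:type_sep_clo_case} for the odd case, and invoke Proposition \ref{prop:classification_ast_trivial} (b)$\iff$(g) for the even case. The Boolean repackaging is just a cosmetic difference.

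Your argument for (1), however, has a genuine gap. In the statement, ``super quasi-rational'' is meant in the sense of Definition \ref{defn:pure}~(2): an irreducible representation $V'$ of $\bmG\otimes_F F^{\sep}$ is super quasi-rational when ${}^\sigma V'\overset{\boldsymbol{\cdot}}{\cong} V'$ for every $\sigma\in\Gamma=\mathrm{Gal}(F^{\sep}/F)$ (together with the Schur condition, handled by Corollary \ref{cor:schur's_lemma}). This is a condition on the \emph{Galois twists over $F$}. When you apply Proposition \ref{prop:split}~(1) with base field $F^{\sep}$, you obtain super quasi-rationality in the sense of Definition \ref{defn:pure}~(1) over $F^{\sep}$, i.e., that $\bEnd_{\bmG\otimes_F F^{\sep}}(V')$ is a central division superalgebra over $F^{\sep}$. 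That statement says nothing about how $V'$ behaves under $\Gamma$; indeed, it holds for any quasi-reductive $\bmG$ over any $F$, whereas the corollary is specific to Assumption \ref{ass_asttriv}. The paper's one-line proof uses precisely the ingredient you omitted: the $\ast$-action is trivial by Assumption \ref{ass_asttriv}~(i), so by Corollary \ref{cor:superpure}~(3) the condition $\sigma\ast\lambda=\lambda$ holds for all $\sigma$, and $V(\fu(\lambda))$ is super quasi-rational in the required sense. You should replace your (1) with this argument; your (2) then goes through unchanged.
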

	
	\begin{proof}
		Part (1) follows since the $\ast$-action is trivial. If $d_{\lambda,F}^{\spl}=d_\lambda$ is odd, $V(\fu(\lambda))$ is quasi-rational by (1), Proposition \ref{prop:pure}, and Example \ref{ex:type_sep_clo_case}. To complete the proof of (2), we assume $d_{\lambda,F}^{\spl}$ to be even. Then for each $\sigma\in\Gamma$, ${}^\sigma V(\fu(\lambda))$ is isomorphic to exactly one of $V(\fu(\lambda))$ and $\Pi V(\fu(\lambda))$ by (1) and Example \ref{ex:type_sep_clo_case}. Proposition \ref{prop:classification_ast_trivial} now implies that $V(\fu(\lambda))$ is quasi-rational if and only if $\delta^{\spl}_{\lambda,F}\in (F^\times)^2\cup\{0\}$.
	\end{proof}
	
	\begin{prop}
		Consider the setting of Proposition \ref{prop:classification_ast_trivial}. Then
		we have
		\[\beta^{\super}_{V(\fu(\lambda))_F}=(+,1,(\beta^{\BT}_\lambda)^{-1})
		[\bar{C}(\fh^{\spl},q^{-\lambda,\spl}_F)].\]
	\end{prop}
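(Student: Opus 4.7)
The plan is to decompose $\beta^{\super}_{V(\fu(\lambda))_F}$ into two factors, one encoding the Borel--Tits cocycle of $\lambda$ and one capturing the Clifford structure on $\fh^{\spl}_{\bar 1}$ over $F$. The point of Assumption \ref{ass_asttriv} is precisely that the Clifford factor can be handled over $F$ by applying Proposition \ref{prop:split} to the auxiliary split group $\bmH^{\spl}$, whose even part is a split torus; for this group the unique BPS-subgroup is $\bmH^{\spl}$ itself (no roots), and its Borel--Weil socle is the tautology $V_F(\fu^{\spl}(\lambda)_F)=\fu^{\spl}(\lambda)_F$. Thus Proposition \ref{prop:split} (2) immediately yields
\[
\beta^{\super}_{\fu^{\spl}(\lambda)_F}=[\bar{C}(\fh^{\spl}_{\bar 1},q^{-\lambda,\spl}_F)],
\]
and the desired identity reduces to the cleaner claim
\[
\beta^{\super}_{V(\fu(\lambda))_F}=(+,1,(\beta^{\BT}_\lambda)^{-1})\cdot\beta^{\super}_{\fu^{\spl}(\lambda)_F}.
\]

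I would proceed in four steps. First, Corollary \ref{cor:transfer_qrat} (1) makes $V(\fu(\lambda))$ super quasi-rational, so Remark \ref{rem:uniform_expression} gives $D_{V(\fu(\lambda))_F}=(\beta^{\BT}_{V(\fu(\lambda))})^{-1}(-\epsilon_{V(\fu(\lambda))_F},a_{V(\fu(\lambda))_F})$, and likewise for $\fu^{\spl}(\lambda)_F$. Second, Assumption \ref{ass_asttriv} (iii) identifies the pullback of ${}^{w_\sigma}({}^\sigma\fu(\lambda))$ along the isomorphism $\bmH\otimes_F F^{\sep}\cong\bmH^{\spl}\otimes_F F^{\sep}$ with ${}^\sigma\fu^{\spl}(\lambda)$, so isomorphisms $\varphi_\sigma$ for $\fu(\lambda)$ pull back to analogous isomorphisms $\varphi^{\spl}_\sigma$ for $\fu^{\spl}(\lambda)$, yielding the cocycle identity $\beta^{\BT}_{\fu(\lambda)}=\beta^{\BT}_{\fu^{\spl}(\lambda)}$. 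Combined with the product formula (Theorem \ref{thm:prod_formula}), this gives
\[
\beta^{\BT}_{V(\fu(\lambda))}=\beta^{\BT}_\lambda\cdot\beta^{\BT}_{\fu^{\spl}(\lambda)}.
\]
Third, Proposition \ref{prop:classification_ast_trivial} and Corollary \ref{cor:transfer_qrat} match the quasi-rationality and $\Pi$-self-conjugacy of $V(\fu(\lambda))_F$ with those of $\fu^{\spl}(\lambda)_F$, so $\epsilon_{V(\fu(\lambda))_F}=\epsilon_{\fu^{\spl}(\lambda)_F}$. For the discriminant parameter, when $V(\fu(\lambda))$ is not quasi-rational the value $a$ is determined (Construction \ref{cons:a}) by the Galois action distinguishing ${}^\sigma V\cong V$ from ${}^\sigma V\cong\Pi V$, which is the same Galois action on $\{\fu^{\spl}(\lambda),\Pi\fu^{\spl}(\lambda)\}$ by the transfer; when $V(\fu(\lambda))\cong \Pi V(\fu(\lambda))$ is quasi-rational, the unique $S$ of Theorem \ref{thm:BT} (3)(i) can be constructed by applying $V_{\bmB'}(-)$ to the corresponding $s:\fu^{\spl}(\lambda)\cong\Pi\fu^{\spl}(\lambda)$ (transferred to $\bmH\otimes_F F^{\sep}$), and $\Pi S\circ S$ descends to $\Pi s\circ s=a_{\fu^{\spl}(\lambda)_F}\id$. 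Fourth, one combines the previous steps in $\BW(F)$: since the multiplication rule gives $(+,1,D_1)\cdot(\epsilon,a,D_2)=(\epsilon,a,D_1D_2)$ (using $(1,\pm a)=1$), substituting the equalities from steps 1--3 produces exactly $(+,1,(\beta^{\BT}_\lambda)^{-1})\cdot\beta^{\super}_{\fu^{\spl}(\lambda)_F}$.

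The main obstacle is the third step in the quasi-rational case with $V\cong\Pi V$. Constructing $S$ from $s$ requires threading the induction functor $\Ind^{\bmG\otimes_F F^{\sep}}_{\bmB'}$ through the transfer identification while keeping track of the Galois twists by $w_\sigma$ that obstruct naive equivariance. Concretely one must verify that the counit realization $V(\fu(\lambda))\hookrightarrow\Ind^{\bmG\otimes_F F^{\sep}}_{\bmB'}\fu(\lambda)\to\fu(\lambda)$ intertwines the cocycle data for $V(\fu(\lambda))$ and $\fu(\lambda)$ in a manner compatible with the transfer to $\fu^{\spl}(\lambda)$, so that the defining equation $S\circ\Phi_\sigma=\Pi\Phi_\sigma\circ{}^\sigma S$ descends from $s\circ\varphi^{\spl}_\sigma=\Pi\varphi^{\spl}_\sigma\circ{}^\sigma s$. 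Everything else is a direct assemblage of Theorem \ref{thm:prod_formula}, Proposition \ref{prop:split}, Remark \ref{rem:uniform_expression}, and the multiplication law \eqref{eq:bw}.
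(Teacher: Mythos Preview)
Your proposal is correct and follows essentially the same route as the paper: match $\epsilon$ via Proposition \ref{prop:epsilon} and Corollary \ref{cor:transfer_qrat}, use the transfer under Assumption \ref{ass_asttriv} to identify $\beta^{\BT}_{\fu(\lambda)}=\beta^{\BT}_{\fu^{\spl}(\lambda)}$ and then apply the product formula, match $a$ via Theorems \ref{thm:BT} (3)(ii) and \ref{thm:BT_nonqrat} (2), and finally assemble in $\BW(F)$. The only difference is cosmetic: the paper phrases the split side in terms of $V_F(\fu^{\spl}(\lambda)_F)$ for $\bmG^{\spl}$ rather than $\fu^{\spl}(\lambda)_F$ for $\bmH^{\spl}$, but these have the same endomorphism superalgebra by the proof of Proposition \ref{prop:split}.

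Regarding the obstacle you flag in the case $V(\fu(\lambda))\cong\Pi V(\fu(\lambda))$: the paper does not construct $S$ explicitly either. It simply invokes Theorem \ref{thm:BT} (3)(ii), and the reason this suffices is the isomorphism $\bEnd_{\bmG\otimes_FF^{\sep}}(V(\fu(\lambda)))\cong\bEnd_{\bmH\otimes_FF^{\sep}}(\fu(\lambda))$ (from the proof of Proposition \ref{prop:split}, via $H^0((\bmU')^+,-)$ and $\Ind$), which is compatible with the chosen $\Phi_\sigma$ and $\varphi_\sigma$ by the very construction of $\Phi_\sigma$ from $\varphi_\sigma$. This transports the unique odd central element $s$ of $\bEnd$ on the $\bmH$-side to the required $S$ on the $\bmG$-side, and the transfer of Assumption \ref{ass_asttriv} (iii) then identifies $s$ with its analogue for $\fu^{\spl}(\lambda)$. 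So no separate diagram chase through the counit is needed; your concern dissolves once you use that $\Phi_\sigma=V_{\bmB'}(\varphi_\sigma)\circ(\text{translation by }w_\sigma)$ is functorial in $\varphi_\sigma$.
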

	
	\begin{proof}
		We have $\epsilon_{V(\fu(\lambda))_F}=\epsilon_{V_F(\fu^{\spl}(\lambda))}$ from Proposition \ref{prop:epsilon} and Corollary \ref{cor:transfer_qrat}.
		
		The product formula and the pullback argument to $\bmG^{\spl}$ imply
		\[\beta^{\BT}_{V(\fu(\lambda))}=\beta^{\BT}_\lambda \beta^{\BT}_{\fu^{\spl}(\lambda)}=\beta^{\BT}_\lambda \beta^{\BT}_{V(\fu^{\spl}(\lambda))}.\]
		In fact, one can prove $\beta^{\BT}_{\fu(\lambda)}=\beta^{\BT}_{\fu^{\spl}(\lambda)}$ by construction. Similarly, the equality
		$a_{V(\fu(\lambda))_F}=a_{V_F(\fu^{\spl}(\lambda)_F)}=a_{\fu^{\spl}(\lambda)_F}$ follows from Theorems \ref{thm:BT} (3) (ii) and \ref{thm:BT_nonqrat} (2). We identify $\pm$ with $\pm 1$ to deduce
		\[\begin{split}
			\beta_{V(\fu(\lambda))_F}
			&=(\epsilon_{V_F(\fu^{\spl}(\lambda)_F)},a_{V_F(\fu^{\spl}(\lambda)_F)},
			D_{V(\fu(\lambda))_F})\\
			&=(+,1,(\beta^{\BT}_\lambda)^{-1})
			(\epsilon_{V_F(\fu^{\spl}(\lambda)_F)},a_{V_F(\fu^{\spl}(\lambda)_F)},
			\beta^{\BT}_\lambda D_{V(\fu(\lambda))_F}),
		\end{split}\]
		\[\begin{split}
			\beta^{\BT}_\lambda D_{V(\fu(\lambda))_F}
			&=\beta^{\BT}_\lambda(\beta^{\BT}_{V(\fu(\lambda))})^{-1}
			(-\epsilon_{V(\fu(\lambda))_F},a_{V(\fu(\lambda))_F})\\
			&=(\beta^{\BT}_{V(\fu^{\spl}(\lambda))})^{-1}
			(-\epsilon_{V_F(\fu^{\spl}(\lambda)_F)},a_{V_F(\fu^{\spl}(\lambda)_F)})\\
			&=D_{V_F(\fu^{\spl}(\lambda)_F)},
		\end{split}\]
		\[(-\epsilon_{V_F(\fu^{\spl}(\lambda)_F)},a_{V_F(\fu^{\spl}(\lambda)_F)},
		D_{V_F(\fu^{\spl}(\lambda)_F)})
		=\beta^{\super}_{V_F(\fu^{\spl}(\lambda)_F)}
		=[\bar{C}(\fh^{\spl},q^{-\lambda,\spl}_F)].
		\]
		This completes the proof.
	\end{proof}

	\begin{ex}
		Put $\bmG=\Qq^\ast(2n)$ with $n\geq 1$. Take the subgroup of diagonal matrices for $H$. Identify $\bmG\otimes_\bR\bC\cong \Qq_{2n}$ to take the BPS-subgroup as in Example \ref{ex:Q_n}. Put $w=\diag(J_n,J_n)\in \bmG(\bC)$.
		This setting enjoys Assumption \ref{ass_asttriv}. To apply the preceding arguments, identify $X^\ast(H\otimes_\bR\bC)$ with $\bZ^{2n}$ as in Example \ref{ex:Q_n}. Take $\lambda\in X^\flat(H\otimes_\bR \bC)$. Due to our present choice of the positive system, $\lambda$ satisfies
		\[\lambda_1\geq \lambda_2\geq\cdots\geq\lambda_n\geq\lambda_{2n}\geq \lambda_{2n-1}
		\geq\cdots\geq \lambda_{n+1}\]
		and the unstationary condition away from zero. Since the $\ast$-involution is trivial, all the irreducible representations $V$ of $\bmG$ are super quasi-rational. To describe $\beta^{\super}_{V(\fu(\lambda))}$,
		set
		\[\begin{array}{ccc}
			n_+=|\{i\in\{1,2,\ldots,2n\}:~\lambda_i>0\}|,
			&n_-=|\{i\in\{1,2,\ldots,2n\}:~\lambda_i<0\}|,
			&|\lambda|\coloneqq \sum_{i=1}^{2n} \lambda_i.
		\end{array}\]
		Then we have
		\begin{align*}
			d_\lambda&=n_++n_-,&\epsilon_{V_\bR(\fu_\bR(\lambda))}&=-^{d_\lambda},\\
			a_{V_\bR(\fu_\bR(\lambda))}&=(-1)^{\binom{d_\lambda}{2}+n_+},
			&D_{V_\bR(\fu_\bR(\lambda))}
			&=(-1)^{|\lambda|+\binom{n_+}{2}+n_+\binom{d_\lambda-1}{2}+\binom{d_\lambda+1}{4}}.
		\end{align*}
		Here we used \cite[Example 4.1.4]{MR4627704} for computation of $\beta^{\BT}_\lambda$.
	\end{ex}

	\subsection{General case II}\label{sec:q(p,q)}
	
	Finally, we discuss how to work with the general setting of Theorem \ref{thm:Galois_action}. For this, let us recall \cite[Remark 4.7]{MR4039427} to give an explicit construction of $\fu(\lambda)$. We caution that this works over separably closed fields since so does \cite[Proof of Proposition B.4]{MR4039427} readily.
	
	Firstly, we think of general $\lambda\in X^\ast(H\otimes_F F^{\sep})$. Choose a maximal totally isotropic subspace $(\fh_{\bar{1}}\otimes_F F^{\sep})^\dagger$ to set
	$(\fh\otimes_F F^{\sep})^\dagger\coloneqq \fh_{\bar{0}}\oplus
	(\fh_{\bar{1}}\otimes_F F^{\sep})^\dagger$.
	We remark that $(\fh_{\bar{1}}\otimes_F F^{\sep})^\dagger=\fh_{\bar{1}}\otimes_F F^{\sep}$ if $\delta_\lambda=0$. On the other hand, this noncanonical choice will cause difficulties for the study of the case $\delta\neq 0$.
	
	Define $(\bmH\otimes_F F^{\sep})^\dagger$ as in \cite[Remark 4.7]{MR4039427}. Then we may set
	\[\fu(\lambda)\coloneqq
	\hy(\bmH\otimes_F F^{\sep})\otimes_{\hy((\bmH\otimes_F F^{\sep})^\dagger)} F^{\sep}_\lambda,\]
	where $F^{\sep}_\lambda=F^{\sep}$ is an $\hy((\bmH\otimes_F F^{\sep})^\dagger)$-module for the character $\lambda$ and the trivial action of $(\fh_{\bar{1}}\otimes_F F^{\sep})^\dagger$ (descend it to an action of a Clifford superalgebra through \cite[Lemma 4.1]{MR4039427}).
	We remark that this construction is compatible with Convention \ref{conv}. We also note that this construction is after \cite[\S 9.2]{MR2849718} when $F$ is algebraically closed of characteristic zero. We now get
	\[{}^{w_\sigma}({}^\sigma\fu(\lambda))\cong\hy(\bmH\otimes_F F^{\sep})\otimes_{\hy(w^{-1}_\sigma{}^\sigma(\bmH\otimes_F F^{\sep})^\dagger w_{\sigma})} F^{\sep}_{\sigma\ast\lambda}.\]
	
	Suppose that Assumption \ref{ass} holds. Recall that we wish to know whether $\fu(\lambda)\cong {}^{w_\sigma}({}^\sigma \fu(\lambda))$ and $\fu(\lambda)\cong \Pi {}^{w_\sigma}({}^\sigma \fu(\lambda))$ hopefully explicitly. These are equivalent to finding $(\fh_{\bar{1}}\otimes_F F^{\sep})^\dagger$-invariant even and odd elements of ${}^{w_\sigma}({}^\sigma\fu(\lambda))$ respectively. The difficulty is that
	$w^{-1}_\sigma{}^\sigma(\bmH\otimes_F F^{\sep})^\dagger w_{\sigma}
	\neq (\bmH\otimes_F F^{\sep})^\dagger$
	in general due to our noncanonical choice of $(\fh_{\bar{1}}\otimes_F F^{\sep})^\dagger$. We found a lucky example where we can avoid this issue:
	
	\begin{ex}
		Put $\bmG=\Qq(p,q)$ with $p+q\geq 1$. Take the subgroup of diagonal matrices for $H$. Identify $X^\ast(H\otimes_\bR \bC)\cong\bZ^{p+q}$ and $\bmG\otimes_\bR\bC\cong \Qq_{p+q}$ to take BPS-subgroup as in Example \ref{ex:Q_n}. The $\ast$-involution on $X^\ast(H\otimes_\bR \bC)$ is given by
		\[(\lambda_i)\mapsto (-\lambda_{p+q+1-i}).\]
		Take an element $\lambda\in X^\flat(H\otimes_\bR \bC)$.
		\begin{description}
			\item[Case I] Assume $\bar{\lambda}= w\lambda$. Then $V(\fu(\lambda))$ is super quasi-rational. Let us set $n_{\pm},n_0,\eta_i$ as in Example \ref{ex:Q_n_real}. Then we have $n_+=n_-$ and thus $d_\lambda=2n_{+}$. In particular, $d_\lambda$ is even. This implies $\Pi V(\fu(\lambda))\not\cong V(\fu(\lambda))$ (Example \ref{ex:type_sep_clo_case}).
			
			Let $(\fh_{\bar{1}}\otimes_\bR\bC)^\dagger$ be the linear span of
			\[\begin{array}{cc}
				\sqrt{-\lambda}_{p+q+1-i}\eta_i+\sqrt{\lambda}_i\eta_{p+q+1-i}&(1\leq i\leq n_{+})\\
				\eta_i&(n_{+}+1\leq i\leq n_{+}+n_0).
			\end{array}\]
			This is a maximal isotropic subspace. Since it is closed under the actions of the complex conjugation and the longest element of the Weyl group of $G\otimes_\bR\bC$, we have
			$w^{-1}\overline{\bmH\otimes_\bR \bC}^\dagger w
			= (\bmH\otimes_\bR \bC)^\dagger$. This implies $\overline{V(\fu(\lambda))}\cong V(\fu(\lambda))$ (recall \eqref{eq:Galois_twist_V(u)} and the preceding arguments in this small section).
			
			By virtue of Proposition \ref{prop:even} and the equality $w^{-1}\overline{\bmH\otimes_\bR \bC}^\dagger w
			= (\bmH\otimes_\bR \bC)^\dagger$, we now deduce
			\[\beta^{\super}_{V(\fu(\lambda))_\bR}=[\End_{\bmG}(V(\fu(\lambda))_\bR)]
			=(+,\beta^{\BT}_{V(\fu(\lambda))},1)=(+,1,1)=[\bR]\]
			(see \cite[Example 4.1.3]{MR4627704} for the second equality).
			\item[Case II] Assume $\bar{\lambda}\neq w\lambda$. Then $V(\fu(\lambda))$ is not super quasi-rational. Moreover, we have
			\[\bEnd_{\bmG}(V(\fu(\lambda))_\bR)\cong\begin{cases}
				\bC&(d_\lambda\in 2\bZ_{\geq 0})\\
				\bC\left[\epsilon\right]/(\epsilon^2-1)&(d_\lambda\in 2\bZ_{\geq 0}+1)
			\end{cases}\]
			by Examples \ref{ex:nonsuperpure_real} and \ref{ex:type_sep_clo_case}.
		\end{description}
	\end{ex}

\end{document}